\newcommand{\BB}{{\mathcal B}}
\newcommand{\CC}{{\mathcal C}}
\newcommand{\DD}{{\mathcal D}}
\newcommand{\FF}{{\mathcal F}}
\newcommand{\GG}{{\mathcal G}}
\newcommand{\HH}{{\mathcal H}}
\newcommand{\LL}{{\mathcal L}}
\newcommand{\OO}{{\mathcal O}}
\newcommand{\PP}{{\mathcal P}}
\newcommand{\RR}{{\mathcal R}}
\newcommand{\SSS}{{\mathcal S}}
\newcommand{\TT}{{\mathcal T}}
\newcommand{\VV}{{\mathcal V}}
\newcommand{\C}{{\mathbb C}}
\newcommand{\N}{{\mathbb N}}
\newcommand{\R}{{\mathbb R}}
\newcommand{\Z}{{\mathbb Z}}
\newcommand{\wit}{\widetilde}
\newcommand{\wih}{\widehat}
\newcommand{\supp}{{\operatorname{supp}}}
\newcommand{\diam}{{\operatorname{diam}}}
\newcommand{\Lip}{{\operatorname{Lip}}}
\newcommand{\dist}{{\operatorname{dist}}}
\newcommand{\Top}{{\operatorname{Top}}}
\newcommand{\Tree}{{\operatorname{Tree}}}
\newcommand{\Child}{{\operatorname{Ch}}}
\newcommand{\Tre}{{\operatorname{Tr}}}
\newcommand{\Trees}{{\operatorname{Trs}}}
\newcommand{\Stop}{{\operatorname{Stp}}}
\newtheorem{teo}{Theorem}[section]
\newtheorem{lema}[teo]{Lemma}
\newtheorem{coro}[teo]{Corollary}
\newtheorem{propo}[teo]{Proposition}
\newtheorem{ques}[teo]{Question}
\newtheorem{defi}[teo]{Definition}
\newtheorem{claim}[teo]{Claim}
\newtheorem{remarko}[teo]{{\em Remark}}
{\theoremstyle{remark} }
\title[Variation for the Riesz transform and uniform rectifiability]
{Variation for the Riesz transform\\and uniform rectifiability}
\author[A. MAS AND X. TOLSA]{ALBERT MAS\; AND\; XAVIER TOLSA}
\date{September, 2011}
\subjclass[2010]{Primary 42B20, 42B25.} 
\keywords{$\rho$-variation and oscillation,
 Calder\'{o}n-Zygmund singular integrals, Riesz transform, uniform rectifiability}
\thanks{Both authors are partially supported by grants
2009SGR-000420 (Generalitat de
Catalunya) and MTM2010-16232 (Spain). Albert Mas is also
supported by grant AP2006-02416 (FPU program, Spain).}
\address{Albert Mas. Departament de Matem\`atiques, Universitat
Aut\`onoma de Bar\-ce\-lo\-na. 08193 Barcelona, Catalonia. Spain} \email{amblesa@mat.uab.cat}
\address{Xavier Tolsa. Instituci\'{o} Catalana de Recerca
i Estudis Avan\c{c}ats (ICREA) and Departament de
Ma\-te\-m\`a\-ti\-ques, Universitat Aut\`onoma de Bar\-ce\-lo\-na.
08193 Barcelona, Catalonia} \email{xtolsa@mat.uab.cat}
\begin{document}

\begin{abstract}
For $1\leq n<d$ integers and $\rho>2$, we prove that an $n$-dimensional Ahlfors-David regular measure $\mu$ in $\R^d$ is uniformly $n$-rectifiable if and only if the $\rho$-variation for the Riesz transform with respect to $\mu$ is a bounded operator in $L^2(\mu)$. This result can be considered as a partial solution to a well known open problem posed by G. David and S. Semmes which relates the $L^2(\mu)$ boundedness of the Riesz transforms to the uniform rectifiability of $\mu$.
\end{abstract}
\maketitle

\section{Introduction}

In this paper we characterize the notion of uniform rectifiability in the sense of David and Semmes \cite{DS2}
in terms of the $L^2$ boundedness of the $\rho$-variation for the Riesz transform, with $\rho>2$.

Given $1\leq n<d$ integers and a Borel measure $\mu$ in $\R^d$, one defines the
$n$-dimensional Riesz transform of a function $f\in L^1(\mu)$ by
$R^\mu f(x)=\lim_{\epsilon\searrow0}R^\mu_\epsilon f(x)$
(whenever the limit exists), where
$$R^\mu_\epsilon f(x)=\int_{|x-y|>\epsilon}\frac{x-y}
{|x-y|^{n+1}}\,f(y)\,d\mu(y),\qquad x\in\R^d.$$
We will use the notation $\RR^\mu f(x):=\{R_{\epsilon}^\mu f(x)\}_{\epsilon>0}$. 
When $d=2$ (i.e., $\mu$ is a Borel measure in $\C$), one defines the
Cauchy transform of  $f\in L^1(\mu)$ by $C^\mu
f(x)=\lim_{\epsilon\searrow0}C^\mu_\epsilon f(x)$ (whenever the
limit exists), where
$$C^\mu_\epsilon f(x)=\int_{|x-y|>\epsilon}\frac{f(y)}
{x-y}\,d\mu(y),\qquad x\in\C.$$
To avoid the problem of existence of the preceding limits, it is useful to consider the maximal
operators $R^\mu_* f(x)=\sup_{\epsilon>0}|R^\mu_\epsilon f(x)|$
and $C^\mu_* f(x)=\sup_{\epsilon>0}|C^\mu_\epsilon f(x)|$. 
Notice that the Cauchy transform coincides with the $1$-dimensional Riesz transform in $\R^2$ modulo conjugation, since $1/x=\overline x/|x|^{2}$ for all $x\in\C\setminus\{0\}$.

The Cauchy and Riesz transforms are two very important examples
of singular integral operators with a Calder\'{o}n-Zygmund kernel.
Given $d\geq2$, the kernels $K:\R^d\setminus\{0\}\to\R$ that we consider in this
paper satisfy
\begin{equation}\label{4eq333}
|K(x)|\leq \frac{C}{|x|^{n}},\quad|\partial_{x^i}K(x)|\leq
\frac{C}{|x|^{n+1}}\quad\text{and}\quad|\partial_{x^i}\partial_{x^j}K(x)|\leq
\frac{C}{|x|^{n+2}},
\end{equation}
for all $1\leq i,j\leq d$ and
$x=(x^1,\ldots,x^d)\in\R^d\setminus\{0\}$, where $1\leq n<d$ is some
integer and $C>0$ is some constant; and moreover $K(-x)=-K(x)$ for
all $x\neq0$ (i.e. $K$ is odd). Notice that the $n$-dimensional
Riesz transform corresponds to the vector kernel
$(x^1,\ldots,x^d)/|x|^{n+1}$, and the Cauchy transform
to $(x^1,-x^2)/|x|^{2}$ (so, we may consider $K$
to be any scalar component of these vector kernels).
For $f\in L^1(\mu)$ and $x\in\R^d$, we set
\begin{equation*}
T_{\epsilon}^\mu f(x)\equiv T_{\epsilon}(f\mu)(x):=\int_{|x-y|>\epsilon}K(x-y)f(y)\,d\mu(y),
\end{equation*}
and we denote $\TT^\mu f(x)=\{ T_{\epsilon}^\mu f(x)\}_{\epsilon>0}$.

\begin{defi}[$\rho$-variation and oscillation]
Let
$\mathcal{F}:=\{F_\epsilon\}_{\epsilon>0}$ be a family of
functions defined on $\R^d$. Given  $\rho>0$, the $\rho$-{\em
variation} of $\FF$ at $x\in\R^d$ is defined by
\begin{equation*}
\VV_{\rho}(\FF)(x):=\sup_{\{\epsilon_{m}\}}\bigg(\sum_{m\in\Z}
|F_{\epsilon_{m+1}}(x)-F_{\epsilon_{m}}(x)|^{\rho}\bigg)^{1/\rho},
\end{equation*}
where the pointwise supremum is taken over all decreasing
sequences $\{\epsilon_{m}\}_{m\in\Z}\subset(0,\infty)$. Fix a decreasing
sequence $\{r_{m}\}_{m\in\Z}\subset(0,\infty)$. The {\em oscillation} of $\FF$
at $x\in\R^d$ is defined by
\begin{equation*}
\OO(\FF)(x):=\sup_{\{\epsilon_m\},\{\delta_{m}\}}\bigg(\sum_{m\in\Z}
|F_{\epsilon_m}(x)-F_{\delta_m}(x)|^{2}\bigg)^{1/2},
\end{equation*}
where the pointwise supremum is taken over all sequences
$\{\epsilon_m\}_{m\in\Z}$ and $\{\delta_{m}\}_{m\in\Z}$
such that $r_{m+1}\leq\epsilon_{m}\leq\delta_{m}\leq r_{m}$ for all
$m\in\Z$.
\end{defi}

The $\rho$-variation and oscillation for martingales and some
families of operators have been studied in many recent papers on
probability, ergodic theory, and harmonic analysis (see
\cite{Lepingle}, \cite{Bourgain}, \cite{JKRW-ergodic},
\cite{CJRW-Hilbert}, \cite{JSW}, \cite{Lacey}, and
\cite{OSTTW}, for example).
In this paper we are interested in the $\rho$-variation and oscillation of the family $\TT^\mu f$. That is, given 
a Borel measure $\mu$ in $\R^d$ and $f\in L^1(\mu)$ we will deal with
\begin{equation*}
\begin{split}
&(\VV_{\rho}\circ\TT^\mu)f(x):=\VV_{\rho}(\TT^\mu f)(x),\quad
(\OO\circ\TT^\mu)f(x):=\OO(\TT^\mu f)(x).
\end{split}
\end{equation*} 
We are specially interested in the case $\TT^\mu=\RR^\mu$. Notice, by the way, that $T_*^\mu f(x)
\leq (\VV_{\rho}\circ\TT^\mu)f(x)$ for 
any compactly supported function $f\in L^1(\mu)$ and all $x\in\R^d$.

When $\mu$ coincides with the Lebesgue
measure in the real line and $K(x)=1/x$ is the kernel of the Hilbert transform, Campbell, Jones,  Reinhold and Wierdl
\cite{CJRW-Hilbert} showed that 
$\VV_{\rho}\circ\TT^\mu$ and $\OO\circ\TT^\mu$ are
bounded in $L^p(\mu)$, for $1<p<\infty$, and of weak type $(1,1)$.
This result was extended to other singular integral operators in higher dimensions in 
\cite{CJRW-singular integrals}. The case of the Cauchy transform and other odd Calder\'on-Zygmund
operators on Lipschitz graphs was studied recently in \cite{MT}.

Let us turn our attention to uniform rectifiability now.
Recall that a Borel measure $\mu$ in $\R^d$ is called $n$-rectifiable if there exists a countable family of $n$-dimensional $C^1$ submanifolds $\{M_i\}_{i\in\N}$ in $\R^d$ such that $\mu(E\setminus\bigcup_{i\in\N}M_i)=0$.
Moreover, $\mu$ is said to be $n$-dimensional Ahlfors-David
regular, or simply AD regular, if there exists some constant $C>0$
such that $C^{-1}r^n\leq\mu(B(x,r))\leq Cr^n$ for all $x\in\supp\mu$
and $0<r\leq\diam(\supp\mu)$.
One also says that $\mu$ is uniformly $n$-rectifiable if there exist $\theta,M>0$ so that, for each
$x\in\supp\mu$ and $r>0$, there is a Lipschitz mapping $g$ from
the $n$-dimensional ball $B^n(0,r)\subset\R^n$ into $\R^d$ such that
$\Lip(g)\leq M$ and
$\mu\big(B(x,r)\cap g(B^n(0,r))\big) \geq \theta r^n,$
where $\Lip(g)$ stands for the Lipschitz constant of $g$. In particular, uniform rectifiability implies rectifiability. Given a set $E\subset\R^d$, we denote by $\HH^n_E$ the $n$-dimensional Hausdorff measure restricted to $E$. Then $E$ is called, respectively, $n$-rectifiable, AD regular, or uniformly $n$-rectifiable if $\HH^n_E$ is so. By the Lebesgue differentiation theorem, any $n$-dimensional AD regular measure $\mu$ is of the form $\mu=f\HH^n_{\supp\mu}$ with $C^{-1}\leq f(x)\leq C$ for some constant $C>0$ and all $x\in\supp\mu$.

G. David and S. Semmes asked more than twenty years ago the following question, which is still open (see, for example, \cite[Chapter 7]{Pajot}):
\begin{ques}\label{ques David-Semmes}
Is it true that
an $n$-dimensional AD regular measure $\mu$ is uniformly
$n$-rectifiable if and only if $R_*^\mu$ is bounded in $L^2(\mu)$?
\end{ques}

Some comments are in order. By the results in \cite{DS1}, the ``only if'' implication of the question above is already known to hold. Also in \cite{DS1}, G. David and S. Semmes gave a positive answer to Question \ref{ques David-Semmes} if one replaces the $L^2$ boundedness of $R^\mu_*$ by the $L^2$ boundedness of $T^\mu_*$ for a wide class of odd kernels $K$. In the case $n=1$ (in particular, for the Cauchy transform), the ``if'' implication was proved by P. Mattila, M. Melnikov and J. Verdera in \cite{MMV} using the notion of curvature of measures. Later on,  G. David and J. C. L\'{e}ger \cite{Leger} proved that the $L^2$ boundedness $C^\mu_*$ implies that $\mu$ is rectifiable, even without the AD regularity assumption (with $n=1$). 

When $\mu$ is the $n$-dimensional Hausdorff measure on a set $E\subset\R^d$ such that $\mu(E)<\infty$,  the rectifiability of $\mu$ is also related with the existence $\mu$-a.e. of the principal value of the Riesz transform of $\mu$, that is,  the existence of $R^\mu1(x)=\lim_{\epsilon\searrow0}R^\mu_\epsilon1(x)$ for $\mu$-a.e. $x\in E$. In \cite{Mattila-Preiss}, P. Mattila and D. Preiss proved that, under the additional assumption that 
\begin{equation}\label{eqjo88}
\lim\inf_{r\to 0}r^{-n}\mu(B(x,r))>0 \qquad\mbox{for $\mu$-a.e. $x\in E$,}
\end{equation}
 the rectifiability of $E$ is equivalent to the existence of $R^\mu1(x)$ $\mu$-a.e. $x\in E$. 
Later on, in \cite{Tolsa8}  X. Tolsa removed the assumption \eqref{eqjo88} and proved the result in full generality. Let us mention that, for the case $n=1$ and $d=2$ (that is, for the Cauchy transform), the analogous results had been obtained previously by \cite{Mattila2} under the assumption \eqref{eqjo88}, and in \cite{Tolsa7}, in full generality, by using the notion of curvature of measures.

In this paper we prove the following:

\begin{teo}\label{teo88}
Let $1\leq n<d$ and $\rho>2$. An $n$-dimensional AD regular Borel measure $\mu$ in $\R^d$ is uniformly $n$-rectifiable if and only if $\VV_\rho\circ\RR^\mu$ is a bounded operator in $L^2(\mu)$. Moreover, if $\mu$ is $n$-uniformly rectifiable, then for any kernel $K$ satisfying \eqref{4eq333}, the operator $\VV_\rho\circ\TT^\mu$ is bounded in $L^2(\mu)$.
\end{teo}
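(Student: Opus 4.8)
The plan is to prove the two implications by fairly different methods. For the "only if" direction (and the last sentence of the theorem, which is the stronger statement since it covers all kernels $K$ satisfying \eqref{4eq333}), the natural route is the corona/stopping-time machinery of David and Semmes: since $\mu$ is uniformly $n$-rectifiable, it admits a corona decomposition into trees of David cubes on each of which $\supp\mu$ is well approximated by a Lipschitz graph. On each Lipschitz graph one already has, by the results of \cite{MT}, that $\VV_\rho\circ\TT^{\sigma}$ is bounded in $L^2$ for the arclength/surface measure $\sigma$ of the graph. The strategy is therefore to fix $f\in L^2(\mu)$, decompose the variation $\VV_\rho(\TT^\mu f)(x)$ at a point $x$ lying in a tree by splitting the scales $\epsilon$ into those comparable to the "stopping scale" of the cubes of that tree containing $x$ and the rest, compare $T^\mu_\epsilon f$ with $T^\sigma_\epsilon(f\, d\mu/d\sigma)$ for the approximating graph, and control the errors with the smoothness estimates \eqref{4eq333} on $K$ and the $L^2$ boundedness of the variation on the graph. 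Summing the square-function-type estimates over all trees in the corona decomposition — using that the stopping cubes satisfy a packing (Carleson) condition — then gives boundedness of $\VV_\rho\circ\TT^\mu$ on $L^2(\mu)$. A triangle-inequality-type bound for $\VV_\rho$ (it is subadditive, $\VV_\rho(\FF+\GG)\leq\VV_\rho(\FF)+\VV_\rho(\GG)$, up to a constant) is what lets us patch together the tree-by-tree estimates. The main obstacle here is making the passage between scales and between $\mu$ and the approximating measures uniform in the parameters $\epsilon_m$ defining the variation: unlike a single operator $T^\mu_\epsilon$, the $\rho$-variation involves arbitrary sequences of scales simultaneously, so the error terms must be summable in a way that survives the supremum over sequences. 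This is precisely where $\rho>2$ enters, via the elementary inequality that a $\rho$-variation is dominated by an $\ell^2$-type short-variation plus a long-variation piece controlled by a square function.

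For the "if" direction one argues by contradiction / via a known characterization of uniform rectifiability. Assume $\VV_\rho\circ\RR^\mu$ is bounded in $L^2(\mu)$. Since $R^\mu_*f(x)\leq(\VV_\rho\circ\RR^\mu)f(x)$ (noted in the excerpt for compactly supported $f$), boundedness of $\VV_\rho\circ\RR^\mu$ immediately gives $L^2$ boundedness of $R^\mu_*$, hence of $R^\mu$; but by the David–Semmes theory this alone is not known to imply uniform rectifiability — that is exactly the open Question \ref{ques David-Semmes} — so we must extract genuinely more from the variational bound. The extra information is an $L^2$ "square function" estimate: for the truncations at two nearby scales $r$ and $2r$, boundedness of the $\rho$-variation (indeed already of the oscillation $\OO\circ\RR^\mu$) yields
\begin{equation*}
\int\int_0^\infty \bigl|R^\mu_{r}1(x)-R^\mu_{2r}1(x)\bigr|^2\,\frac{dr}{r}\,d\mu(x)\leq C\,\mu(\text{supp}\,\mu\cap \text{ball}),
\end{equation*}
and more generally with $f$ in place of $1$. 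The point is that $|R^\mu_{r}1(x)-R^\mu_{2r}1(x)|$ is essentially $\int_{r<|x-y|\leq 2r}K(x-y)\,d\mu(y)$, a smooth-truncated "$\beta$-number-like" quantity measuring how far $\mu$ is from being flat at scale $r$ near $x$. The plan is to show that this square-function bound forces a Carleson-type estimate on the David–Semmes $\alpha$ or $\beta$ coefficients — or, more directly, to invoke the known result that such an $L^2$ estimate for the (smoothly truncated) Riesz transform implies uniform rectifiability. Concretely, I would aim to deduce the "weak constant density" or the "$BAUP$" condition, or to run the scheme of \cite{DS1} showing that control of $\sum|R^\mu_{r_{m+1}}1 - R^\mu_{r_m}1|^2$ is one of the equivalent characterizations; alternatively, for $n=1$ one can recover curvature bounds as in \cite{MMV}, \cite{Tolsa7} and conclude.

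The main obstacle — and the technical heart of the paper — is this "if" direction: converting the $\ell^2$-summability of increments of truncated Riesz transforms into a bona fide geometric flatness statement at every scale and location. The difficulty is that $|R^\mu_{r}1(x)-R^\mu_{2r}1(x)|$ being small does not by itself say $\mu$ is flat at $(x,r)$ — there can be cancellation in the annulus $r<|x-y|\leq 2r$ even when $\mu$ is far from a plane. One must combine increments over a range of scales, use the AD regularity to rule out such cancellation generically (perhaps via a stopping-time argument producing, on most cubes, a plane minimizing a $\beta$-type coefficient, controlled in $L^2$ by the square function), and then feed this into the corona construction of David–Semmes to build the Lipschitz graphs. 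I expect to need an intermediate quantity — a smooth analogue of the square function $\int_0^\infty\beta_{\mu,2}(x,r)^2\,\frac{dr}{r}$ — and a lemma relating it to the variational bound; establishing that relation, carefully tracking constants so the final packing estimate closes, is where the real work lies.
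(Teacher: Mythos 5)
Your first half (uniform rectifiability implies boundedness of the variation) follows the same strategy as the paper in outline: corona decomposition, approximation by Lipschitz graphs, the $L^2$ result of \cite{MT} on graphs, and a short/long splitting of scales. Be aware, though, that you have the difficulty assessment inverted: this is the direction the paper describes as the laborious one, because for the sharp truncations $\chi_\epsilon$ one cannot simply compare $T^\mu_\epsilon f$ with the graph operator tree by tree; the paper first proves the smooth-truncation statement (via big pieces of Lipschitz graphs and a good-$\lambda$ inequality), reduces the long variation to it, and then controls the short variation by a Haar/wavelet decomposition combined with Carleson packing of the $\alpha_\mu$ and $\beta_{p,\mu}$ coefficients and a lemma bounding the mass of thin annuli on Lipschitz graphs of small slope. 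Your sketch is consistent with this but contains none of that mechanism; still, the genuinely problematic gap is in the other direction.

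For the ``if'' direction, your displayed estimate is not a consequence of the hypothesis: for $\rho>2$, fixing (say) the dyadic sequence of scales only gives $\sum_m |R^\mu_{2^{-m-1}}f-R^\mu_{2^{-m}}f|^\rho\le\big((\VV_\rho\circ\RR^\mu) f\big)^\rho$, an $\ell^\rho$ bound on the increments, which is strictly weaker than the $\ell^2$ square function you wrote (this is precisely why Remark \ref{4remark oscil} needs an extra hypothesis on the sequence defining $\OO$). Moreover, your fallback, to ``invoke the known result'' that such a square-function estimate for the Riesz kernel implies uniform rectifiability, or to deduce BAUP/WCD, is not substantiated and as stated skirts the David--Semmes problem itself. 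What closes the argument in the paper (Theorem \ref{4rectif teorema}) is: (i) a quantitative contrapositive of your ``small increments $\Rightarrow$ flat'' step, namely Proposition \ref{4rectif propo} (built on Lemma \ref{4lemli} from \cite{DS1} and Lemma \ref{4lemcas1} from \cite{To}): if $\beta_{1,\mu}(Q)>\epsilon_0$, then on some cube $P\subset 4Q$ of comparable side length one has $|S_k\mu|\ge\delta_0$ pointwise, where $S_k\mu$ is a difference of two smoothly truncated Riesz transforms of $\mu$ at consecutive scales; this is how the cancellation issue you raise is sidestepped. (ii) A counting/stopping-time argument (Lemmas \ref{4rectif lema} and \ref{4rectif lema 2}) that needs only the $\ell^\rho$ information: each bad cube through $x$ contributes at least $\delta_0^\rho$ to the $\ell^\rho$ sum, so the counting function $F^R$ of bad cubes satisfies $F^R\le\delta_0^{-\rho}\big((\VV_\rho\circ\TT_\varphi^\mu)\chi_{MR}\big)^\rho$, and it is $(F^R)^{2/\rho}$, not $F^R$ itself, that is integrated against $\mu$ and bounded by the assumed $L^2$ estimate; this yields the Carleson packing of the cubes with large $\beta_{1,\mu}$, i.e.\ the Weak Geometric Lemma. (iii) The David--Semmes theorem that the WGL together with $L^2$ boundedness of $R^\mu_*$ (which, as you note, does follow from the variational bound) implies uniform rectifiability. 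Without (i) and the $2/\rho$ counting trick in (ii), your outline does not close for $\rho>2$.
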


Let us compare this result with the David-Semmes Question \ref{ques David-Semmes}.
Notice that the preceding theorem asserts that if we replace the $L^2(\mu)$ boundedness of $R_*^\mu$ by the stronger
assumption that $\VV_\rho\circ\RR^\mu$ is bounded in $L^2(\mu)$, then $\mu$ must be uniformly rectifiable.
On the other hand, the theorem claims that the variation for odd singular integral operators with any kernel 
satisfying \eqref{4eq333}, in particular for the $n$-dimensional Riesz transforms, is bounded in $L^2(\mu)$.

A natural question then arises. Given an arbitrary measure $\mu$ on $\R^d$, without atoms say,  
does the $L^2(\mu)$ boundedness of $R_*^\mu$ implies the $L^2(\mu)$ boundedness of $\VV_\rho\circ\RR^\mu$, for $\rho>2$?
By the results of \cite{MMV} and Theorem \ref{teo88}, this is true in the case $n=1$ if $\mu$ is AD regular
$1$-dimensional. Clearly, a positive answer in the general case $n\geq1$ would solve the David-Semmes problem in the affirmative.
Nevertheless, such an approach to try to solve this problem looks quite difficult. In fact, we recall
that is not even known if the $L^2(\mu)$ boundedness of $R_*^\mu$ ensures the $\mu$-a.e.\ existence of the
principal values of $R^\mu1$, which is a necessary condition for the
$L^2(\mu)$ boundedness of $\VV_\rho\circ\RR^\mu$.

Concerning the proof of Theorem \ref{teo88}, in our previous paper \cite{MT} 
we showed that, if $\mu$ stands for the $n$-dimensional Hausdorff-measure on an $n$-dimensional
Lipschitz graph, then
 the $\rho$-variation for Riesz transforms and odd Calder\'on-Zygmund operators with
smooth truncations are bounded in $L^2(\mu)$. This is a fundamental step to prove that 
$\VV_\rho\circ\RR^\mu$ and, more generally,  $\VV_\rho\circ\TT^\mu$, are bounded in $L^2(\mu)$ if $\mu$ is 
uniformly $n$-rectifiable. Another basic tool in our arguments is the geometric corona decomposition
of uniformly rectifiable measures introduced by David and Semmes in \cite{DS1}, which, roughly speaking, 
describes how $\supp(\mu)$ can be approximated at different scales by $n$-dimensional Lipschitz graphs.
 
The proof of the fact that the $L^2(\mu)$ boundedness of 
$\VV_\rho\circ\RR^\mu$ implies the uniform rectifiability of $\mu$ is not so laborious as the one
of the converse implication. 
As remarked above, if $\VV_\rho\circ\RR^\mu$ is bounded in $L^2(\mu)$, then the principal
values of $R^\mu1$ exist $\mu$-a.e., which implies the $n$-rectifiability of $\mu$, by the results of 
\cite{Mattila-Preiss} or \cite{Tolsa8}. However, this is not enough to ensure the {\em uniform} $n$-rectifiability of $\mu$.
We will prove the uniform $n$-rectifiability by arguments partially inspired by some of the techniques in \cite{To}.

Finally, let us remark that Theorem \ref{teo88} follows from a more general result, namely Theorem
\ref{4main theorem} below, which also deals with the variation for Riesz transforms and odd Calder\'on-Zygmund 
operators with smooth truncations.



As usual, in the paper the letter `$C$' stands
for some constant which may change its value at different
occurrences, and which quite often only depends on $n$ and $d$. The notation $A\lesssim B$ ($A\gtrsim B$) means that
there is some fixed constant $C$ such that $A\leq CB$ ($A\geq CB$),
with $C$ as above. Also, $A\approx B$ is equivalent to $A\lesssim B\lesssim A$.

\section{Preliminaries}

\subsection{The main theorem}

\begin{defi}[families of truncations]\label{4defi varphi}
Let $\chi_\R:=\chi_{[1,\infty)}$ and let $\varphi_\R:[0,+\infty)\to[0,+\infty)$ be a non decreasing $\CC^2$ function with $\chi_{[4,\infty)}\leq\varphi_\R\leq
\chi_{[1/4,\infty)}$. Suppose moreover that $|\varphi'_\R|$ is bounded below away from zero in $[1/3,3]$, i.e., $\chi_{[1/3,3]} \leq C|\varphi'_\R|$ for some $C>0$.

Given $x\in\R^d$, and $0<\epsilon\leq\delta$, we set 
\begin{gather*}
\chi_\epsilon(x):=\chi_{\R}(|x|/\epsilon)\quad\text{and}\quad
\chi_\epsilon^\delta(x):=\chi_\epsilon(x)-\chi_\delta(x),\\
\varphi_\epsilon(x):=\varphi_{\R}(|x|^2/\epsilon^2)\quad\text{and}\quad
\varphi_\epsilon^\delta(x):=\varphi_\epsilon(x)-\varphi_\delta(x).
\end{gather*}
Notice that, for any finite Borel measure $\mu$, $T_{\epsilon}\mu(x) = (K\chi_\epsilon*\mu)(x)$.
Given $x=(x^1,\ldots,x^d)\in\R^d$, we denote $\wit x=(x^1,\ldots,x^n,0,\ldots,0)\in\R^d$, and we set $\wit\varphi_\epsilon(x):=\varphi_\epsilon(\wit x)$ and $\wit\varphi_\epsilon^\delta(x):=\varphi_\epsilon^\delta(\wit x)$.
Finally, for $f\in L^1(\mu)$ we set $\TT^\mu f\equiv \TT(f\mu):=\{T_\epsilon^\mu f\}_{\epsilon>0}$, 
\begin{gather*}
T_{\varphi_\epsilon}^\mu f(x)\equiv T_{\varphi_\epsilon}(f\mu)(x):=(K\varphi_\epsilon*\mu)(x)
\quad\text{and}\quad\TT_\varphi^\mu f\equiv \TT_\varphi(f\mu):=\{T_{\varphi_\epsilon}^\mu f\}_{\epsilon>0},\\
T_{\wit\varphi_\epsilon}^\mu f(x)\equiv T_{\wit\varphi_\epsilon}(f\mu)(x):=(K\wit\varphi_\epsilon*\mu)(x)
\quad\text{and}\quad\TT_{\wit\varphi}^\mu f\equiv \TT_{\wit\varphi}(f\mu):=\{T_{\wit\varphi_\epsilon}^\mu f\}_{\epsilon>0}.
\end{gather*}
\end{defi}

\begin{remarko}{\em
In the definition, the choice of $[4,\infty)$, $[1/4,\infty)$, and $[1/3,3]$ is not specially relevant, it is just for definiteness. One can replace the preceding  intervals by other suitable intervals, and all the proofs in the paper remain almost the same.}
\end{remarko}

We will prove the following.

\begin{teo}[Main Theorem]\label{4main theorem}
Let $1\leq n<d$ be integers. Let $\mu$ be an $n$-dimensional AD regular Borel measure on $\R^d$. The following are equivalent:
\begin{itemize}
\item[$(a)$] $\mu$ is uniformly $n$-rectifiable.
\item[$(b)$] For any $K$ satisfying \eqref{4eq333} and any $\rho>2$, the operator $\VV_\rho\circ\TT^\mu_\varphi$ is bounded in $L^p(\mu)$ for all $1<p<\infty$, and from $L^1(\mu)$ 
into $L^{1,\infty}(\mu)$.
\item[$(c)$] For any $K$ satisfying \eqref{4eq333} and any $\rho>2$, the operator $\VV_\rho\circ\TT^\mu$ is bounded in $L^2(\mu)$.
\item[$(d)$] For some $\rho>0$, the operator $\VV_\rho\circ\RR^\mu$ is bounded in $L^2(\mu)$.
\item[$(e)$] For $K(x)=x/|x|^{n+1}$ and some $\rho>0$, the operator $\VV_\rho\circ\TT^\mu_\varphi$ is bounded in $L^2(\mu)$.
\end{itemize}
\end{teo}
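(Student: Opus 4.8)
The strategy is to establish a cycle of implications among $(a)$–$(e)$, routing everything through the geometric characterization of uniform rectifiability via the corona decomposition of David and Semmes. The chain I would set up is $(a)\Rightarrow(b)\Rightarrow(c)$, then the trivial $(c)\Rightarrow(d)$ (take $K(x)=x/|x|^{n+1}$, $p=2$; note that $\TT^\mu$ with sharp truncations is exactly $\RR^\mu$ for this kernel) and $(c)\Rightarrow(e)$ (take $K(x)=x/|x|^{n+1}$), and finally close the loop with $(d)\Rightarrow(a)$ and $(e)\Rightarrow(a)$. The implications $(b)\Rightarrow(c)$ is again essentially immediate once one controls the difference between sharp and smooth truncations: for a fixed $x$ and a fixed sequence $\{\epsilon_m\}$, the telescoping sum for $\TT^\mu$ differs from that for $\TT^\mu_\varphi$ by error terms $T^\mu_{\epsilon_m}f-T^\mu_{\varphi_{\epsilon_m}}f$, each of which is an average of $f$ over an annulus of bounded eccentricity and hence pointwise dominated by the maximal function $M^\mu f$ (using AD regularity); since $\rho>2>1$ one can sum these in $\ell^\rho$ against a single copy of $M^\mu f$, which is $L^2(\mu)$-bounded. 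So the heart of the forward direction is $(a)\Rightarrow(b)$.

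For $(a)\Rightarrow(b)$ I would follow the route sketched in the introduction. The main input from our earlier work \cite{MT} is that when $\mu$ is the $n$-dimensional Hausdorff measure on a Lipschitz graph, $\VV_\rho\circ\TT^\mu_\varphi$ is bounded in $L^p(\mu)$, $1<p<\infty$, and is of weak type $(1,1)$. To pass from Lipschitz graphs to a general uniformly rectifiable $\mu$, I would invoke the David–Semmes corona decomposition: $\supp\mu$ admits a partition of the dyadic lattice into trees (coronas) such that on each tree the measure is well approximated, at the relevant scales, by a Lipschitz graph with uniformly controlled constant, and such that the "stopping" and "top" cubes satisfy a Carleson packing condition. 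One then decomposes $\VV_\rho(\TT^\mu_\varphi f)(x)$ according to which scales $\epsilon$ fall inside which corona containing $x$. For each corona one compares $T^\mu_{\varphi_\epsilon}f$ with the corresponding truncated operator associated to the approximating Lipschitz graph measure — controlling the error by the smoothness estimates \eqref{4eq333} on $K$ together with the geometric closeness of $\supp\mu$ to the graph — and sums the resulting $\rho$-variations over coronas using the packing condition (this is where $\rho>2$, or rather $\rho>1$, and an $\ell^\rho\hookrightarrow\ell^1$-type or square-function estimate is used to combine contributions). This corona-gluing argument, together with the $L^p$ and weak $(1,1)$ boundedness on each graph, yields the full range in $(b)$. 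I expect this to be the most technically demanding part: keeping track of the boundary/transition scales between coronas, handling the smooth truncation parameter $\varphi_\epsilon$ correctly across a change of corona, and verifying that the error terms from the graph approximation are genuinely summable against the Carleson packing measure.

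For the reverse direction it suffices to prove $(d)\Rightarrow(a)$, since $(e)\Rightarrow(a)$ is obtained the same way (or by first deriving $R^\mu_*$-boundedness from $(e)$ via the comparison of smooth and sharp truncations and the pointwise bound $T^\mu_*f\le(\VV_\rho\circ\TT^\mu)f$). Given that $\VV_\rho\circ\RR^\mu$ is bounded in $L^2(\mu)$, in particular $R^\mu_*1\in L^2_{loc}$ and, more importantly, the finiteness of $\VV_\rho(\RR^\mu 1)(x)$ for $\mu$-a.e. $x$ forces the principal value $\lim_{\epsilon\searrow0}R^\mu_\epsilon 1(x)$ to exist $\mu$-a.e.; by \cite{Mattila-Preiss} / \cite{Tolsa8} this already gives $n$-rectifiability of $\mu$. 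To upgrade to \emph{uniform} rectifiability I would argue quantitatively in the spirit of \cite{To}: show that the $L^2(\mu)$ bound on $\VV_\rho\circ\RR^\mu$ implies a Carleson-type estimate on suitable coefficients measuring the oscillation of the Riesz transform across scales — concretely, control $\sum_{Q\subset R}\beta$-type or $\alpha$-type coefficients, or directly control $\|R^\mu\chi_R\|$-variants localized to dyadic $R$ — and then quote the David–Semmes characterization of uniform rectifiability by such Carleson packing conditions (or by the $L^2$ boundedness of a sufficiently rich family of singular integrals). The key gain over merely $R^\mu_*$-boundedness is that the $\rho$-variation controls not just the size but the \emph{fluctuation} of $R^\mu_\epsilon 1$ in $\epsilon$, and localizing this fluctuation to a cube $R$ and integrating over $R$ produces exactly the Carleson sum needed. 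The obstacle here is extracting a \emph{bilinear/quantitative} local estimate from the a priori norm bound — essentially a good-$\lambda$ or stopping-time decomposition relating $\VV_\rho(\RR^\mu\chi_R)$ restricted to $R$ to the geometry of $\supp\mu\cap R$ — and then matching it precisely to one of the standard equivalent formulations of uniform rectifiability.
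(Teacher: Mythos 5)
The fatal problem is your claim that $(b)\Rightarrow(c)$ is ``essentially immediate'' by comparing sharp and smooth truncations and dominating the error by $M^\mu f$. It is true that each single error term $T^\mu_{\epsilon}f-T^\mu_{\varphi_\epsilon}f$ is an average of $f$ over an annulus of bounded eccentricity and hence pointwise $\lesssim M^\mu f(x)$; but the $\rho$-variation of the difference family $\{T^\mu_{\epsilon}f-T^\mu_{\varphi_\epsilon}f\}_{\epsilon>0}$ is a sum over \emph{infinitely many} scales of such terms raised to the power $\rho$, and a uniform bound by $M^\mu f$ on each term gives no control of that sum: you would need square-function-type decay in the scale, which the maximal function does not provide, and $\rho>2$ does not help since you cannot sum an unbounded number of comparable terms ``against a single copy of $M^\mu f$''. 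This is precisely why the paper does not deduce $(c)$ from $(b)$. What the comparison argument actually yields (via writing $\chi_\R-\varphi_\R$ as a convex combination of $\chi_{[1,\infty)}-\chi_{[s,\infty)}$, $s\in[1/4,4]$) is that the \emph{long} variation of $\TT^\mu$ is controlled by $\VV_\rho\circ\TT^\mu_\varphi$ plus the \emph{short} variation $\VV^\SSS_\rho\circ\TT^\mu$ (Lemma \ref{lqlq}), and the $L^2(\mu)$ bound for $\VV^\SSS_\rho\circ\TT^\mu$ is the main technical content of the paper: it occupies all of Section \ref{5s var no suau} and requires the corona decomposition, Haar decompositions adapted to the $\mu$-cubes, the wavelet bases averaged over translated dyadic lattices on the approximating planes, and Carleson packing estimates for the $\alpha_\mu$ and $\beta_{p,\mu}$ coefficients (Lemmas \ref{5 var eq8}, \ref{5 var eq11}, \ref{5 var eq8a}, \ref{5lema L2}). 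In short, your plan omits the proof of the hardest implication, $(a)\Rightarrow(c)$; the paper even remarks explicitly (after the proof of Theorem \ref{4unif rectif teo}) that the smooth-truncation arguments, in particular the regularity estimate \eqref{4eqgli01} underlying the good-$\lambda$ method, ``are no longer valid if one replaces $\TT_\varphi$ by $\TT$''.

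The rest of your outline is broadly compatible with the paper, with differences of route rather than substance. For $(a)\Rightarrow(b)$ the paper does not glue graph operators corona by corona; instead it proves a weak $(1,1)$ bound from $M(\R^d)$ to $L^{1,\infty}(\HH^n_\Gamma)$ on Lipschitz graphs via a Calder\'on--Zygmund decomposition for measures (Theorem \ref{4unif rectif teo3}), and then transfers it to uniformly rectifiable $\mu$ through the ``big pieces of big pieces of Lipschitz graphs'' characterization and a good-$\lambda$ inequality (Theorem \ref{4unif rectif teo}); your corona-based gluing would have to reprove essentially this, and in any case only handles the smooth truncations. For $(d),(e)\Rightarrow(a)$ your target (a Carleson packing condition for $\beta$-type coefficients, i.e.\ the Weak Geometric Lemma) is the right one, but the concrete mechanism in the paper is not a good-$\lambda$ or bilinear estimate: it is the pointwise lower bound $|S_k\mu|\geq\delta_0$ on a definite piece of every cube with $\beta_{1,\mu}(Q)>\epsilon_0$ (Proposition \ref{4rectif propo}, resting on Lemmas \ref{4lemli} and \ref{4lemcas1}), combined with the counting-function argument of Lemmas \ref{4rectif lema} and \ref{4rectif lema 2}, which converts the $L^2$ bound on the variation of the smoothly truncated Riesz transform into the desired packing condition. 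You would need to supply an argument at this level of precision; as written, that step is a sketch of intent rather than a proof, though not an incorrect one.
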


Clearly, Theorem \ref{teo88} is a direct consequence of the preceding result.

\begin{remarko}\label{4remark oscil2}{\em
Let $\{r_m\}_{m\in\Z}\subset(0,\infty)$ be a fixed decreasing sequence defining $\OO$. Then, the implications $(a)\Rightarrow(b),\ldots,(e)$ in the theorem above still hold if one replaces $\VV_\rho$ by $\OO$.  If there exists $C>0$ such that $C^{-1}r_m\leq r_m-r_{m+1}\leq Cr_m$ for all $m\in\Z$, then the implications $(b),\ldots,(e)\Rightarrow(a)$ also hold (so Theorem \ref{4main theorem} remains true replacing $\VV_\rho$ by $\OO$), but we do not know if they are still true without this additional assumption (see Remark \ref{4remark oscil}).}
\end{remarko}

Notice that, by Theorem \ref{4main theorem}, besides $\VV_\rho\circ\RR^\mu$ and $\OO\circ\RR^\mu$, the operators $\VV_\rho\circ\TT^\mu_\varphi$ and $\OO\circ\TT^\mu_\varphi$ for $K(x)=x/|x|^{n+1}$  characterize completely the $n$-AD regular measures $\mu$ which are uniformly $n$-rectifiable. 

One of the main ingredients for the proof of Theorem \ref{4main theorem} is the following result, which strengthens one of the endpoint estimates obtained in \cite{MT}. Let $M(\R^d)$ be the space of finite real Borel measures on $\R^d$, with the norm induced by the variation of measures.
\begin{teo}\label{4unif rectif teo3}
Let $\rho>2$ and let $\mu$ be the $n$-dimensional Hausdorff measure restricted to an $n$-dimensional Lipschitz graph.  
Then, $\VV_\rho\circ\TT_\varphi$ is a bounded operator from $M(\R^d)$ to $L^{1,\infty}(\mu)$.
In particular, $\VV_\rho\circ\TT_\varphi^{\mu}$ is of weak type $(1,1)$. 
The bound of the norm of this operator only depends on $n$, $d$, $K$, $\rho$, $\varphi_\R$, and the maximal slope of $\Gamma$. 
\end{teo}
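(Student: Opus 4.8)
The plan is to prove the weak $(1,1)$ bound from $M(\R^d)$ to $L^{1,\infty}(\mu)$ by a Calder\'on--Zygmund decomposition argument, reducing matters to the $L^2(\mu)$ boundedness of $\VV_\rho\circ\TT_\varphi$ on a Lipschitz graph, which is one of the main results of \cite{MT} and may be assumed here. So let $\nu\in M(\R^d)$ with $\|\nu\|=1$, fix $\lambda>0$, and perform a Calder\'on--Zygmund decomposition of $\nu$ adapted to the AD regular measure $\mu$ (the $n$-dimensional Hausdorff measure on the graph $\Gamma$): one obtains a ``good'' part $g$ and a ``bad'' part $b=\sum_i b_i$, where $b_i$ is supported on a ball $B_i$ with $\int b_i\,d\mu = 0$ (or is a multiple of $\mu|_{B_i}$ minus a point mass), the $B_i$ have bounded overlap, $\sum_i \mu(B_i)\lesssim \|\nu\|/\lambda$, $\|g\|_{L^\infty(\mu)}\lesssim\lambda$ and $\|g\|_{L^1(\mu)}\lesssim\|\nu\|$, so that $\|g\|_{L^2(\mu)}^2\lesssim\lambda\|\nu\|$.

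Next I would split the level set. On the exceptional set $2\bigcup_i B_i$, which has $\mu$-measure $\lesssim\|\nu\|/\lambda$, we simply discard. Off that set, the key point is the pointwise inequality $\VV_\rho(\TT_\varphi\nu)(x)\le \VV_\rho(\TT_\varphi(g\mu))(x)+\VV_\rho(\TT_\varphi b)(x)$, where I use subadditivity of the $\rho$-variation (the triangle inequality in $\ell^\rho$). The good part is handled by Chebyshev together with the $L^2(\mu)\to L^2(\mu)$ boundedness of $\VV_\rho\circ\TT_\varphi^\mu$ on the graph: $\mu\{\VV_\rho(\TT_\varphi(g\mu))>\lambda\}\lesssim \lambda^{-2}\|g\|_{L^2(\mu)}^2\lesssim\|\nu\|/\lambda$. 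For the bad part, on $\R^d\setminus 2\bigcup_i B_i$ I would estimate $\int_{\R^d\setminus 2\bigcup B_j}\VV_\rho(\TT_\varphi b_i)\,d\mu\lesssim \mu(B_i)$ using the cancellation $\int b_i\,d\mu=0$ and the smoothness/decay of the truncated kernels $K\varphi_\epsilon$. Here the crucial technical estimate is a uniform-in-$\{\epsilon_m\}$ bound: for $x$ away from $B_i$, the telescoping sum $\sum_m |T_{\varphi_{\epsilon_{m+1}}}b_i(x)-T_{\varphi_{\epsilon_m}}b_i(x)|^\rho$ is controlled by $\|\nabla_\epsilon (K\varphi_\epsilon)\|$-type quantities, which after using $\rho>1$ and integrating the $\epsilon$-derivative collapse to something like $\big(\int |\partial_\epsilon (K\varphi_\epsilon * b_i)(x)|\,d\epsilon\big)^\rho$ times... — more simply, the standard trick: $\VV_\rho\le \VV_1$ is false, but since the family $\epsilon\mapsto K\varphi_\epsilon*b_i(x)$ is (for fixed $x$ far from $B_i$) a $C^1$ function of $\epsilon$ with integrable derivative, its $\rho$-variation for $\rho>1$ is bounded by its total variation $\int_0^\infty |\partial_\epsilon(K\varphi_\epsilon*b_i)(x)|\,d\epsilon$, and one estimates that integral by the usual Calder\'on--Zygmund smoothness exploiting $\int b_i\,d\mu=0$ together with $|\partial_\epsilon\varphi_\epsilon(z)|\lesssim \epsilon^{-1}\chi_{\{\epsilon/2\lesssim|z|\lesssim 2\epsilon\}}$. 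Summing over $i$ and using $\sum_i\mu(B_i)\lesssim\|\nu\|/\lambda$ and Chebyshev closes the estimate.

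The main obstacle will be making the bad-part estimate fully rigorous, specifically controlling $\VV_\rho(\TT_\varphi b_i)$ for $x$ outside $2B_i$: one must verify that, for such $x$, $\epsilon\mapsto T_{\varphi_\epsilon}b_i(x)$ is indeed $C^1$ in $\epsilon$ with $|\partial_\epsilon T_{\varphi_\epsilon}b_i(x)|$ integrable in $\epsilon$ and with integral $\lesssim \mu(B_i)\,\dist(x,B_i)^{-n-1}\cdot r(B_i)$ (using the vanishing mean of $b_i$ and the $C^2$ bounds on $K$ together with $\chi_{[1/3,3]}\lesssim|\varphi_\R'|$ ruling out degeneracy), so that integrating over $x\in\R^d\setminus 2B_i$ against $d\mu$ gives $\lesssim\mu(B_i)$; care is needed because the annulus where $\partial_\epsilon\varphi_\epsilon$ lives depends on $\epsilon$ and one must not lose in the $\rho$-power when passing from variation to total variation. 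A secondary subtlety is the precise form of the Calder\'on--Zygmund decomposition when $\nu$ has atoms or is not absolutely continuous with respect to $\mu$: one splits $\nu=\nu|_{\text{good}}+\nu|_{\text{bad}}$ and forms $b_i$ as $\nu|_{B_i}$ minus a suitable multiple of $\mu|_{B_i}$ to enforce the cancellation, which changes nothing essential but must be tracked. Once these pieces are in place, the ``in particular'' statement about $\VV_\rho\circ\TT_\varphi^\mu$ being of weak type $(1,1)$ is immediate by restricting to $\nu=f\mu$ with $f\in L^1(\mu)$, and the dependence of the constants on $n$, $d$, $K$, $\rho$, $\varphi_\R$, and the maximal slope of $\Gamma$ is inherited from the $L^2(\mu)$ bound of \cite{MT} and from the Calder\'on--Zygmund constants of $\mu$.
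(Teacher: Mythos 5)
Your overall skeleton (Calder\'on--Zygmund decomposition adapted to $\mu$, good part via the $L^2(\mu)$ bound for smooth truncations on the graph, bad part via cancellation and kernel smoothness, Chebyshev plus an exceptional set) is the same as the paper's, and your replacement of the paper's short/long index splitting by the pointwise bound $\VV_\rho\leq\VV_1=\int_0^\infty|\partial_\epsilon(K\varphi_\epsilon*b_i)(x)|\,d\epsilon$ for $x$ far from the support of $b_i$ is legitimate: thanks to the smooth truncation, $\epsilon\mapsto T_{\varphi_\epsilon}b_i(x)$ is $C^1$ there, and the derivative estimate with the vanishing mean gives the same far-field bound $\lesssim\|b_i\|\,r(B_i)\,|x-z_i|^{-n-1}$ that the paper obtains.

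The genuine gap is in the geometry of the decomposition, precisely at the point you dismiss as ``changes nothing essential''. The selection condition for the CZ cubes is $|\nu|(Q_j)>2^{-d-1}\lambda\,\mu(2Q_j)$, so the density $|\nu|(Q_j)/\mu(Q_j)$ is $\gtrsim\lambda$ and can be arbitrarily large (indeed $\mu(Q_j)$ may vanish while $|\nu|(Q_j)>0$); hence the compensating piece with $L^\infty(\mu)$-norm $\lesssim\lambda$ cannot be a multiple of $\mu$ restricted to $Q_j$ (or $B_i$) itself, but must be spread over an enlargement $R_j=6Q_j$, using $|\nu|(Q_j)\leq|\nu|(3Q_j)\leq2^{-d-1}\lambda\,\mu(6Q_j)$. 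Consequently the bad piece $\nu_b^j=w_j\nu-b_j\mu$ is supported in $R_j$, while the only exceptional set with $\mu$-measure controlled by $\lambda^{-1}\|\nu\|$ is $\bigcup_j2Q_j$ (from $\mu(2Q_j)\lesssim\lambda^{-1}|\nu|(Q_j)$); you cannot discard $\bigcup_j2R_j$, because $\mu(2R_j)$ is not comparable to $\mu(2Q_j)$ when the graph barely clips $2Q_j$ but crosses $12Q_j$ fully. So there remains an intermediate region $2R_j\setminus2Q_j$ where your far-field cancellation estimate fails ($x$ may lie inside the support of $b_j\mu$) and which is not in the exceptional set; your proposal does not address it. The paper handles it by splitting $\nu_b^j$ there: the $w_j\nu$ part (supported in $Q_j$, with $x\notin2Q_j$) is bounded by $|\nu|(Q_j)|x-z_j|^{-n}$, and the $b_j\mu$ part is bounded using the $L^2(\mu)$ boundedness of $\VV_\rho\circ\TT_\varphi^\mu$, Cauchy--Schwarz on $2R_j$, and $\|b_j\|_{L^\infty(\mu)}\mu(R_j)\lesssim|\nu|(Q_j)$. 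You need this (or an equivalent) extra step. A minor further point: the $L^2(\mu)$ input you invoke is for the truncations $\varphi_\epsilon$, which is not literally the statement in \cite{MT} (proved there for $\wit\varphi_\epsilon$); the paper devotes Theorem \ref{theorem lip} to this transfer, so you should either quote that result or supply the corresponding argument.
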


By an $n$-dimensional Lipschitz graph $\Gamma\subset\R^d$ we mean any translation and rotation of a set of the type $\{x\in\R^d\,:\,x=(y,A(y)),\, y\in\R^n\}$, where $A:\R^n\to\R^{d-n}$ is some Lipschitz function with Lipschitz constant $\Lip(A)$, which coincides with the maximal slope of $\Gamma$. 

\begin{remarko}{\em 
The theorem above remains valid if one replaces $\VV_\rho$ by $\OO$. Moreover, the norm of $\OO\circ\TT_\varphi^{\mu}$ is bounded independently of the sequence that defines $\OO$.}
\end{remarko}

The plan to prove Theorem \ref{4main theorem}
is the following: in Section \ref{4sec unif rectif teo3} we deal with Theorem \ref{4unif rectif teo3}, which is used in the subsequent Section \ref{4sec acotacio unif rectif} to obtain the
implication $(a)\,\Longrightarrow\,(b)$ of Theorem \ref{4main theorem}. In Section \ref{5s var no suau} we prove $(a)\,\Longrightarrow\,(c)$ in Theorem \ref{5teo var no suau acotada L2}, and in Section \ref{4sec acotacio implica rectif} we prove Theorem \ref{4rectif teorema}, which gives $(d)\,\Longrightarrow\,(a)$ and $(e)\,\Longrightarrow\,(a)$, and finishes the proof of Theorem \ref{4main theorem}, taking into account that the implications  $(b)\,\Longrightarrow\,(e)$ and $(c)\,\Longrightarrow\,(d)$ are trivial.

Theorems \ref{4main theorem} and \ref{4unif rectif teo3} are stated in terms of $\VV_\rho$, but they also hold for $\OO$, as remarked above. However, we will only give the proof of these results for $\VV_\rho$, because the case of $\OO$ follows by very similar arguments and computations.

\subsection{Calder\'{o}n-Zygmund decomposition for measures}\label{4secdob}

Given a cube $Q\subset\R^d$ and $a>0$, we denote by $\ell(Q)$ the side length of $Q$ and by $aQ$ the cube concentric with $Q$ with side length $a\ell(Q)$. The cubes that we consider in this paper have sides parallel to the coordinate axes in $\R^d$.

A proof of the following result can be found in \cite[Chapter 2]{Tolsa-llibre} or \cite[Lemma 5.1.2]{Mas-thesis}.

\begin{lema}[Calder\'{o}n-Zygmund decomposition]\label{4lema CZ}
Assume that $\mu:=\HH^n_{\Gamma\cap B}$, where $\Gamma$ is an $n$-dimensional Lipschitz graph and $B\subset\R^d$ is some fixed ball. For any $\nu\in M(\R^d)$ with compact support and any $\lambda>2^{d+1}\|\nu\|/\|\mu\|$, the following holds:
\begin{itemize}
\item[$(a)$] There exists a finite or countable collection of almost disjoint cubes $\{Q_j\}_j\subset\R^d$ (that is, $\sum_j\chi_{Q_j}\leq C$) and a function $f\in L^1(\mu)$ such that
\begin{gather}
|\nu|(Q_j)>2^{-d-1}\lambda\mu(2Q_j),\label{4lema CZ 1}\\
|\nu|(\eta Q_j)\leq2^{-d-1}\lambda\mu(2\eta Q_j)\quad\text{for }\eta>2,\label{4lema CZ 2}\\
\nu=f\mu\text{ in }\R^d\setminus\textstyle{\bigcup_j}Q_j\text{ with }|f|\leq\lambda\;\,\mu\text{-a.e}.\label{4lema CZ 3}
\end{gather}
\item[$(b)$]For each $j$, let $R_j:=6Q_j$ and denote $w_j:=\chi_{Q_j}\big(\sum_k\chi_{Q_k}\big)^{-1}$. Then, there exists a family of functions $\{b_j\}_j$ with $\supp b_j\subset R_j$ and with constant sign satisfying
\begin{gather}
\int b_j\,d\mu=\int w_j\,d\nu,\label{4lema CZ 4}\\
\| b_j\|_{L^\infty(\mu)}\mu(R_j)\leq C|\nu|(Q_j),\text{ and}\label{4lema CZ 5}\\
{\textstyle\sum_j}|b_j|\leq C_0\lambda\quad
\text{(where $C_0$ is some absolute constant).}\label{4lema CZ 6}
\end{gather}
\end{itemize}
\end{lema}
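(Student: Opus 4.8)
The plan is to perform the non-homogeneous Calder\'on--Zygmund construction adapted to the $n$-AD regular measure $\mu$ (which, as a measure on $\R^d$, fails to be doubling), following the scheme of \cite[Chapter 2]{Tolsa-llibre} or \cite[Lemma 5.1.2]{Mas-thesis}. For a cube $Q$ write $\Theta(Q):=|\nu|(Q)/\mu(2Q)$. Since $\nu$ and $\mu$ have compact support, as soon as a cube $Q(x,r)$ centered at $x$ satisfies $Q(x,r)\supset\supp\nu$ and $2Q(x,r)\supset\supp\mu$ one has $\Theta(Q(x,r))=\|\nu\|/\|\mu\|<2^{-d-1}\lambda$ by the hypothesis on $\lambda$; consequently $s_x:=\sup\{r>0:\Theta(Q(x,r))>2^{-d-1}\lambda\}$ is finite, and it is positive exactly on a bounded set $\Omega$, with $\sup_{x\in\Omega}s_x<\infty$. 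For $x\in\Omega$ pick $r_x\in[s_x/2,s_x]$ with $\Theta(Q(x,r_x))>2^{-d-1}\lambda$ (possible since the set in the supremum accumulates at $s_x$) and set $Q_x:=Q(x,r_x)$. Then \eqref{4lema CZ 1} holds by construction, and for $\eta>2$ we have $\eta r_x>2r_x\ge s_x$, so $\Theta(\eta Q_x)\le2^{-d-1}\lambda$, which is \eqref{4lema CZ 2}. The family $\{Q_x\}_{x\in\Omega}$ covers $\Omega$ with uniformly bounded side lengths, so the Besicovitch covering lemma yields a countable subfamily $\{Q_j\}$, still covering $\Omega$, with $\sum_j\chi_{Q_j}\le C(d)$.

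For \eqref{4lema CZ 3}, note that $\R^d\setminus\bigcup_jQ_j\subset\R^d\setminus\Omega$, so at each point $x$ of this set $|\nu|(Q(x,r))\le2^{-d-1}\lambda\,\mu(2Q(x,r))$ for every $r>0$. Since $\mu=\HH^n_{\Gamma\cap B}$ is the restriction of the AD regular measure $\HH^n_\Gamma$, for $\mu$-a.e.\ $x$ one has $\mu(Q(x,r))\approx r^n\approx\mu(2Q(x,r))$ as $r\to0$. Writing the Radon--Nikodym decomposition $\nu=f\mu+\nu_s$ with $\nu_s\perp\mu$ and applying the Lebesgue differentiation theorem for $\mu$, the previous estimate forces $d|\nu_s|/d\mu<\infty$ (hence $\nu_s$ to vanish) and $|f|\le\lambda$, $\mu$-a.e.\ on $\R^d\setminus\Omega$; extending $f$ by zero gives \eqref{4lema CZ 3}.

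For part $(b)$, set $w_j:=\chi_{Q_j}\big(\sum_k\chi_{Q_k}\big)^{-1}$, so that $0\le w_j\le\chi_{Q_j}$, $\sum_jw_j=\chi_{\cup_jQ_j}$, and $m_j:=\int w_j\,d\nu$ satisfies $|m_j|\le|\nu|(Q_j)$ and $\sum_j|m_j|\le|\nu|(\bigcup_jQ_j)\le\|\nu\|$. Applying \eqref{4lema CZ 2} with $\eta=3$ gives $\mu(R_j)=\mu(6Q_j)\ge2^{d+1}\lambda^{-1}|\nu|(3Q_j)\ge2^{d+1}\lambda^{-1}|\nu|(Q_j)>0$. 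I will look for $b_j$ of the form $\big(m_j/\mu(A_j)\big)\chi_{A_j}$ with $A_j\subset R_j$ a Borel set of positive $\mu$-measure; such a $b_j$ is automatically of constant sign, supported in $R_j$, and satisfies $\int b_j\,d\mu=m_j=\int w_j\,d\nu$, which is \eqref{4lema CZ 4}. If moreover the $A_j$ are chosen so that $\mu(A_j)\gtrsim|m_j|\,\mu(R_j)/|\nu|(Q_j)$, then $\|b_j\|_{L^\infty(\mu)}\mu(R_j)=|m_j|\mu(R_j)/\mu(A_j)\lesssim|\nu|(Q_j)$, which is \eqref{4lema CZ 5}; combining with $\mu(R_j)\ge2^{d+1}\lambda^{-1}|\nu|(Q_j)$ this gives $\|b_j\|_{L^\infty(\mu)}\lesssim\lambda$, so that if in addition the $A_j$ have bounded overlap, $\sum_j\chi_{A_j}\le C_0$, then $\sum_j|b_j|\le\big(\sup_j\|b_j\|_{L^\infty(\mu)}\big)\sum_j\chi_{A_j}\lesssim\lambda$, which is \eqref{4lema CZ 6}.

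Producing the sets $A_j$ is the main obstacle and the only genuinely delicate point. The difficulty is that, in contrast with the $Q_j$, the enlarged cubes $R_j=6Q_j$ may have \emph{unbounded} overlap --- a single point can belong to infinitely many of them, at arbitrarily small scales --- so one cannot take $A_j=R_j$ nor spread $m_j$ uniformly over $R_j$; yet \eqref{4lema CZ 5} forbids $A_j$ from being too small relative to $R_j$ when $\mu(R_j)\approx|m_j|/\lambda$. The way around this is to define the $A_j$ by an iterative procedure that runs through the cubes in order of size and carves each $R_j$ down to a portion disjoint from (or meeting few of) the sets already assigned, controlling the $\mu$-measure that must be discarded by means of \eqref{4lema CZ 2} --- through the bounds $\mu(6Q_k)\gtrsim\lambda^{-1}|\nu|(3Q_k)$, the total estimate $\sum_k|\nu|(Q_k)\le\|\nu\|$, and the bounded overlap of the $Q_k$ --- so that each $R_j$ retains enough $\mu$-measure. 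This is precisely the construction of the functions $b_j$ in the non-homogeneous Calder\'on--Zygmund decomposition, which I would carry out following \cite[Chapter 2]{Tolsa-llibre} or \cite[Lemma 5.1.2]{Mas-thesis}.
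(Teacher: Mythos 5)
Your part $(a)$ is the standard argument (stopping radius $s_x$, Besicovitch covering, differentiation of $\nu$ with respect to $\mu$ outside the selected cubes) and is essentially fine; the only small point is that, as written, the differentiation step yields $|f|\leq C\lambda$ rather than $|f|\leq\lambda$, and to get the stated constant one should differentiate along a sequence of cubes with $\mu(2Q(x,r_k))\leq 2^{d+1}\mu(Q(x,r_k))$, which exist for $\mu$-a.e.\ $x$ (here the ratio even tends to $2^n$ at points where $\Gamma$ has a tangent plane and $x$ is a density point of $\Gamma\cap B$); this is exactly where the factor $2^{-d-1}$ in the hypotheses is used. The genuine gap is part $(b)$: beyond routine covering arguments, the entire content of the lemma is the construction of the $b_j$ satisfying \eqref{4lema CZ 4}--\eqref{4lema CZ 6} despite the unbounded overlap of the cubes $R_j$, and you do not carry it out. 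You reduce it to producing sets $A_j\subset R_j$ with $\mu(A_j)\gtrsim |m_j|\,\mu(R_j)/|\nu|(Q_j)$ and bounded overlap $\sum_j\chi_{A_j}\leq C_0$, and then defer precisely this construction to \cite{Tolsa-llibre} and \cite{Mas-thesis}. The paper itself only cites these references for the lemma, so you are in good company, but as a proof attempt this leaves the decisive step unproved.

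Moreover, the sufficient conditions you propose are not the ones the cited construction verifies, and the bounded-overlap requirement on the $A_j$ is stronger than what is needed (and it is not clear it can be arranged). The standard non-homogeneous construction orders the cubes so that $\ell(R_1)\leq\ell(R_2)\leq\cdots$, defines inductively $A_j:=R_j\cap\{x:\sum_{k<j}|b_k(x)|\leq 2\lambda\}$ and $b_j:=\bigl(\int w_j\,d\nu/\mu(A_j)\bigr)\chi_{A_j}$, and then proves two facts: first, $\mu(A_j)\geq\tfrac12\mu(R_j)$, via Chebyshev applied to $\sum_{k<j}|b_k|$ on $R_j$, using that the relevant $Q_k$ lie in a fixed dilate of $R_j$, the bounded overlap of the $Q_k$, and the stopping condition \eqref{4lema CZ 2}; second, $\|b_j\|_{L^\infty(\mu)}\lesssim\lambda$, which follows from this measure estimate together with \eqref{4lema CZ 2} for $\eta=3$, and which also gives \eqref{4lema CZ 5}. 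Then \eqref{4lema CZ 6} is immediate from the stopping definition of $A_j$ (at a point $x$, if $j^*$ is the last index with $x\in A_{j^*}$, then $\sum_j|b_j(x)|\leq2\lambda+\|b_{j^*}\|_{L^\infty(\mu)}\lesssim\lambda$) --- no bounded overlap of the $A_j$ enters at all. Note also that in the general non-doubling setting the cube $R_j$ is chosen as a suitable doubling dilate of $Q_j$ precisely so that the Chebyshev estimate for $\mu(R_j\setminus A_j)$ closes; with $R_j=6Q_j$ and $\mu=\HH^n_{\Gamma\cap B}$, whose support a stopping cube $Q_j$ may barely touch, this comparison requires an additional argument. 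All of this is the part you label ``the main obstacle'' and then skip; as it stands, part $(b)$ is not proved.
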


\subsection{Dyadic lattices}\label{dyadic lattice}
For the study of the uniformly rectifiable measures we will use the ``dyadic cubes'' built by G. David in \cite[Appendix 1]{David-LNM} (see also \cite[Chapter 3 of Part I]{DS2}). These dyadic cubes are not true cubes, but they play this role with respect to a given $n$-dimenasional AD regular Borel measure $\mu$, in a sense. To distinguish them from the usual cubes, we will call them {\em $\mu$-cubes}. 

Let us explain
which are the precise results and properties about the lattice of dyadic $\mu$-cubes.
Given an $n$-dimensional AD regular Borel measure $\mu$ in $\R^d$ (for simplicity, we may assume $\diam(\supp\mu)=\infty$), for each $j\in\Z$ there exists a family $\DD_j$ of Borel subsets of $\supp\mu$ (the dyadic $\mu$-cubes of the $j$-th generation) such that:
\begin{itemize}
\item[$(a)$] each $\DD_j$ is a partition of $\supp\mu$, i.e.\ $\supp\mu=\bigcup_{Q\in \DD_j} Q$ and $Q\cap Q'=\emptyset$ whenever $Q,Q'\in\DD_j$ and
$Q\neq Q'$;
\item[$(b)$] if $Q\in\DD_j$ and $Q'\in\DD_k$ with $k\leq j$, then either $Q\subset Q'$ or $Q\cap Q'=\emptyset$;
\item[$(c)$] for all $j\in\Z$ and $Q\in\DD_j$, we have $2^{-j}\lesssim\diam(Q)\leq2^{-j}$ and $\mu(Q)\approx 2^{-jn}$;
\item[$(d)$] there exists $C>0$ such that, for all $j\in\Z$, $Q\in\DD_j$, and $0<\tau<1$,
\begin{equation}\label{small boundary condition}
\begin{split}
\mu\big(\{x\in Q:\, &\dist(x,\supp\mu\setminus Q)\leq\tau2^{-j}\}\big)\\&+\mu\big(\{x\in \supp\mu\setminus Q:\, \dist(x,Q)\leq\tau2^{-j}\}\big)\leq C\tau^{1/C}2^{-jn}.
\end{split}
\end{equation}
This property is usually called the {\em small boundaries condition}.
From (\ref{small boundary condition}), it follows that there is a point $z_Q\in Q$ (the center of $Q$) such that $\dist(z_Q,\supp\mu\setminus Q)\gtrsim 2^{-j}$ (see \cite[Lemma 3.5 of Part I]{DS2}).
\end{itemize}

We denote $\DD:=\bigcup_{j\in\Z}\DD_j$. For $Q\in \DD_j$, we define the side length
of $Q$ as $\ell(Q)=2^{-j}$. Notice that $\ell(Q)\lesssim\diam(Q)\leq \ell(Q)$.
Actually it may happen that a $\mu$-cube $Q$ belongs to $\DD_ j\cap \DD_k$ with $j\neq k$. In this case, $\ell(Q)$ is not well defined. However, this problem can be solved in many ways.
For example, the reader may think that a $\mu$-cube is not only a subset of $\supp\mu$, but a couple $(Q,j)$, where $Q$ is a subset of $\supp\mu$ and $j\in\Z$ is such that $Q\in\DD_j$. 

Given $a>1$ and $Q\in\DD$, we set
$a Q:= \bigl\{x\in \supp\mu: \dist(x,Q)\leq (a-1)\ell(Q)\bigr\}.$
Observe that $\diam(a Q)\leq \diam(Q) + 2(a-1)\ell(Q)\leq (2a-1)\ell(Q)$.

\subsection{Corona decomposition}\label{5ss corona decomposition}
Given an $n$-dimensional AD regular Borel measure $\mu$ on $\R^d$, let $\DD:=\{Q\in\DD_j\,:\,j\in\Z\}$ be the dyadic lattice associated to $\mu$ introduced in Subsection \ref{dyadic lattice}. 
Following \cite[Definitions 3.13 and 3.19 of Part I]{DS2}, one says that $\mu$ admits a corona decomposition if, for each $\eta>0$ and $\theta>0$, one can find a triple $(\BB,\GG,\Trees)$, where $\BB$ and $\GG$ are two subsets of $\DD$ (the ``bad $\mu$-cubes'' and the ``good $\mu$-cubes'') and $\Trees$ is a family of subsets $S\subset\GG$ (that we will call {\em trees}), which satisfy the following conditions::
\begin{itemize}
\item[$(a)$] $\DD=\BB\cup\GG$\quad and\quad$\BB\cap\GG=\emptyset.$
\item[$(b)$] $\BB$ satisfies a Carleson packing condition, i.e., $\sum_{Q\in\BB:\, Q\subset R}\mu(Q)\lesssim\mu(R)$ for all $R\in\DD$.
\item[$(c)$] $\GG=\biguplus_{S\in\Trees}S$, i.e., any $Q\in\GG$ belongs to only one $S\in\Trees$.
\item[$(d)$] Each $S\in\Trees$ is {\em coherent}. This means that each $S\in\Trees$ has a unique maximal element $Q_S$ which contains all other elements of $S$ as subsets, that $Q'\in S$ as soon as $Q'\in\DD$ satisfies $Q\subset Q'\subset Q_S$ for some $Q\in S$, and that if $Q\in S$ then either all of the children of $Q$ lie in $S$ or none of them do (if $Q\in\DD_j$, the {\em children} of $Q$ is defined as the collection of $\mu$-cubes $Q'\in\DD_{j+1}$ such that $Q'\subset Q$).
\item[$(e)$] The maximal $\mu$-cubes $Q_S$, for $S\in\Trees$, satisfy a Carleson packing condition. That is, $\sum_{S\in\Trees:\, Q_S\subset R}\mu(Q_S)\lesssim\mu(R)$ for all $R\in\DD$.
\item[$(f)$] For each $S\in\Trees$, there exists an $n$-dimensional Lipschitz graph $\Gamma_S$ with constant smaller than $\eta$ such that $\dist(x,\Gamma_S)\leq\theta\,\diam(Q)$ whenever $x\in2Q$ and $Q\in S$ (one can replace ``$x\in2Q$'' by ``$x\in C_{cor}Q$'' for any constant $C_{cor}\geq2$ given in advance, by \cite[Lemma 3.31 of Part I]{DS2}).
\end{itemize}

It is shown in \cite{DS1} (see also \cite{DS2}) that if $\mu$ is uniformly rectifiable then it
admits a corona decomposition for all parameters $k>2$ and $\eta,\theta>0$. Conversely,
the existence of a corona decomposition for a single set of parameters $k>2$ and $\eta,\theta>0$ implies that $\mu$ is uniformly rectifiable.

\subsection{The $\alpha$ and $\beta$ coefficients}
Let  $\mu$ be an $n$-dimensional AD regular Borel measure in $\R^d$ and $\DD$ as in Subsection \ref{dyadic lattice}. Given $1\leq p<\infty$ and a $\mu$-cube $Q\in\DD$, one sets (see \cite{DS2})
\begin{equation*}
\beta_{p,\mu}(Q) = \inf_L
\biggl\{ \frac1{\ell(Q)^n}\int_{2Q} \biggl(\frac{\dist(y,L)}{\ell(Q)}\biggr)^pd\mu(y)\biggr\}^{1/p},
\end{equation*}
where the infimum is taken over all $n$-planes $L$ in $\R^d$.
For $p=\infty$ one replaces the $L^p$ norm by the supremum norm. The $\beta_{\infty,\mu}$ coefficients were first introduced by P. Jones in his celebrated work on rectifiability \cite{Jones-salesman}, while the $\beta_{p,\mu}$'s for $1\leq p<\infty$ were introduced by G. David and S. Semmes in their pioneering work on uniform rectifiability (see \cite{DS1} for example).

Other coefficients that have been proved useful in the study of uniform rectifiability and boundedness of Calder\'on-Zygmund operators are the $\alpha$ coefficients introduced in \cite{To}.
Let $F\subset\R^d$ be the closure of an open set. Given two finite
Borel measures $\sigma$, $\nu$ on $\R^d$, one sets
$\dist_F(\sigma,\nu):= \sup\bigl\{ \bigl|{\textstyle \int f\,d\sigma -
\int f\,d\nu}\bigr|:\,{\rm Lip}(f) \leq1,\,\supp f\subset
F\bigr\}.$
Finally, given a $\mu$-cube $Q\in\DD$,
consider the closed ball $B_Q:=B(z_Q,6\sqrt{d}\ell(Q))$, where $z_Q$ denotes the center of $Q$. Then one defines
\begin{equation*}
\alpha_\mu(Q):=\frac1{\ell(Q)^{n+1}}\,\inf_{c\geq0,L} \,\dist_{B_Q}(\mu,\,c\HH^n_{L}),
\end{equation*}
where the infimum is taken over all constants $c\geq0$ and all $n$-planes $L$ in $\R^d$.

The following result characterizes the uniform rectifiability of $\mu$ in terms of the $\alpha$ and $\beta$ coefficients (see \cite{DS1} for $(a)\Longleftrightarrow(b)$ and \cite{To} for $(a)\Longleftrightarrow(c)$).
\begin{teo}\label{alphas unifrectif}
Let $p\in[1,2]$ and let $\mu$ be an $n$-dimensional AD regular Borel measure in $\R^d$. The following are equivalent:
\begin{itemize}
\item[$(a)$] $\mu$ is uniformly $n$-rectifiable.
\item[$(b)$] $\sum_{Q\in\DD:\,Q\subset R}\beta_{p,\mu}(Q)^2\ell(Q)^n\lesssim\ell(R)^n$ for all $\mu$-cubes $R\in\DD$.
\item[$(c)$] $\sum_{Q\in\DD:\,Q\subset R}\alpha_{\mu}(Q)^2\ell(Q)^n\lesssim\ell(R)^n$ for all $\mu$-cubes $R\in\DD$.
\end{itemize}
\end{teo}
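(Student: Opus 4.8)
The plan is to reduce the equivalence of $(a)$, $(b)$ and $(c)$, by elementary monotonicity together with a single one-sided comparison between $\beta$ and $\alpha$, to three core implications---(I) $(a)$ implies $(b)$ with $p=2$; (II) $(b)$ with $p=1$ implies $(a)$; (III) $(a)$ implies $(c)$---and then to recall how (I)--(III) are proved, following \cite{DS1} and \cite{To}. Two elementary remarks drive the reduction. First, since $\mu(2Q)\approx\ell(Q)^{n}$ by AD regularity, the power-mean inequality (applied to the probability space obtained by normalizing $\mu$ on $2Q$) gives $\beta_{p,\mu}(Q)\lesssim\beta_{q,\mu}(Q)$ whenever $1\le p\le q$; thus the hypothesis in $(b)$ is strongest for $p=2$ and weakest for $p=1$. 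Second, $\beta_{1,\mu}(Q)\lesssim\alpha_\mu(Q)$: if $c\ge0$ and the $n$-plane $L$ almost realize the infimum defining $\alpha_\mu(Q)$, one tests $\dist_{B_Q}(\mu,c\HH^{n}_{L})$ against $f=c_0\,\psi\,\dist(\cdot,L)$, where $\psi\equiv1$ on $2Q$, $\supp\psi\subset B_Q$, $\Lip(\psi)\lesssim\ell(Q)^{-1}$, and $c_0\approx1$ is chosen so that $\Lip(f)\le1$; since $f\ge0$ and $\int f\,d(c\HH^{n}_{L})=0$, one obtains $\int_{2Q}\dist(y,L)\,d\mu\lesssim\alpha_\mu(Q)\,\ell(Q)^{n+1}$, that is $\beta_{1,\mu}(Q)\lesssim\alpha_\mu(Q)$. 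Consequently $(c)$ implies $(b)$ with $p=1$; $(b)$ for any $p\in[1,2]$ implies $(b)$ for $p=1$; and $(a)$ together with (I) yields $(b)$ for $p=2$, hence for every $p\in[1,2]$. So (I), (II) and (III) give the whole statement.

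For (I) and (III) I would run the corona decomposition of Subsection \ref{5ss corona decomposition}, which $\mu$ admits for all parameters $\eta,\theta>0$ because it is uniformly rectifiable. By the Carleson packing of the bad $\mu$-cubes and of the maximal $\mu$-cubes $Q_S$, it then suffices to bound $\sum_{Q\in S}\beta_{2,\mu}(Q)^{2}\ell(Q)^{n}$, respectively $\sum_{Q\in S}\alpha_\mu(Q)^{2}\ell(Q)^{n}$, by $\ell(Q_S)^{n}$ for each tree $S$. On a tree, $\supp\mu$ lies within $\theta\diam(Q)$ of the Lipschitz graph $\Gamma_S$ for every $Q\in S$; combining this with the small boundaries condition, with the bound $C^{-1}\le f\le C$ for the density of $\mu$, and, crucially, with the scale-by-scale flatness of $\Gamma_S$, the per-tree sum is controlled by a Dorronsoro-type square-function estimate for the defining Lipschitz function $A$ of $\Gamma_S$ (localized to a multiple of $Q_S$ and with an affine part subtracted, so that the relevant $W^{1,2}$ norm is $\lesssim\|\nabla A\|_\infty^{2}\,\ell(Q_S)^{n}$). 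One should note that the $\theta$-closeness alone does not suffice (it would over-count); the square-summability genuinely comes from the flatness estimate, which is the heart of \cite{DS1}. For (III) one needs in addition the analogous estimate for the $\alpha$-coefficients of $\HH^{n}_{\Gamma_S}$ relative to flat measures $c\HH^{n}_{L}$, obtained in the same spirit in \cite{To}; the Lipschitz change of variables between $\HH^{n}_{\Gamma_S}$ and Lebesgue measure on the base $n$-plane is again absorbed by a Dorronsoro-type bound for $\nabla A$, and the free multiplicative constant in the definition of $\alpha_\mu$ makes the density $f$ harmless.

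The hard part is (II), the David--Semmes implication that the weak geometric lemma forces uniform rectifiability. Since a corona decomposition for even a single choice of parameters already implies uniform rectifiability (Subsection \ref{5ss corona decomposition}), it suffices to build one from the hypothesis alone, and I would do this by a stopping-time construction. For a $\mu$-cube $R$, fix a near-optimal $n$-plane $L_R$ for $\beta_{1,\mu}(R)$ and keep a descendant $Q$ of $R$ in its tree as long as (i) $\beta_{1,\mu}(Q)$ stays below a small fixed threshold and (ii) some near-optimal plane for $\beta_{1,\mu}(Q)$ stays within a small angle of $L_R$; stop at the first $\mu$-cube where (i) or (ii) fails, and iterate. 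Type-(i) stopping $\mu$-cubes are Carleson at once from the hypothesis. The delicate point is that type-(ii) stopping $\mu$-cubes are Carleson as well: one shows that a near-optimal plane cannot rotate by a definite angle across a controlled number of generations without forcing $\sum\beta_{1,\mu}(Q)^{2}\ell(Q)^{n}$ to absorb a definite amount, by comparing near-optimal planes at telescoping scales and using AD regularity to pass from smallness of $\beta_{1,\mu}$ to genuine geometric proximity of the planes; this again charges the stopping against the hypothesis. One then checks coherence of the resulting trees and that on each of them $\supp\mu$ stays within $\theta\diam(Q)$ of a Lipschitz graph over $L_{Q_S}$ with small slope, which completes the corona decomposition. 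I expect step (II), and within it the control of the rotation-type stopping $\mu$-cubes, to be the main obstacle; once the corona reduction is in place, (I) and (III) come down to Dorronsoro-type square-function estimates on Lipschitz graphs.
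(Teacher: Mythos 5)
In the paper this theorem is not proved at all: it is quoted, with $(a)\Leftrightarrow(b)$ attributed to \cite{DS1} and $(a)\Leftrightarrow(c)$ to \cite{To}, so your text is in effect a reconstruction of those external proofs. Your elementary reductions are correct and standard: by AD regularity and H\"older, $\beta_{p,\mu}(Q)\lesssim\beta_{q,\mu}(Q)$ for $1\le p\le q$, and testing $\dist_{B_Q}(\mu,c\HH^n_L)$ against $c_0\,\psi\,\dist(\cdot,L)$ does give $\beta_{1,\mu}(Q)\lesssim\alpha_\mu(Q)$; hence the statement does reduce to your (I)--(III), exactly as in the literature, and the overall architecture (corona decomposition plus Dorronsoro for the direct implications, a stopping-time construction for the converse) is the right one.

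Read as a proof plan rather than as a pointer to \cite{DS1} and \cite{To}, however, the load-bearing steps are not right as stated. In (II), the packing of the tilt-stopping cubes: comparing near-optimal planes at telescoping scales along the chain from a stopping cube $Q$ up to $R$ only yields that the angle between $L_Q$ and $L_R$ is $\lesssim\sum\beta_{1,\mu}(P)$ over the chain, and when the rotation by a fixed angle accumulates over an \emph{unbounded} number of generations this does not force $\sum\beta_{1,\mu}(P)^2$ to be bounded below (many small increments can have large sum and tiny square sum); your phrase ``across a controlled number of generations'' covers only the easy case and the weighted Cauchy--Schwarz tricks one might try do not close the gap. The genuine mechanism is a quasiorthogonality/square-function argument: one uses the approximating Lipschitz graph of the tree, whose gradient averages behave like a martingale with (quasi)orthogonal increments, so that the squared angular displacement is comparable to a square sum of $\beta$-type increments, and only then does each tilt-stop get charged to the Carleson hypothesis; this is where the real work in \cite{DS1} lies. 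In (I)/(III) there are two misleading glosses: first, the corona approximation cannot be used additively cube by cube, since an error of size $\theta\,\diam(Q)$ in $\beta$ or $\alpha$ summed over a tree produces $\theta^2\sum_{Q\in S}\ell(Q)^n$, which diverges for infinite trees, so the transfer from $\mu$ to $\HH^n_{\Gamma_S}$ (or the alternative route through big pieces and stability of Carleson conditions) has to be organized more carefully; second, in (III) the free constant $c$ in the definition of $\alpha_\mu$ does \emph{not} make the density of $\mu$ harmless: $c$ is a single constant per cube, so oscillation of the density inside the cube contributes directly to $\alpha_\mu(Q)$, and controlling the square sum of these contributions (quasiorthogonality for bounded densities) is a substantive component of \cite{To}, not an afterthought. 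In short, the reduction and the citations are fine, but the sketched arguments for the three core implications would not survive as written.
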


For the case $\mu=\HH^n_\Gamma$ for some Lipschitz graph $\Gamma=\{x\in\R^d\,:\,x=(y,A(y)),\, y\in\R^n\}$, one can take  $\DD=\{\wit Q\times\R^{d-n}\cap\Gamma:\,\wit Q\in\DD(\R^n)\}$, where $\DD(\R^n)$ denotes the standard dyadic lattice of $\R^n$. For $Q= (\wit Q\times\R^{d-n})\cap\Gamma\in\DD$, we set
\begin{equation*}
\wit\alpha_\mu(Q):=\frac1{\ell(\wit Q)^{n+1}}\,\inf_{c\geq0,L} \,\dist_{6\wit Q\times\R^{d-n}}(\mu,\,c\HH^n_{L}),
\end{equation*}
where the infimum is taken over all constants $c\geq0$ and all $n$-planes $L$ in $\R^d$. Then, it is easy to show that $\wit\alpha_\mu(Q)\approx\alpha_\mu(Q)$ for all $Q\in\DD$. 

One can also define $\wit\beta_{p,\mu}(Q)$ in an analogous manner. By Theorem \ref{alphas unifrectif},
\begin{equation}\label{pack alpha graph}
\sum_{Q\in\DD:\,Q\subset R}(\wit\beta_{p,\mu}(Q)^2+\wit\alpha_\mu(Q)^2)\ell(Q)^n\leq C\ell(R)^n
\end{equation}
for all $R\in\DD$, with $C$ independent of $R$. Moreover, one can also show that this last inequality also holds replacing $Q$ and $R$ by $k_1Q$ and $k_2R$ for any $k_1,k_2\geq1$ given in advance, where $kQ:=(k\wit Q\times\R^{d-n})\cap\Gamma$ for $k>0$.

\section{If $\Gamma$ is an $n$-dimensional Lipschitz graph, then\\
$\VV_\rho\circ\TT_\varphi:\,M(\R^d)\to L^{1,\infty}(\HH^n_\Gamma)$ is a bounded operator}\label{4sec unif rectif teo3}

The following result is contained in \cite[Theorem 1.1]{MT} (see also \cite[Main Theorem 3.0.1]{Mas-thesis}).
\begin{teo}\label{main teo lip}
Let $\rho>2$ and let $\mu$ be the $n$-dimensional Hausdorff measure restricted to an $n$-dimensional Lipschitz graph.  
Then, the operator $\VV_\rho\circ\TT^\mu_{\wit\varphi}$ is bounded in $L^{2}(\mu)$. The bound of the norm only depends on $n$, $d$, $K$, $\rho$, $\varphi_\R$, and the slope of the graph. 
\end{teo}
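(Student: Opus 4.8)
The statement is \cite[Theorem 1.1]{MT}; we sketch the argument. After a rotation we may write $\Gamma=\{(y,A(y)):y\in\R^n\}$ with $A\colon\R^n\to\R^{d-n}$ Lipschitz, and we let $\pi\colon\Gamma\to\R^n$ be the vertical projection, which is bi-Lipschitz. Transporting $\mu=\HH^n_\Gamma$ by $\pi$ gives a measure $J\,dy$ on $\R^n$ with $J\approx1$, and $\TT_{\wit\varphi}^\mu$ turns into a family $\{S_\epsilon\}_{\epsilon>0}$ of operators on $\R^n$ with kernels $\varphi_\R(|y-z|^2/\epsilon^2)\,K_A(y,z)$, where $K_A(y,z):=K\bigl((y,A(y))-(z,A(z))\bigr)$. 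The key gain is that the truncation $\wit\varphi_\epsilon$, which depended only on the first $n$ coordinates, becomes an honest radial cutoff at scale $\epsilon$ on $\R^n$; moreover $K_A$ satisfies the standard size and regularity bounds on $\R^n$ (with constants controlled by $\Lip(A)$ and the constants in \eqref{4eq333}), and the associated singular integral $T$ is bounded on $L^2(\R^n)$ --- this is classical for Lipschitz graphs, using the oddness of $K$. So it suffices to bound $\VV_\rho(\{S_\epsilon g\})$ in $L^2(\R^n)$.

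As usual in variational estimates one has the pointwise splitting
\begin{equation*}
\VV_\rho\bigl(\{S_\epsilon g(y)\}_{\epsilon>0}\bigr)\ \lesssim\ \VV_\rho\bigl(\{S_{2^k}g(y)\}_{k\in\Z}\bigr)+\Bigl(\sum_{k\in\Z}\VV_\rho\bigl(\{S_\epsilon g(y)\}_{2^{k-1}\le\epsilon<2^k}\bigr)^2\Bigr)^{1/2},
\end{equation*}
where, in replacing the natural $\ell^\rho$ sum of the short pieces by an $\ell^2$ sum, we used $\rho\ge2$. For the short (within-block) piece, fix $k$; since $\varphi_\R\in\CC^2$, for $2^{k-1}\le\delta\le\epsilon<2^k$ we may write $S_\epsilon g-S_\delta g=\int_\delta^\epsilon U_tg\,dt$, where $U_tg(y)=\int\partial_t\bigl[\varphi_\R(|y-z|^2/t^2)\bigr]K_A(y,z)\,g(z)\,dz$ has kernel supported in $\{|y-z|\approx t\}$ and of size $\lesssim t^{-n-1}$. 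Because the intervals occurring in a variation sum are disjoint and $\rho\ge1$, the within-block $\rho$-variation at $y$ is $\le\int_{2^{k-1}}^{2^k}|U_tg(y)|\,dt$, and Cauchy-Schwarz against $dt/t$ bounds its square by $\lesssim\int_{2^{k-1}}^{2^k}|U_tg(y)|^2\,t\,dt$. Summing in $k$, the short piece is $\lesssim\bigl(\int_0^\infty|U_tg(y)|^2\,t\,dt\bigr)^{1/2}$, so it is enough to prove the quadratic estimate $\int_0^\infty\|U_tg\|_{L^2}^2\,t\,dt\lesssim\|g\|_{L^2}^2$. This is a $T1$-type square-function bound: the $\CC^2$ regularity of $K$ yields almost-orthogonality $\|U_tU_s^*\|+\|U_t^*U_s\|\lesssim(\min(s,t)/\max(s,t))^{\gamma}$ for some $\gamma>0$, which reduces the matter to a Carleson measure estimate on $U_t1$; and $U_t1$ is small because $\partial_t[\varphi_\R(|\cdot|^2/t^2)]$ is radial while $K$ is odd, so the size of $U_t1(y)$ is governed by how far $\Gamma$ deviates from affine $n$-planes near $(y,A(y))$ at scale $t$ --- a quantity that obeys a Carleson packing condition because $\Gamma$ is a Lipschitz graph (the one-graph case of the $\beta$-number estimate; cf.\ \eqref{pack alpha graph}).

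For the long (dyadic-scale) piece one compares $S_{2^k}g$ with the conditional expectation $E_k(Tg)$, where $\{E_k\}$ is a dyadic martingale on $\R^n$ adapted to scale $2^k$. The martingale $\{E_k(Tg)\}_k$ has $\rho$-variation bounded in $L^2$ by the martingale $\rho$-variation inequality of L\'epingle \cite{Lepingle} --- this is the only place the strict inequality $\rho>2$ (rather than $\rho\ge2$) is used --- with $\|Tg\|_{L^2}\lesssim\|g\|_{L^2}$ from the first step; and the differences $D_kg:=S_{2^k}g-E_k(Tg)$ form a non-telescoping family, so that $\VV_\rho(\{D_kg\}_k)\lesssim(\sum_k|D_kg|^2)^{1/2}$ and $\sum_k\|D_kg\|_{L^2}^2\lesssim\|g\|_{L^2}^2$ follows from an almost-orthogonality argument of the same type as for $\{U_t\}$. (Alternatively, the long piece can be handled by the now-standard machinery of \cite{CJRW-singular integrals}, once $T$ is known to be $L^2$-bounded with a regular kernel.)

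The technically demanding part --- and what makes \cite{MT} lengthy --- is the family of square-function and almost-orthogonality estimates for $\{U_t\}$ and $\{D_k\}$ (the $T1$-type Carleson bounds), which must be proved by simultaneously exploiting the oddness and $\CC^2$ regularity of $K$, the radial structure of the truncations, and the Lipschitz regularity of $\Gamma$, with all constants depending only on $n$, $d$, $K$, $\rho$, $\varphi_\R$ and $\Lip(A)$. The remaining ingredients --- the parametrization, the long/short splitting, and the reduction of the short variation to a quadratic estimate --- are routine.
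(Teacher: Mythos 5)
Correct: the paper itself gives no proof of Theorem \ref{main teo lip}, but simply quotes it from \cite[Theorem 1.1]{MT}, which is exactly what your first sentence does, so your treatment matches the paper's. Your appended outline (graph parametrization turning $\wit\varphi_\epsilon$ into a radial cutoff, the short/long variation splitting, square-function and $T1$-type estimates exploiting the oddness of $K$ and the Carleson packing of the graph's $\beta$-numbers, and L\'epingle's inequality as the sole place where $\rho>2$ enters) is consistent with the strategy of \cite{MT}, as also reflected in this paper's sketch of Theorem \ref{theorem lip}.
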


By very similar techniques to the ones used in the proof of the theorem above, one can prove the following.
\begin{teo}\label{theorem lip}
Let $\rho>2$ and let $\mu$ be the $n$-dimensional Hausdorff measure restricted to an $n$-dimensional Lipschitz graph.  
Then, the operator $\VV_\rho\circ\TT^\mu_{\varphi}$ is bounded in $L^{2}(\mu)$. The bound of the norm only depends on $n$, $d$, $K$, $\rho$, $\varphi_\R$, and the slope of the graph. 
\end{teo}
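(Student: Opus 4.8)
The plan is to deduce Theorem \ref{theorem lip} from Theorem \ref{main teo lip} by comparing the two families of smooth truncations $\TT^\mu_{\varphi}$ and $\TT^\mu_{\wit\varphi}$. Write $\mu=\HH^n_\Gamma$ with $\Gamma=\{(\xi,A(\xi)):\xi\in\R^n\}$ and $L:=\Lip(A)$, and for $\epsilon>0$ put
\[
D_\epsilon f(x):=T^\mu_{\varphi_\epsilon}f(x)-T^\mu_{\wit\varphi_\epsilon}f(x)=\int_\Gamma\kappa_\epsilon(x-y)\,f(y)\,d\mu(y),\qquad \kappa_\epsilon(z):=K(z)\bigl(\varphi_\epsilon(z)-\wit\varphi_\epsilon(z)\bigr).
\]
Since $\rho>1$, the functional $\VV_\rho$ is subadditive, so $\VV_\rho(\TT^\mu_{\varphi}f)\le\VV_\rho(\TT^\mu_{\wit\varphi}f)+\VV_\rho(\{D_\epsilon f\}_{\epsilon>0})$; the first term is bounded in $L^2(\mu)$ by Theorem \ref{main teo lip}, so I would only need to show that $f\mapsto\VV_\rho(\{D_\epsilon f\}_{\epsilon>0})$ is bounded in $L^2(\mu)$.

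The gain in passing to $D_\epsilon$ is that its kernel is well localized. On $\Gamma$ one has $|\wit z|\le|z|\le\sqrt{1+L^2}\,|\wit z|$ for $z=x-y$ with $x,y\in\Gamma$, hence $\varphi_\epsilon(z)=\wit\varphi_\epsilon(z)$ unless $c_1\epsilon\le|z|\le c_2\epsilon$ for constants $0<c_1<c_2$ depending only on $L$. Thus, on $\Gamma-\Gamma$, $\kappa_\epsilon$ is supported in the annulus $\{c_1\epsilon\le|z|\le c_2\epsilon\}$, and using \eqref{4eq333} together with the hypotheses on $\varphi_\R$ one checks that $\kappa_\epsilon$ and $\epsilon\,\partial_\epsilon\kappa_\epsilon$ satisfy the usual Calder\'on--Zygmund size and smoothness estimates at scale $\epsilon$ (with bounds $\lesssim\epsilon^{-n}$ on the annulus), and — since $K$ is odd and $\varphi_\epsilon,\wit\varphi_\epsilon$ are even — both are odd. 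I would then split the variation in the standard way into long and short parts,
\[
\VV_\rho(\{D_\epsilon f\}_{\epsilon>0})(x)\lesssim\VV_\rho\bigl(\{D_{2^k}f(x)\}_{k\in\Z}\bigr)+\Bigl(\sum_{k\in\Z}\bigl(\sup_{2^k\le\epsilon<2^{k+1}}|D_\epsilon f(x)-D_{2^k}f(x)|\bigr)^\rho\Bigr)^{1/\rho}.
\]
For the short part, $\sup_{2^k\le\epsilon<2^{k+1}}|D_\epsilon f(x)-D_{2^k}f(x)|\le\int_{2^k}^{2^{k+1}}|\partial_\epsilon D_\epsilon f(x)|\,d\epsilon$, and since $\rho\ge2$, using $\ell^2\hookrightarrow\ell^\rho$, Cauchy--Schwarz and $2^k\approx\epsilon$ on $[2^k,2^{k+1}]$ yields the bound $Sf(x):=\bigl(\int_0^\infty|\Theta_\epsilon f(x)|^2\,\tfrac{d\epsilon}{\epsilon}\bigr)^{1/2}$, where $\Theta_\epsilon:=\epsilon\,\partial_\epsilon D_\epsilon$ has kernel $\epsilon\,\partial_\epsilon\kappa_\epsilon$. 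For the long part, since $\kappa_{2^k}$ is supported in $\{|z|\approx2^k\}$ one has, for nice $f$, $D_{2^k}f(x)\to0$ as $k\to\pm\infty$ (using the oddness-induced cancellation of $\kappa_\epsilon$ at small scales and the escape of the annulus at large scales), so $\{D_{2^k}f(x)\}_k\in\ell^2(\Z)$ and the elementary bound $\VV_\rho(\{a_k\}_{k\in\Z})\le2\bigl(\sum_k|a_k|^\rho\bigr)^{1/\rho}\le2\bigl(\sum_k|a_k|^2\bigr)^{1/2}$ (valid for $\rho\ge2$) applies.

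This reduces everything to the two vertical square-function estimates
\[
\int_\Gamma\int_0^\infty|\Theta_\epsilon f(x)|^2\,\frac{d\epsilon}{\epsilon}\,d\mu(x)\lesssim\|f\|_{L^2(\mu)}^2\qquad\text{and}\qquad\sum_{k\in\Z}\|D_{2^k}f\|_{L^2(\mu)}^2\lesssim\|f\|_{L^2(\mu)}^2
\]
for the odd, annulus-truncated, CZ-regular kernels $\epsilon\,\partial_\epsilon\kappa_\epsilon$ and $\kappa_{2^k}$ against the AD regular graph measure $\mu$. I would establish these either from the $T(1)$ theorem for square functions on AD regular sets, or — in the spirit of \cite{MT} — by transferring to $\R^n$ via the bi-Lipschitz parametrization $\xi\mapsto(\xi,A(\xi))$, where they become classical Littlewood--Paley estimates and the bounded Jacobian weight contributes only lower-order terms treated in the same way (first reducing to the small-slope case by the usual rotation/covering argument); in fact these are exactly the estimates carried out in \cite{MT} to prove Theorem \ref{main teo lip}, which is why the present theorem follows ``by very similar techniques.'' The main obstacle is precisely this last step: oddness gives cancellation only against Lebesgue measure, so $\int_\Gamma\kappa_\epsilon(x-y)\,d\mu(y)$ need not vanish, and recovering sufficient cancellation against $\mu$ — equivalently, verifying the relevant $BMO$/Carleson condition, or controlling the error terms produced by the parametrization — is where the Lipschitz structure of $\Gamma$ must genuinely be used.
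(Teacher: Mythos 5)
Your reduction is sound and is essentially the same first step the paper takes: compare $\TT^\mu_\varphi$ with $\TT^\mu_{\wit\varphi}$, invoke Theorem \ref{main teo lip} for the latter, and observe that on $\Gamma$ the difference kernel $K(\varphi_\epsilon-\wit\varphi_\epsilon)$ lives on an annulus $|z|\approx\epsilon$ and is odd; the splitting of the variation of the difference into a lacunary (long) part and an intra-dyadic (short) part controlled by square functions is also fine for $\rho\ge 2$. But the proof stops exactly where the theorem actually lives. The two square-function bounds you reduce to are not proved: oddness gives no cancellation against $\mu$, and the ``$T(1)$ theorem for square functions'' or the transfer to $\R^n$ that you invoke would require verifying precisely the Carleson/testing condition $\sum_{m}\int_Q\big|(K(\varphi_{2^{-m}}-\wit\varphi_{2^{-m}})*\mu)\big|^2\,d\mu\lesssim\mu(Q)$ (together with its short-variation analogue), which you explicitly leave as ``the main obstacle.'' In the paper this is the content of the basic estimate (\ref{basic estimate}): for each dyadic $\mu$-cube $D$ and $x\in D$ one replaces $\mu$ by a minimizing flat measure $c_D\HH^n_{L_D^x}$ on the plane through $x$, kills that term by the antisymmetry of $(\varphi_{2^{-m}}-\wit\varphi_{2^{-m}})K$, and controls the two error terms by $\wit\alpha_\mu(C_1D)$ and $\dist(x,L_D)/\ell(D)\lesssim \dist(x,L^2_D)/\ell(D)+\wit\beta_{2,\mu}(D)+\wit\alpha_\mu(D)$, summing by the packing condition (\ref{pack alpha graph}) coming from Theorem \ref{alphas unifrectif}. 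Without an argument of this type (or an equivalent verification of the testing condition) your proposal does not yield the theorem.

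There is a second, related gap: even granting the Carleson-type estimate, it only controls the operators acting on the measure itself (equivalently, on bounded densities), not on arbitrary $f\in L^2(\mu)$; your appeal to an off-the-shelf $T(1)$ theorem for the variational square functions $\Theta_\epsilon$ and $\{D_{2^k}\}$ is not something you can cite as is. The paper bridges this by first deducing from (\ref{basic estimate}) and Theorem \ref{main teo lip} a local estimate $\int_D\big((\VV_\rho\circ\TT_\varphi^\mu)g\big)^2\,d\mu\lesssim\|g\|^2_{L^\infty(\mu)}\mu(D)$ for bounded $g$ supported in $D$, which gives the endpoint bounds $H^1(\mu)\to L^1(\mu)$ and $L^\infty(\mu)\to BMO(\mu)$, and then concludes by interpolation. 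You would need to supply either this interpolation scheme or a genuine local $T(b)$-type argument adapted to the variation operator; as written, the proposal identifies the correct skeleton but omits the quantitative cancellation and the passage from testing functions to $L^2(\mu)$, which together constitute the proof.
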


\begin{proof}[{\bf{\em Sketch of the proof}}]
The first step consists in obtaining the following basic estimate: 
Fix a cube $\widetilde P\subset\R^n$. Set $\Gamma:=\{x\in
\R^d\,:\,x=(y,A(y)),\,y\in\R^n\},$ where
$A:\R^n\to\R^{d-n}$ is a Lipschitz function supported in $\widetilde
P$, and set $P:=(\widetilde P\times \R^{d-n})\cap\Gamma$. Set
$\mu:=f\HH^n_{\Gamma}$, where $f(x)=1$ for all $x\in
\Gamma\setminus P$ and $C_0^{-1}\leq f(x)\leq C_0$ for all $x\in P$, for some constant $C_0>0$.

For each $x\in\Gamma$, define
\begin{equation}\label{definicio Wmu}
W\mu(x)^2:=\sum_{m\in\Z}|(K\varphi_{2^{-m}}*\mu)(x)-(K\widetilde \varphi_{2^{-m}}*\mu)(x)|^{2}.
\end{equation}
and
\begin{equation}\label{definicio Smu}
S\mu(x)^2:=\sup_{\{\epsilon_{m}\}}\sum_{j\in\Z}
\,\sum_{m\in\Z:\,\epsilon_{m},\epsilon_{m+1}\in I_j}
|(K\varphi_{\epsilon_{m+1}}^{\,\epsilon_{m}}*\mu)(x)|^{2},
\end{equation}
where $I_j=[2^{-j-1},2^{-j})$ and the supremum is taken over all decreasing
sequences of positive numbers $\{\epsilon_{m}\}_{m\in\Z}$.
Then, we claim that
\begin{equation}\label{basic estimate}
\left\|W\mu\right\|^{2}_{L^{2}(\mu)}+\left\|S\mu\right\|^{2}_{L^{2}(\mu)}\lesssim \sum_{Q\in\DD}\big(\,\wit\alpha_\mu(C_1Q)^{2}+\wit\beta_{2,\mu}(Q)^2\,\big)\ell(Q)^n,
\end{equation}
where $C_1>0$ only depends on $C_0$, $n$, $d$, $K$, $\varphi_\R$, and $\Lip(A)$, and where $\DD$ denotes the dyadic lattice associated to $\HH^n_\Gamma$ defined below Theorem \ref{alphas unifrectif}.

Let us prove the claim. If we define $\wit S\mu$ like $S\mu$ but replacing $\varphi_{\epsilon_{m+1}}^{\,\epsilon_{m}}$ by $\wit\varphi_{\epsilon_{m+1}}^{\,\epsilon_{m}}$, in the proof of Theorem \ref{main teo lip} in \cite{MT} it is shown that $\|\wit S\mu\|^{2}_{L^{2}(\mu)}$ is bounded above by the right hand side of (\ref{basic estimate}). The proof for $\|S\mu\|^{2}_{L^{2}(\mu)}$ is almost the same.

Let us deal now with $W\mu$. Fix $D:=(\wit D\times\R^{d-n})\cap\Gamma\in\DD$ with $\ell(D)=2^{-m}$ and $x\in D$. Let $L_D$ be an $n$-plane that minimizes $\wit\alpha_\mu(C_1D)$, where $C_1>0$ is some constant big enough which will be fixed later, and let $\sigma_D:=c_D\HH^n_{L_D}$ be a minimizing measure for $\wit\alpha_\mu(C_1D)$. Let $L_D^x$ be the $n$-plane parallel to $L_D$ which contains $x$, and set $\sigma^x_D:=c_D\HH^n_{L^x_D}$.

Since $x\in D$ and $\ell(D)=2^{-m}$, $(\varphi_{2^{-m}}(x-\cdot)-\wit\varphi_{2^{-m}}(x-\cdot))K(x-\cdot)$ is a function supported in $C_1\wit D\times\R^{d-n}$ (for some constant $C_1$ big enough) and with Lipschitz constant smaller than $C2^{m(n+1)}$. Moreover, by the antisymmetry of the function $(\varphi_{2^{-m}}(x-\cdot)-\wit\varphi_{2^{-m}}(x-\cdot))K(x-\cdot)$, and since $\sigma_D^x$ is a multiple of the $n$-dimensional Hausdorff measure on an $n$-plane which contains $x$, we have $(K\varphi_{2^{-m}}*\sigma_D^x)(x)-(K\widetilde \varphi_{2^{-m}}*\sigma_D^x)(x)=0.$ Therefore,
\begin{equation}\label{basic eq1}
\begin{split}
(K\varphi_{2^{-m}}&*\mu)(x)-(K\widetilde \varphi_{2^{-m}}*\mu)(x)=
(K(\varphi_{2^{-m}}-\widetilde \varphi_{2^{-m}})*\mu)(x)\\
&=(K(\varphi_{2^{-m}}-\widetilde \varphi_{2^{-m}})*(\mu-\sigma_D))(x)
+(K(\varphi_{2^{-m}}-\widetilde \varphi_{2^{-m}})*(\sigma_D-\sigma_D^x))(x).
\end{split}
\end{equation}
Using the definition of $\wit\alpha_\mu$, we get
\begin{equation}\label{dsds1}
\begin{split}
|(K(\varphi_{2^{-m}}-\widetilde \varphi_{2^{-m}})*(\mu-\sigma_D))(x)|
\lesssim2^{m(n+1)}\dist_{C_1\wit D\times\R^{d-n}}(\mu,\sigma_D)\lesssim\wit\alpha_\mu(C_1 D).
\end{split}
\end{equation}
Since $L_D^x$ is a translation of $L_D$, by standard estimates it is not hard to show that 
\begin{equation}\label{dsds}
|(K(\varphi_{2^{-m}}-\widetilde \varphi_{2^{-m}})*(\sigma_D-\sigma_D^x))(x)|\lesssim2^{m}\dist(x,L_D)=\dist(x,L_D)/\ell(D).
\end{equation}
Let $\dist_\HH(E,F)$ denote the Hausdorff distance of two given sets $E,F\subset\R^d$, and set $\wit B_D:=6\wit D\times \R^{d-n}$. If $L_D^1$ and $L_D^2$ denote a minimizing $n$-plane for $\wit\beta_{1,\mu}(D)$ and $\wit\beta_{2,\mu}(D)$, respectively, one can show that
$\dist_\HH(L_D\cap \wit B_D,L_D^1\cap  \wit B_D)\lesssim\wit\alpha_\mu(D)\ell(D)$ and that $\dist_\HH(L_D^1\cap \wit B_D,L_D^2\cap \wit B_D)\lesssim\wit\beta_{2,\mu}(D)\ell(D)$. This easily implies that $\dist(x,L_D)\lesssim\dist(x,L^2_D)+\wit\beta_{2,\mu}(D)\ell(D)+\wit\alpha_\mu(D)\ell(D)$ for all $x\in D$.
Applying this to (\ref{dsds}), and using also (\ref{dsds1}) and (\ref{basic eq1}), we obtain
\begin{equation*}
\begin{split}
\left\|W\mu\right\|^{2}_{L^{2}(\mu)}&=
\int\sum_{m\in\Z}|(K(\varphi_{2^{-m}}-\widetilde \varphi_{2^{-m}})*\mu)(x)|^{2}\,d\mu(x)\\
&=\sum_{m\in\Z}\,\sum_{D\in\DD:\,\ell(D)=2^{-m}}\int_D|(K(\varphi_{2^{-m}}-\widetilde \varphi_{2^{-m}})*\mu)(x)|^{2}\,d\mu(x)\\
&\lesssim\sum_{m\in\Z}\,\sum_{D\in\DD:\,\ell(D)=2^{-m}}\int_D\big(\dist(x,L^2_D)/\ell(D)+\wit\beta_{2,\mu}(D)+\wit\alpha_\mu(C_1D)\big)^2\,d\mu(x)\\
&\lesssim\sum_{D\in\DD}\big(\wit\alpha_\mu(C_1 D)^2+\wit\beta_{2,\mu}(D)^2\big)\ell(D)^n,
\end{split}
\end{equation*}
which proves (\ref{basic estimate}).

Let now $\mu$ be as in Theorem \ref{theorem lip}. 
Using (\ref{basic estimate}) and Theorem \ref{main teo lip}, one can 
show that there exists $C>0$ such that, for any cube $\widetilde D\subset\R^n$ and any $g\in L^\infty(\mu)$ supported in $D$ (where $D:=\widetilde D\times\R^{d-n}$),
\begin{equation*}
\int_{D}\big((\VV_\rho\circ\TT_{\varphi}^{\mu})g\big)^2\,d\mu\leq
C\|g\|^2_{L^\infty(\mu)}\mu(D).
\end{equation*}
This yields the endpoint estimates $\VV_\rho\circ\TT^\mu_{\varphi}:H^1(\mu)\to L^1(\mu)$ and 
$\VV_\rho\circ\TT^\mu_{\varphi}:L^\infty(\mu)\to BMO(\mu)$, where $H^1(\mu)$ denotes the atomic Hardy space related to $\mu$. Then, by interpolation, one finally deduces that $\VV_\rho\circ\TT^\mu_{\varphi}$ is bounded in $L^2(\mu)$. Since this part of the proof is analogous to the one in the proof of Theorem \ref{main teo lip} (see \cite[Theorem 1.1]{MT}), we omit it.
\end{proof}

\subsection{Proof of Theorem \ref{4unif rectif teo3}}
The proof of Theorem \ref{4unif rectif teo3} uses the Calder\'{o}n-Zygmund decomposition of Lemma \ref{4lema CZ} and rather standard arguments.
Set $\mu:=\HH^n_{\Gamma\cap B}$, where is some fixed ball $B\subset\R^d$. Let $\nu\in M(\R^d)$ be a finite Radon measure with compact support and $\lambda>2^{d+1}\|\nu\|/\|\mu\|$. We will show that
\begin{equation}\label{4cota mesuresL1debil}
\mu\big(\big\{x\in\R^d\,:\,(\VV_\rho\circ\TT_\varphi)\nu(x)>\lambda\big\}\big)\leq\frac{C}{\lambda}\,\|\nu\|,
\end{equation}
where $C>0$ depends on $n$, $d$, $K$, $\rho$ and $\Gamma$, but not on $B$. Let us check that this implies that $\VV_\rho\circ\TT_\varphi$ is bounded from $M(\R^d)$ into $L^{1,\infty}(\HH^n_\Gamma)$. 
First, we show that (\ref{4cota mesuresL1debil}) also holds for $\nu$ without compact support. Set $\nu_N = \chi_{B(0,N)}\,\nu$ and
let $N_0$ be such that $\supp\mu\subset B(0,N_0)$. Then it is not hard to show that, for $x\in\supp\mu$,
$$|(\VV_\rho\circ\TT_\varphi)\nu(x)-(\VV_\rho\circ\TT_\varphi)\nu_N(x)|\leq C\,\frac{|\nu|(\R^d\setminus B(0,N))}{N-N_0},$$
thus $(\VV_\rho\circ\TT_\varphi)\nu_N(x)\to (\VV_\rho\circ\TT_\varphi)\nu(x)$ for all $x\in \supp\mu$, and since the estimate (\ref{4cota mesuresL1debil}) 
 holds by assumption
for $\nu_N$, letting $N\to\infty$, we deduce that it also holds for $\nu$. Now, by increasing the size of the ball $B$ and by monotone convergence, we deduce that 
$\HH^n_\Gamma\big(\big\{x\in\R^d\,:\,(\VV_\rho\circ\TT_\varphi)\nu(x)>\lambda\big\}\big)\leq C\lambda^{-1}\|\nu\|$, as desired. 

To prove (\ref{4cota mesuresL1debil}) for $\nu\in M(\R^d)$ with compact support, let $\{Q_j\}_j$ be the almost disjoint
family of cubes of Lemma \ref{4lema CZ}, and set $\Omega:=\bigcup_jQ_j$ and $R_j:=6Q_j$.
Then we can write $\nu=g\mu+\nu_b$, with
$$g\mu=\chi_{\R^d\setminus\Omega}\nu+ \sum_j b_j\mu\quad\text{and}\quad
\nu_b=\sum_j\nu_b^j:=\sum_j\left(w_j\nu-b_j\mu\right),$$
where the functions $b_j$ satisfy (\ref{4lema CZ 4}), (\ref{4lema CZ 5}), (\ref{4lema CZ 6}) and
$w_j=\chi_{Q_j}\big(\sum_k \chi_{Q_k}\big)^{-1}$.

By the subadditivity of $\VV_\rho\circ\TT_\varphi$, we have
\begin{equation}\label{4cota mesuresL1debil 2}
\begin{split}
\mu\big(&\big\{x\in\R^d\,:\,(\VV_\rho\circ\TT_\varphi)\nu(x)>\lambda\big\}\big)\\
&\leq\mu\big(\big\{x\in\R^d\,:\,(\VV_\rho\circ\TT_\varphi^\mu)g(x)>\lambda/2\big\}\big)
+\mu\big(\big\{x\in\R^d\,:\,(\VV_\rho\circ\TT_\varphi)\nu_b(x)>\lambda/2\big\}\big).
\end{split}
\end{equation}

Since $\VV_\rho\circ\TT_\varphi^{\HH^n_\Gamma}$ is bounded in $L^2(\HH^n_\Gamma)$ by Theorem \ref{theorem lip}, it is easy to show that $\VV_\rho\circ\TT_\varphi^{\mu}$ is bounded in $L^2(\mu)$, with a bound independent of $B$. Notice that $|g|\leq C\lambda$ by (\ref{4lema CZ 3}) and (\ref{4lema CZ 6}). Then, using (\ref{4lema CZ 5}),
\begin{equation}\label{4cota mesuresL1debil 3}
\begin{split}
\mu\big(\big\{x\in\R^d\,:\,(\VV_\rho\circ\TT_\varphi^\mu)g(x)>\lambda/2\big\}\big)
&\lesssim\frac{1}{\lambda^{2}}\int|(\VV_\rho\circ\TT_\varphi^\mu)g|^2\,d\mu
\lesssim\frac{1}{\lambda^{2}}\int|g|^2\,d\mu\\
&\lesssim\frac{1}{\lambda}\int|g|\,d\mu
\lesssim\frac{1}{\lambda}\bigg(|\nu|(\R^d\setminus\Omega)+\sum_j\int_{R_j}|b_j|\,d\mu\bigg)\\
&\lesssim\frac{1}{\lambda}\bigg(|\nu|(\R^d\setminus\Omega)+\sum_j|\nu|(Q_j)\bigg)\lesssim\frac{\|\nu\|}{\lambda}.
\end{split}
\end{equation}

Let $\wih\Omega:=\bigcup_j2Q_j$. By (\ref{4lema CZ 1}), we have
$\mu(\wih\Omega)\leq\sum_j\mu(2Q_j)\lesssim\lambda^{-1}\sum_j|\nu|(Q_j)\lesssim\lambda^{-1}\|\nu\|$. We are going to show now that
\begin{equation}\label{4cota mesuresL1debil 1}
\mu\big(\big\{x\in\R^d\setminus\wih\Omega\,:\,(\VV_\rho\circ\TT_\varphi)\nu_b(x)>\lambda/2\big\}\big)\leq\frac{C}{\lambda}\,\|\nu\|,
\end{equation}
and then (\ref{4cota mesuresL1debil}) is a direct consequence of (\ref{4cota mesuresL1debil 2}), (\ref{4cota mesuresL1debil 3}), (\ref{4cota mesuresL1debil 1}) and the estimate $\mu(\wih\Omega)\lesssim\lambda^{-1}\|\nu\|$.
Since $\VV_\rho\circ\TT_\varphi$ is sublinear,
\begin{equation}\label{4cota mesuresL1debil equa1}
\begin{split}
\mu\big(\big\{x\in\R^d\setminus\wih\Omega\,:\,(\VV_\rho&\circ\TT_\varphi)\nu_b(x)>\lambda/2\big\}\big)
\lesssim\frac{1}{\lambda}\sum_{j}\int_{\R^d\setminus\wih\Omega}(\VV_\rho\circ\TT_\varphi)\nu_b^j\,d\mu\\
&\leq\frac{1}{\lambda}\sum_{j}\int_{\R^d\setminus 2R_j}(\VV_\rho\circ\TT_\varphi)\nu_b^j\,d\mu
+\frac{1}{\lambda}\sum_{j}\int_{2R_j\setminus 2Q_j}(\VV_\rho\circ\TT_\varphi)\nu_b^j\,d\mu.
\end{split}
\end{equation}

We are going to estimate the two terms on the right of (\ref{4cota mesuresL1debil equa1}) separately. Let us start with the first one. Given $j$ and $x\in\supp\mu\setminus2R_j$, let $\{\epsilon_m\}_{m\in\Z}$ be a decreasing sequence of positive numbers (which depends on $j$ and $x$, i.e. $\epsilon_m\equiv\epsilon_m(j,x)$) such that
\begin{equation}\label{4cota mesuresL1debil equa3}
\begin{split}
(\VV_\rho\circ\TT_\varphi)\nu_b^j(x)&
\leq2\bigg(\sum_{m\in\Z}|(K\varphi_{\epsilon_{m+1}}^{\epsilon_m}*\nu_b^j)(x)|^\rho\bigg)^{1/\rho}.
\end{split}
\end{equation}
If we set $I_k:=[2^{-k-1},2^{-k})$, we can decompose $\Z=\SSS\cup\LL$, where
\begin{equation*}
\begin{split}
&\LL:=\{m\in\Z\,:\,\epsilon_m\in I_k,\,\epsilon_{m+1}\in I_i,\text{ for }i>k\},\\
&\SSS:=\bigcup_{k\in\Z}\SSS_k,\quad\SSS_k:=\{m\in\Z\,:\,\epsilon_{m},\epsilon_{m+1}\in I_k\}.
\end{split}
\end{equation*}

Let $z_j$ denote the center of $Q_j$ (and of $R_j$). Then, since $\nu_b^j(R_j)=0$ and $\supp\nu_b^j\subset R_j$,
\begin{equation}\label{4cota mesuresL1debil equa2}
\begin{split}
|(K\varphi_{\epsilon_{m+1}}^{\epsilon_m}*\nu_b^j)(x)|
&=\bigg|\int\varphi_{\epsilon_{m+1}}^{\epsilon_m}(x-y)K(x-y)\,d\nu_b^j(y)\bigg|\\
&\leq\int\big|\varphi_{\epsilon_{m+1}}^{\epsilon_m}(x-y)K(x-y)
-\varphi_{\epsilon_{m+1}}^{\epsilon_m}(x-z_j)K(x-z_j)\big|\,d|\nu_b^j|(y).
\end{split}
\end{equation}

If $m\in\LL$, it is easy to see that
$|\nabla(\varphi_{\epsilon_{m+1}}^{\epsilon_m}K)(t)|\leq|\nabla(\varphi_{\epsilon_{m+1}}K)(t)|+|\nabla(\varphi_{\epsilon_m}K)(t)|\lesssim|t|^{-n-1}$ for all $t\in\R^d\setminus\{0\}$. Moreover, since $x\in\R^d\setminus 2R_j$ and $\supp\nu_b^j\subset R_j$, there are finitely many $m\in\LL$ (which depends only on $n$ and $d$) such that  $(K\varphi_{\epsilon_{m+1}}^{\epsilon_m}*\nu_b^j)(x)\neq0$, and this number only depends on $n$ and $d$. 
On the other hand, if $m\in\SSS_k$,  it is not hard to show that 
$|\nabla(\varphi_{\epsilon_{m+1}}^{\epsilon_m}K)(t)|\lesssim2^{k}|\epsilon_m-\epsilon_{m+1}||t|^{-n-1}$. Actually, this follows from the fact that
$(\varphi_{\epsilon_{m+1}}^{\epsilon_m}K)(t)\neq0$ only if $|t|\approx 2^{-k}$ and the estimates
\begin{equation}\label{Short eq1}
\begin{split}
|\varphi_{\epsilon_{m+1}}^{\,\epsilon_{m}}(t)|&=\left|\varphi_\R\bigg(\frac{|t|}{\epsilon_{m+1}}\bigg)-\varphi_\R\bigg(\frac{|t|}{\epsilon_{m}}\bigg)\right|
\leq\|\varphi'_\R\|_{L^\infty(\R)}\left|\frac{|t|}{\epsilon_{m+1}}-\frac{|t|}{\epsilon_{m}}\right|\\
&=\|\varphi'_\R\|_{\infty}|t|\,\frac{\epsilon_{m}-\epsilon_{m+1}}{\epsilon_{m}\epsilon_{m+1}}\lesssim2^{k}|\epsilon_{m}-\epsilon_{m+1}|
\end{split}
\end{equation}
and
\begin{equation}\label{Short eq2}
\begin{split}
\big|\partial_{t^i}\big(\varphi_{\epsilon_{m+1}}^{\,\epsilon_m}(t)\big)\big|
&\leq\left|\varphi'_\R\left(\frac{|t|}{\epsilon_{m}}\right)\frac{1}{\epsilon_m}-
\varphi'_\R\left(\frac{|t|}{\epsilon_{m+1}}\right)\frac{1}{\epsilon_{m+1}}\right|\\
&\leq\left|\varphi'_\R\left(\frac{|t|}{\epsilon_{m}}\right)\right|\left|\frac{1}{\epsilon_{m}}-\frac{1}{\epsilon_{m+1}}\right|
+\left|\varphi'_\R\left(\frac{|t|}{\epsilon_{m}}\right)-\varphi'_\R\left(\frac{|t|}{\epsilon_{m+1}}\right)\right|\frac{1}{\epsilon_{m+1}}\\
&\leq\left(\|\varphi_\R'\|_{\infty}+\|\varphi''_\R\|_{\infty}\frac{|t|}{\epsilon_{m+1}}\right)\frac{\epsilon_{m}-\epsilon_{m+1}}{\epsilon_{m}\epsilon_{m+1}}\lesssim2^{k}(\epsilon_{m}-\epsilon_{m+1})|t|^{-1},
\end{split}
\end{equation}
where $1\leq i\leq d$ and $t^i$ denotes the $i$'th coordinate of $t\in\R^d$ (recall that $\epsilon_m\approx\epsilon_{m+1}\approx 2^{-k}$ for $m\in\SSS_k$ and we assumed $|t|\approx 2^{-k}$).
Similarly to the case $m\in\LL$, there are finitely many $k\in\Z$ such that $\supp\varphi_{2^{-k-1}}^{2^{-k}}(x-\cdot)\cap R_j\neq\emptyset$, and the number only depends on $n$ and $d$ (notice that 
$\supp\varphi_{\epsilon_{m+1}}^{\epsilon_m}(x-\cdot)\subset\supp\varphi_{2^{-k-1}}^{2^{-k}}(x-\cdot)$ for all $m\in\SSS_k$).

From these estimates and remarks, and (\ref{4cota mesuresL1debil equa3}), (\ref{4cota mesuresL1debil equa2}), we obtain
\begin{equation*}
\begin{split}
(\VV_\rho\circ\TT_\varphi)\nu_b^j(x)
&\lesssim
\sum_{k\in\Z}\,\sum_{m\in\SSS_k}|(K\varphi_{\epsilon_{m+1}}^{\epsilon_m}*\nu_b^j)(x)|+\sum_{m\in\LL}|(K\varphi_{\epsilon_{m+1}}^{\epsilon_m}*\nu_b^j)(x)|\\
&\lesssim
\sum_{{\begin{subarray}{c}k\in\Z:\,
\supp\varphi_{2^{-k-1}}^{2^{-k}}(x-\cdot)\cap R_j\neq\emptyset
\end{subarray}}}\,\sum_{m\in\SSS_k}
2^{k}|\epsilon_m-\epsilon_{m+1}||x-z_j|^{-n-1}\ell(R_j)\|\nu_b^j\|\\
&\quad+\sum_{\begin{subarray}{c}m\in\LL:\,
\supp\varphi_{\epsilon_{m+1}}^{\epsilon_m}(x-\cdot)\cap R_j\neq\emptyset
\end{subarray}}|x-z_j|^{-n-1}\ell(R_j)\|\nu_b^j\|
\lesssim|x-z_j|^{-n-1}\ell(R_j)\|\nu_b^j\|
\end{split}
\end{equation*}
for all $j$ and $x\in\supp\mu\setminus 2R_j$. Therefore, using that $\mu$ has $n$-dimensional growth, that $\|\nu_b^j\|\lesssim|\nu|(Q_j)$, and that the $Q_j$'s are semidisjoint,
\begin{equation}\label{4cota mesuresL1debil equa5}
\begin{split}
\sum_{j}\int_{\R^d\setminus 2R_j}(\VV_\rho\circ\TT_\varphi)\nu_b^j\,d\mu\lesssim
\sum_{j}\ell(R_j)\|\nu_b^j\|\int_{\R^d\setminus 2R_j}|x-z_j|^{-n-1}\,d\mu\lesssim
\sum_{j}\|\nu_b^j\|\lesssim\|\nu\|.
\end{split}
\end{equation}

Let us now estimate the second term on the right hand side of (\ref{4cota mesuresL1debil equa1}).
As above, given $j$ and $x\in 2R_j\setminus2Q_j$,  let $\{\epsilon_m\}_{m\in\Z}$ be a decreasing sequence of positive numbers  such that
\begin{equation*}
\begin{split}
(\VV_\rho\circ\TT_\varphi)(w_j\nu)(x)&
\leq2\bigg(\sum_{m\in\Z}|(K\varphi_{\epsilon_{m+1}}^{\epsilon_m}*(w_j\nu))(x)|^\rho\bigg)^{1/\rho},
\end{split}
\end{equation*}
where $w_j=\chi_{Q_j}\big(\sum_k\chi_{Q_k}\big)^{-1}$.
Since $\rho>2$, $\VV_\rho\circ\TT_\varphi$ is sublinear, and since $\nu_b^j=w_j\nu-b_j\mu$, for $x\in2R_j\setminus2Q_j$ we have
\begin{equation*}
\begin{split}
(\VV_\rho\circ\TT_\varphi)\nu_b^j(x)&\leq(\VV_\rho\circ\TT_\varphi)(w_j\nu)(x)+(\VV_\rho\circ\TT)(b_j\mu)(x)\\
&\leq2\sum_{m\in\Z}\big|(K\varphi_{\epsilon_{m+1}}^{\epsilon_m}*(w_j\nu))(x)\big|+(\VV_\rho\circ\TT_\varphi^\mu)b_j(x)\\
&\lesssim|\nu|(Q_j)|x-z_j|^{-n}+(\VV_\rho\circ\TT_\varphi^\mu)b_j(x).
\end{split}
\end{equation*}
Since $\VV_\rho\circ\TT_\varphi^\mu$ is bounded in $L^2(\mu)$, using the estimate above and Cauchy-Schwarz we get
\begin{equation*}
\begin{split}
\sum_j\int_{2R_j\setminus 2Q_j}(\VV_\rho\circ\TT_\varphi)\nu_b^j\,d\mu
&\lesssim\sum_j\int_{2R_j\setminus2Q_j}\frac{|\nu|(Q_j)}{|x-z_j|^{n}}\,d\mu(x)
+\sum_{j}\int_{2R_j\setminus 2Q_j}(\VV_\rho\circ\TT_\varphi^\mu)b_j\,d\mu\\
&\lesssim\sum_j|\nu|(Q_j)\frac{\mu(2R_j)}{\ell(Q_j)^{n}}
+\sum_{j}\|(\VV_\rho\circ\TT_\varphi^\mu)b_j\|_{L^2(\mu)}\mu(2R_j)^{1/2}\\
&\lesssim\sum_j|\nu|(Q_j)
+\sum_{j}\|b_j\|_{L^\infty(\mu)}\mu(R_j)
\lesssim\sum_j|\nu|(Q_j)\lesssim\|\nu\|.
\end{split}
\end{equation*}
Together with (\ref{4cota mesuresL1debil equa5}) and (\ref{4cota mesuresL1debil equa1}), this proves (\ref{4cota mesuresL1debil 1}), and Theorem \ref{4unif rectif teo3} follows.

\section{If $\mu$ is a uniformly $n$-rectifiable measure, then\\
$\VV_\rho\circ\TT_\varphi^\mu:\,L^p(\mu)\to L^p(\mu)$ is a bounded operator for $1<p<\infty$}\label{4sec acotacio unif rectif}

The purpose of this section consists in proving the following theorem and the subsequent corollary. 

\begin{teo}\label{4unif rectif teo}
Let $\mu$ be an $n$-dimensional AD regular Borel measure in $\R^d$ and let $\rho>2$. Assume that there exist constants $C_0$ and $C_1$ such that, for each ball $B$ centered on $\supp\mu$, there is a set $F=F_B$ such that:
\begin{itemize}
\item[$(a)$]$\mu(F\cap B)\geq C_0\mu(B),$
\item[$(b)$] $\VV_\rho\circ\TT_\varphi$ is bounded from $M(\R^d)$ to $L^{1,\infty}(\HH^n_F)$ with constant bounded by $C_1$.
\end{itemize}
Then $\VV_\rho\circ\TT_\varphi$ is bounded from $M(\R^d)$ to $L^{1,\infty}(\mu)$, and $\VV_\rho\circ\TT_\varphi^{\mu}$ is a bounded operator in $L^p(\mu)$ for all $1<p<\infty$.
\end{teo}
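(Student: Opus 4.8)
The plan is to first upgrade hypotheses $(a)$ and $(b)$ to the $L^2(\mu)$-boundedness of $\VV_\rho\circ\TT_\varphi^\mu$, and then to derive the weak-type bound $\VV_\rho\circ\TT_\varphi\colon M(\R^d)\to L^{1,\infty}(\mu)$ together with the $L^p(\mu)$ bounds, $1<p<\infty$, by standard Calder\'on--Zygmund arguments. Two reductions will be used throughout. First, since $\mu$ is AD regular we may write $\mu=f\,\HH^n_{\supp\mu}$ with $f\approx 1$; replacing each $F_B$ by $F_B\cap\supp\mu$, which does not affect $(a)$ since $\mu$ lives on $\supp\mu$, we get $\mu|_{F_B}=f\,\HH^n_{F_B}\leq C\,\HH^n_{F_B}$, so $(b)$ becomes the statement that $\VV_\rho\circ\TT_\varphi$ maps $M(\R^d)$ into $L^{1,\infty}(\mu|_{F_B})$ with norm $\lesssim C_1$, uniformly in $B$. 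Second, $\VV_\rho\circ\TT_\varphi$ is a sublinear operator whose kernel obeys Calder\'on--Zygmund-type bounds: $(\VV_\rho\circ\TT_\varphi)\delta_y(x)\lesssim|x-y|^{-n}$, with H\"older smoothness in $x$ when $x$ is far from $\supp\nu$. These are precisely the estimates already exploited in Section \ref{4sec unif rectif teo3} --- compare the bound $(\VV_\rho\circ\TT_\varphi)\nu_b^j(x)\lesssim|x-z_j|^{-n-1}\ell(R_j)\|\nu_b^j\|$ obtained there --- and they depend only on \eqref{4eq333} and $\varphi_\R$.

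The core of the proof is a stopping-time (corona-type) decomposition of the dyadic lattice $\DD$ of $\mu$, in the spirit of \cite{DS1} and \cite{To}. For a ``top'' $\mu$-cube $R\in\DD$, take a ball $B_R=B(z_R,c\,\ell(R))$ with $c$ small enough that $B_R\cap\supp\mu\subset R$ (possible by the properties of the dyadic lattice), apply $(a)$--$(b)$ to $B_R$, and set $F_R:=F_{B_R}\cap\supp\mu\subset R$; then $(a)$ gives $\mu(R\cap F_R)\geq C_0'\,\mu(R)$ for a fixed $C_0'>0$. Let $\Stop(R)$ be the family of maximal $\mu$-cubes $Q\subsetneq R$ with $\mu(Q\cap F_R)<\theta\,\mu(Q)$, where $\theta\in(0,C_0')$ is a small fixed constant, and iterate to partition $\DD$ into coherent trees $S$ with tops $Q_S$. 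Since each $Q\in\Stop(R)$ has $\mu(Q\cap F_R)<\theta\,\mu(Q)$ while $\mu(R\setminus F_R)\leq(1-C_0')\,\mu(R)$, one gets $\sum_{Q\in\Stop(R)}\mu(Q)\leq\frac{1-C_0'}{1-\theta}\,\mu(R)=\kappa\,\mu(R)$ with $\kappa<1$, and hence, on iterating, the Carleson packing condition $\sum_{S\colon Q_S\subset R_0}\mu(Q_S)\lesssim\mu(R_0)$ for all $R_0\in\DD$. Inside a tree $S$ with top $R$, $\supp\mu$ is ``$\theta$-saturated by $F_R$'' at every scale between $\ell(R)$ and the stopping scales, so the part of $\VV_\rho\circ\TT_\varphi^\mu f$ governed by those scales can be compared with $\VV_\rho\circ\TT_\varphi$ acting on $\mu|_{F_R}$ --- for which the uniform weak-type estimate of the first paragraph is available --- the discrepancy being absorbed by $\mu|_{\bigcup_{Q\in\Stop(R)}Q}$ (passed down to the trees hanging below) and by tail interactions between distinct trees (controlled by the kernel bounds together with the packing of the tops). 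Summing the tree contributions with this Carleson packing should give $\|\VV_\rho\circ\TT_\varphi^\mu f\|_{L^2(\mu)}\lesssim\|f\|_{L^2(\mu)}$.

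Granting the $L^2(\mu)$ bound, the remaining assertions are routine. For the weak-type estimate I would run a Calder\'on--Zygmund decomposition of $\nu\in M(\R^d)$ at level $\lambda$ relative to $\mu$ (in the spirit of Lemma \ref{4lema CZ}; see \cite[Chapter 2]{Tolsa-llibre}): $\nu=g\mu+\sum_j\nu_b^j$ with $|g|\lesssim\lambda$, $\nu_b^j$ supported on $R_j=6Q_j$ with vanishing $\mu$-mean, $\|\nu_b^j\|\lesssim|\nu|(Q_j)$, and $\sum_j\mu(Q_j)\lesssim\|\nu\|/\lambda$; the argument is then identical to the proof of Theorem \ref{4unif rectif teo3}, the good part being handled by the $L^2(\mu)$ bound and Chebyshev and the bad part by the Calder\'on--Zygmund kernel estimates and the $n$-growth of $\mu$ (with the $L^2(\mu)$ bound again for the terms over $2R_j\setminus2Q_j$). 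Interpolating this weak-type $(1,1)$ estimate with the $L^2(\mu)$ bound gives $L^q(\mu)$ for $1<q\leq 2$; this yields the endpoint estimate $\VV_\rho\circ\TT_\varphi^\mu\colon L^\infty(\mu)\to BMO(\mu)$ (use the $L^q$ bound for the local part of $f$ and the kernel smoothness for the far part), and interpolating between $L^2(\mu)$ and this endpoint gives $L^p(\mu)$ for $2\leq p<\infty$; the a priori finiteness needed in these steps is obtained by a routine truncation argument.

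I expect the main difficulty to lie in the second paragraph: making precise, inside a single tree, the comparison of $\VV_\rho\circ\TT_\varphi^\mu$ with $\VV_\rho\circ\TT_\varphi$ acting on $\mu|_{F_R}$, and in particular extracting the $L^2(\mu)$ bound from the merely weak-type information available on the pieces, while keeping the errors that propagate across scales and across trees summable. The smoothness and antisymmetry of the kernel, and the Carleson packing of the tops provided by $(a)$, are what should make this work.
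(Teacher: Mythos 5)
There is a genuine gap, and it sits exactly where you suspect: the passage from hypotheses $(a)$--$(b)$ to the $L^2(\mu)$ bound for $\VV_\rho\circ\TT_\varphi^\mu$. Hypothesis $(b)$ only gives a weak type $(1,1)$ bound on the big piece $F_R$, and weak $(1,1)$ information on pieces cannot be summed into an $L^2$ estimate: inside a tree you have no square-function or orthogonality structure to exploit, and Chebyshev-type bounds at a single level $\lambda$ do not add up over trees to $\|f\|_{L^2(\mu)}^2$. Worse, the ``discrepancy'' you propose to absorb is not small: hypothesis $(a)$ only guarantees $\mu(F_R\cap B_R)\geq C_0'\mu(R)$, so $\mu|_{R\setminus F_R}$ carries a fixed positive proportion of the mass of $R$ at the very top scale (not just below the stopping cubes), and the operator applied to $f\,\mu|_{R\setminus F_R}$ at points of $R\setminus F_R$ is controlled by nothing in your hypotheses --- $(b)$ says nothing about the size of $\VV_\rho\circ\TT_\varphi$ off the set $F$. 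So the central comparison ``$\VV_\rho\circ\TT_\varphi^\mu$ on the tree $\approx$ $\VV_\rho\circ\TT_\varphi$ on $\mu|_{F_R}$ plus absorbable errors'' is not available, and with it the claimed $L^2(\mu)$ bound. (The third paragraph --- Calder\'on--Zygmund decomposition plus interpolation --- is fine in itself, but it is downstream of this unproven step.)

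The paper's proof never passes through $L^2(\mu)$. It runs a good $\lambda$ inequality relative to the maximal operator $M^\mu$, see \eqref{4eqgli0}: on a Whitney ball $B$ of the level set $\{(\VV_\rho\circ\TT_\varphi)\nu>\lambda\}$ one splits $\nu$ into a part supported on $2B$ and a far part; the key structural point is that $(b)$ is a bound from all of $M(\R^d)$, so it can be applied directly to the local part of $\nu$ itself and evaluated at points of $F_B$, whence by $(a)$ a definite proportion of $B\cap\supp\mu$ cannot lie in $\{(\VV_\rho\circ\TT_\varphi)\nu>(1+\epsilon)\lambda\}$ unless $M^\mu\nu$ is large there; the far part is handled by the regularity estimate \eqref{4eqgli01}, $\bigl|(\VV_\rho\circ\TT_\varphi)(\chi_{\R^d\setminus 2B}\nu)(x)-(\VV_\rho\circ\TT_\varphi)(\chi_{\R^d\setminus 2B}\nu)(z)\bigr|\lesssim M^\mu\nu(x)$ for $x,z\in B$, which uses the smoothness of the truncations $\varphi_\epsilon$. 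The good $\lambda$ inequality then yields simultaneously the bound $M(\R^d)\to L^{1,\infty}(\mu)$ and the $L^p(\mu)$ bounds, $1<p<\infty$, since $M^\mu$ is bounded on these spaces. If you want to salvage your outline, the corona construction should be discarded and replaced by this good $\lambda$ scheme (or you would need a genuinely new argument extracting strong-type bounds from weak-type bounds on big pieces, which your sketch does not supply).
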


\begin{coro}\label{unif rectif implies var smooth}
If  $\mu$ is an $n$-dimensional AD regular uniformly $n$-rectifiable measure, then $\VV_\rho\circ\TT_\varphi^{\mu}$ is a bounded operator in $L^p(\mu)$ for all $1<p<\infty$ and $\rho>2$. Moreover, the operator $\VV_\rho\circ\TT_\varphi$ is bounded from $M(\R^d)$ to $L^{1,\infty}(\mu)$, so $\VV_\rho\circ\TT_\varphi^{\mu}$ is also of weak type $(1,1)$.
\end{coro}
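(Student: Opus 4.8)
The plan is to obtain the corollary as an immediate consequence of Theorem~\ref{4unif rectif teo}: it suffices to produce, for every ball $B$ centered on $\supp\mu$, a set $F=F_B$ satisfying conditions $(a)$ and $(b)$ of that theorem with constants $C_0,C_1$ independent of $B$. The natural choice is to take $F_B$ to be a Lipschitz graph. Indeed, since $\mu$ is AD regular and uniformly $n$-rectifiable, it has big pieces of Lipschitz graphs in the sense of David and Semmes \cite{DS1}: there are constants $\theta_0>0$ and $M>0$, depending only on $n$, $d$ and on the AD regularity and uniform rectifiability constants of $\mu$, such that for every $x\in\supp\mu$ and every $0<r\leq\diam(\supp\mu)$ there exists an $n$-dimensional Lipschitz graph $\Gamma_{B}$ whose maximal slope is at most $M$ and for which
$$
\mu\bigl(\Gamma_{B}\cap B(x,r)\bigr)\geq\theta_0\,\mu\bigl(B(x,r)\bigr).
$$

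First I would set $F_B:=\Gamma_B$, so that $\HH^n_{F_B}$ is the $n$-dimensional Hausdorff measure on a Lipschitz graph whose slope is at most the fixed constant $M$. Then condition $(a)$ of Theorem~\ref{4unif rectif teo} holds with $C_0:=\theta_0$. Condition $(b)$ holds by Theorem~\ref{4unif rectif teo3}, which gives that $\VV_\rho\circ\TT_\varphi$ is bounded from $M(\R^d)$ to $L^{1,\infty}(\HH^n_{\Gamma_B})$ with norm depending only on $n$, $d$, $K$, $\rho$, $\varphi_\R$ and the slope of $\Gamma_B$; since every $\Gamma_B$ has slope at most $M$, this norm is bounded by a constant $C_1=C_1(n,d,K,\rho,\varphi_\R,M)$ that does not depend on $B$. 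Applying Theorem~\ref{4unif rectif teo} we conclude that $\VV_\rho\circ\TT_\varphi$ is bounded from $M(\R^d)$ to $L^{1,\infty}(\mu)$ and that $\VV_\rho\circ\TT_\varphi^{\mu}$ is bounded on $L^p(\mu)$ for all $1<p<\infty$. The weak type $(1,1)$ statement then follows by applying the $M(\R^d)\to L^{1,\infty}(\mu)$ bound to the measures $f\mu$ with $f\in L^1(\mu)$.

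Since the corollary merely combines Theorems~\ref{4unif rectif teo} and~\ref{4unif rectif teo3} with a standard geometric property of uniformly rectifiable measures, there is no genuinely hard step here; the real work lies in Theorems~\ref{4unif rectif teo} and~\ref{4unif rectif teo3} themselves. The only point requiring a little care is uniformity: one must make sure that the Lipschitz graphs $\Gamma_B$ furnished for the various balls $B$ all have slope bounded by a single $M$ (so that Theorem~\ref{4unif rectif teo3} provides one constant $C_1$ valid for all of them), and that $\theta_0$ can be chosen independently of $B$; both are built into the quantitative formulation of the big-pieces-of-Lipschitz-graphs property. If instead one only wishes to use that $\mu$ has big pieces of sets that themselves have big pieces of Lipschitz graphs, it is enough to apply Theorem~\ref{4unif rectif teo} twice: first to deduce that any set with big pieces of Lipschitz graphs satisfies the conclusion, and then with such sets playing the role of $F_B$.
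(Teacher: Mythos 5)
Your reduction to Theorem \ref{4unif rectif teo} is the right idea, but the main route you propose rests on a false geometric claim: it is \emph{not} true that an AD regular uniformly $n$-rectifiable measure has big pieces of Lipschitz graphs (BPLG) with uniform constants. What David and Semmes prove, and what the paper actually uses, is that uniform rectifiability is equivalent to the weaker condition $(BP)^2LG$ (big pieces of big pieces of Lipschitz graphs); see the references to \cite{DS2} in the paper's proof. BPLG is strictly stronger than uniform rectifiability (a well-known example of Hrycak gives a uniformly rectifiable set with no big pieces of Lipschitz graphs), so the step ``since $\mu$ is uniformly rectifiable, for every ball there is a Lipschitz graph $\Gamma_B$ of slope $\leq M$ with $\mu(\Gamma_B\cap B)\geq\theta_0\mu(B)$'' is unjustified and in general fails. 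Thus the argument as written does not prove the corollary.

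The fix is exactly the alternative you relegate to your last sentence, and it is the paper's proof: apply Theorem \ref{4unif rectif teo} twice. First, for any measure $\nu$ with BPLG, take $F_B$ to be the Lipschitz graphs; Theorem \ref{4unif rectif teo3} gives hypothesis $(b)$ with a constant depending only on $n,d,K,\rho,\varphi_\R$ and the slope bound, so $\VV_\rho\circ\TT_\varphi:M(\R^d)\to L^{1,\infty}(\nu)$ with a norm controlled by the BPLG and AD regularity constants of $\nu$. Second, since $\mu$ is uniformly rectifiable it has $(BP)^2LG$: each ball contains a big piece of an AD regular set $F$ (with uniformly bounded constants) whose restricted Hausdorff measure has BPLG uniformly; by the first step these measures $\HH^n_F$ satisfy hypothesis $(b)$ with a single constant $C_1$, and hypothesis $(a)$ holds by the big-pieces property, so Theorem \ref{4unif rectif teo} applied to $\mu$ yields the $M(\R^d)\to L^{1,\infty}(\mu)$ bound and the $L^p(\mu)$ bounds; the weak type $(1,1)$ statement then follows by applying the measure bound to $f\mu$, as you say. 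When writing this up you should cite the equivalence of uniform rectifiability with $(BP)^2LG$ from \cite{DS2} and check the uniformity of the constants for the intermediate sets $F$, since that uniformity is what makes the second application of Theorem \ref{4unif rectif teo} legitimate.
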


\begin{proof}[{\bf {\em Proof.}}]
Recall from \cite[Definition 1.26]{DS2} that a Borel measure $\nu$ in $\R^d$ has $BPLG$ ({\em big pieces of Lipschitz graphs}) if $\nu$ is $n$-dimensional AD regular and if there exist constants $C_1>0$ and $\theta>0$ such that, for any $x\in \supp\nu$ and $0<r<\diam(\supp\nu)$, there is (a rotation and translation of) an $n$-dimensional Lipschitz graph $\Gamma$ with constant less than $C_1$ such that $\nu(\Gamma\cap B(x,r))\geq\theta r^n$. Thus, if $\nu$ has $BPLG$, the assumption $(a)$ of Theorem \ref{4unif rectif teo} is satisfied for $\nu$ by taking $F=\Gamma$, while Theorem \ref{4unif rectif teo3} implies that the assumption $(b)$ holds with a uniform constant. Therefore, from Theorem \ref{4unif rectif teo} we deduce that, if $\nu$ has $BPLG$ and $\rho>2$, then $\VV_\rho\circ\TT_\varphi$ is bounded from $M(\R^d)$ to $L^{1,\infty}(\nu)$.

Similarly, a measure $\nu$ has $(BP)^2LG$ ({\em big pieces of big pieces of Lipschitz graphs}) if there exist constants $C_g$, $\theta$, and $0<\alpha\leq1$ so that, if $B$ is any ball centered on $\supp\nu$, then there is an $n$-dimensional AD regular set $F\subset\R^d$ (with constant bounded by $C_g$) such that $\nu(F\cap B)\geq\alpha\nu(B)$ and such that $\HH^n_ F$ has $BPLG$ with uniform constants. So $\VV_\rho\circ\TT_\varphi$ is a bounded operator from $M(\R^d)$ to $L^{1,\infty}(\HH^n_F)$, by the comments above. Hence, we can apply once again Theorem \ref{4unif rectif teo} to $\nu$ (now $(b)$ is satisfied for the big pieces $F$ of $\nu$), and we deduce that, for any measure $\nu$ which has $(BP)^2LG$, $\VV_\rho\circ\TT_\varphi$ is bounded from $M(\R^d)$ to $L^{1,\infty}(\nu)$. 
Similar arguments yield that $\VV_\rho\circ\TT_\varphi^{\nu}$ is a bounded operator in $L^p(\nu)$ for all $1<p<\infty$.

Finally, from \cite[page 22]{DS2} and the remark given in \cite[page 16]{DS2}, we know that if $\mu$ is $n$-dimensional AD regular, then being  uniformly $n$-rectifiable is equivalent to having $(BP)^2LG$. Therefore, the corollary is proved by applying the comments above to $\nu=\mu$.
\end{proof}

Since the arguments for proving Theorem \ref{4unif rectif teo} are more or less standard in Calder\'on-Zygmund theory, for the sake of shortness we will only sketch its proof (see \cite[Chapter 2]{Tolsa-llibre} or \cite[Proposition 1.28 of Part I]{DS2} for a similar argument).

\begin{proof}[{\bf{\em Sketch of the proof of} Theorem \ref{4unif rectif teo}}]
The proof follows by the so-called {\em good $\lambda$ inequality} method.
Fix $\rho>2$ and let $M^\mu$ denote the Hardy-Littlewood maximal operator 
$$M^\mu \nu(x):=\sup_{r>0}\frac{|\nu|(B(x,r))}{\mu(B(x,r))},\quad\text{ for }\nu\in M(\R^d)\text{ and } x\in\supp\mu.$$
{\em The good $\lambda$ inequality}: one shows that there exists some absolute constant $\eta>0$ such that for all $\epsilon>0$ there exists $\delta:=\delta(\epsilon)>0$ such that
\begin{equation}\label{4eqgli0}
\begin{split}
\mu\big(\big\{x\in\R^d\,:\,(\VV_\rho\circ\TT_\varphi)\nu(x)&>(1+\epsilon)\lambda,\;M^\mu \nu(x)\leq\delta\lambda\big\}\big)\\
&\leq(1-\eta)\mu\big(\big\{x\in\R^d\,:\,(\VV_\rho\circ\TT_\varphi)\nu(x)>\lambda\big\}\big)
\end{split}
\end{equation}
for all $\lambda>0$ and $\nu\in M(\R^d)$.
It is easy to check that this implies that 
$\VV_\rho\circ\TT_\varphi$ is bounded from $M(\R^d)$ to $L^{1,\infty}(\mu)$, and that $\VV_\rho\circ\TT_\varphi^{\mu}$ is bounded in $L^p(\mu)$ for all $1<p<\infty$, by standard arguments (recall that $M^\mu$ is bounded in these spaces).

The proof of (\ref{4eqgli0}) is quite standard. The interested reader may look at \cite[Theorem 5.2.1]{Mas-thesis} for the detailed proof, or to \cite[Chapter 2]{Tolsa-llibre} for similar arguments. The only point that we should mention is that, in order to pursue the good $\lambda$ inequality method, one needs the following estimate: let $\nu\in M(\R^d)$, consider a ball $B\subset\R^d$ and take $x,z\in B$. Then,
\begin{equation}\label{4eqgli01}
\big|(\VV_\rho\circ\TT_\varphi)(\chi_{\R^d\setminus 2B}\nu)(x)-(\VV_\rho\circ\TT_\varphi)(\chi_{\R^d\setminus 2B}\nu)(z)\big|\lesssim M^\mu\nu(x).
\end{equation}
We finish the sketch of the proof of Theorem \ref{4unif rectif teo} by showing (\ref{4eqgli01}). Since $x,z\in B$ and $\VV_\rho\circ\TT_\varphi$ is sublinear and positive, by the mean value theorem,
\begin{equation}\label{4eq unif rectif1}
\begin{split}
\big|(\VV_\rho\circ\TT_\varphi)(&\chi_{\R^d\setminus 2B}\nu)(x)-(\VV_\rho\circ\TT_\varphi)(\chi_{\R^d\setminus 2B}\nu)(z)\big|\\
&\leq\sup_{\epsilon_m}\bigg(\sum_{m\in\Z}
|(K{\varphi_{\epsilon_{m+1}}^{\epsilon_m}}*(\chi_{\R^d\setminus 2B}\nu))(x)-(K{\varphi_{\epsilon_{m+1}}^{\epsilon_m}}*(\chi_{\R^d\setminus 2B}\nu))(z)|^\rho\bigg)^{1/\rho}\\
&\leq\sup_{\epsilon_m}\bigg(\sum_{m\in\Z}
\bigg(\int_{B_m(x,z)}|\nabla(\varphi_{\epsilon_{m+1}}^{\epsilon_m}K)(u_{x,z}(y)-y)||x-z|\,d|\nu|(y)\bigg)^\rho\bigg)^{1/\rho},
\end{split}
\end{equation}
where $B_m(x,z):=({\R^d\setminus 2B})\cap(\supp\varphi_{\epsilon_{m+1}}^{\epsilon_m}(x-\cdot)\cup\supp\varphi_{\epsilon_{m+1}}^{\epsilon_m}(z-\cdot))$ and $u_{x,z}(y)$ is some point lying on the segment joining $x$ and $z$.  For each $x$ and $z$, let $\epsilon_m\equiv\epsilon_m(x,z)$ be a sequence that realizes the supremum in the right hand side of (\ref{4eq unif rectif1}).
Given $\epsilon_m>0$, let $j(\epsilon_m)$ denote the integer such that $\epsilon_m\in[2^{-j(\epsilon_m)-1},2^{-j(\epsilon_m)})$. For $j\in\Z$ set $I_j:=[2^{-j-1},2^{-j})$. As usual, we decompose $\Z=\SSS\cup\LL$, where
\begin{equation*}
\begin{split}
&\SSS:=\bigcup_{j\in\Z}\SSS_j,\quad\SSS_j:=\{m\in\Z\,:\,\epsilon_m,\epsilon_{m+1}\in I_j\},\\
&\LL:=\{m\in\Z\,:\,\epsilon_m\in I_i,\,\epsilon_{m+1}\in I_j\text{ for }i<j\}.
\end{split}
\end{equation*}

Notice that if $2^{-j+2}<r(B)$, where $r(B)$ denotes the radius of $B$, then $B_m(x.z)=\emptyset$ for all $m\in\SSS_j$. Therefore, we can assume that $j\leq\log_2(4/r(B))$.
If $m\in\SSS_j$, then $B_m(x,z)\subset B(x,2^{-j+3})$, and for $t\in\supp(\varphi_{\epsilon_{m+1}}^{\epsilon_m}K)$ we have that 
$|\nabla(\varphi_{\epsilon_{m+1}}^{\epsilon_m}K)(t)|\lesssim2^{j(n+2)}|\epsilon_m-\epsilon_{m+1}|$ (see (\ref{Short eq1}) and (\ref{Short eq2})). If $m\in\LL$, we easily have
 $|\nabla(\varphi_{\epsilon_{m+1}}^{\epsilon_m}K)(t)|\lesssim|t|^{-n-1}$. 
Therefore, using (\ref{4eq unif rectif1}), that $\rho>2$, that the sets $B_m(x,z)$ have bounded overlap for $m\in\LL$, and that $|x-z|\lesssim r(B)$, we get
\begin{equation*}
\begin{split}
\big|(\VV_\rho\circ\TT_\varphi)(&\chi_{\R^d\setminus 2B}\nu)(x)-(\VV_\rho\circ\TT_\varphi)(\chi_{\R^d\setminus 2B}\nu)(z)\big|\\
&\lesssim\sum_{j\leq\log_2(4/r(B))}\,\sum_{m\in\SSS_j}
|x-z|2^{j(n+2)}|\epsilon_m-\epsilon_{m+1}|\int_{B(x,2^{-j+3})}\,d|\nu|(y)\\
&\quad+|x-z|\sum_{m\in\LL}\int_{B_m(x,z)}|x-y|^{-n-1}\,d|\nu|(y)\\
&\lesssim\sum_{j\leq\log_2(4/r(B))}
r(B)2^{j(n+1)}\int_{B(x,2^{-j+3})}\,d|\nu|(y)
+r(B)\int_{\R^d\setminus 2B}\frac{d|\nu|(y)}{|x-y|^{n+1}}\\
&\lesssim\sum_{j\leq\log_2(4/r(B))}
\frac{r(B)2^{j}}{\mu(B(x,2^{-j+3}))}\int_{B(x,2^{-j+3})}\,d|\nu|(y)\\
&\quad+r(B)\sum_{k\geq1}\int_{2^{k+2}r(B)\geq|x-y|\geq2^{k-1}r(B)}\frac{d|\nu|(y)}{|x-y|^{n+1}}\\
&\lesssim M^\mu \nu(x)
+\sum_{k\geq1}\frac{2^{-k}}{\mu(B(x,2^{k+2}r(B_i)))}\int_{B(x,2^{k+2}r(B))}\,d|\nu|(y)
\lesssim M^\mu\nu(x).
\end{split}
\end{equation*}
\end{proof}
\begin{remarko}{\em
Notice that, to prove (\ref{4eqgli01}), it is a key fact that we are considering smooth truncations (given by $\varphi_\R$) in the definition of $\TT_\varphi$. These computations are no longer valid if one replaces $\TT_\varphi$ by $\TT$.}
\end{remarko}

\section{If $\mu$ is a uniformly $n$-rectifiable measure, then\\
$\VV_\rho\circ\TT^\mu:\,L^2(\mu)\to L^2(\mu)$ is a bounded operator}\label{5s var no suau}

This section is devoted to the proof of the following result.
\begin{teo}\label{5teo var no suau acotada L2}
Let $\rho>2$ and let $\mu$ be an $n$-dimensional AD regular Borel measure on $\R^d$. If $\mu$ is uniformly $n$-rectifiable, then $\VV_\rho\circ\TT^\mu$ is a bounded operator in $L^2(\mu)$.
\end{teo}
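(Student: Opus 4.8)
The strategy is to reduce the sharp-truncation estimate to the smooth-truncation one, which is already available from Corollary~\ref{unif rectif implies var smooth}, and to handle the remaining ``sharp minus smooth'' part through the corona decomposition of $\mu$ together with the Lipschitz graph estimate of Theorem~\ref{theorem lip}. First I would write $T^\mu_\epsilon f=T^\mu_{\varphi_\epsilon}f+D^\mu_\epsilon f$, where $D^\mu_\epsilon f:=\big((\chi_\epsilon-\varphi_\epsilon)K\big)*(f\mu)$. The kernel $\psi_\epsilon:=(\chi_\epsilon-\varphi_\epsilon)K$ is supported in the annulus $\{\epsilon/2\le|y|\le 2\epsilon\}$, satisfies $|\psi_\epsilon(y)|\lesssim|y|^{-n}$, and, since $\chi_\epsilon-\varphi_\epsilon$ is radial and $K$ is odd, obeys $\int\psi_\epsilon\,d\HH^n_L=0$ for every $n$-plane $L$ through the origin. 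Since $\VV_\rho\circ\TT^\mu$ is sublinear and $\VV_\rho\circ\TT^\mu_\varphi$ is bounded in $L^2(\mu)$ by Corollary~\ref{unif rectif implies var smooth}, it then suffices to prove $\|\VV_\rho(\{D^\mu_\epsilon f\}_{\epsilon>0})\|_{L^2(\mu)}\lesssim\|f\|_{L^2(\mu)}$. For this I would use the usual splitting of the $\rho$-variation into a long part and a short part: writing $I_j:=[2^{-j-1},2^{-j})$ and decomposing a decreasing sequence into the consecutive pairs that cross a dyadic endpoint and those contained in a single $I_j$, together with $\|\cdot\|_{\ell^\rho}\le\|\cdot\|_{\ell^2}$ (valid since $\rho>2$) and the elementary bound $\VV_\rho(\{a_j\}_{j\in\Z})\lesssim\big(\sum_j|a_j|^2\big)^{1/2}$, one gets
\[
\VV_\rho(\{D^\mu_\epsilon f\})(x)\lesssim\Big(\sum_{j\in\Z}\big|D^\mu_{2^{-j}}f(x)\big|^2\Big)^{1/2}+\Big(\sum_{j\in\Z}V_jf(x)^2\Big)^{1/2},
\]
where $V_jf(x)$ is the $\ell^2$-variation of the family $\{D^\mu_\epsilon f(x):\epsilon\in I_j\}$.

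The first term is a square function of Littlewood--Paley type associated with the family $\{D^\mu_{2^{-j}}\}_j$: bounded, compactly supported kernels with $n$-growth and the cancellation recorded above. I would prove its $L^2(\mu)$ boundedness by the same scheme used for the quantity $W\mu$ in the proof of Theorem~\ref{theorem lip}, namely first on a Lipschitz graph (where flatness and cancellation give, via the packing \eqref{pack alpha graph}, a local $L^2$ bound by $\|f\|_{L^\infty(\mu)}^2\mu(D)$ for $f$ supported on a $\mu$-cube $D$), then upgrading to the $L^2$ operator bound by interpolation, and finally transferring to a general uniformly rectifiable $\mu$ through its corona decomposition. In the second term I would split each increment $D^\mu_\epsilon f(x)-D^\mu_\delta f(x)$ (with $\epsilon,\delta\in I_j$) into its $\varphi$-part and its $\chi$-part. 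By the Lipschitz-in-$\epsilon$ estimates \eqref{Short eq1}--\eqref{Short eq2}, the $\varphi$-part contributes the smooth short-variation operator $\big(\sum_j V^\varphi_jf(x)^2\big)^{1/2}$, which is dominated by the operator $S$ appearing in the proof of Theorem~\ref{theorem lip} (now applied to $f\mu$), hence bounded in $L^2(\mu)$ for uniformly rectifiable $\mu$ by the same corona mechanism. The $\chi$-part produces the genuinely new object, the short variation of the Riesz transform with \emph{sharp} truncations,
\[
\mathcal Wf(x):=\Big(\sum_{j\in\Z}\,\sup_{\{\epsilon_m\}\subset I_j}\sum_m\Big|\int_{\epsilon_{m+1}<|x-y|\le\epsilon_m}K(x-y)f(y)\,d\mu(y)\Big|^2\Big)^{1/2}.
\]

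To bound $\mathcal W$ in $L^2(\mu)$ I would fix small parameters $\eta,\theta$ and run the corona decomposition $\DD=\BB\cup\biguplus_{S\in\Trees}S$ of $\mu$. The scales $2^{-j}$ at which the $\mu$-cube of side $\approx 2^{-j}$ containing $x$ lies in $\BB$, or at which a maximal cube $Q_S$ of a tree is crossed, contribute a Carleson sum by the packing conditions $(b)$ and $(e)$ of the corona decomposition. For a scale $2^{-j}$ at which that $\mu$-cube belongs to a tree $S$, I would replace $\mu$ on $B(x,2^{-j})$ by a flat measure $c\,\HH^n_{\Gamma_S}$: by the oddness of $K$, the sharp-truncated integrals of the flat piece over the essentially symmetric shells have $\ell^2$-variation $\lesssim\dist(x,\Gamma_S)/2^{-j}$, while the error of the flat approximation is controlled by $\alpha_\mu$ and $\beta_{2,\mu}$ of that $\mu$-cube --- this is the analogue for sharp truncations of the basic estimate \eqref{basic estimate}. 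Summing these contributions against $\mu$ and invoking the packing $\sum_{Q\subset R}\big(\alpha_\mu(Q)^2+\beta_{2,\mu}(Q)^2\big)\ell(Q)^n\lesssim\ell(R)^n$ of Theorem~\ref{alphas unifrectif}, one again obtains a local $L^2$ estimate for $\mathcal W$; a Calder\'on--Zygmund decomposition and interpolation between the endpoint bounds $H^1(\mu)\to L^1(\mu)$ and $L^\infty(\mu)\to BMO(\mu)$, exactly as in Theorem~\ref{theorem lip} and in \cite{MT}, then upgrade it to $\|\mathcal Wf\|_{L^2(\mu)}\lesssim\|f\|_{L^2(\mu)}$, which would finish the proof.

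I expect the estimate for $\mathcal W$ to be the main obstacle. The sharp truncations $\chi_\epsilon$ are not Lipschitz in the space variable, so the pointwise control by $M^\mu$ of the far contributions that powered the good-$\lambda$ method for the smooth operator --- estimate \eqref{4eqgli01} and the remark following it --- is no longer at our disposal, and one must instead exploit the oddness of $K$ together with the geometry supplied by the corona decomposition to show that the in-block sharp variation is governed by the square-summable coefficients $\alpha_\mu$ and $\beta_{2,\mu}$.
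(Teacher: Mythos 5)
Your top-level reduction (subtract the smooth family, use Corollary \ref{unif rectif implies var smooth}, and split what remains into a dyadic square function plus an in-block variation) is consistent with the paper's own short/long splitting, but the core of the theorem --- the $L^2(\mu)$ bound for the in-block variation with \emph{sharp} truncations, your $\mathcal W$ --- is not actually established by the mechanism you propose, and that mechanism has a genuine gap. You claim that, after replacing $\mu$ near $x$ by a flat measure, ``the error of the flat approximation is controlled by $\alpha_\mu$ and $\beta_{2,\mu}$ of that $\mu$-cube --- the analogue for sharp truncations of the basic estimate \eqref{basic estimate}''. But \eqref{basic estimate} is proved precisely because the test functions there, $(\varphi_{2^{-m}}-\wit\varphi_{2^{-m}})K$, are Lipschitz with constant $\lesssim 2^{m(n+1)}$, and $\alpha_\mu$ is defined by duality against Lipschitz functions only; the sharply truncated kernels $\chi_{A(x,\epsilon_{m+1},\epsilon_m)}K(x-\cdot)$ are discontinuous, so $\dist_{B_Q}(\mu,c\HH^n_L)$ gives no direct control of their integrals against $\mu-c\HH^n_L$. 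This is exactly the difficulty the whole of Section \ref{5ssshort} is built to circumvent: the paper never bounds the sharp in-block variation cube-by-cube by $\alpha_\mu+\beta_{2,\mu}$. Instead it decomposes $f$ locally into Haar pieces $\Delta_Qf$, stopping pieces $\wit\Delta_Qf$ and cube averages; the first two are handled by thin-annulus counting estimates on small-slope Lipschitz graphs (Lemma \ref{5 lema claims}, Claims \ref{5 claim1}--\ref{5 claim2}), which produce the gains $(\ell(Q)/\ell(D))^{1/2}$ and $(\epsilon_m-\epsilon_{m+1})^{1/2}$ with no $\alpha$'s at all; and the locally constant part requires the quasiorthogonality Lemma \ref{5 veins quasiortogonalitat}, the radial projection $p^x$ (which preserves the annuli, so the sharp cut-off becomes harmless and only $K(x-y)-K(x-p^x(y))$ must be estimated), and a wavelet decomposition of the projected density with averaging over translated lattices (Lemma \ref{5lema L2}, whose part $(d)$ only holds on average in the translation parameter), ending in bounds by \emph{sums} of $\alpha_\mu$ over intermediate scales and new Carleson families $\lambda_1,\dots,\lambda_4$, not by a single $\alpha_\mu(Q)$.

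A second, related problem is your proposed upgrade of a local $L^2$ bound for $\mathcal W$ via a Calder\'on--Zygmund decomposition and $H^1(\mu)\to L^1(\mu)$, $L^\infty(\mu)\to BMO(\mu)$ interpolation ``exactly as in Theorem \ref{theorem lip}''. Those endpoint arguments rest on regularity of the truncations in the space variable (estimates of the type \eqref{Short eq1}--\eqref{Short eq2} and \eqref{4eqgli01}), and the paper explicitly remarks after the good-$\lambda$ argument that these computations are no longer valid when $\TT_\varphi$ is replaced by $\TT$; for sharp truncations the difference of kernels at two nearby poles is not pointwise small but lives on thin shells, so the standard atom/CZ estimates do not go through. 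Finally, note a smaller but real issue in your splitting: the smooth in-block $2$-variation $\big(\sum_jV^\varphi_jf^2\big)^{1/2}$ is \emph{not} dominated by $\VV_\rho\circ\TT_\varphi^\mu$ (a $\rho$-variation with $\rho>2$ does not control block-wise $2$-variations), and the operator $S$ of \eqref{definicio Smu} is only bounded in the paper for graph measures applied to $\mu$ itself, so this term too would need the full corona/martingale machinery rather than a citation. In short, your outline identifies the right objects but the decisive estimates for the sharp-truncation short variation are missing, and the specific route you indicate for them (per-cube $\alpha$--$\beta$ control plus CZ interpolation) cannot work as stated.
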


\subsection{Short and long variation}
Given $j\in\Z$, set $I_j:=[2^{-j-1},2^{-j})$. Then, using the triangle inequality, we can split the variation operator into the so-called short variation and long variation operators, i.e., 
$(\VV_\rho\circ\TT^\mu)f(x)\leq (\VV^\SSS_\rho\circ\TT^\mu)f(x)+(\VV^\LL_\rho\circ\TT^\mu)f(x),$ where 
\begin{equation}\label{5 short and long variation}
\begin{split}
&(\VV^\SSS_\rho\circ\TT^\mu)f(x):=\sup_{\{\epsilon_m\}}\bigg(\sum_{j\in\Z}\,\sum_{\epsilon_m,\epsilon_{m+1}\in I_j}|(K\chi_{\epsilon_{m+1}}^{\epsilon_m}*(f\mu))(x)|^{\rho}\bigg)^{1/\rho},\\
&(\VV^\LL_\rho\circ\TT^\mu)f(x):=\sup_{\{\epsilon_m\}}\bigg(\sum_{\begin{subarray}{c}m\in\Z:\,\epsilon_m\in I_j,\,\epsilon_{m+1}\in I_k\\ \text{ for some }j<k\end{subarray}}|(K\chi_{\epsilon_{m+1}}^{\epsilon_m}*(f\mu))(x)|^{\rho}\bigg)^{1/\rho},
\end{split}
\end{equation}
and, in both cases, the pointwise  supremum is taken over all the sequences of positive numbers $\{\epsilon_m\}_{m\in\Z}$ decreasing to zero. To prove Theorem \ref{5teo var no suau acotada L2} we will show that both the short and long variation operators are bounded in $L^2(\mu)$.

\subsection{$L^2(\mu)$ boundedness of $\VV^\LL_\rho\circ\TT^\mu$}\label{5sslong}
The $L^2(\mu)$-norm of the long variation operator $\VV^\LL_\rho\circ\TT^\mu$ can be handled by comparing it with its smoothed version $\VV_\rho\circ\TT_\varphi^\mu$, using Corollary \ref{unif rectif implies var smooth}, and estimating the error terms by the short variation operator. 
\begin{lema}\label{lqlq} We have
$\|(\VV^{\LL}_\rho\circ\TT^\mu)f\|_{L^2(\mu)}
\lesssim\|(\VV^\SSS_\rho\circ\TT^\mu)f\|_{L^2(\mu)}+\|f\|_{L^2(\mu)}.$
\end{lema}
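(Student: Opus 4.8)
The plan is to bound the long variation by the smooth truncations $\TT_\varphi^\mu$ — for which Corollary \ref{unif rectif implies var smooth} already gives $L^2(\mu)$-boundedness — plus error terms that are controlled by the short variation. Fix $x$ and a decreasing sequence $\{\epsilon_m\}$ realizing (up to a factor of $2$) the supremum defining $(\VV^\LL_\rho\circ\TT^\mu)f(x)$. For each pair $\epsilon_{m+1}<\epsilon_m$ with $\epsilon_m\in I_j$, $\epsilon_{m+1}\in I_k$, $j<k$, I would pick the dyadic scales $2^{-j}$ and $2^{-k}$ straddled by the pair, and write
\begin{equation*}
K\chi_{\epsilon_{m+1}}^{\epsilon_m}*(f\mu)(x)
= K\varphi_{2^{-k}}^{2^{-j}}*(f\mu)(x) + \big(K(\chi_{\epsilon_{m+1}}^{\epsilon_m}-\varphi_{2^{-k}}^{2^{-j}})*(f\mu)\big)(x),
\end{equation*}
where $\varphi_{2^{-k}}^{2^{-j}}=\varphi_{2^{-k}}-\varphi_{2^{-j}}$ in the notation of Definition \ref{4defi varphi}. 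The first term, as $(j,k)$ ranges over the long pairs determined by $\{\epsilon_m\}$, is a sub-sequence of the differences of the smoothly truncated family $\{T_{\varphi_{2^{-j}}}^\mu f(x)\}_{j\in\Z}$, so its $\ell^\rho$-norm is at most $(\VV_\rho\circ\TT_\varphi^\mu)f(x)$, which is in $L^2(\mu)$ by Corollary \ref{unif rectif implies var smooth}.

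The main work is the error term. The difference $\chi_{\epsilon_{m+1}}^{\epsilon_m}-\varphi_{2^{-k}}^{2^{-j}}$ is supported in the two annuli $\{|t|\approx 2^{-j}\}$ and $\{|t|\approx 2^{-k}\}$ (the transition regions where $\chi$ and $\varphi$ disagree), and on each such annulus it is a bounded function. Hence
\begin{equation*}
\big|K(\chi_{\epsilon_{m+1}}^{\epsilon_m}-\varphi_{2^{-k}}^{2^{-j}})*(f\mu)(x)\big|
\lesssim \big|K\psi_j^x*(f\mu)(x)\big| + \big|K\psi_k^x*(f\mu)(x)\big|,
\end{equation*}
where $\psi_j^x$ is (a bounded multiple of) the indicator of the annulus $\{c^{-1}2^{-j}\le|x-\cdot|\le c2^{-j}\}$. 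Each such annular average is a difference $K\chi_{\epsilon'}^{\epsilon''}*(f\mu)(x)$ with $\epsilon',\epsilon''\in I_j$ up to enlarging $I_j$ by a bounded number of dyadic neighbours, hence is dominated by $(\VV^\SSS_\rho\circ\TT^\mu)f(x)$ after splitting each annulus into $O(1)$ pieces of comparable inner and outer radius lying in a single $I_{j'}$ (one uses here that $\rho>2\ge1$ so finitely many terms can be summed with a loss depending only on $n,d$). The key bookkeeping point, which I expect to be the main obstacle, is that when I sum the $\ell^\rho$-norms over all long pairs I must ensure that each dyadic scale $j$ is charged only a bounded number of times: this holds because in a fixed long sequence $\{\epsilon_m\}$ each $I_j$ contains at most one $\epsilon_m$ that is the ``outer'' endpoint of a long pair and at most one that is the ``inner'' endpoint, so the error contributions reorganize into at most two copies of a short-variation-type sum over the scales $\{2^{-j}\}$.

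Combining the three pieces gives the pointwise bound
\begin{equation*}
(\VV^\LL_\rho\circ\TT^\mu)f(x)\lesssim (\VV_\rho\circ\TT_\varphi^\mu)f(x)+(\VV^\SSS_\rho\circ\TT^\mu)f(x),
\end{equation*}
and taking $L^2(\mu)$ norms, together with $\|(\VV_\rho\circ\TT_\varphi^\mu)f\|_{L^2(\mu)}\lesssim\|f\|_{L^2(\mu)}$ from Corollary \ref{unif rectif implies var smooth}, yields the claim. I would carry out the steps in the order: (i) reduce to a fixed near-optimal sequence and split each long difference into a smooth part plus an error; (ii) identify the smooth part with a sub-sequence of $\TT_\varphi^\mu$ and invoke Corollary \ref{unif rectif implies var smooth}; (iii) estimate the error on each transition annulus by $O(1)$ short-variation terms, using the kernel bounds \eqref{4eq333} and the $n$-growth of $\mu$ only implicitly through the comparison with $\VV^\SSS_\rho$; (iv) do the overlap count to see each dyadic scale is used boundedly often, and sum.
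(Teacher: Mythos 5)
Your overall strategy (compare with the smooth family, invoke Corollary \ref{unif rectif implies var smooth}, and push the discrepancy between sharp and smooth truncations into the short variation) is the same as the paper's, and your treatment of the smooth part is fine: the dyadic endpoints attached to distinct long pairs are correctly ordered, so those differences are indeed dominated by $(\VV_\rho\circ\TT_\varphi^\mu)f(x)$. The genuine gap is in your step (iii). The function $g_m:=\chi_{\epsilon_{m+1}}^{\epsilon_m}-\varphi_{2^{-k}}^{2^{-j}}$ is bounded and supported in the two transition annuli, but it is not a constant multiple of the indicator of those annuli: near $|t|\approx 2^{-j}$ it is essentially the transition profile of $\varphi_{2^{-j}}$ (plus a sharp piece between $\epsilon_m$ and $2^{-j}$). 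Since $K$ is a signed kernel, the pointwise bound $|g_m|\leq \psi_j^x+\psi_k^x$ does \emph{not} give $|Kg_m*(f\mu)(x)|\lesssim |K\psi_j^x*(f\mu)(x)|+|K\psi_k^x*(f\mu)(x)|$; domination of convolutions requires positivity. So your identification of the error with ``annular averages which are sharp differences $K\chi_{\epsilon'}^{\epsilon''}*(f\mu)(x)$ with $\epsilon',\epsilon''$ in a single $I_{j'}$'' is unjustified. The obvious fallback of putting absolute values inside, $|Kg_m*(f\mu)(x)|\leq \int |K(x-y)||g_m(x-y)||f(y)|\,d\mu(y)\lesssim 2^{jn}\int_{|x-y|\approx 2^{-j}}|f|\,d\mu+2^{kn}\int_{|x-y|\approx 2^{-k}}|f|\,d\mu$, destroys the cancellation and leaves maximal-function-type quantities whose $\ell^\rho$-in-scale sum is not controlled in $L^2(\mu)$.

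What is missing is exactly the device the paper uses at this point: since $\chi_{[4,\infty)}\leq\varphi_\R\leq\chi_{[1/4,\infty)}$, one can write $\chi_\R-\varphi_\R=\int_{1/4}^4\varphi'_\R(s)\big(\chi_{[1,\infty)}-\chi_{[s,\infty)}\big)\,ds$, a convex combination of differences of \emph{sharp} cut-offs, whence $K(\chi_\epsilon-\varphi_\epsilon)*(f\mu)(x)=\int_{1/4}^4\varphi'_\R(s)\,\big(K\chi_{\epsilon}^{\epsilon\sqrt{s}}*(f\mu)\big)(x)\,ds$, an average of genuine sharp truncated differences at the comparable scales $\epsilon$ and $\epsilon\sqrt{s}\in[\epsilon/2,2\epsilon]$. (The paper keeps the smooth part at the same endpoints $\epsilon_m,\epsilon_{m+1}$, so its error is $(\chi_{\epsilon_m}-\varphi_{\epsilon_m})-(\chi_{\epsilon_{m+1}}-\varphi_{\epsilon_{m+1}})$; in your variant with dyadic endpoints you also pick up the harmless sharp pieces $K\chi_{\epsilon_m}^{2^{-j}}$ and $K\chi_{\epsilon_{m+1}}^{2^{-k}}$, but the remaining piece $\chi_{2^{-j}}-\varphi_{2^{-j}}$ still requires this representation.) With it, Minkowski's integral inequality in $L^2(\mu)$, combined with your overlap count (at most two endpoints per dyadic interval, which matches the paper's reduction to sequences with one $\epsilon_m$ per $I_m$), bounds the error by $\|(\VV^\SSS_\rho\circ\TT^\mu)f\|_{L^2(\mu)}$ and the lemma follows; without it, or some equivalent exploitation of the cancellation of $K$ across the transition zone of $\varphi$, your error estimate does not go through.
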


\begin{proof}[{\bf {\em Proof.}}] We decompose
\begin{equation}\label{5 short and long variation1}
\begin{split}
\big((\VV^\LL_\rho&\circ\TT^\mu)f(x)\big)^{\rho}=\sup_{\{\epsilon_m\}}\sum_{\begin{subarray}{c}m\in\Z:\,\epsilon_m\in I_j,\,\epsilon_{m+1}\in I_k\\ \text{ for some }j<k\end{subarray}}|(K\chi_{\epsilon_{m+1}}^{\epsilon_m}*(f\mu))(x)|^{\rho}\\
&\lesssim\sup_{\{\epsilon_m\}}\sum_{\begin{subarray}{c}m\in\Z:\\\epsilon_m\in I_j,\,\epsilon_{m+1}\in I_k\\ \text{ for some }j<k\end{subarray}}\Big(|(K(\chi_{\epsilon_{m+1}}^{\epsilon_m}-\varphi_{\epsilon_{m+1}}^{\epsilon_m})*(f\mu))(x)|^{\rho}+
|(K\varphi_{\epsilon_{m+1}}^{\epsilon_m}*(f\mu))(x)|^{\rho}\Big)\\
&\lesssim\sup_{\{\epsilon_m\}}\sum_{\begin{subarray}{c}m\in\Z:\,\epsilon_m\in I_j,\,\epsilon_{m+1}\in I_k\\ \text{ for some }j<k\end{subarray}}|(K(\chi_{\epsilon_{m+1}}^{\epsilon_m}-\varphi_{\epsilon_{m+1}}^{\epsilon_m})*(f\mu))(x)|^{\rho}+
\big((\VV_\rho\circ\TT_\varphi^\mu)f(x)\big)^\rho.
\end{split}
\end{equation}
For simplicity, we denote by $\big((\VV^{\LL}_\rho\circ\TT_{\chi-\varphi}^\mu)f(x)\big)^\rho$ the first term on the right hand side of (\ref{5 short and long variation1}). Notice that, given $\epsilon,\delta>0$, we have 
$\chi_\epsilon^\delta-\varphi_\epsilon^\delta=(\chi_\epsilon-\varphi_\epsilon)
-(\chi_\delta-\varphi_\delta)$. Recall that, in the definition of $\varphi_\R$ in Definition \ref{4defi varphi}, we have taken $\chi_{[4,\infty)}\leq\varphi_\R\leq\chi_{[1/4,\infty)}$. Hence, given $t\geq0$,
$$\chi_{\R}(t)-\varphi_{\R}(t)=\chi_{[1,\infty)}(t)-\int_{1/4}^4\varphi'_\R(s)\chi_{[s,\infty)}(t)\,ds=
\int_{1/4}^4\varphi'_\R(s)\big(\chi_{[1,\infty)}(t)-\chi_{[s,\infty)}(t)\big)\,ds$$
(that is, $\chi_{\R}-\varphi_{\R}$ is a convex combination of $\chi_{[1,\infty)}-\chi_{[s,\infty)}$ for $1/4\leq s\leq 4$), and thus, by Fubini's theorem, 
\begin{equation*}
\begin{split}
\big(K&(\chi_\epsilon-\varphi_\epsilon)*(f\mu)\big)(x)
=\int\big(\chi_\R(|x-y|^2/\epsilon^2)-\varphi_\R(|x-y|^2/\epsilon^2)\big)K(x-y)f(y)\,d\mu(y)\\
&=\int_{1/4}^4\varphi'_\R(s)\int\Big(\chi_{[1,\infty)}(|x-y|^2/\epsilon^2)-\chi_{[s,\infty)}(|x-y|^2/\epsilon^2)\Big)K(x-y)f(y)\,d\mu(y)\,ds\\
&=\int_{1/4}^4\varphi'_\R(s)\int\chi_{\epsilon}^{\epsilon\sqrt{s}}(x-y)K(x-y)f(y)\,d\mu(y)\,ds
=\int_{1/4}^4\varphi'_\R(s)\big((K\chi_{\epsilon}^{\epsilon\sqrt{s}}*(f\mu))(x)\big)\,ds.
\end{split}
\end{equation*}
Therefore, by the triangle inequality and  Minkowski's integral inequality, we get
\begin{equation*}
\begin{split}
\|(\VV^{\LL}_\rho\circ\TT_{\chi-\varphi}^\mu)f&\|_{L^2(\mu)}
\leq2\bigg\|\sup_{\{\epsilon_m\in I_m:\,m\in\Z\}}\bigg(\sum_{m\in\Z}
|(K(\chi_{\epsilon_{m}}-\varphi_{\epsilon_{m}})*(f\mu))(x)|^{\rho}\bigg)^{1/\rho}\bigg\|_{L^2(\mu)}\\
&\leq2\int_{1/4}^4\varphi'_\R(s)\bigg\|\sup_{\{\epsilon_m\in I_m:\,m\in\Z\}}\bigg(\sum_{m\in\Z}|(K\chi_{\epsilon_m}^{\epsilon_m\sqrt{s}}*(f\mu))(x)|^\rho\bigg)^{1/\rho}\bigg\|_{L^2(\mu)}\,ds.
\end{split}
\end{equation*}
One can easily verify that  $\sup_{\{\epsilon_m\in I_m:\,m\in\Z\}}\big(\sum_{m\in\Z}|(K\chi_{\epsilon_m}^{\epsilon_m\sqrt{s}}*(f\mu))(x)|^\rho\big)^{1/\rho}\lesssim(\VV^\SSS_\rho\circ\TT^\mu)f(x)$ for all $s\in[1/4,4]$ with uniform bounds. Hence
\begin{equation}\label{5 short and long variation1a}
\begin{split}
\|(\VV^{\LL}_\rho\circ\TT_{\chi-\varphi}^\mu)f\|_{L^2(\mu)}
\lesssim\int_{1/4}^4\varphi'_\R(s)\|(\VV^\SSS_\rho\circ\TT^\mu)f\|_{L^2(\mu)}\,ds\lesssim\|(\VV^\SSS_\rho\circ\TT^\mu)f\|_{L^2(\mu)}.
\end{split}
\end{equation}

Finally, using (\ref{5 short and long variation1}), (\ref{5 short and long variation1a}), and Corollary \ref{unif rectif implies var smooth},
\begin{equation*}
\begin{split}
\|(\VV^{\LL}_\rho\circ\TT^\mu)f\|_{L^2(\mu)}
&\lesssim\|(\VV^{\LL}_\rho\circ\TT_{\chi-\varphi}^\mu)f\|_{L^2(\mu)}
+\|(\VV_\rho\circ\TT_{\varphi}^\mu)f\|_{L^2(\mu)}\\
&\lesssim\|(\VV^\SSS_\rho\circ\TT^\mu)f\|_{L^2(\mu)}+\|f\|_{L^2(\mu)}.
\end{split}
\end{equation*}
\end{proof}

Thus, to prove Theorem \ref{5teo var no suau acotada L2}, it only remains to show the 
$L^2(\mu)$ boundedness of $\VV^\SSS_\rho\circ\TT^\mu$.
 
\subsection{$L^2(\mu)$ boundedness of $\VV^\SSS_\rho\circ\TT^\mu$}\label{5ssshort}
Given $f\in L^2(\mu)$ and $x\in\supp\mu$, let $\{\epsilon_m\}_{m\in\Z}$ be a decreasing sequence of positive numbers (depending on $x$) such that 
\begin{equation*}
\big((\VV^\SSS_2\circ\TT^\mu)f(x)\big)^2\leq2\sum_{j\in\Z}\,\sum_{\epsilon_m,\epsilon_{m+1}\in I_j}|(K\chi_{\epsilon_{m+1}}^{\epsilon_m}*(f\mu))(x)|^{2}.
\end{equation*}
Given $D\in\DD_j$ and $x\in D$, we set $\SSS_D(x):=\{m\in\Z:\,\epsilon_{m},\epsilon_{m+1}\in I_j\}$. Since $\rho\geq2$, we have
\begin{equation*}
\begin{split}
\|(\VV^\SSS_\rho\circ\TT^\mu)f\|_{L^2(\mu)}^2&\leq\|(\VV^\SSS_2\circ\TT^\mu)f\|_{L^2(\mu)}^2
\lesssim\int\sum_{j\in\Z}\,\sum_{\epsilon_m,\epsilon_{m+1}\in I_j}|(K\chi_{\epsilon_{m+1}}^{\epsilon_m}*(f\mu))(x)|^{2}\,d\mu(x)\\
&=\sum_{D\in\DD}\int_D\sum_{m\in\SSS_D(x)}|(K\chi_{\epsilon_{m+1}}^{\epsilon_m}*(f\mu))(x)|^{2}\,d\mu(x).
\end{split}
\end{equation*}

Let $\eta$ and $\theta$ be two positive numbers that will be fixed below (see the proofs of Claims \ref{5 claim1} and \ref{5 claim2}). Consider a corona decomposition of $\mu$ with parameters $\eta$ and $\theta$ as in Subsection \ref{5ss corona decomposition}. Then, we can decompose $\DD=\BB\cup(\bigcup_{S\in\Trees}S)$, so that
\begin{equation}\label{5 var eq1}
\begin{split}
\|(\VV^\SSS_\rho\circ\TT^\mu)f\|_{L^2(\mu)}^2
&\lesssim\sum_{D\in\BB}\int_D\sum_{m\in\SSS_D(x)}|(K\chi_{\epsilon_{m+1}}^{\epsilon_m}*(f\mu))(x)|^{2}\,d\mu(x)\\
&\quad+\sum_{S\in\Trees}\,\sum_{D\in S}\int_D\sum_{m\in\SSS_D(x)}|(K\chi_{\epsilon_{m+1}}^{\epsilon_m}*(f\mu))(x)|^{2}\,d\mu(x).
\end{split}
\end{equation}

Since the $\mu$-cubes in $\BB$ satisfy a Carleson packing condition, we can use Carleson's embedding theorem to estimate the sum on the right hand side of (\ref{5 var eq1}) over the $\mu$-cubes in $\BB$. More precisely, if we set $m_D^\mu f:=\mu(D)^{-1}\int_{D}f\,d\mu$ for $D\in\DD$, we have
\begin{equation}\label{5 var eq2}
\begin{split}
\sum_{D\in\BB}\int_D&\sum_{m\in\SSS_D(x)}|(K\chi_{\epsilon_{m+1}}^{\epsilon_m}*(f\mu))(x)|^{2}\,d\mu(x)\\
&\leq\sum_{D\in\BB}\int_D\sum_{m\in\SSS_D(x)}\bigg(\int_{\epsilon_{m+1}\leq|x-y|\leq\epsilon_m}|K(x-y)||f(y)|\,d\mu(y)\bigg)^{2}\,d\mu(x)\\
&\lesssim\sum_{D\in\BB}\int_D\bigg(\frac{1}{\ell(D)^n}\int_{5D}|f|\,d\mu\bigg)^{2}\,d\mu
\approx\sum_{D\in\BB}\big(m_{5D}^\mu|f|\big)^{2}\mu(D)\lesssim\|f\|^2_{L^2(\mu)}.
\end{split}
\end{equation}

Now we are going to estimate now the second term on the right hand side of (\ref{5 var eq1}), that is the sum  over the $\mu$-cubes in $S$, for all $S\in\Trees$. To this end, we need to introduce some notation. Given $R\in\DD_j$ for some $j\in\Z$, let $P(R)$ denote the $\mu$-cube in $\DD_{j-1}$ which contains $R$ (the {\em parent} of $R$), and set
\begin{equation}\label{5 pares fills veins}
\begin{split}
& \Child( R):=\{Q\in\DD_{j+1}:\,Q\subset R\},\\
&V(R):=\{Q\in\DD_j:\, Q\cap B(y,\ell(R))\neq\emptyset\text{ for some }y\in R\}
\end{split}
\end{equation}
($\Child(R)$ are the {\em children} of $R$, and $V(R)$ stands for the {\em vicinity} of $R$). Notice that $P(R)$ is a $\mu$-cube but $ \Child( R)$ and $V(R)$ are collections of $\mu$-cubes. It is not hard to show that the number of $\mu$-cubes in $ \Child( R)$ and $V(R)$ is bounded by some constant depending only on $n$ and the AD regularity constant of $\mu$. If $R\in S$ for some $S\in\Trees$, we denote by $\Tre(R)$ the set of $\mu$-cubes $Q\in S$ such that $Q\subset R$ (the {\em tree} of $R$). Otherwise, i.e., if $R\in\BB$, we set $\Tre(R):=\emptyset$. Finally, if $\Tre(R)\neq\emptyset$, let $\Stop(R)$ denote the set of $\mu$-cubes $Q\in\BB\cup(\GG\setminus \Tre(R))$ such that $Q\subset R$ and $P(Q)\in \Tre(R)$ (the {\em stopping} $\mu$-cubes relative to $R$), so actually $Q\subsetneq R$.  On the other hand, if $R\in\BB$, we set $\Stop(R):=\{R\}$.

Fix $S\in\Trees$, $D\in S$, and $x\in D$. To deal with the second term on the right hand side of (\ref{5 var eq1}), we have to estimate the sum 
$\sum_{m\in\SSS_D(x)}|(K\chi_{\epsilon_{m+1}}^{\epsilon_m}*(f\mu))(x)|^2$. By the definition of $\SSS_D(x)$, we have 
\begin{equation}\label{5 haar decomposition2}
\sum_{m\in\SSS_D(x)}|(K\chi_{\epsilon_{m+1}}^{\epsilon_m}*(f\mu))(x)|^2=
\sum_{m\in\SSS_D(x)}|(K\chi_{\epsilon_{m+1}}^{\epsilon_m}*(\chi_{\wit D}f\mu))(x)|^2,
\end{equation}
where $\wit D:=\bigcup_{R\in V(D)}R$. Since this union of $\mu$-cubes is disjoint, we can decompose the function $\chi_{\wit D}f$ using a Haar basis adapted to $\DD$ in the following manner:
\begin{equation}\label{5 haar decomposition}
\chi_{\wit D}f=\sum_{R\in V(D)}\bigg((m_R^\mu f)\chi_R+\sum_{Q\in \Tre(R)}\Delta_Q f
+\sum_{Q\in \Stop(R)}\wit\Delta_Q f\bigg),
\end{equation}
where we have set 
\begin{equation*}
\begin{split}
\Delta_Q f:=\sum_{U\in  \Child( Q)}\chi_U(m_U^\mu f-m_Q^\mu f),\!\!\quad\text{and}\quad\!\!
\wit\Delta_Q f:=\sum_{U\in  \Child( Q)}\chi_U(f-m_Q^\mu f)=\chi_Q(f-m_Q^\mu f).
\end{split}
\end{equation*}
Using (\ref{5 haar decomposition}), we split the left hand side of (\ref{5 haar decomposition2}) as follows:
\begin{equation}\label{eqmain}
\begin{split}
\sum_{m\in\SSS_D(x)}|(K\chi_{\epsilon_{m+1}}^{\epsilon_m}*(f\mu))(x)|^2
&\lesssim\sum_{m\in\SSS_D(x)}\bigg|\sum_{R\in V(D)}
(K\chi_{\epsilon_{m+1}}^{\epsilon_m}*((m_R^\mu f)\chi_R\mu))(x)\bigg|^2\\
&+\sum_{m\in\SSS_D(x)}\bigg|\sum_{R\in V(D)}\sum_{Q\in \Tre(R)}(K\chi_{\epsilon_{m+1}}^{\epsilon_m}*(\Delta_Q f\mu))(x)\bigg|^2\\
&+\sum_{m\in\SSS_D(x)}\bigg|\sum_{R\in V(D)}\sum_{Q\in \Stop(R)}(K\chi_{\epsilon_{m+1}}^{\epsilon_m}*(\wit\Delta_Q f\mu))(x)\bigg|^2.
\end{split}
\end{equation}
In the following subsections, we will estimate each part separately.

\subsubsection{{\bf Estimate of 
$\sum_{m\in\SSS_D(x)}\big|\sum_{R\in V(D)}\sum_{Q\in \Tre(R)}(K\chi_{\epsilon_{m+1}}^{\epsilon_m}*(\Delta_Q f\mu))(x)\big|^2$ from (\ref{eqmain})}}\label{5 ss5321}

\begin{lema}\label{5 var eq8}
Under the notation above, we have
\begin{equation*}
\sum_{S\in\Trees}\,\sum_{D\in S}\int_D\sum_{m\in\SSS_D(x)}\bigg|\sum_{R\in V(D)}\sum_{Q\in \Tre(R)}(K\chi_{\epsilon_{m+1}}^{\epsilon_m}*(\Delta_Q f\mu))(x)\bigg|^2d\mu(x)\lesssim\|f\|^2_{L^2(\mu)}.
\end{equation*}
\end{lema}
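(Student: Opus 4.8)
The plan is to bound the contribution of the Haar coefficients $\Delta_Q f$ by exploiting two cancellation effects simultaneously: the short-variation structure (only scales $\epsilon_m,\epsilon_{m+1}\in I_j$ appear in $\SSS_D(x)$ when $D\in\DD_j$) and the Lipschitz-graph approximation of the tree $S$ provided by condition $(f)$ of the corona decomposition. First I would reduce the inner sum $\sum_{m\in\SSS_D(x)}|\cdots|^2$ to a single scale: since all $\epsilon_m,\epsilon_{m+1}\in I_j=[2^{-j-1},2^{-j})$ and $\ell(D)=2^{-j}$, the operator $K\chi_{\epsilon_{m+1}}^{\epsilon_m}*\cdot$ is essentially a ``bump at scale $\ell(D)$'', and the short $\rho$-variation in $m$ over such a fixed dyadic band is controlled (using $\rho>2$, as in Lemma \ref{lqlq} and the estimates \eqref{Short eq1}, \eqref{Short eq2}) by a smooth truncated operator $T_{\wit\varphi_{\ell(D)}}$ or a maximal-type quantity at scale $\ell(D)$, uniformly in the sequence $\{\epsilon_m\}$. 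Concretely I expect an estimate of the shape
\begin{equation*}
\sum_{m\in\SSS_D(x)}\bigg|\sum_{R\in V(D)}\sum_{Q\in\Tre(R)}(K\chi_{\epsilon_{m+1}}^{\epsilon_m}*(\Delta_Q f\mu))(x)\bigg|^2
\lesssim \bigg|\sum_{Q\in\Tre(\wit D),\,\ell(Q)\geq\ell(D)} (K\wit\varphi_{\ell(D)}*(\Delta_Q f\mu))(x)\bigg|^2 + (\text{error}),
\end{equation*}
where $\wit D=\bigcup_{R\in V(D)}R$ and the error involves only the finitely many $Q$ with $\ell(Q)$ comparable to $\ell(D)$, which can be absorbed into $\sum_Q\|\Delta_Q f\|_{L^2(\mu)}^2\lesssim\|f\|_{L^2(\mu)}^2$ by almost-orthogonality of the Haar system.

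Second, for the main term I would use that $S$ is coherent and that $\Gamma_S$ approximates $2Q$ for every $Q\in S$. The idea is that $K\wit\varphi_{\ell(D)}*(\Delta_Q f\mu)$, evaluated at $x\in D\subset\Gamma$-neighborhood, is close to the corresponding quantity with $\mu$ replaced by its projection (or a flat piece) on $\Gamma_S$, with the error controlled by the $\beta$ or $\alpha$ coefficients of $Q$ (and of the cubes between $D$ and $Q$). On the flat model one has genuine orthogonality of the Haar pieces under the smooth Riesz-type kernel — this is exactly the kind of square-function estimate underlying Theorem \ref{theorem lip} and the basic estimate \eqref{basic estimate}. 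So I would run a $T(1)$-type / square-function argument: summing over $D\in S$ and integrating in $x$, the diagonal terms (where $Q$ and $D$ are comparable in scale) give $\sum_Q\|\Delta_Q f\|_{L^2}^2$; the off-diagonal terms (where $\ell(Q)\gg\ell(D)$) decay geometrically in $\log(\ell(Q)/\ell(D))$ because the smooth kernel $K\wit\varphi_{\ell(D)}$ has a fixed number of vanishing-moment–like properties against $\Delta_Q f$ on the approximating plane, the defect being paid for by $\wit\beta_{2,\mu}(Q)+\wit\alpha_\mu(C_1Q)$. Then summing the geometric series and invoking the Carleson packing \eqref{pack alpha graph} (available on each $\Gamma_S$, transported via the coherence of $S$ and condition $(f)$), followed by summing over $S\in\Trees$ using the Carleson packing $(e)$ of the maximal cubes $Q_S$, yields the bound $\lesssim\|f\|_{L^2(\mu)}^2$.

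The main obstacle, as usual in these corona arguments, is the off-diagonal estimate: one must show that for $x\in D$, $D\in S$, and $Q\in\Tre(R)$ with $R\in V(D)$ and $\ell(Q)$ much larger than $\ell(D)$, the quantity $(K\wit\varphi_{\ell(D)}*(\Delta_Q f\mu))(x)$ is small — with a gain that is summable after the double sum over $D$ and $Q$. The difficulty is that $\Delta_Q f$ is supported on $Q$ at scale $\ell(Q)$ but we evaluate a kernel truncated at the much smaller scale $\ell(D)$; naively this is $O(\ell(D)/\ell(Q))$-type smallness only in the flat case, and one must carefully track how the non-flatness of $\supp\mu$ relative to $\Gamma_S$ at the intermediate scales enters. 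The clean way is to introduce the flat comparison measure on $\Gamma_S$, use that on $\Gamma_S$ the smooth truncated Riesz-type kernel genuinely annihilates or nearly annihilates $\Delta_Q f$ (one can arrange this since $\int\Delta_Q f\,d\mu=0$ and $\Delta_Q f$ has controlled support and $L^\infty$ size), and estimate the difference by $\big(\wit\beta_{2,\mu}(Q)+\wit\alpha_\mu(C_1Q)\big)$ times a geometrically decaying factor, exactly as in the chain of inequalities leading to \eqref{basic estimate}. Once this per-pair estimate is in hand, Cauchy–Schwarz in the scale gap, the Carleson packing of the $\alpha$ and $\beta$ coefficients, and the packing of the trees finish the proof.
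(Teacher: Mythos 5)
Your proposal misidentifies where the difficulty lies in this lemma, and the step you rely on to reduce matters to a single smooth truncation does not hold. First, the structural point: here $Q$ ranges over $\Tre(R)$ with $R\in V(D)$, so $Q\subset R$ and $\ell(Q)\leq\ell(R)=\ell(D)$. There are no cubes with $\ell(Q)\gg\ell(D)$ in this sum; the coarse scales live in the term $\sum_{R\in V(D)}(K\chi_{\epsilon_{m+1}}^{\epsilon_m}*((m_R^\mu f)\chi_R\mu))(x)$ of \eqref{eqmain}, which is treated separately (and it is only there that the $\alpha$ and $\beta$ coefficients, the comparison with flat measures on $\Gamma_S$, wavelets, and the packing of the tops $Q_S$ enter). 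So the machinery you invoke — vanishing moments of $K\wit\varphi_{\ell(D)}$ against $\Delta_Q f$ on approximating planes, paying $\wit\beta_{2,\mu}+\wit\alpha_\mu$, Carleson packing of \eqref{pack alpha graph} — is aimed at a regime that does not occur in this lemma, while the regime that does occur ($\ell(Q)\ll\ell(D)$, with $Q$ possibly meeting a very thin annulus $A(x,\epsilon_{m+1},\epsilon_m)$) is not addressed.

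Second, the reduction you propose in the first step is not justified: the short variation over $m\in\SSS_D(x)$ is not controlled by a single operator $T_{\wit\varphi_{\ell(D)}}$ plus an error supported on finitely many cubes of scale comparable to $\ell(D)$. The sum over $m$ involves arbitrarily many, arbitrarily thin annuli inside the band $I_j$, and each of them interacts with cubes of the tree at all scales down to zero; estimates \eqref{Short eq1}--\eqref{Short eq2} concern the smooth cutoffs $\varphi$ and give no control on the rough differences $\chi_{\epsilon_{m+1}}^{\epsilon_m}$. What actually closes the argument in the paper is a purely geometric input absent from your proposal: by property $(f)$ of the corona decomposition, the cubes of $\Tre(R)$ at scale $2^{-k}$ that meet a thin annulus around $x$ have total measure $\lesssim 2^{-k}\ell(D)^{n-1}$ (Lemma \ref{5 lema claims}, resting on the fact that a Lipschitz graph with small slope meets $A(z,a,b)$ in measure $\lesssim(b-a)b^{n-1}$, Lemma \ref{lema pendent petita3}). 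This yields the weighted counting bounds of Claims \ref{5 claim1} and \ref{5 claim2}, which, combined with Cauchy--Schwarz against the weight $\ell(Q)^{n-1/2}$, the mean-zero and support properties of $\Delta_Q f$, and the disjointness in $m$ of the annuli (so that $\sum_m\|\chi_{A_m(x)}\Delta_Q f\|_{L^1(\mu)}\leq\|\Delta_Q f\|_{L^1(\mu)}$), produce the gain $(\ell(Q)/\ell(D))^{1/2}$ in \eqref{5 var eq7}; summing in $D\supset Q$ and using $\sum_Q\|\Delta_Q f\|^2_{L^2(\mu)}\leq\|f\|^2_{L^2(\mu)}$ finishes. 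Without the thin-annulus estimate (or some substitute quantifying how much of the tree a single thin annulus can meet), your scheme cannot control the short variation, and the comparison with a flat model via $\alpha$-type distances is additionally problematic here because the truncations $\chi_{A_m(x)}K$ are not Lipschitz test functions.
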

\begin{proof}[{\bf {\em Proof.}}]
Let $C_0>0$ be a small constant to be fixed below. Given $m\in\SSS_D(x)$ set $A_m(x):=A(x,\epsilon_{m+1},\epsilon_m)$, and given $R\in V(D)$ let
\begin{equation*}
\begin{split}
J_m^{1,R}:=\{Q\in \Tre(R):\,Q\cap A_m(x)\neq\emptyset,\,\ell(Q)> C_0(\epsilon_m-\epsilon_{m+1})\},\\
J_m^{2,R}:=\{Q\in \Tre(R):\,Q\cap A_m(x)\neq\emptyset,\,\ell(Q)\leq C_0(\epsilon_m-\epsilon_{m+1})\}.
\end{split}
\end{equation*}
For $Q\in J_m^{1,R}$, we write
$|(K\chi_{\epsilon_{m+1}}^{\epsilon_m}*(\Delta_Q f\mu))(x)|
\lesssim\ell(D)^{-n}\|\chi_{A_m(x)}\Delta_Q f\|_{L^1(\mu)}$. The following claim will be proved in Subsection \ref{5ss proof of claims} below.
\begin{claim}\label{5 claim1}
The following estimate holds: $\sum_{Q\in J_m^{1,R}}\ell(Q)^{n-1/2}\lesssim\ell(D)^{n-1/2}$.
\end{claim}
Using that $V(D)$ has finitely many elements (depending only on $n$ and the AD regularity constant of $\mu$), Cauchy-Schwarz inequality, Claim \ref{5 claim1}, and the previous estimate, we obtain
\begin{equation}\label{5 var eq3}
\begin{split}
\sum_{m\in\SSS_D(x)}\bigg|\sum_{R\in V(D)}&\,\sum_{Q\in J_m^{1,R}}(K\chi_{\epsilon_{m+1}}^{\epsilon_m}*(\Delta_Q f\mu))(x)\bigg|^2\\
&\lesssim\sum_{R\in V(D)}\,\sum_{m\in\SSS_D(x)}\bigg(\sum_{Q\in J_m^{1,R}}
\ell(D)^{-n}\|\chi_{A_m(x)}\Delta_Q f\|_{L^1(\mu)}\bigg)^2\\
&\lesssim\sum_{R\in V(D)}\,\sum_{m\in\SSS_D(x)}\bigg(\sum_{Q\in J_m^{1,R}}\ell(Q)^{n-1/2}\bigg)
\bigg(\sum_{Q\in J_m^{1,R}}
\frac{\|\chi_{A_m(x)}\Delta_Q f\|^2_{L^1(\mu)}}{\ell(D)^{2n}\ell(Q)^{n-1/2}}\bigg)\\
&\lesssim\sum_{R\in V(D)}\,\sum_{m\in\SSS_D(x)}
\,\sum_{Q\in \Tre(R)}
\frac{\|\chi_{A_m(x)}\Delta_Q f\|^2_{L^1(\mu)}}{\ell(D)^{n+1/2}\ell(Q)^{n-1/2}}\\
&\lesssim\sum_{R\in V(D)}\,
\,\sum_{Q\in \Tre(R)}
\bigg(\frac{\ell(Q)}{\ell(D)}\bigg)^{1/2}\frac{\|\Delta_Q f\|^2_{L^1(\mu)}}{\ell(D)^{n}\ell(Q)^{n}}.
\end{split}
\end{equation}

We deal now with the $\mu$-cubes $Q\in J_m^{2,R}$. Let $z_Q$ denote the center of $Q$. Since $\int\Delta_Q f\,d\mu=0$, we can decompose
\begin{equation}\label{5 var eq4}
\begin{split}
(K\chi_{\epsilon_{m+1}}^{\epsilon_m}\!\!*\!(\Delta_Q f\mu))(x)
&=\!\int\!\big( \chi_{A_m(x)}(y)K(x-y)
\!-\!\chi_{A_m(x)}(z_Q)K(x-z_Q)\big)\Delta_Qf(y)\,d\mu(y)\\
&=\!\int\!\chi_{A_m(x)}(y)\Big(K(x-y)\!-\!K(x-z_Q)\Big)\Delta_Qf(y)\,d\mu(y)\\
&\quad+\int\Big(\chi_{A_m(x)}(y)-\chi_{A_m(x)}(z_Q)\Big)K(x-z_Q)\Delta_Qf(y)\,d\mu(y)\\
&=:T_m^{1,\mu}(\Delta_Qf)(x)+T_m^{2,\mu}(\Delta_Qf)(x).
\end{split}
\end{equation}

For the first term on the right hand side of the last equality, we have the standard estimate (by assuming $C_0$ small enough, so any $Q\in J_m^{2,R}$ is far from $x$)
$$|T_m^{1,\mu}(\Delta_Qf)(x)|\lesssim\int_{A_m(x)}\frac{|y-z_Q|}{|x-y|^{n+1}}\,|\Delta_Qf(y)|\,d\mu(y)
\lesssim\frac{\ell(Q)}{\ell(D)^{n+1}}\|\chi_{A_m(x)}\Delta_Qf\|_{L^1(\mu)}.$$
From this estimate and Cauchy-Schwarz inequality, we obtain
\begin{equation*}
\begin{split}
\sum_{m\in\SSS_D(x)}\bigg|\sum_{R\in V(D)}&\,
\sum_{Q\in J_m^{2,R}}T_m^{1,\mu}(\Delta_Qf)(x)\bigg|^2\\
&\lesssim\sum_{R\in V(D)}\,\sum_{m\in\SSS_D(x)}\bigg(\sum_{Q\in J_m^{2,R}}
\frac{\ell(Q)}{\ell(D)^{n+1}}\|\chi_{A_m(x)}\Delta_Qf\|_{L^1(\mu)}\bigg)^2\\
&\lesssim\sum_{R\in V(D)}\bigg(\sum_{Q\in \Tre(R)}\frac{\ell(Q)}{\ell(D)^{n+1}}\sum_{m\in\SSS_D(x)}
\|\chi_{A_m(x)}\Delta_Qf\|_{L^1(\mu)}\bigg)^2\\
&\lesssim\sum_{R\in V(D)}\bigg(\sum_{Q\in \Tre(R)}\frac{\ell(Q)^{n+1}}{\ell(D)^{n+1}}\bigg)\bigg(\sum_{Q\in \Tre(R)}\frac{\|\Delta_Qf\|^2_{L^1(\mu)}}
{\ell(Q)^{n-1}\ell(D)^{n+1}}\bigg).
\end{split}
\end{equation*}
Since $\ell(R)=\ell(D)$ for all $R\in V(D)$, we have 
$\sum_{Q\in \Tre(R)}\big(\frac{\ell(Q)}{\ell(D)}\big)^{n+1}\leq\sum_{Q\in\DD:\,Q\subset R}\big(\frac{\ell(Q)}{\ell(R)}\big)^{n+1}\lesssim1$. Thus, using that $t\lesssim\sqrt{t}$ for all $t\lesssim1$, we conclude
\begin{equation}\label{5 var eq5}
\begin{split}
\sum_{m\in\SSS_D(x)}\bigg|&\sum_{R\in V(D)}\,\sum_{Q\in J_m^{2,R}}T_m^{1,\mu}(\Delta_Qf)(x)\bigg|^2\lesssim\sum_{R\in V(D)}\sum_{Q\in \Tre(R)}\bigg(\frac{\ell(Q)}{\ell(D)}\bigg)^{1/2}\frac{\|\Delta_Qf\|^2_{L^1(\mu)}}{\ell(Q)^{n}\ell(D)^{n}}.
\end{split}
\end{equation}

We deal now with the second term on the right hand side of (\ref{5 var eq4}). Given $Q\in J_m^{2,R}$, since $\supp(\Delta_Qf)\subset Q$, if $Q\subset A_m(x)$ or $Q\subset(A_m(x))^c$ then we obviously have $\chi_{A_m(x)}(y)-\chi_{A_m(x)}(z_Q)=0$ for all $y\in\supp(\Delta_Qf)$. Therefore, to estimate the sum of $T_m^{2,\mu}(\Delta_Qf)(x)$ over all $Q\in J_m^{2,R}$, we can replace $J_m^{2,R}$ by
$$J_m^{3,R}:=\{Q\in \Tre(R):\,Q\cap A_m(x)\neq\emptyset,\,Q\cap (A_m(x))^c\neq\emptyset,\,\ell(Q)\leq C_0(\epsilon_m-\epsilon_{m+1})\}.$$
For $m\in\SSS_D(x)$ and $Q\in J_m^{3,R}$, we will use the estimate $|T_m^{2,\mu}(\Delta_Qf)(x)|\lesssim
\ell(D)^{-n}\|\Delta_Qf\|_{L^1(\mu)}.$
\begin{claim}\label{5 claim2}
The following holds: $\sum_{Q\in J_m^{3,R}}\ell(Q)^{n-1/2}\lesssim\ell(D)^{n-1}(\epsilon_m-\epsilon_{m+1})^{1/2}$.
\end{claim}
Hence, using that $V(D)$ has finitely many terms, Cauchy-Schwarz inequality, assuming Claim \ref{5 claim2} (see Subsection \ref{5ss proof of claims}), and by the previous estimate, we deduce
\begin{equation*}
\begin{split}
\sum_{m\in\SSS_D(x)}\bigg|&\sum_{R\in V(D)}\,\sum_{Q\in J_m^{2,R}}T_m^{2,\mu}(\Delta_Qf)(x)\bigg|^2\lesssim\sum_{R\in V(D)}\,\sum_{m\in\SSS_D(x)}\bigg(\sum_{Q\in J_m^{3,R}}
\frac{\|\Delta_Qf\|_{L^1(\mu)}}{\ell(D)^{n}}\bigg)^2\\
&\leq\sum_{R\in V(D)}\,\sum_{m\in\SSS_D(x)}
\bigg(\sum_{Q\in J_m^{3,R}}\frac{\ell(Q)^{n-1/2}}{\ell(D)^{n-1/2}}\bigg)
\bigg(\sum_{Q\in J_m^{3,R}}
\frac{\ell(Q)^{1/2-n}}{\ell(D)^{n+1/2}}\,\|\Delta_Qf\|^2_{L^1(\mu)}\bigg)\\
&\lesssim\sum_{R\in V(D)}\,\sum_{m\in\SSS_D(x)}\bigg(\frac{\epsilon_m-\epsilon_{m+1}}{\ell(D)}\bigg)^{1/2}
\sum_{Q\in J_m^{3,R}}
\frac{\ell(Q)^{1/2-n}}{\ell(D)^{n+1/2}}\,\|\Delta_Qf\|^2_{L^1(\mu)}\\
&\leq\sum_{R\in V(D)}\,\sum_{Q\in \Tre(R)}
\frac{\ell(Q)^{1/2-n}}{\ell(D)^{n+1/2}}\,\|\Delta_Qf\|^2_{L^1(\mu)}\sum_{\begin{subarray}{c}m\in\SSS_D(x):\,A_m(x)\cap Q\neq\emptyset,\\\ell(Q)\leq C_0(\epsilon_m-\epsilon_{m+1})\end{subarray}}\bigg(\frac{\epsilon_m-\epsilon_{m+1}}{\ell(D)}\bigg)^{1/2}.
\end{split}
\end{equation*}
The sum over $m$ on the right hand side of the last inequality can be easily bounded by some constant depending on $C_0$, thus we finally obtain
\begin{equation}\label{5 var eq6}
\begin{split}
\sum_{m\in\SSS_D(x)}\bigg|&\sum_{R\in V(D)}\,\sum_{Q\in J_m^{2,R}}T_m^{2,\mu}(\Delta_Qf)(x)\bigg|^2\lesssim\sum_{R\in V(D)}\sum_{Q\in \Tre(R)}\bigg(\frac{\ell(Q)}{\ell(D)}\bigg)^{1/2}\frac{\|\Delta_Qf\|^2_{L^1(\mu)}}{\ell(Q)^{n}\ell(D)^{n}}.
\end{split}
\end{equation}

Finally, combining (\ref{5 var eq3}), (\ref{5 var eq4}), (\ref{5 var eq5}), and (\ref{5 var eq6}), we conclude
\begin{equation}\label{5 var eq7}
\begin{split}
\sum_{m\in\SSS_D(x)}\!\bigg|\!\sum_{R\in V(D)}\sum_{Q\in \Tre(R)}\!\!(K\chi_{\epsilon_{m+1}}^{\epsilon_m}\!\!*\!(\Delta_Q f\mu))(x)\bigg|^2
\!\lesssim\!\!\sum_{R\in V(D)}\sum_{Q\in \Tre(R)}\!\!\!\bigg(\frac{\ell(Q)}{\ell(D)}\bigg)^{1/2}\frac{\|\Delta_Qf\|^2_{L^1(\mu)}}{\ell(Q)^{n}\ell(D)^{n}},
\end{split}
\end{equation}
Since $\|\Delta_Qf\|_{L^1(\mu)}\lesssim\|\Delta_Qf\|_{L^2(\mu)}\ell(Q)^{n/2}$ by H\"{o}lder's inequality, since $V(D)$ has finitely many terms, and since $\ell(R)=\ell(D)$ for all $R\in V(D)$, we get
\begin{equation*}
\begin{split}
\sum_{S\in\Trees}\,\sum_{D\in S}\int_D\sum_{m\in\SSS_D(x)}\bigg|&\sum_{R\in V(D)}\sum_{Q\in \Tre(R)}(K\chi_{\epsilon_{m+1}}^{\epsilon_m}*(\Delta_Q f\mu))(x)\bigg|^2d\mu(x)\\
&\lesssim\sum_{S\in\Trees}\,\sum_{D\in S}\,\sum_{R\in V(D)}\,\sum_{Q\in \Tre(R)}\bigg(\frac{\ell(Q)}{\ell(D)}\bigg)^{1/2}\|\Delta_Qf\|^2_{L^2(\mu)}\\
&\leq\sum_{S\in\Trees}\,\sum_{Q\in S}\,\sum_{R\in \DD:\,R\supset Q}\,\sum_{D\in V(R)}\bigg(\frac{\ell(Q)}{\ell(R)}\bigg)^{1/2}\|\Delta_Qf\|^2_{L^2(\mu)}\\
&\lesssim\sum_{S\in\Trees}\,\sum_{Q\in S}\|\Delta_Qf\|^2_{L^2(\mu)}
\leq\sum_{Q\in\DD}\|\Delta_Qf\|^2_{L^2(\mu)}\leq\|f\|^2_{L^2(\mu)}.
\end{split}
\end{equation*}

To complete the proof of Lemma \ref{5 var eq8}, it only remains to show Claims \ref{5 claim1} and \ref{5 claim2}.
\end{proof}

\subsubsection{{\bf Proof of Claims \ref{5 claim1} and \ref{5 claim2}}}\label{5ss proof of claims}

First of all, we need an auxiliary result whose easy proof is left for the reader.
\begin{lema}\label{lema pendent petita3}
Let $\Gamma:=\{x\in \R^d\,:\,x=(y,A(y)),\,y\in\R^n\}$ be the graph of a Lipschitz function $A:\R^n\to\R^{d-n}$ such that $\Lip(A)$ is small enough. Then, $\HH^n_\Gamma(A^d(z,a,b))\lesssim(b-a)b^{n-1}$ for all $0<a\leq b$ and $z\in\Gamma$.
\end{lema}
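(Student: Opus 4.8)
The plan is to reduce the statement to an estimate on $\R^n$ via the graph parametrization and then to slice in polar coordinates. Write $z=(w,A(w))$ with $w\in\R^n$, and recall that $A^d(z,a,b)=\{x\in\R^d:\,a\leq|x-z|\leq b\}$. First I would introduce the bi-Lipschitz map $\Phi:\R^n\to\Gamma$, $\Phi(y)=(y,A(y))$, which satisfies $\Lip(\Phi)\leq(1+\Lip(A)^2)^{1/2}$ and $\Lip(\Phi^{-1})\leq1$ (the inverse being the orthogonal projection onto the first $n$ coordinates). Setting $h(y):=|\Phi(y)-z|=(|y-w|^2+|A(y)-A(w)|^2)^{1/2}$ and $E:=\{y\in\R^n:\,a\leq h(y)\leq b\}$, one has $\Gamma\cap A^d(z,a,b)=\Phi(E)$, and since an $L$-Lipschitz map increases $\HH^n$ by at most the factor $L^n$ (and $\HH^n$ agrees with Lebesgue measure $\mathcal{L}^n$ on $\R^n$),
\[
\HH^n_\Gamma(A^d(z,a,b))=\HH^n(\Phi(E))\leq(1+\Lip(A)^2)^{n/2}\,\mathcal{L}^n(E)\lesssim\mathcal{L}^n(E).
\]
So it suffices to prove $\mathcal{L}^n(E)\lesssim(b-a)b^{n-1}$.

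For this, the plan is to write $y=w+r\omega$ with $r\geq0$ and $\omega\in S^{n-1}$ and to study $g_\omega(r):=h(w+r\omega)=(r^2+|A(w+r\omega)-A(w)|^2)^{1/2}$. Clearly $r\leq g_\omega(r)\leq(1+\Lip(A)^2)^{1/2}r$. The key point is that $g_\omega$ is absolutely continuous and that, at a.e.\ $r>0$,
\[
g_\omega'(r)=\frac{r+\big(A(w+r\omega)-A(w)\big)\cdot\tfrac{d}{dr}A(w+r\omega)}{g_\omega(r)}\geq\frac{(1-\Lip(A)^2)\,r}{(1+\Lip(A)^2)^{1/2}r}=:c>0,
\]
where $c>0$ since $\Lip(A)<1$; here one uses $|A(w+r\omega)-A(w)|\leq\Lip(A)\,r$ together with the fact that $r\mapsto A(w+r\omega)$ is $\Lip(A)$-Lipschitz. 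Hence $g_\omega$ is a strictly increasing homeomorphism of $[0,\infty)$ onto itself, so $\{r\geq0:\,a\leq g_\omega(r)\leq b\}$ is an interval $[r_a(\omega),r_b(\omega)]$; from $g_\omega(r)\geq r$ we get $r_b(\omega)\leq b$, and from $b-a=g_\omega(r_b)-g_\omega(r_a)=\int_{r_a}^{r_b}g_\omega'\geq c\,(r_b-r_a)$ we get $r_b(\omega)-r_a(\omega)\leq(b-a)/c$. Integrating in polar coordinates about $w$ then yields
\[
\mathcal{L}^n(E)=\int_{S^{n-1}}\int_{r_a(\omega)}^{r_b(\omega)}r^{n-1}\,dr\,d\HH^{n-1}(\omega)\leq\HH^{n-1}(S^{n-1})\,\frac{b-a}{c}\,b^{n-1}\lesssim(b-a)b^{n-1},
\]
which completes the argument.

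There is no real obstacle here; the only point requiring a little care is the pointwise lower bound on $g_\omega'$, since $A$ is merely Lipschitz and so $DA$ exists only a.e. This is handled by working along rays: $r\mapsto A(w+r\omega)$ is Lipschitz, hence absolutely continuous and differentiable a.e., which is exactly what is needed to justify both the formula for $g_\omega'$ and the identity $b-a=\int_{r_a}^{r_b}g_\omega'$ used above. Everything else (the bi-Lipschitz change of variables, the polar integration) is routine, consistent with the lemma being left to the reader.
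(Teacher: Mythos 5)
Your argument is correct, and there is nothing in the paper to compare it with: the authors explicitly leave the proof of this lemma to the reader (citing \cite[Lemma 4.1.9]{Mas-thesis} in Remark \ref{rem499}). Your route --- reducing to $\mathcal{L}^n$ on $\R^n$ via the graph map and then slicing along rays, where the radial function $g_\omega$ is Lipschitz with $g_\omega'\geq c>0$ a.e.\ as soon as $\Lip(A)<1$ --- is sound as written and even recovers the sharp condition $\Lip(A)<1$ mentioned in that remark, with the implicit constant depending only on $n$, $d$ and $\Lip(A)$.
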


\begin{remarko}\label{rem499}{\em
Actually, to obtain the conclusion of the lemma, one only needs $\Lip(A)<1$ (see \cite[Lemma 4.1.9]{Mas-thesis}). Let us mention that this assumption is sharp in the sense that if $\Lip(A)\geq1$ then the lemma fails. However, we do not need this stronger version for our purposes.}
\end{remarko}

Claims \ref{5 claim1} and \ref{5 claim2} follow from the next lemma, which will be proved using Lemma \ref{lema pendent petita3}.
\begin{lema}\label{5 lema claims}
Let $C_0>0$ be some constant depending only on $n$, $d$, and the AD regularity constant of $\mu$, and consider $x\in D\in\DD_j$ for some $j\in\Z$.  Let $\epsilon\in[2^{-j-1},2^{-j}).$ Given $k\geq j$ and $R\in V(D)$, set
$$\Lambda_k:=\{Q\in \Tre(R)\cap\DD_k:\, Q\subset A(x,\epsilon-C_02^{-k},\epsilon+C_02^{-k})\}.$$
Then, $\mu\big(\bigcup_{Q\in\Lambda_k}Q\big)\lesssim2^{-k}\ell(D)^{n-1}\approx2^{-k-j(n-1)}$.
\end{lema}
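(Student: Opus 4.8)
The plan is to reduce the estimate to a geometric fact about the Lipschitz graph $\Gamma_S$ attached, by the corona decomposition, to the tree $S$ containing $\Tre(R)$. If $\Tre(R)=\emptyset$ then $\Lambda_k=\emptyset$ and the bound is trivial, so assume $R\in S$ for some $S\in\Trees$; by property $(f)$ of the corona decomposition there is an $n$-dimensional Lipschitz graph $\Gamma_S$ with constant $<\eta$ such that $\dist(z,\Gamma_S)\le\theta\,\diam(Q)$ whenever $Q\in S$ and $z\in C_{cor}Q$ (we take $C_{cor}$ large enough in advance; we are also free to choose $\eta$ and $\theta$ small, since they are only fixed when Claims \ref{5 claim1} and \ref{5 claim2} are proved). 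Since the $\mu$-cubes in $\Lambda_k$ form a disjoint subfamily of $\DD_k$, AD regularity gives $\mu\big(\bigcup_{Q\in\Lambda_k}Q\big)=\sum_{Q\in\Lambda_k}\mu(Q)\approx(\#\Lambda_k)\,2^{-kn}$, so it suffices to show $\#\Lambda_k\lesssim(2^{k-j})^{n-1}$. Note $\epsilon\approx 2^{-j}=\ell(D)$. Finally, since $\ell(R)=\ell(D)$ and $R$ lies in the vicinity $V(D)$ of $D$ we have $\dist(x,R)\lesssim\ell(D)$, hence $x\in C_{cor}R$ and $\dist(x,\Gamma_S)\lesssim\theta\,\ell(D)$; choosing $\theta$ small we may assume $\dist(x,\Gamma_S)\le\epsilon/4$.

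Next I would build a well-separated family of points on $\Gamma_S$. For $Q\in\Lambda_k$ we have $Q\in S$ and $\ell(Q)=2^{-k}$, so property $(f)$ gives $\dist(z,\Gamma_S)\lesssim\theta\,2^{-k}$ for all $z\in Q$; write $z_Q$ for the center of $Q$ and $z_Q^*:=\pi_{\Gamma_S}(z_Q)$, so $|z_Q-z_Q^*|\lesssim\theta\,2^{-k}$. By the small boundaries condition \eqref{small boundary condition} the centers of distinct $\mu$-cubes of $\DD_k$ are $\gtrsim 2^{-k}$-separated, and taking $\theta$ small the points $\{z_Q^*:Q\in\Lambda_k\}$ are still $\gtrsim 2^{-k}$-separated. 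Since $z_Q\in Q\subset A(x,\epsilon-C_0 2^{-k},\epsilon+C_0 2^{-k})$ we get $\big|\,|z_Q^*-x|-\epsilon\,\big|\lesssim 2^{-k}$. Hence, for a suitable small fixed $c$, the balls $B(z_Q^*,c\,2^{-k})$, $Q\in\Lambda_k$, are pairwise disjoint; each satisfies $\HH^n_{\Gamma_S}(B(z_Q^*,c\,2^{-k}))\gtrsim 2^{-kn}$ (as $z_Q^*\in\Gamma_S$ and $\Gamma_S$, being a Lipschitz graph, is AD regular); and each lies in the shell $\Sigma:=\{z:\epsilon-C_1 2^{-k}\le|z-x|\le\epsilon+C_1 2^{-k}\}$ for a fixed $C_1$. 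Summing, $(\#\Lambda_k)\,2^{-kn}\lesssim\HH^n_{\Gamma_S}(\Gamma_S\cap\Sigma)$, so it remains to prove $\HH^n_{\Gamma_S}(\Gamma_S\cap\Sigma)\lesssim 2^{-k}\,\ell(D)^{n-1}$.

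To bound $\HH^n_{\Gamma_S}(\Gamma_S\cap\Sigma)$ I would split into two cases. If $C_1 2^{-k}\ge\epsilon/2$, then $\Sigma\subset B(x,3C_1 2^{-k})$ and, since $\HH^n_{\Gamma_S}(B(x,r))\lesssim r^n$, we get $\HH^n_{\Gamma_S}(\Gamma_S\cap\Sigma)\lesssim(2^{-k})^n\le 2^{-k}\,\ell(D)^{n-1}$ (using $2^{-k}\le\ell(D)$); this settles the boundedly many scales $k$ comparable to $j$. If $C_1 2^{-k}<\epsilon/2$, set $x^*:=\pi_{\Gamma_S}(x)\in\Gamma_S$ and $\delta:=\dist(x,\Gamma_S)\le\epsilon/4$. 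Parametrizing $\Gamma_S$ as a graph and using that $\delta$ is much smaller than the radius $\epsilon$ while $\Gamma_S$ has small Lipschitz constant, one checks that the (tangential) gradient of $z\mapsto|z-x|$ on $\Gamma_S$ is bounded below by a fixed positive constant throughout $\Gamma_S\cap\Sigma$; this is the analogue, for a sphere whose center is close to (but not on) $\Gamma_S$, of the computation behind Lemma \ref{lema pendent petita3}, and it yields $\HH^n_{\Gamma_S}(\Gamma_S\cap\Sigma)\lesssim (C_1 2^{-k})\,\epsilon^{n-1}\lesssim 2^{-k}\,\ell(D)^{n-1}$. (Equivalently, the gradient bound shows $\Gamma_S\cap\Sigma\subset A^d(x^*,a,b)$ with $a,b>0$, $b-a\lesssim 2^{-k}$ and $b\lesssim\epsilon$, and one concludes directly by Lemma \ref{lema pendent petita3}.)

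The main obstacle is precisely this last geometric estimate. Comparing $|z-x|$ with $|z-x^*|$ via the triangle inequality only places $\Gamma_S\cap\Sigma$ in an annulus around $x^*$ of width $\approx\delta$, and $\delta$ can be as large as $\approx\theta\,\ell(D)$, hence $\gg 2^{-k}$ at fine scales, which would give the far too weak bound $\theta\,\ell(D)^n$. The point is that the perpendicular displacement $\delta\ll\epsilon$ affects distances to $x$ only at second order, so that — once the Lipschitz constant of $\Gamma_S$ is chosen small enough — the level sets $\{\,|z-x|=s\,\}\cap\Gamma_S$ genuinely have the expected thickness $\approx 2^{-k}$ inside the shell. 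Granting this, everything else (the reduction to counting cubes, the separated balls, and the invocation of Lemma \ref{lema pendent petita3}) is routine, using only AD regularity, the small boundaries condition, and property $(f)$ of the corona decomposition.
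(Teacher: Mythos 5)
Your first half runs parallel to the paper's: disjointness of the cubes in $\Lambda_k$, the $\gtrsim 2^{-k}$ separation of their centers, their $\theta\,2^{-k}$-closeness to the approximating graph $\Gamma_S$ from property $(f)$, and a packing of (essentially disjoint) balls of $\HH^n_{\Gamma_S}$-measure $\gtrsim 2^{-kn}$ inside a shell of width $\approx 2^{-k}$ and radius $\approx\epsilon$ around $x$; this part is fine, as is your treatment of the boundedly many scales $k$ comparable to $j$. The divergence is in how the shell estimate is closed when $x\notin\Gamma_S$, and this is exactly where your write-up has a gap. The paper sidesteps the off-center difficulty entirely: since one may assume $k\gg j$, every $Q\in\Lambda_k$ lies at distance $\geq\frac34\,\epsilon$ from $x$, so using $\dist(x,\Gamma_S)\lesssim\theta\,\ell(D)$ and $\eta,\theta$ small one modifies $\Gamma_S$ only inside $B(x,\epsilon/4)$ to obtain a Lipschitz graph $\Gamma_S^x$ with small constant which \emph{contains} $x$ and coincides with $\Gamma_S$ in the region where the cubes of $\Lambda_k$ live; then Lemma \ref{lema pendent petita3} applies verbatim with center $x\in\Gamma_S^x$. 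You instead try to prove an off-center analogue of Lemma \ref{lema pendent petita3}, which is a legitimate alternative, but neither of the two justifications you offer for it is complete.

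Concretely: the parenthetical claim that $\Gamma_S\cap\Sigma\subset A^d(x^*,a,b)$ with $b-a\lesssim2^{-k}$ is false in general. Writing $\delta:=\dist(x,\Gamma_S)$ and expanding $|z-x|^2=|z-x^*|^2+\delta^2+2(z-x^*)\cdot(x^*-x)$, the vector $x-x^*$ is only within angle $\approx\eta$ of the vertical directions while $z-x^*$ is within angle $\approx\eta$ of horizontal, so the cross term is merely $O(\eta\,\delta\,|z-x^*|)$; hence the shell condition confines $|z-x^*|$ to an interval of length $O(2^{-k}+\eta\delta)$, and $\eta\delta$ can be of size $\eta\theta\,\ell(D)\gg2^{-k}$ at fine scales (a graph consisting of a horizontal piece and a slightly tilted piece already exhibits a spread $\approx\eta\delta$ of $|z-x^*|$ on a single sphere $\partial B(x,\epsilon)$). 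The primary route is also not closed as stated: a lower bound on the tangential gradient of $z\mapsto|z-x|$ on $\Gamma_S\cap\Sigma$, fed into the coarea formula, only gives $\HH^n_{\Gamma_S}(\Gamma_S\cap\Sigma)\lesssim\int\HH^{n-1}(\Gamma_S\cap\partial B(x,s))\,ds$ over $|s-\epsilon|\lesssim2^{-k}$, and you still need the slice bound $\HH^{n-1}(\Gamma_S\cap\partial B(x,s))\lesssim\epsilon^{n-1}$, which is not a consequence of the gradient bound and is itself an off-center estimate of the same nature as the one being proved. The statement you want is true, and your heuristic (the perpendicular displacement enters only at second order) is the right one; the cleanest completion along your lines is a radial slicing over the base plane: for each horizontal unit direction $\omega$, the function $t\mapsto|(x'+t\omega,A(x'+t\omega))-x|$ has derivative $\gtrsim1$ on the region where its value is $\approx\epsilon$ (using $\eta$ small and $\delta\leq\epsilon/4$), so the admissible $t$'s form an interval of length $\lesssim2^{-k}$ at height $t\approx\epsilon$, and integrating in polar coordinates yields $\HH^n_{\Gamma_S}(\Gamma_S\cap\Sigma)\lesssim2^{-k}\epsilon^{n-1}$. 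With that step supplied (or with the paper's modification trick, which is shorter), your argument is correct; as written, the key estimate is asserted rather than proved, and one of the two proposed justifications is wrong.
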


\begin{proof}[{\bf {\em Proof.}}]
First of all, we can assume $k\gg j$ (otherwise, the claim follows easily using the AD regularity of $\mu$), thus we may assume that $\dist(x,Q)\geq\frac{3}{4}\,\epsilon$.
For simplicity, set $S\equiv \Tre(R)$. By the property $(f)$ of the corona decomposition of $\mu$, there exists a (rotation and translation of an) $n$-dimensional Lipschitz graph $\Gamma_S$ with $\Lip(\Gamma_S)\leq\eta$ such that $\dist(y,\Gamma_S)\leq\theta\,\diam(Q)$ whenever $y\in C_{cor}Q$ and $Q\in S$, for some given constant $C_{cor}\geq2$. Since $x\in D$ and $R\in V(D)$, we have $x\in C_{cor}Q$ assuming $C_{cor}$ big enough, and so $\dist(x,\Gamma_S)\leq\theta\,\diam(Q)$. Hence, if $\eta$ and $\theta$ are small enough, one can easily modify $\Gamma_S$ inside $B(x,\frac{1}{4}\,\epsilon)$ to obtain a Lipschitz graph $\Gamma_S^x$ such that $x\in\Gamma_S^x$, and moreover
\begin{equation}\label{5lema anell}
\Lip(\Gamma_S^x)\leq\eta'\text{ for some }\eta'\text{ small enough},\quad\text{and}\quad\Gamma_S^x
\setminus B(x,\epsilon/4)=\Gamma_S\setminus B(x,\epsilon/4).
\end{equation}
Using that $\dist(x,Q)\geq\frac{3}{4}\,\epsilon$ for all $Q\in\Lambda_k$, that $\dist(z_Q,\Gamma_S)\leq\theta\,\diam(Q)$ for the centre $z_Q$ of $Q$, and the last part of (\ref{5lema anell}), we deduce  that $\dist(z_Q,\Gamma_S^x)\leq\theta\,\diam(Q)$ for all $Q\in\Lambda_k$.
So $B(z_Q,\theta\,\diam(Q))\cap\Gamma_S^x\neq\emptyset$, which in turn yields 
$\HH^n\big({\Gamma_S^x}\cap B(z_Q,2\theta\,\diam(Q))\big)\gtrsim(\theta\,\diam(Q))^n$. Therefore, since $\{B(z_Q,2\theta\,\diam(Q))\}_{Q\in\Lambda_k}$ is a family with finite overlap bounded by some constant depending only on $n$, $\theta$, and the AD regularity constant of $\mu$, we have
\begin{equation*}
\begin{split}
\mu\bigg(\bigcup_{Q\in\Lambda_k}Q\bigg)&
\approx\sum_{Q\in\Lambda_k}\ell(Q)^n
\lesssim\theta^{-n}\sum_{Q\in\Lambda_k}\HH^n\big({\Gamma_S^x}\cap B(z_Q,2\theta\,\diam(Q))\big)\\
&\lesssim\theta^{-n}\HH^n_{\Gamma_S^x}\bigg(\bigcup_{Q\in\Lambda_k}B(z_Q,2\theta\,\diam(Q))\bigg)\\
&\lesssim\theta^{-n}\HH^n_{\Gamma_S^x}\big(A(x,\epsilon-C_02^{-k},\epsilon+C_02^{-k})\big)
\lesssim\theta^{-n}2^{-k-j(n-1)},
\end{split}
\end{equation*}
where we used Lemma \ref{lema pendent petita3} and that $\epsilon\approx2^{-j}$ in the last inequality. The lemma is proved. 
\end{proof}

\begin{proof}[{\bf{\em Proof of} Claim \ref{5 claim1}}]
Recall that $J_m^{1,R}:=\{Q\in \Tre(R):\,Q\cap A_m(x)\neq\emptyset,\,\ell(Q)\geq C_0(\epsilon_m-\epsilon_{m+1})\},$ where $R\in V(D)$ and $D\in\DD_j$. We have to check that $\sum_{Q\in J_m^{1,R}}\ell(Q)^{n-1/2}\lesssim\ell(D)^{n-1/2}$.
We will split the sum into different scales and we will apply Lemma \ref{5 lema claims} at each scale.

Given $i\in\Z$ such that $2^{-i}\geq C_0(\epsilon_m-\epsilon_{m+1})$, the number of $\mu$-cubes $Q\in\DD_i$ such that $Q\subset R$ and $Q\cap A_m(x)\neq\emptyset$ is bounded by $C\ell(R)^{n-1}2^{i(n-1)}\approx2^{-j(n-1)+i(n-1)}$, since 
for all these $\mu$-cubes, $Q\subset A(x,\epsilon_{m+1}-C2^{-i},\epsilon_m+C2^{-i})\subset
A(x,\epsilon_{m}-C2^{-i+1},\epsilon_m+C2^{-i+1})$ for some constant $C>0$ big enough, and then by Lemma \ref{5 lema claims}, 
$\mu\big(\bigcup_{Q\in J_m^{1,R}\cap\DD_i}Q\big)\lesssim 2^{-i}\ell(D)^{n-1}$.  Therefore, 
\begin{equation*}
\begin{split}
\sum_{Q\in J_m^{1,R}}\ell(Q)^{n-1/2}&=\sum_{i\in\Z:\,i\geq j}2^{i/2}\sum_{Q\in J_m^{1,R}\cap\DD_i}\ell(Q)^{n}\lesssim\sum_{i\in\Z:\,i\geq j}2^{i/2}2^{-i}\ell(D)^{n-1}\\
&\approx2^{-j/2}\ell(D)^{n-1}=\ell(D)^{n-1/2}.
\end{split}
\end{equation*}
\end{proof}

\begin{proof}[{\bf{\em Proof of} Claim \ref{5 claim2}}]
Recall that $J_m^{3,R}:=\{Q\in \Tre(R):\,Q\cap A_m(x)\neq\emptyset,\,Q\cap (A_m(x))^c\neq\emptyset,\,\ell(Q)\leq C_0(\epsilon_m-\epsilon_{m+1})\},$ where $R\in V(D)$ and $D\in\DD_j$. We have to check that $$\sum_{Q\in J_m^{3,R}}\ell(Q)^{n-1/2}\lesssim\ell(D)^{n-1}(\epsilon_m-\epsilon_{m+1})^{1/2}.$$ As before, 
we will split the sum into the different scales and we will apply Lemma \ref{5 lema claims} at each scale.
Given $i\in\Z$ such that $2^{-i}\leq C_0(\epsilon_m-\epsilon_{m+1})$, since for any $Q\in J_m^{3,R}\cap\DD_i$ we have $Q\subset A(x,\epsilon_{m+1}-C2^{-i},\epsilon_{m+1}+C2^{-i})\cup A(x,\epsilon_{m}-C2^{-i},\epsilon_{m}+C2^{-i}$) for some constant $C>0$ big enough, by Lemma \ref{5 lema claims} applied to both annuli we have $\mu\big(\bigcup_{Q\in J_m^{3,R}\cap\DD_i}Q\big)\lesssim2^{-i}\ell(D)^{n-1}$. Therefore,
\begin{equation*}
\begin{split}
\sum_{Q\in J_m^{3,R}}\ell(Q)^{n-1/2}&=\sum_{\begin{subarray}{c}i\in\Z:\,i\geq-\log_2( C_0(\epsilon_m-\epsilon_{m+1}))\end{subarray}}2^{i/2}\sum_{Q\in J_m^{3,R}\cap\DD_i}\ell(Q)^n\\
&\lesssim\sum_{\begin{subarray}{c}i\in\Z:\,i\geq-\log_2( C_0(\epsilon_m-\epsilon_{m+1}))\end{subarray}}2^{-i/2}\ell(D)^{n-1}\approx(\epsilon_m-\epsilon_{m+1})^{1/2}\ell(D)^{n-1}.
\end{split}
\end{equation*}
\end{proof}

\subsubsection{{\bf Estimate of 
$\sum_{m\in\SSS_D(x)}\big|\sum_{R\in V(D)}\sum_{Q\in \Stop(R)}(K\chi_{\epsilon_{m+1}}^{\epsilon_m}*(\wit\Delta_Q f\mu))(x)\big|^2$ from (\ref{eqmain})}}\label{5 ss5321b}
\begin{lema}\label{5 var eq11}
Under the notation above, we have
\begin{equation*}
\sum_{S\in\Trees}\,\sum_{D\in S}\int_D\sum_{m\in\SSS_D(x)}\bigg|\sum_{R\in V(D)}\sum_{Q\in \Stop(R)}(K\chi_{\epsilon_{m+1}}^{\epsilon_m}*(\wit\Delta_Q f\mu))(x)\bigg|^2d\mu(x)\lesssim\|f\|^2_{L^2(\mu)}.
\end{equation*}
\end{lema}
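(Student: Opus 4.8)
The plan is to run the argument of Lemma~\ref{5 var eq8} with $\Stop(R)$ and $\wit\Delta_Q f$ in place of $\Tre(R)$ and $\Delta_Q f$, and then to close the estimate by Carleson's embedding theorem instead of by Haar orthogonality. The point is that, although one cannot sum $\|\wit\Delta_Q f\|_{L^2(\mu)}^2$ over the stopping $\mu$-cubes (these overlap across scales, even across different trees), the family of stopping $\mu$-cubes satisfies a Carleson packing condition, which is exactly what is needed to control the corresponding $L^1$-normalized quantities.

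\emph{Step 1: pointwise estimate.} For $S\in\Trees$, $D\in S$ and $x\in D$, I would first prove
\begin{equation}\label{eq:stopplan}
\sum_{m\in\SSS_D(x)}\bigg|\sum_{R\in V(D)}\sum_{Q\in\Stop(R)}(K\chi_{\epsilon_{m+1}}^{\epsilon_m}*(\wit\Delta_Q f\mu))(x)\bigg|^2\lesssim\sum_{R\in V(D)}\sum_{Q\in\Stop(R)}\bigg(\frac{\ell(Q)}{\ell(D)}\bigg)^{1/2}\frac{\|\wit\Delta_Q f\|_{L^1(\mu)}^2}{\ell(Q)^n\ell(D)^n},
\end{equation}
which is the exact analogue of~\eqref{5 var eq7}. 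The derivation is word for word the one in the proof of Lemma~\ref{5 var eq8}: one splits $\Stop(R)$ according to whether $\ell(Q)>C_0(\epsilon_m-\epsilon_{m+1})$ or not, treats the large $\mu$-cubes by the crude $L^1$ bound, and treats the small ones by subtracting the value of the kernel at the center $z_Q$ as in~\eqref{5 var eq4}; the latter is licit because $\int\wit\Delta_Q f\,d\mu=0$. The one new point is that the counting estimates of Claims~\ref{5 claim1} and~\ref{5 claim2}, i.e.\ Lemma~\ref{5 lema claims}, must be checked for the $\mu$-cubes of $\Stop(R)$: if $R\in\BB$ this is trivial since $\Stop(R)=\{R\}$, while if $R$ lies in a tree $S'$, then every $Q\in\Stop(R)$ has $P(Q)\in\Tre(R)\subset S'$, so $\dist(z_Q,\Gamma_{S'})\leq\theta\,\diam(P(Q))\leq 2\theta\,\diam(Q)$, and since $R\in V(D)$ one has $x\in C_{cor}R$ for $C_{cor}$ large, hence $\dist(x,\Gamma_{S'})\lesssim\theta\ell(D)$; thus $x$ and the $\mu$-cubes of $\Stop(R)$ sit within distance $\lesssim\theta$ (times side length) of the same Lipschitz graph, and the proof of Lemma~\ref{5 lema claims} (modifying that graph near $x$) applies verbatim.

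\emph{Step 2: the stopping $\mu$-cubes pack, then Carleson embedding.} I would set $\mathcal Q:=\{Q\in\DD:\,Q\in\Stop(R)\text{ for some }R\in\DD\}$ and show $\mathcal Q\subset\BB\cup\{Q_S:\,S\in\Trees\}$: a stopping $\mu$-cube either coincides with a bad $\mu$-cube $R$ (case $R\in\BB$), or it belongs to $\GG\setminus\Tre(R)$ with $P(Q)\in\Tre(R)$ for some $R\in\GG$, in which case the coherence of the tree containing $R$ (which contains $P(Q)$ but not $Q$) together with the coherence of the tree containing $Q$ forces $Q$ to be the top $\mu$-cube of its own tree. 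Since $\BB$ and $\{Q_S\}$ satisfy Carleson packing conditions (properties $(b)$ and $(e)$ of the corona decomposition, see Subsection~\ref{5ss corona decomposition}), so does $\mathcal Q$. Integrating~\eqref{eq:stopplan} over $x\in D$, summing over $D\in S$ and $S\in\Trees$, and using $\mu(D)\approx\ell(D)^n$, $\#V(D)\lesssim1$ and $\ell(R)=\ell(D)$ for $R\in V(D)$, one reaches $\lesssim\sum_{Q\in\mathcal Q}\ell(Q)^{-n}\|\wit\Delta_Q f\|_{L^1(\mu)}^2$ after absorbing, for each fixed $Q$, the $\lesssim1$ choices of $D$ (hence of $S$) per admissible $R$ and the geometric sum $\sum_{R\supset Q}(\ell(Q)/\ell(R))^{1/2}\lesssim1$. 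Finally $\|\wit\Delta_Q f\|_{L^1(\mu)}\leq 2\int_Q|f|\,d\mu$ gives $\lesssim\sum_{Q\in\mathcal Q}\mu(Q)(m_Q^\mu|f|)^2\lesssim\|f\|_{L^2(\mu)}^2$ by Carleson's embedding theorem applied to the Carleson family $\mathcal Q$, as in~\eqref{5 var eq2}.

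\emph{Main obstacle.} The delicate part is precisely Step~2: recognizing that the stopping $\mu$-cubes form a Carleson family (so that one must work with the $L^1$, not the $L^2$, normalization of $\wit\Delta_Q f$, unlike in Lemma~\ref{5 var eq8}) and checking that Lemma~\ref{5 lema claims} still applies to them; the rest is a routine repetition of the computations of Section~\ref{5ssshort}.
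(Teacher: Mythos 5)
Your proposal is correct and follows essentially the same route as the paper: the pointwise estimate with the analogues of Claims \ref{5 claim1} and \ref{5 claim2} for $\Stop(R)$ (using only that $\wit\Delta_Qf$ is supported in $Q$ with vanishing mean and that the stopping cubes stay close to the graph $\Gamma_{S'}$ of the tree containing $R$), followed by the observation that each stopping cube is either a bad cube or a top cube $Q_S$, so the $L^1$-normalized quantities $\|\wit\Delta_Qf\|_{L^1(\mu)}^2\ell(Q)^{-n}\lesssim (m_Q^\mu|f|)^2\mu(Q)$ are summed via the Carleson packing of $\BB\cup\{Q_S\}$ and Carleson's embedding theorem. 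This is exactly the paper's argument, including the identification of the key points.
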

\begin{proof}[{\bf{\em Proof}}]
Given $R\in V(D)$, consider a $\mu$-cube $Q\in \Stop(R)$. If $\Tre(R)\neq\emptyset$, then
$Q\in\BB\cup(\GG\setminus \Tre(R))$, $Q\subset R$ and $P(Q)\in \Tre(R)$ (in particular, $Q\subsetneq R$). Take $S\in\Trees$ such that $R\in S$. By property $(f)$ of the corona decomposition (see Subsection \ref{5ss corona decomposition}), we have $\dist(y,\Gamma_S)\leq\theta\diam(P(Q))$ for all $y\in C_{cor}P(Q)$. Hence, $\dist(y,\Gamma_S)\leq C\theta\diam(Q)$ for all $y\in C_{cor}Q$. On the other hand, if $\Tre(R)=\emptyset$ we have set $\Stop(R)=\{R\}$. In this case, we have $R\in\BB$. Take $S$ such that $D\in S$. Since $R\in V(D)$, we have $R\subset C_{cor}D$ if $C_{cor}$ is chosen big enough, and thus $\dist(y,\Gamma_S)\leq C\theta \diam(R)$ for all $y\in C'R$, where $C$ is as above and $C'$ depends on $C_{cor}$.

Taking into account the comments above, one can prove the following claims using similar arguments to the ones in the proof of  Claims \ref{5 claim1} and \ref{5 claim2}.

\begin{claim}\label{5 claim3}
Let $x\in D\in\DD$, $R\in V(D)$, and $m\in\SSS_D(x)$. If we set 
$J_m^{1,R}:=\{Q\in \Stop(R):\,Q\cap A_m(x)\neq\emptyset,\,\ell(Q)\geq C_0(\epsilon_m-\epsilon_{m+1})\},$
then $\sum_{Q\in J_m^{1,R}}\ell(Q)^{n-1/2}\lesssim\ell(D)^{n-1/2}$.
\end{claim}

\begin{claim}\label{5 claim4}
Let $x\in D\in\DD$, $R\in V(D)$, and $m\in\SSS_D(x)$. If we set 
$J_m^{3,R}:=\{Q\in \Stop(R):\,Q\cap A_m(x)\neq\emptyset,\,Q\cap (A_m(x))^c\neq\emptyset,\,\ell(Q)\leq C_0(\epsilon_m-\epsilon_{m+1})\},$
then $\sum_{Q\in J_m^{3,R}}\ell(Q)^{n-1/2}\lesssim\ell(D)^{n-1}(\epsilon_m-\epsilon_{m+1})^{1/2}$.
\end{claim}

The only properties of $\Delta_Q f$ that we used to obtain (\ref{5 var eq7}) were that $\Delta_Q f$ is supported in $Q$ and that $\int\Delta_Q f\,d\mu=0$. The function $\wit\Delta_Qf$ is also supported in $Q$ and has vanishing integral. Thus, if we replace $\Tre(R)$ by $\Stop(R)$, Claims \ref{5 claim1} and \ref{5 claim2} by Claims \ref{5 claim3} and \ref{5 claim4}, and $\Delta_Q f$ by $\wit\Delta_Q f$, the same arguments that gave us (\ref{5 var eq7}) yield the following estimate:
\begin{equation}\label{5 var eq9}
\begin{split}
\!\sum_{m\in\SSS_D(x)}\!\bigg|\!\sum_{R\in V(D)}\sum_{Q\in \Stop(R)}\!\!(K\chi_{\epsilon_{m+1}}^{\epsilon_m}\!\!*\!(\wit\Delta_Q f\mu))(x)\bigg|^2
\!\!\lesssim\!\!\!\sum_{R\in V(D)}\sum_{Q\in \Stop(R)}\!\!\frac{\ell(Q)^{1/2-n}}{\ell(D)^{1/2+n}}\,\|\wit\Delta_Qf\|^2_{L^1(\mu)}.
\end{split}
\end{equation}
Below we will use that 
$\|\wit\Delta_Qf\|^2_{L^1(\mu)}\ell(Q)^{-n}=\big(\int_Q|f-m^\mu_Qf|\,d\mu\big)^2\ell(Q)^{-n}\lesssim\big(m^\mu_Q|f|\big)^2\mu(Q).$
Notice that, by the definition of $\Stop(R)$ and since the corona decomposition is coherent (property $(d)$), any $Q\in \Stop(R)$ is actually a maximal $\mu$-cube $Q_S$ of some $S\in\Trees$ or $Q\in\BB$ (and in this case $\Tre(R)$ is empty). Hence, if we integrate (\ref{5 var eq9}) in $D$, we sum over all $D\in S\in\Trees$, and we change the order of summation, we get
\begin{equation*}
\begin{split}
\sum_{S\in\Trees}\,\sum_{D\in S}\int_D\sum_{m\in\SSS_D(x)}&\bigg|\sum_{R\in V(D)}\sum_{Q\in \Stop(R)}(K\chi_{\epsilon_{m+1}}^{\epsilon_m}*(\wit\Delta_Q f\mu))(x)\bigg|^2d\mu(x)\\
&\lesssim\sum_{S\in\Trees}\,\sum_{D\in S}\,\sum_{R\in V(D)}\,\sum_{Q\in \Stop(R)}\bigg(\frac{\ell(Q)}{\ell(D)}\bigg)^{1/2}\frac{\|\wit\Delta_Qf\|^2_{L^1(\mu)}}{\ell(Q)^{n}}\\
&\lesssim\sum_{D\in\DD}\,\sum_{R\in V(D)}\,\sum_{S\in\Trees:\,Q_S\subset R}\bigg(\frac{\ell(Q_S)}{\ell(D)}\bigg)^{1/2}\big(m^\mu_{Q_S}|f|\big)^2\mu(Q_S)\\
&\quad+\sum_{D\in\DD}\,\sum_{R\in V(D)}\,\sum_{Q\in\BB:\,Q\subset R}\bigg(\frac{\ell(Q)}{\ell(D)}\bigg)^{1/2}\big(m^\mu_{Q}|f|\big)^2\mu(Q)\\
&=\sum_{S\in\Trees}\,\sum_{R\in\DD:\,R\supset Q_S}\,\sum_{D\in V(R)}\bigg(\frac{\ell(Q_S)}{\ell(R)}\bigg)^{1/2}\big(m^\mu_{Q_S}|f|\big)^2\mu(Q_S)\\
&\quad+\sum_{Q\in\BB}\,\sum_{R\in\DD:\,R\supset Q}\,\sum_{D\in V(R)}\bigg(\frac{\ell(Q)}{\ell(R)}\bigg)^{1/2}\big(m^\mu_{Q}|f|\big)^2\mu(Q).
\end{split}
\end{equation*}

Finally, using that $V(R)$ has finitely many elements, and that the $\mu$-cubes $Q_S$ with $S\in\Trees$ and the $\mu$-cubes $Q\in\BB$ satisfy a Carleson packing condition (so we can apply Carleson's embedding theorem), we deduce
\begin{equation*}
\begin{split}
\sum_{S\in\Trees}&\,\sum_{D\in S}\int_D\sum_{m\in\SSS_D(x)}\bigg|\sum_{R\in V(D)}\sum_{Q\in \Stop(R)}(K\chi_{\epsilon_{m+1}}^{\epsilon_m}*(\wit\Delta_Q f\mu))(x)\bigg|^2d\mu(x)\\
&\lesssim\sum_{S\in\Trees}\big(m^\mu_{Q_S}|f|\big)^2\mu(Q_S)\sum_{R\in\DD:\,R\supset Q_S}\frac{\ell(Q_S)^{1/2}}{\ell(R)^{1/2}}+\sum_{Q\in\BB}\big(m^\mu_{Q}|f|\big)^2\mu(Q)\sum_{R\in\DD:\,R\supset Q}\frac{\ell(Q)^{1/2}}{\ell(R)^{1/2}}\\
&\lesssim\sum_{S\in\Trees}\big(m^\mu_{Q_S}|f|\big)^2\mu(Q_S)
+\sum_{Q\in\BB}\big(m^\mu_{Q}|f|\big)^2\mu(Q)
\lesssim\|f\|^2_{L^2(\mu)}.
\end{split}
\end{equation*}
\end{proof}

\subsubsection{{\bf Estimate of 
$\sum_{m\in\SSS_D(x)}\big|\sum_{R\in V(D)}
(K\chi_{\epsilon_{m+1}}^{\epsilon_m}*((m_R^\mu f)\chi_R\mu))(x)\big|^2$ from (\ref{eqmain})}}
We will need the following auxiliary lemma, which we prove for completeness, despite we think it is already known.
\begin{lema}\label{5 veins quasiortogonalitat}
Given $D\in\DD$ and $f\in L^2(\mu)$, set $a_D(f):=\sum_{R\in V(D)}|m_R^\mu f-m_D^\mu f|$. Then, there exists $C>0$ depending only $n$ and the AD regularity constant of $\mu$ such that
$$\sum_{D\in\DD}(a_D(f))^2\mu(D)\leq C\|f\|^2_{L^2(\mu)}.$$
\end{lema}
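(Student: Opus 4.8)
\emph{The plan.} I would show that the map $f\mapsto\big(a_D(f)\,\mu(D)^{1/2}\big)_{D\in\DD}$ is bounded from $L^2(\mu)$ into $\ell^2(\DD)$, by first reducing $a_D(f)$ to the mean oscillation of $f$ over a genuine ball of radius $\approx\ell(D)$, and then invoking a standard Littlewood--Paley (Carleson measure) estimate for the AD regular measure $\mu$. For the geometric reduction, note first that $\#V(D)$ is bounded by a constant depending only on $n$ and the AD regularity constant of $\mu$, so by the Cauchy--Schwarz inequality $a_D(f)^2\lesssim\sum_{R\in V(D)}|m_R^\mu f-m_D^\mu f|^2$. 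Fix $D\in\DD_j$, let $z_D$ be its center, and set $B_D:=B(z_D,c_0\ell(D))$ with $c_0$ a dimensional constant large enough that $R\subset B_D$ for every $R\in V(D)$ (possible since $\diam R\approx\ell(D)$ and $\dist(R,D)\le\ell(D)$). For a ball $B$ write $m_B^\mu f:=\mu(B)^{-1}\int_B f\,d\mu$. Using $\mu(R)\approx\mu(D)\approx\mu(B_D)\approx\ell(D)^n$, for any $R\in V(D)$ we obtain
\[
\big|m_R^\mu f-m_{B_D}^\mu f\big|\le\frac1{\mu(R)}\int_R\big|f-m_{B_D}^\mu f\big|\,d\mu\le\frac{\mu(B_D)}{\mu(R)}\cdot\frac1{\mu(B_D)}\int_{B_D}\big|f-m_{B_D}^\mu f\big|\,d\mu\lesssim\frac1{\mu(B_D)}\int_{B_D}\big|f-m_{B_D}^\mu f\big|\,d\mu,
\]
and the same bound holds with $R$ replaced by $D$. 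Hence $a_D(f)\lesssim\mu(B_D)^{-1}\int_{B_D}|f-m_{B_D}^\mu f|\,d\mu$, and by Jensen's inequality together with $\mu(D)\approx\mu(B_D)$ one gets $a_D(f)^2\mu(D)\lesssim\int_{B_D}|f-m_{B_D}^\mu f|^2\,d\mu$.

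\emph{Reduction to a square-function estimate.} It then suffices to prove
\[
\sum_{j\in\Z}\ \sum_{D\in\DD_j}\int_{B_D}\big|f-m_{B_D}^\mu f\big|^2\,d\mu\ \lesssim\ \|f\|_{L^2(\mu)}^2 .
\]
For each $j$ the balls $\{B_D:D\in\DD_j\}$ have bounded overlap and radius $\approx2^{-j}$, so the left-hand side is controlled by $\int\big(\sum_{j\in\Z}\frac1{\mu(B(x,2^{-j}))}\int_{B(x,2^{-j})}|f(y)-m_{B(x,2^{-j})}^\mu f|^2\,d\mu(y)\big)\,d\mu(x)$, i.e.\ by the square of the $L^2(\mu)$ norm of a Littlewood--Paley square function of $f$. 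That this square function is bounded on $L^2(\mu)$ is classical for the space of homogeneous type $(\supp\mu,|\cdot|,\mu)$: one may argue by $TT^*$, since the averaging operators $P_jf:=\mu(B(\cdot,2^{-j}))^{-1}\int_{B(\cdot,2^{-j})}f\,d\mu$ are uniformly bounded on $L^2(\mu)$ and $P_j-P_{j+1}$ has the usual size and H\"older cancellation, so that $\sum_j\|(P_j-P_{j+1})f\|_{L^2(\mu)}^2\lesssim\|f\|_{L^2(\mu)}^2$; comparing this telescoping sum with the dyadic martingale square function $\sum_{Q\in\DD}\|\Delta_Qf\|_{L^2(\mu)}^2=\|f\|_{L^2(\mu)}^2$, and controlling the mismatch between the balls $B(x,2^{-j})$ and the dyadic $\mu$-cubes via the small boundaries condition \eqref{small boundary condition}, gives the claim. (Alternatively one may simply quote the Littlewood--Paley theory of \cite{DS2}.)

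\emph{The main obstacle.} The delicate point is precisely that the David--Semmes lattice is not aligned with genuine balls: two $\mu$-cubes $R\in V(D)$ and $D$ of the same generation need not share an ancestor of comparable size, so a naive telescoping/Haar argument does not immediately close. One can still work purely with Haar functions, writing $m_R^\mu f-m_D^\mu f=\int f\,h_{D,R}\,d\mu$ with $h_{D,R}:=\mu(R)^{-1}\chi_R-\mu(D)^{-1}\chi_D$ (which has zero $\mu$-mean, is supported in $B_D$, and satisfies $\|h_{D,R}\|_{L^\infty(\mu)}\lesssim\ell(D)^{-n}$), and observing that $\int\Delta_Qf\,h_{D,R}\,d\mu=0$ unless $\ell(Q)>\ell(D)$ \emph{and} $B_D$ meets at least two children of $Q$ (because $h_{D,R}$ is constant on the $\mu$-cubes of $\DD_j$). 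The surviving, near-boundary terms must then be summed, and a crude $L^1$--$L^\infty$ estimate on them diverges; the small boundaries condition \eqref{small boundary condition} is exactly what makes this final summation converge. Carrying this out — equivalently, establishing the Littlewood--Paley estimate quoted above — is the technical heart; all the remaining steps are routine.
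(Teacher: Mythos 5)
Your opening reduction is where the argument breaks: the estimate you reduce to, namely $\sum_{j\in\Z}\sum_{D\in\DD_j}\int_{B_D}|f-m_{B_D}^\mu f|^2\,d\mu\lesssim\|f\|_{L^2(\mu)}^2$, is false, so no Littlewood--Paley theorem can deliver it. Take $\mu=\HH^1$ restricted to a line in $\R^2$ and $f=\chi_E$ with $E$ a unit segment. For every coarse generation $j<0$ there is at least one $D\in\DD_j$ with $E\subset B_D$ (the $\mu$-cube meeting $E$, since $c_0\ell(D)\gg 1$); for such $D$ one has $m_{B_D}^\mu f\approx 2^{j}$, hence $\int_{B_D}|f-m_{B_D}^\mu f|^2\,d\mu=\mu(E)-\mu(B_D)\,(m_{B_D}^\mu f)^2\approx 1$, so the sum over $j\to-\infty$ diverges, while $\|f\|_{L^2(\mu)}^2=1$ and, consistently, $\sum_D a_D(f)^2\mu(D)<\infty$ (these coarse cubes contribute only $a_D(f)^2\mu(D)\approx 2^{j}$). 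Equivalently, the pointwise object $\bigl(\sum_j\mu(B(x,2^{-j}))^{-1}\int_{B(x,2^{-j})}|f-m^\mu_{B(x,2^{-j})}f|^2\,d\mu\bigr)^{1/2}$ is not bounded on $L^2(\mu)$: for this $f$ it is $\gtrsim|x|^{-n/2}$ at large $|x|$, which just fails to be square integrable. The bounded classical square function is $\bigl(\sum_j|P_jf-P_{j+1}f|^2\bigr)^{1/2}$, built from differences of consecutive-scale averages; the quantity $a_D(f)$ is of that single-scale type (it is blind to all oscillation of $f$ at scales finer than $\ell(D)$, since fine-scale Haar terms have mean zero on each $R\in V(D)$, and it decays geometrically at scales much coarser than where $f$ lives), whereas the mean oscillation over $B_D$ aggregates all finer scales and has no such decay. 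Replacing the first by the second is exactly the step that destroys the cancellation which makes the sum over scales converge.

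Your closing paragraph does point at the correct mechanism, and it is essentially the paper's proof: decompose $f=\sum_m u_m$ with $u_m=\sum_{Q\in\DD_m}\Delta_Qf$, set $S_j(f):=\bigl(\sum_{D\in\DD_j}a_D(f)^2\chi_D\bigr)^{1/2}$, observe that $S_j(u_m)=0$ for $m\ge j$ (your vanishing criterion for $\int\Delta_Qf\,h_{D,R}\,d\mu$), and for $m<j$ use that only boundedly many children $U\in\DD_{m+1}$ have $\wit D$ crossing $\partial U$ together with the small boundaries condition \eqref{small boundary condition} to get $\|S_j(u_m)\|_{L^2(\mu)}\lesssim 2^{-|m-j|/(2C)}\|u_m\|_{L^2(\mu)}$; a Cauchy--Schwarz (Schur-type almost-orthogonality) summation and $\sum_m\|u_m\|_{L^2(\mu)}^2\le\|f\|_{L^2(\mu)}^2$ then finish. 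But this quantitative decay and its summation are precisely what you do not carry out; they are not ``equivalent to the Littlewood--Paley estimate quoted above'' (that estimate is false), and they cannot be quoted wholesale from \cite{DS2} in the form you invoke. As it stands, the proposal has a genuine gap at its central step.
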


\begin{proof}[{\bf {\em Proof.}}]
By subtracting a constant if necessary, we can assume that $f$ has mean zero. Consider the representation of $f$ with respect to the Haar basis associated to $\DD$, that is $f=\sum_{Q\in\DD}\Delta_Qf$. For $m\in\Z$, we define the function $u_m=\sum_{Q\in\DD_m}\Delta_Qf$, so $f=\sum_{m\in\Z}u_m$ and the equality holds in $L^2(\mu)$.
Given $j\in\Z$, define the operator $$S_j(f):=\bigg(\sum_{D\in\DD_j}(a_D(f))^2\chi_D\bigg)^{1/2}.$$
We will prove that there exists a sequence $\{\sigma(k)\}_{k\in\Z}$ such that 
\begin{equation}\label{5 vei quasiort}
\sum_{k\in\Z}\sigma(k)\leq C<\infty\quad\text{and}\quad\|S_j(u_m)\|_{L^2(\mu)}\lesssim\sigma(|m-j|)\|u_m\|_{L^2(\mu)}.
\end{equation}

Assume for the moment that (\ref{5 vei quasiort}) holds. Then, since each $S_j$ is sublinear, by Cauchy-Schwarz inequality and the orthogonality of the $u_m$'s,
\begin{equation*}
\begin{split}
\sum_{D\in\DD}(a_D(f))^2\mu(D)&=\sum_{j\in\Z}\int\sum_{D\in\DD_j}(a_D(f))^2\chi_D\,d\mu
=\sum_{j\in\Z}\|S_j(f)\|^2_{L^2(\mu)}\\
&=\sum_{j\in\Z}\bigg\|S_j\bigg(\sum_{m\in\Z}u_m\bigg)\bigg\|^2_{L^2(\mu)}
\leq\sum_{j\in\Z}\bigg(\sum_{m\in\Z}\|S_j(u_m)\|_{L^2(\mu)}\bigg)^2\\
&\leq\sum_{j\in\Z}\bigg(\sum_{m\in\Z}\sigma(|m-j|)\bigg)\bigg(\sum_{m\in\Z}\sigma(|m-j|)^{-1}\|S_j(u_m)\|^2_{L^2(\mu)}\bigg)\\
&\lesssim\sum_{j\in\Z}\sum_{m\in\Z}\sigma(|m-j|)\|u_m\|^2_{L^2(\mu)}
=\sum_{m\in\Z}\|u_m\|^2_{L^2(\mu)}\sum_{j\in\Z}\sigma(|m-j|)\\
&\lesssim\sum_{m\in\Z}\|u_m\|^2_{L^2(\mu)}=\|f\|^2_{L^2(\mu)},
\end{split}
\end{equation*}
and the lemma follows. Let us verify (\ref{5 vei quasiort}) now. By definition,
\begin{equation}\label{5 def Sj}
\|S_j(u_m)\|^2_{L^2(\mu)}
=\sum_{D\in\DD_j}\bigg(\sum_{R\in V(D)}\bigg|\sum_{Q\in\DD_m}\int\Delta_Qf
\bigg(\frac{\chi_R}{\mu(R)}-\frac{\chi_D}{\mu(D)}\bigg)\,d\mu\,\bigg|\bigg)^2\mu(D).
\end{equation}
Assume first that $m\geq j$. If $D\in\DD_j$, $R\in V(D)$, and $Q\in\DD_m$, then either $Q\cap R=\emptyset$ or $Q\subset R$. In both cases, since $\Delta_Qf$ has mean zero and is supported in $Q$, we have $\int\Delta_Q f\,\chi_R\,d\mu=0$. Thus, the right hand side of (\ref{5 def Sj}) vanishes (obviously $D\in V(D)$), and (\ref{5 vei quasiort}) follows. 

Assume now that $m<j$. Set $\wit D:=\bigcup_{R\in V(D)}R$. Recall that $\Delta_Q f:=\sum_{U\in  \Child( Q)}\chi_U(m_U^\mu f-m_Q^\mu f)$, so $\Delta_Q f$ is constant in each $U\in  \Child( Q)$. Hence, if for some $U\in  \Child( Q)$ we have $\wit D\subset U$ or $\wit D\subset\supp\mu\setminus U$, then $(R\cup D)\subset U$ or $(R\cup D)\cap U=\emptyset$ for all $R\in V(D)$, and so
$$\int\chi_U(m_U^\mu f-m_Q^\mu f)\bigg(\frac{\chi_R}{\mu(R)}-\frac{\chi_D}{\mu(D)}\bigg)\,d\mu
=(m_U^\mu f-m_Q^\mu f)\int_U\bigg(\frac{\chi_R}{\mu(R)}-\frac{\chi_D}{\mu(D)}\bigg)\,d\mu=0$$
for all $R\in V(D)$. Therefore, if we set $m_{U,Q}^\mu f:=(m_U^\mu f-m_Q^\mu f)$, using that $V(D)$ has finitely many elements and that $\int|\mu(R)^{-1}\chi_R-\mu(D)^{-1}\chi_D|\,d\mu\leq2$ for all $R\in V(D)$, we deduce from (\ref{5 def Sj}) that
\begin{equation}\label{5 def Sj1}
\begin{split}
\|S_j(u_m)\|^2_{L^2(\mu)}
&=\sum_{D\in\DD_j}\!\bigg(\!\sum_{R\in V(D)}\!\bigg|\!\sum_{Q\in\DD_m}\!\int\!\!\sum_{\begin{subarray}{c}U\in  \Child( Q):\\\wit D\cap U\neq\emptyset,\\\wit D\cap U^c\neq\emptyset\end{subarray}}\!\!\chi_U\,m_{U,Q}^\mu f
\bigg(\frac{\chi_R}{\mu(R)}-\frac{\chi_D}{\mu(D)}\bigg)\,d\mu\,\bigg|\bigg)^2\!\mu(D)\\
&\lesssim\sum_{D\in\DD_j}\bigg(\sum_{Q\in\DD_m}\,
\sum_{\begin{subarray}{c}U\in  \Child( Q):\,\wit D\cap U\neq\emptyset,\\\wit D\cap U^c\neq\emptyset\end{subarray}}
|m_{U,Q}^\mu f|\bigg)^2\mu(D)\\
&=\sum_{D\in\DD_j}\bigg(\sum_{\begin{subarray}{c}U\in\DD_{m+1}:\,\wit D\cap U\neq\emptyset,\\\wit D\cap U^c\neq\emptyset\end{subarray}}|m_{U,P(U)}^\mu f|\bigg)^2\mu(D).
\end{split}
\end{equation}
It is not hard to show that, since $m<j$ and $D\in\DD_j$, the number of $\mu$-cubes $U\in\DD_{m+1}$ such that $\wit D\cap U\neq\emptyset$ and $\wit D\cap U^c\neq\emptyset$ is bounded by some constant depending only on $n$ and the AD regularity constant of $\mu$ (but not on the precise value of $m$). Hence,
\begin{equation}\label{5 def Sj2}
\begin{split}
\sum_{D\in\DD_j}\bigg(\sum_{\begin{subarray}{c}U\in\DD_{m+1}:\,\wit D\cap U\neq\emptyset,\\\wit D\cap U^c\neq\emptyset\end{subarray}}|m_{U,P(U)}^\mu f|\bigg)^2\mu(D)
&\lesssim\sum_{D\in\DD_j}\,\sum_{\begin{subarray}{c}U\in\DD_{m+1}:\,\wit D\cap U\neq\emptyset,\\\wit D\cap U^c\neq\emptyset\end{subarray}}|m_{U,P(U)}^\mu f|^2\mu(D)\\
&=\sum_{U\in\DD_{m+1}}|m_{U,P(U)}^\mu f|^2
\,\mu\bigg(\bigcup_{\begin{subarray}{c}D\in\DD_{j}:\,\wit D\cap U\neq\emptyset,\\\wit D\cap U^c\neq\emptyset\end{subarray}}D\bigg).
\end{split}
\end{equation}
Fix $U\in\DD_{m+1}$. Recall that $\wit D:=\bigcup_{R\in V(D)}R$, so $\diam(\wit D)\approx\diam(D)$. Thus, there exists a constant $\tau_0>0$ such that 
\begin{equation*}
\begin{split}
\bigcup_{\begin{subarray}{c}D\in\DD_{j}:\,\wit D\cap U\neq\emptyset,\,\wit D\cap U^c\neq\emptyset \end{subarray}}D
&\subset\{x\in U:\, \dist(x,\supp\mu\setminus U)\leq\tau_0\ell(D)\}\\
&\quad\cup\{x\in \supp\mu\setminus U:\, \dist(x,U)\leq\tau_0\ell(D)\}\\
&=\{x\in U:\, \dist(x,\supp\mu\setminus U)\leq\tau_02^{m-j+1}\ell(U)\}\\
&\quad\cup\{x\in \supp\mu\setminus U:\, \dist(x,U)\leq\tau_02^{m-j+1}\ell(U)\}.
\end{split}
\end{equation*}
If $m\ll j$, then $\tau:=\tau_02^{m-j+1}<1$, so we can apply the {\em small boundaries condition} (\ref{small boundary condition}) of Subsection \ref{dyadic lattice} to obtain
$\mu\big(\bigcup_{D\in\DD_{j}:\,\wit D\cap U\neq\emptyset,\,\wit D\cap U^c\neq\emptyset}D\big)\leq C\tau^{1/C}2^{-mn}.$
On the contrary, if $|m-j|\lesssim1$, then $\tau^{1/C}\approx1$, so 
$\mu\big(\bigcup_{D\in\DD_{j}:\,\wit D\cap U\neq\emptyset,\,\wit D\cap U^c\neq\emptyset}D\big)\leq\mu(C_1U)\lesssim2^{-mn}\approx\tau^{1/C}2^{-mn}$, for some big constant $C_1>0$. Thus, in any case, 
$\mu\big(\bigcup_{D\in\DD_{j}:\,\wit D\cap U\neq\emptyset,\,\wit D\cap U^c\neq\emptyset}D\big)\lesssim2^{(m-j)/C}\ell(U)^n,$ and combining this with (\ref{5 def Sj2}) and (\ref{5 def Sj1}) we conclude that, for $m<j$,
\begin{equation*}
\begin{split}
\|S_j(u_m)\|^2_{L^2(\mu)}&\lesssim
2^{(m-j)/C}\sum_{U\in\DD_{m+1}}|m_U^\mu f-m_{P(U)}^\mu f|^2\ell(U)^n\\
&\approx2^{(m-j)/C}\int\sum_{U\in\DD_{m+1}}\chi_U|m_U^\mu f-m_{P(U)}^\mu f|^2\,d\mu=2^{-|m-j|/C}\|u_m\|^2_{L^2(\mu)},
\end{split}
\end{equation*}
which gives (\ref{5 vei quasiort})  with $\sigma(k)=2^{-\frac{|k|}{2C}}$ and finishes the proof of the lemma.
\end{proof}

\begin{lema}\label{5 var eq8a}
Under the notation above, we have
\begin{equation*}
\sum_{S\in\Trees}\sum_{D\in S}\int_D\sum_{m\in\SSS_D(x)}\bigg|\sum_{R\in V(D)}
(K\chi_{\epsilon_{m+1}}^{\epsilon_m}*((m_R^\mu f)\chi_R\mu))(x)\bigg|^2\,d\mu(x)\lesssim\|f\|_{L^2(\mu)}^2.
\end{equation*}
\end{lema}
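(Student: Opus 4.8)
The plan is to express the ``mean part'' as a $T1$-type term plus errors of the kind already handled in the previous lemmas, and to control the $T1$-type term by a Dorronsoro-type estimate. Fix $S\in\Trees$, $D\in S$ and $x\in D$, and set $\wit D:=\bigcup_{R\in V(D)}R$. For $m\in\SSS_D(x)$ we have $\epsilon_m\le\ell(D)$, so the annulus $A_m(x):=A(x,\epsilon_{m+1},\epsilon_m)$ meets $\supp\mu$ only inside $\wit D$; since $\sum_{R\in V(D)}\chi_R=\chi_{\wit D}$ and $D\in V(D)$,
\[
\sum_{R\in V(D)}(K\chi_{\epsilon_{m+1}}^{\epsilon_m}*((m_R^\mu f)\chi_R\mu))(x)=m_D^\mu f\,(K\chi_{\epsilon_{m+1}}^{\epsilon_m}*\mu)(x)+\sum_{R\in V(D)}(m_R^\mu f-m_D^\mu f)(K\chi_{\epsilon_{m+1}}^{\epsilon_m}*(\chi_R\mu))(x).
\]
Using the coherence of $S$ to telescope $m_D^\mu f$ to $m_{Q_S}^\mu f$ along the chain of $\mu$-cubes joining $D$ to $Q_S$, and noting that $\Delta_Q f$ is constant on $A_m(x)$ whenever $Q\in S$ is large enough that $\wit D$ lies inside one of its children, the first term on the right equals
\[
m_{Q_S}^\mu f\,(K\chi_{\epsilon_{m+1}}^{\epsilon_m}*\mu)(x)+\sum_{\substack{Q\in S:\\ D\subsetneq Q\subseteq Q_S}}(K\chi_{\epsilon_{m+1}}^{\epsilon_m}*(\Delta_Q f\mu))(x),
\]
up to finitely many terms (those $Q$ at the scale of $D$ for which $\wit D$ meets several children of $Q$), which are of the same type as the martingale-difference terms of Lemma~\ref{5 var eq8} and are disposed of in the same way, using $\|\Delta_Q f\|_{L^2(\mu)}$ and the disjointness of the $A_m(x)$. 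It therefore suffices to bound, in $\ell^2(m)$ and after integrating over $D$ and summing over $D\in S$ and $S\in\Trees$, the three pieces $\mathrm{I}_m:=\sum_{R\in V(D)}(m_R^\mu f-m_D^\mu f)(K\chi_{\epsilon_{m+1}}^{\epsilon_m}*(\chi_R\mu))(x)$, $\mathrm{II}_m:=m_{Q_S}^\mu f\,(K\chi_{\epsilon_{m+1}}^{\epsilon_m}*\mu)(x)$, and $\mathrm{III}_m:=\sum_{Q\in S,\,D\subsetneq Q\subseteq Q_S}(K\chi_{\epsilon_{m+1}}^{\epsilon_m}*(\Delta_Q f\mu))(x)$.

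The piece $\mathrm{I}_m$ is immediate: from $|(K\chi_{\epsilon_{m+1}}^{\epsilon_m}*(\chi_R\mu))(x)|\lesssim\ell(D)^{-n}\mu(A_m(x)\cap R)$, the disjointness of the annuli $A_m(x)$ and the $n$-growth of $\mu$ (so that $\sum_m\mu(A_m(x))^2\le(\sup_m\mu(A_m(x)))\sum_m\mu(A_m(x))\lesssim\ell(D)^{2n}$) we get $\sum_m|\mathrm{I}_m|^2\lesssim a_D(f)^2$ with $a_D(f)=\sum_{R\in V(D)}|m_R^\mu f-m_D^\mu f|$, and then $\sum_{S}\sum_{D\in S}\int_D\sum_m|\mathrm{I}_m|^2\,d\mu\le\sum_{D\in\DD}a_D(f)^2\mu(D)\lesssim\|f\|_{L^2(\mu)}^2$ by Lemma~\ref{5 veins quasiortogonalitat}.

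The main point is a Dorronsoro-type bound for the $T1$-type quantity $\sum_{m\in\SSS_D(x)}|(K\chi_{\epsilon_{m+1}}^{\epsilon_m}*\mu)(x)|^2$, namely that, for a suitable best-approximating $n$-plane $L_D$ of $\mu$ on a fixed dilate $C_1D$ of $D$,
\[
\sum_{m\in\SSS_D(x)}|(K\chi_{\epsilon_{m+1}}^{\epsilon_m}*\mu)(x)|^2\;\lesssim\;\Big(\frac{\dist(x,L_D)}{\ell(D)}\Big)^{\!2}+\alpha_\mu(C_1 D)^2+\beta_{2,\mu}(C_1 D)^2 .
\]
The idea behind this is that, at the scale $\ell(D)$ and near $x$, $\mu$ is close to the translate $L_D^x$ of $L_D$ through $x$: since $K$ is odd and each $A_m(x)$ is centered at $x$, $(K\chi_{\epsilon_{m+1}}^{\epsilon_m}*c\HH^n_{L_D^x})(x)=0$ for every $c$, so $(K\chi_{\epsilon_{m+1}}^{\epsilon_m}*\mu)(x)=(K\chi_{\epsilon_{m+1}}^{\epsilon_m}*(\mu-c\HH^n_{L_D^x}))(x)$, and one must estimate the pairing of the flat-small measure $\mu-c\HH^n_{L_D^x}$ against the (non-Lipschitz) kernel $\chi_{A_m(x)}K(x-\cdot)$. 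This is where the argument is delicate: the contribution away from the two spheres bounding $A_m(x)$ is handled through the flat norm and gives the $\alpha_\mu$, $\beta_{2,\mu}$ and $\dist(x,L_D)/\ell(D)$ terms (the last coming from replacing $L_D$ by $L_D^x$), while the contribution near those spheres is handled using the closeness of $\supp\mu$ to the modified Lipschitz graph $\Gamma_S^x$ through $x$, the thin-annulus estimate of Lemma~\ref{lema pendent petita3} and a covering by $\mu$-cubes at the scale of the annulus width, exactly as in the proof of Lemma~\ref{5 lema claims}; the widths $\epsilon_m-\epsilon_{m+1}$ summing to at most $\ell(D)$ then lets one sum in $m$. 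Carrying this out rigorously is the main obstacle; everything that follows is bookkeeping.

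Granting the displayed estimate, the pieces $\mathrm{II}_m$ and $\mathrm{III}_m$ are finished as follows. For $\mathrm{II}_m$: since $\int_D\dist(x,L_D)^2\,d\mu\lesssim(\alpha_\mu(C_1D)^2+\beta_{2,\mu}(C_1D)^2)\ell(D)^{n+2}$ and $\ell(D)^n\approx\mu(D)$, the displayed estimate gives $\sum_{D\in S}\int_D\sum_m|\mathrm{II}_m|^2\,d\mu\lesssim|m_{Q_S}^\mu f|^2\sum_{D\subseteq Q_S}(\alpha_\mu(C_1D)^2+\beta_{2,\mu}(C_1D)^2)\mu(D)\lesssim|m_{Q_S}^\mu f|^2\mu(Q_S)$ by the Carleson packing of the $\alpha$-- and $\beta$--numbers (Theorem~\ref{alphas unifrectif}); summing over $S$ and using the Carleson packing of the top $\mu$-cubes $\{Q_S\}$ together with the $L^2(\mu)$ boundedness of the dyadic maximal operator (Carleson's embedding theorem), $\sum_{S}|m_{Q_S}^\mu f|^2\mu(Q_S)\lesssim\|f\|_{L^2(\mu)}^2$. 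For $\mathrm{III}_m$: since each $\Delta_Q f$ with $D\subsetneq Q\subseteq Q_S$ is constant on $A_m(x)$, $\mathrm{III}_m$ telescopes (up to the finitely many boundary terms mentioned above) to $(m_{D^*}^\mu f-m_{Q_S}^\mu f)(K\chi_{\epsilon_{m+1}}^{\epsilon_m}*\mu)(x)$ for an ancestor $D^*$ of $D$ of comparable size, whence, denoting by $M^\mu_\DD$ the dyadic maximal operator,
\[
\sum_m|\mathrm{III}_m|^2\lesssim\big(M^\mu_\DD f(x)\big)^2\Big(\Big(\tfrac{\dist(x,L_D)}{\ell(D)}\Big)^{\!2}+\alpha_\mu(C_1D)^2+\beta_{2,\mu}(C_1D)^2\Big).
\]
The measure $\big(\sum_{D\in\DD:\,x\in D}\big((\dist(x,L_D)/\ell(D))^2+\alpha_\mu(C_1D)^2+\beta_{2,\mu}(C_1D)^2\big)\chi_D(x)\big)\,d\mu(x)$ is a Carleson measure with respect to $\mu$ (by Theorem~\ref{alphas unifrectif} and the bound for $\int_D\dist(x,L_D)^2\,d\mu$ above), so, since the good $\mu$-cubes are partitioned by the trees, integrating over $D$ and summing over $D\in S$ and $S\in\Trees$ and using once more the $L^2(\mu)$ boundedness of $M^\mu_\DD$ against a Carleson measure gives $\sum_{S}\sum_{D\in S}\int_D\sum_m|\mathrm{III}_m|^2\,d\mu\lesssim\|f\|_{L^2(\mu)}^2$. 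Together with the estimate for $\mathrm{I}_m$, this completes the proof.
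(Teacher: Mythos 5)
Your reduction and your treatment of the piece $\mathrm{I}_m$ are fine: splitting off $\sum_{R\in V(D)}(m_R^\mu f-m_D^\mu f)(K\chi_{\epsilon_{m+1}}^{\epsilon_m}*(\chi_R\mu))(x)$ and controlling it by $a_D(f)^2$ plus Lemma \ref{5 veins quasiortogonalitat} is exactly what the paper does. The problem is the step you yourself flag as ``the main obstacle'': the single-scale Dorronsoro-type estimate
\begin{equation*}
\sum_{m\in\SSS_D(x)}|(K\chi_{\epsilon_{m+1}}^{\epsilon_m}*\mu)(x)|^2\lesssim\Big(\frac{\dist(x,L_D)}{\ell(D)}\Big)^{2}+\alpha_\mu(C_1D)^2+\beta_{2,\mu}(C_1D)^2
\end{equation*}
is false, and everything else in your argument (the pieces $\mathrm{II}_m$ and $\mathrm{III}_m$) rests on it. Here is a counterexample. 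Take $L$ an $n$-plane through $x$, $\ell(D)=1$, and $\mu=(1+\varepsilon g)\HH^n_{L}$, where $g$ is supported in the shell $\{1/2\le|y-x|<1\}$ and has the form $g(y)=s(|y-x|)\,\operatorname{sign}\big((x-y)^1\big)$, with $s=\pm1$ alternating on radial shells of width $\delta\ll1$. This measure is AD regular and uniformly rectifiable, $\beta_{p,\mu}\equiv0$, $\dist(x,L_D)=0$, and because $g$ oscillates radially at scale $\delta$, testing against $1$-Lipschitz functions gives $\alpha_\mu(C_1D)\lesssim\varepsilon\delta$; so the right-hand side is $\lesssim\varepsilon^2\delta^2$. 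On the other hand, choosing $\{\epsilon_m\}$ to run through the shell boundaries, the density-one part of each annulus integral vanishes by antisymmetry, while the first component of $K$ pairs with $g$ without cancellation, giving $|(K\chi_{\epsilon_{m+1}}^{\epsilon_m}*\mu)(x)|\approx\varepsilon\delta$ for each of the $\approx\delta^{-1}$ annuli, hence a left-hand side $\gtrsim\varepsilon^2\delta$. Letting $\delta\to0$ the claimed inequality fails by a factor $\delta^{-1}$. (Note the sequence $\{\epsilon_m\}$ defining $\SSS_D(x)$ is arbitrary, so an adversarial choice is legitimate.)

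The failure is structural: the sharp truncations $\chi_{\epsilon_{m+1}}^{\epsilon_m}$ are sensitive to the distribution of $\mu$ near the spheres $\partial B(x,\epsilon_m)$ at scales far below $\ell(D)$, and no coefficient measured only at the scale of $D$ can see that. This is precisely why the paper's proof of this lemma does not stop at the scale of $D$: after the easy case $\alpha_\mu(10D)\geq C_*$, it builds the radial projection $p^x$ onto $L_D^x$ (so that membership in $A_m(x)$ is preserved), writes the pushforward density $h_x$ in a wavelet basis averaged over translated lattices, introduces stopping cubes via \eqref{reduced lema1}, and bounds the resulting wavelet coefficients by \emph{sums of $\alpha_\mu$ over all intermediate cubes} between the wavelet scale and $\ell(D)$ (Lemma \ref{5lema L2}); the conclusion then comes from Carleson packing of the coefficients $\lambda_1,\ldots,\lambda_4$ and Carleson embedding against $|m_D^\mu f|^2$, not from a pointwise single-scale bound. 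In my example this is consistent: the fine-scale $\alpha$'s near the shells are of size $\varepsilon$, and the paper's multi-scale coefficients do dominate $\varepsilon^2\delta$. Your estimates for $\mathrm{II}_m$ and $\mathrm{III}_m$ (including the telescoping to $m_{Q_S}^\mu f$ and the dyadic maximal function against a Carleson measure) would be fine bookkeeping if the displayed inequality were true, but as it stands the core of the proof is missing.
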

\begin{proof}[{\bf{\em Proof}}]

Recall that, given $D\in\DD$, we have set $\wit D:=\bigcup_{R\in V(D)}R$. For $x\in D$, we have
\begin{equation}\label{5 var quasiort eq1}
\begin{split}
\sum_{m\in\SSS_D(x)}\!\bigg|\!\sum_{R\in V(D)}
(K\chi_{\epsilon_{m+1}}^{\epsilon_m}*&((m_R^\mu f)\chi_R\mu))(x)\bigg|^2\lesssim\!\sum_{m\in\SSS_D(x)}\!\big|
(K\chi_{\epsilon_{m+1}}^{\epsilon_m}*((m_D^\mu f)\chi_{\wit D}\mu))(x)\big|^2\\
&\quad+\sum_{m\in\SSS_D(x)}\bigg|\sum_{R\in V(D)}
(K\chi_{\epsilon_{m+1}}^{\epsilon_m}*((m_R^\mu f-m_D^\mu f)\chi_R\mu))(x)\bigg|^2.
\end{split}
\end{equation}
We are going to estimate the two terms on the right hand side of (\ref{5 var quasiort eq1}) separately. For the second one, recall also that,  given $m\in\SSS_D(x)$, we have set $A_m(x):=A(x,\epsilon_{m+1},\epsilon_m)$. We write
\begin{equation*}
\begin{split}
|(K\chi_{\epsilon_{m+1}}^{\epsilon_m}*((m_R^\mu f-m_D^\mu f)\chi_R\mu))(x)|
&\leq|m_R^\mu f-m_D^\mu f|\int_{A_m(x)}|K(x-y)|\chi_R(y)\,d\mu(y)\\
&\lesssim|m_R^\mu f-m_D^\mu f|\,\mu(A_m(x)\cap R)\ell(D)^{-n}.
\end{split}
\end{equation*}
Therefore, interchanging the order of summation, 
\begin{equation*}
\begin{split}
\sum_{m\in\SSS_D(x)}&\bigg|\sum_{R\in V(D)}
(K\chi_{\epsilon_{m+1}}^{\epsilon_m}*((m_R^\mu f-m_D^\mu f)\chi_R\mu))(x)\bigg|^2\\
&\lesssim\bigg(\sum_{m\in\SSS_D(x)}\sum_{R\in V(D)}|m_R^\mu f-m_D^\mu f|\,\mu(A_m(x)\cap R)\ell(D)^{-n}\bigg)^2\\
&\leq\bigg(\sum_{R\in V(D)}|m_R^\mu f-m_D^\mu f|\,\frac{\mu(R)}{\ell(D)^{n}}\bigg)^2
\approx\bigg(\sum_{R\in V(D)}|m_R^\mu f-m_D^\mu f|\bigg)^2=(a_D(f))^2,
\end{split}
\end{equation*}
where $a_D(f)$ are the coefficients introduced in Lemma \ref{5 veins quasiortogonalitat}. If we integrate on $D$ and sum over all $D\in S$ and $S\in\Trees$, we can apply Lemma \ref{5 veins quasiortogonalitat}, and we finally obtain
\begin{equation}\label{5 var quasiort eq2}
\begin{split}
\sum_{S\in\Trees}\,\sum_{D\in S}\int_D\sum_{m\in\SSS_D(x)}\bigg|\sum_{R\in V(D)}
(K\chi_{\epsilon_{m+1}}^{\epsilon_m}*((m_R^\mu f&-m_D^\mu f)\chi_R\mu))(x)\bigg|^2d\mu(x)\\
&\lesssim\sum_{D\in\DD}(a_D(f))^2\mu(D)\lesssim\|f\|^2_{L^2(\mu)}.
\end{split}
\end{equation}

Let us estimate now the first term on the right hand side of (\ref{5 var quasiort eq1}).
Let $L_D$ be a minimizing $n$-plane for $\alpha_{\mu}(D)$ and let $L_D^x$ be the $n$-plane parallel to $L_D$ which contains $x$. 
Given $z\in\R^d$, let $p_0^x$ denote the orthogonal projection onto $L_D^x$. Let $g_1,g_2:\R\to[0,1]$ be such that $\supp g_1\subset(-2\varepsilon\ell(D),2\varepsilon\ell(D))$, $\supp g_2\subset(-\ell(D)\varepsilon,\ell(D)\varepsilon)^c$, and $g_1+g_2=1$, where $\varepsilon>0$ is some fixed constant small enough.  For $z\in\R^d$, consider the  projection onto $L_D^x$ given by 
\begin{equation}\label{5projeccio angular}
p^x(z):=\bigg(x+(p^x_0(z)-x)\frac{|z-x|}{|p^x_0(z)-x|}\bigg)g_2(|p^x_0(z)-x|)+p^x_0(z)g_1(|p^x_0(z)-x|).
\end{equation}
Since $\supp g_2$ does not contain the origin, $p^x$ is well defined. Moreover, if $z\in\R^d$ is such that $g_2(|p^x_0(z)-x|)=1$, then $|z-x|=|p^x(z)-x|$.

Let $C_*>0$ be a small constant which will be fixed below. Assume that $\alpha_\mu(10D)\geq C_*$. Then, we can easily estimate
\begin{equation}\label{5teorema L2 no suau 6'}
\begin{split}
\sum_{m\in\SSS_D(x)}\big|(K\chi_{\epsilon_{m+1}}^{\epsilon_m}*&((m_D^\mu f)\chi_{\wit D}\mu))(x)\big|^2
=|m_D^\mu f|^2\sum_{m\in\SSS_\DD(x)}\bigg|\int_{A_m(x)\cap\wit D}K(x-y)\,d\mu(y)\bigg|^2\\
&\lesssim|m_D^\mu f|^2\bigg(\sum_{m\in\SSS_\DD(x)}\int_{A_m(x)\cap\wit D}|K(x-y)|\,d\mu(y)\bigg)^2\\
&\lesssim|m_D^\mu f|^2\bigg(\int_{\wit D}\ell(D)^{-n}\,d\mu(y)\bigg)^2
\lesssim|m_D^\mu f|^2\lesssim|m_D^\mu f|^2\alpha_\mu(10D)^2.
\end{split}
\end{equation}

From now on, we assume that $\alpha_\mu(10D)<C_*$. By assuming $C_*$ small enough, it is not difficult to show that then
the distance between $\wit D$ and $L_D^x$ is smaller than $\ell(D)/1000$. Moreover, $p^x$ restricted to $\{y\in A_m(x):\, \dist(y,L_D^x)\leq\ell(D)/1000\}$ is a Lipschitz function with Lipschitz constant depending only $n$, $d$, and the AD regularity constant of $\mu$. Furthermore, by taking $\varepsilon$ small enough, we have 
\begin{equation}\label{mimimim}
p^x(z)=x+(p^x_0(z)-x)\frac{|z-x|}{|p^x_0(z)-x|}
\end{equation} 
for all $z\in\{y\in \wit D\cap A_m(x):\, \dist(y,L_D^x)\leq\ell(D)/1000\}\subset\supp\mu$.

Recall that $D\in S$ for some $S\in\Trees$. Let $Q_S$ be the maximal $\mu$-cube of $S$, and set $\nu_x:=p^x_\sharp(\chi_{40Q_S}\mu)$. Then, since $\supp\mu\cap A_m(x)\subset\wit D$ by the construction of $\wit D$,
\begin{equation}\label{5teorema L2 no suau 6}
\begin{split}
(K\chi_{\epsilon_{m+1}}^{\epsilon_m}*((&m_D^\mu f)\chi_{\wit D}\mu))(x)
=(m_D^\mu f)\int_{A_m(x)}K(x-y)\,d\mu(y)\\
&=(m_D^\mu f)\int_{A_m(x)}K(x-y)\,d(\mu-\nu_x)(y)
+(m_D^\mu f)\int_{A_m(x)}K(x-y)\,d\nu_x(y)\\
&=:U1_m(x)+U2_m(x).
\end{split}
\end{equation}

\begin{claim}\label{5teorema L2 no suau 18}
Under the notation above, we have
\begin{equation*}
\sum_{m\in\SSS_D(x)}|U1_m(x)|^2
\lesssim|m_D^\mu f|^2\bigg(\beta_{1,\mu}(D)^2+\alpha_\mu(D)^2+\bigg(\frac{\dist(x,L_D)}{\ell(D)}\bigg)^2\bigg).
\end{equation*}
\end{claim}
\begin{proof}[{\bf{\em Proof of }Claim \ref{5teorema L2 no suau 18}}]

By (\ref{mimimim}), $y\in A_m{(x)}$ if and only if $p^x(y)\in A_m(x)$ in the integral defining $U1_m(x)$. Since $|y-p^x(y)|\lesssim \dist(y,L_D^x)\leq\dist(y,L_D)+\dist(x,L_D)$ for all $y\in\sup\mu\cap A_m(x)$,
\begin{equation*}
\begin{split}
|U1_m(x)|&\leq|m_D^\mu f|\int_{A_m(x)}|K(x-y)-K(x-p^x(y))|\,d\mu(y)\\
&\lesssim\frac{|m_D^\mu f|}{\ell(D)^{n+1}}\int_{A_m(x)}|y-p^x(y)|\,d\mu(y)\\
&\lesssim\frac{|m_D^\mu f|}{\ell(D)^{n+1}}\int_{A_m(x)}(\dist(y,L_D)+\dist(x,L_D))\,d\mu(y).
\end{split}
\end{equation*}
If $L_D^1$ denotes a minimizing $n$-plane for $\beta_1(D)$, then $\dist_\HH(L_D\cap B_D,L_D^1\cap B_D)\lesssim\alpha_\mu(D)\ell(D)$, so $\dist(y,L_D)\lesssim\dist(y,L^1_D)+\alpha_\mu(D)\ell(D)$ for $y\in CD$ (see \cite{To}).
Therefore,
\begin{equation*}
\begin{split}
\sum_{m\in\SSS_D(x)}|U1_m(x)|^2
&\lesssim\bigg(\frac{|m_D^\mu f|}{\ell(D)^{n+1}}\sum_{m\in\SSS_D(x)}\int_{A_m(x)} (\dist(y,L_D)+\dist(x,L_D))\,d\mu(y)\bigg)^2\\
&\lesssim|m_D^\mu f|^2\bigg(\ell(D)^{-n-1}\int_{CD}(\dist(y,L_D)+\dist(x,L_D))\,d\mu(y)\bigg)^2\\
&\lesssim|m_D^\mu f|^2\bigg(\beta_{1,\mu}(D)^2+\alpha_\mu(D)^2+\bigg(\frac{\dist(x,L_D)}{\ell(D)}\bigg)^2\bigg).
\end{split}
\end{equation*}
\end{proof}

Let us consider $U2_m(x)$ now. We can assume that $\nu_x$ is absolutely continuous with respect to $\HH^n_{L_D^x}$ (for example, by convolving it with an approximation of the identity and making a limiting argument). Let $h_x$ be the corresponding density, so $\nu_x=h_x\HH^n_{L_D^x}$. We may also assume that $h_x\in L^2(\HH^n_{L_D^x})$.
So,
\begin{equation*}
\begin{split}
U2_m(x)&=(m_D^\mu f)\int_{A_m(x)}K(x-y)\,d\nu_x(y)=(m_D^\mu f)\int_{A_m(x)}K(x-y)h_x(y)\,d\HH^n_{L_D^x}(y).
\end{split}
\end{equation*}

At this point, we need to introduce a wavelet basis.
\begin{defi}\label{definicio wavelet}
Let $\DD^n$ denote the standard dyadic lattice of $\R^n$. Let $\{\psi_Q^k\}_{Q\in\DD^n,\,k=1,\ldots,2^n-1}$ be an orthonormal basis of $\CC^1$ wavelets on $\R^n$ in the following manner (see \cite[Part I]{David-LNM}):
\begin{enumerate}
\item[$(a)$] $\psi_Q^k:\R^n\to\R$ is a $\CC^1$ function for all $Q\in\DD^n$ and $k=1,\ldots,2^n-1$.
\item[$(b)$] There exists $C>1$ and $\psi_0:[0,C]^n\to\R$ with $\|\psi_0\|_2=1$, $\|\psi_0\|_\infty\lesssim1$, and such that, for any $Q\in\DD^n$ and $k=1,\ldots,2^n-1$, there exists $l\in\Z^n$ such that $\psi_Q^k(y)=\psi_0(y/\ell(Q)-l)\ell(Q)^{-n/2}$ for all $y\in\R^n$.
\item[$(c)$] $\|\psi_Q^k\|_2=1$, $\int\psi_Q^k\,d\LL^n=0$ and $\int\psi_Q^k\psi_R^l\,d\LL^n=0$, for all $Q,R\in\DD^n$ and $k,l=1,\ldots,2^n-1$ such that $(Q,k)\neq(R,l)$, where $\LL^n$ denotes the Lebesgue measure in $\R^n$.
\item[$(d)$] $\supp\psi_Q^k\subset C_wQ$ for all $Q\in\DD^n$ and $k=1,\ldots,2^n-1$, where $C_w>1$ is some fixed constant (which depends on $n$). In particular, for any $j\in\Z$ the supports of the functions in $\bigcup_{Q\in\DD^n:\, \ell(Q)=2^{-j}}\{\psi_Q^k\}_{k=1,\ldots,2^n-1}$ have finite overlap.
\item[$(e)$] $\|\psi_Q^k\|_\infty\lesssim\ell(Q)^{-n/2}$ and $\|\nabla\psi_Q^k\|_\infty\lesssim\ell(Q)^{-n/2-1}$ for all $Q\in\DD^n$, $k=1,\ldots,2^n-1$.
\item[$(f)$] If $h\in L^2(\LL^n)$, then $h=\sum_{Q\in\DD^n,\,k=1,\ldots,2^n-1}\Delta_Q^k h$, where $\Delta_Q^k h:=\big(\int h\psi_Q^k\,d\LL^n\big)\psi_Q^k.$
\end{enumerate}
\end{defi}

In order to reduce the notation, we may think that a cube of $\DD^n$ is not only a subset of $\R^n$, but a couple $(Q,k)$, where $Q$ is a subset of $\R^n$ and $k=1,\ldots,2^n-1$. In particular, there exist $2^n-1$ cubes in $\DD^n$ such that the subsets that they represent in $\R^n$ coincide. We make this abuse of notation to avoid using the superscript $k$ in the previous definition. Then, we can rewrite the wavelet basis as $\{\psi_Q\}_{Q\in\DD^n}$, with the evident adjustments of the properties $(a),\ldots,(f)$ in Definition \ref{definicio wavelet}.

Let $\DD^{n,0}_x$ be a fixed dyadic lattice of the $n$-plane $L_{D}^x$, and let $\{\psi_Q\}_{Q\in\DD^{n,0}_x}$ be a wavelet basis as the one introduced in Definition \ref{definicio wavelet} but defined on $L_{D}^x$. Denote by $E_D^x$ the $n$-dimensional vector space which defines $L_{D}^x$, and let $\{Q^0_k\}_{k\in\Z}$ be a fixed sequence of nested dyadic cubes in $E_D^x$ having the origin as a common vertex and such that $\ell(Q^0_k)=2^{-k}$ for all $k\in\Z$. Given $s\in E_D^x$, set 
$\DD^{n,s}_x:=\{s+Q:\,Q\in\DD^{n,0}_x\}$ (notice that, for any $k\in\Z$, the family $\{Q\in\DD^{n,s}_x:\, \ell(Q)=2^{-k}\}$ is periodic in the parameter $s$), 
 For any $Q\in\DD^{n,0}_x$ and $y\in L_{D}^x$, if $Q'=s+Q\in \DD^{n,s}_x$, we define $\psi_{Q'}(y)\equiv\psi_{s+Q}(y):=\psi_{Q}(y-s)$. Then $\{\psi_{Q'}\}_{Q'\in\DD^{n,s}_x}$ is also a wavelet basis defined on $L_{D}^x$.
Consider the decomposition of $h_x$ with respect to this basis, 
\begin{equation}\label{mklnko}
h_x=\sum_{Q\in\DD^{n,s}_x}\Delta^\psi_{Q}h_x=\sum_{Q\in\DD^{n,0}_x}\Delta^\psi_{Q,s}h_x,
\end{equation}
where $\Delta^\psi_{Q,s}h_x(z):=\big(\int h_x(y)\psi_Q(y-s)\,d\mu(y)\big)\,\psi_Q(z-s)$ (recall that, for any $Q\in\DD^{n,s}_x$, $\int\psi_{Q}\,d\HH^n_{L_D^x}=0$). 
We set $J(Q_S):=-\log_2(\ell(Q_S))$, and given $Q\in\DD^{n,s}_x$, we set $J(Q):=-\log_2(\ell(Q))$ and $J'(Q):=\max\{J(Q_S),J(Q)\}$.
Given $\Omega\subset E_D^x$, denote by $m_{s\in\Omega} g$ the average of a function $g:E_D^x\to\R$ over all $s\in\Omega$ and with respect to $\HH^n_{E_D^x}$. Then, by the periodicity of $\{\psi_Q\}_{Q\in\DD^{n,s}_x}$ in the parameter $s$ (recall Definition \ref{definicio wavelet}$(b)$) and (\ref{mklnko}), we can write
\begin{equation*}
h_x=m_{s\in Q^0_{J(Q_S)}} (h_x)=\sum_{Q\in\DD^{n,0}_x}m_{s\in Q^0_{J(Q_S)}}(\Delta^\psi_{Q,s}h_x)
=\sum_{Q\in\DD^{n,0}_x}m_{s\in Q^0_{J'(Q)}}(\Delta^\psi_{Q,s}h_x).
\end{equation*}

Set $J:=\{Q\in\DD_x^{n,0}\,:\,\supp\psi_{Q}(\cdot-s)\cap\supp\chi_{2^{-j-1}}^{\,2^{-j}}(x-\cdot)\neq\emptyset\text{ for some }s\in Q^0_{J'(Q)}\}$. Then,
\begin{equation}\label{chili}
U2_m(x)=(m_D^\mu f)\int_{A_m(x)}K(x-y)\sum_{Q\in J}m_{s\in Q^0_{J'(Q)}}(\Delta^\psi_{Q,s}h_x(y))\,d\HH^n_{L_D^x}(y).
\end{equation}
 
Recall that $D\in\DD_j$ and $m\in\SSS_D(x)$.
Since $x\in D$ and $\ell(D)=2^{-j}$, if $Q\in J$, then $D\subset B(x,C_a\ell(Q))$ or $Q\subset B(x,C_a\ell(D))$ for some constant $C_a>0$ big enough. In particular, if $\ell(Q)\gtrsim\ell(D)$ then $D\subset B(z_Q,C_a\ell(Q))$, and if $\ell(Q)\leq C\ell(D)$ with $C>0$ small enough then $Q\subset B(z_D,C_a\ell(D))$, where $z_Q$ denotes the center of $Q\subset L_D^x$ and $z_D$ denotes the center of $D\in\DD$.
We define 
\begin{equation*}
\begin{split}
J_1:&=\{Q\in J:\,\ell(Q)\leq C\ell(D)\}\subset\{Q\in\DD_x^{n,0}:\,Q\subset B(z_D,C_a\ell(D))\},\text{ and}\\
J_2:&=J\setminus J_1\subset\{Q\in\DD_x^{n,0}:\,D\subset  B(z_Q,C_a\ell(Q))\}.
\end{split}
\end{equation*}
Then, using (\ref{chili}), that  $\supp\chi_{\epsilon_{m+1}}^{\,\epsilon_m}(x-\cdot)\subset\supp\chi_{2^{-j-1}}^{\,2^{-j}}(x-\cdot)$ for all $m\in\SSS_D(x)$, that $\int_{A_m(x)}K(x-y)\,d\HH^n_{L_D^x}(y)=0$ by antisymmetry, and that $J'(Q)=J(Q)$ for all $Q\in J_1$ (because $D\subset Q_S$), if $x'$ denotes some fixed point in $A(x,2^{-j-1},2^{-j})\cap L_D^x$, we have
\begin{equation}\label{5teorema L2 no suau 16}
\begin{split}
U2_m&(x)=(m_D^\mu f)\int_{A_m(x)}K(x-y)\sum_{Q\in J_1}m_{s\in Q^0_{J(Q)}}(\Delta^\psi_{Q,s}h_x(y))\,d\HH^n_{L_D^x}(y)\\
&\quad+(m_D^\mu f)\int_{A_m(x)}K(x-y)\sum_{Q\in J_2}m_{s\in Q^0_{J'(Q)}}\big(\Delta^\psi_{Q,s}h_x(y)
-\Delta^\psi_{Q,s}h_x(x')\big)\,d\HH^n_{L_D^x}(y)\\
&=:U3_m(x)+U4_m(x).
\end{split}
\end{equation}

\begin{claim}\label{5teorema L2 no suau 20}
Under the notation above, we have
\begin{equation*}
\sum_{m\in\SSS_D(x)}|U4_m(x)|^2
\lesssim|m_D^\mu f|^2\sum_{Q\in J_2}\bigg(\frac{\ell(D)}{\ell(Q)}\bigg)^{1/2}\ell(Q)^{-n}\big(m_{s\in Q^0_{J'(Q)}}\|\Delta^\psi_{Q,s}h_x\|_2\big)^2.
\end{equation*}
\end{claim}
\begin{proof}[{\bf{\em Proof of }Claim \ref{5teorema L2 no suau 20}}]
By property $(e)$ of the wavelet basis in Definition \ref{definicio wavelet}, we have $|\Delta^\psi_{Q,s}h_x(y)-\Delta^\psi_{Q,s}h_x(x')|\leq\|\nabla(\Delta^\psi_{Q,s}h_x)\|_\infty|x'-y|\lesssim\|\Delta^\psi_{Q,s}h_x\|_2|x'-y|\ell(Q)^{-n/2-1}$. Moreover, if $y\in A_m(x)$, then $|x'-y|\lesssim\ell(D)$. Therefore, 
\begin{equation*}
\begin{split}
|U4_m(x)|&\leq\sum_{Q\in J_2}|m_D^\mu f|\int_{A_m(x)}|K(x-y)|m_{s\in Q^0_{J'(Q)}}\big(\big|\Delta^\psi_{Q,s}h_x(y)-\Delta^\psi_{Q,s}h_x(x')\big|\big)\,d\HH^n_{L_D^x}(y)\\
&\lesssim\sum_{Q\in J_2}|m_D^\mu f|m_{s\in Q^0_{J'(Q)}}\big(\|\Delta^\psi_{Q,s}h_x\|_2\big)\ell(D)^{1-n}\ell(Q)^{-n/2-1}\HH^n_{L_D^x}({A_m(x)}),
\end{split}
\end{equation*}
and then, by Cauchy-Schwarz inequality and since $J_2\subset\{Q\in\DD_x^{n,0}:\,D\subset  B(z_Q,C_a\ell(Q))\}$ (in particular, $\ell(D)/\ell(Q)\lesssim(\ell(D)/\ell(Q))^{1/2}$),
\begin{equation*}
\begin{split}
\sum_{m\in\SSS_D(x)}|U4_m(x)|^2
&\lesssim\bigg(\sum_{m\in\SSS_D(x)}\sum_{Q\in J_2}|m_D^\mu f|\,
m_{s\in Q^0_{J'(Q)}}\big(\|\Delta^\psi_{Q,s}h_x\|_2\big)\frac{\ell(Q)^{n/2+1}}{\ell(D)^{n-1}}\,\HH^n_{L_D^x}({A_m(x)})\bigg)^2\\
&\leq\bigg(\sum_{Q\in J_2}|m_D^\mu f|\,m_{s\in Q^0_{J'(Q)}}\big(\|\Delta^\psi_{Q,s}h_x\|_2\big)\ell(D)\ell(Q)^{n/2+1}\bigg)^2\\
&\leq\bigg(\sum_{Q\in J_2}\frac{\ell(D)}{\ell(Q)}\bigg)\bigg(\sum_{Q\in J_2}|m_D^\mu f|^2\big(m_{s\in Q^0_{J'(Q)}}\|\Delta^\psi_{Q,s}h_x\|_2\big)^2\frac{\ell(D)}{\ell(Q)^{n+1}}\bigg)\\
&\lesssim|m_D^\mu f|^2\sum_{Q\in J_2}\bigg(\frac{\ell(D)}{\ell(Q)}\bigg)^{1/2}\ell(Q)^{-n}\big(m_{s\in Q^0_{J'(Q)}}\|\Delta^\psi_{Q,s}h_x\|_2\big)^2.
\end{split}
\end{equation*}
\end{proof}

We are going to estimate $U3_m(x)$ with techniques very similar to the ones used in Subsections \ref{5 ss5321} and \ref{5 ss5321b}. First of all, let $b_*>0$ be a small constant which will be fixed later on, and consider the family $\PP:=\{Q\in\DD^{n,0}_x:\,\ell(Q)\leq \ell(D)\}$. Let $\Stop$ denote the set of cubes $Q\in \PP$ such that there exists $R_Q\in\DD$ with $\ell(R_Q)=\ell(Q)$, $10R_Q\cap(p^x)^{-1}(\supp\psi_Q)\neq\emptyset$, and 
\begin{equation}\label{reduced lema1}
\sum_{R\in\DD:\,R_Q\subset R,\,\ell(R)\leq\ell(D)}\alpha_{\mu}(10R)\geq b_*\quad\text{but}\quad\sum_{R\in\DD:\,P(R_Q)\subset R,\,\ell(R)\leq\ell(D)}\alpha_{\mu}(10R)<b_*.
\end{equation}
Observe that if $Q$ and $Q'$ are different and belong to $\Stop$, then $Q\cap Q'=\emptyset$. Notice also that $D\not\in\Stop$ because we assumed $\alpha_\mu(10D)<C_*$. Finally, denote by $\Tre$ the set of cubes $Q\in\PP\setminus\Stop$ such that $R\not\in\Stop$ for all $R\in\PP$ with $R\supset Q$. Then $\PP=\Tre\cup\bigcup_{Q\in\Stop}\{R\in\PP:\,R\subset Q\}$. By taking $C_*$ small enough we can assume that, if $R\in J_1\cap\PP$ and $R\subset Q$ for some $Q\in\Stop$, then $Q\in J_1$. So we write
\begin{equation*}
\begin{split}
\sum_{Q\in J_1}m_{s\in Q^0_{J(Q)}}(&\Delta^\psi_{Q,s}h_x)\\
&=\sum_{Q\in J_1\cap\Tre}m_{s\in Q^0_{J(Q)}}(\Delta^\psi_{Q,s}h_x)
+\sum_{Q\in J_1\cap\Stop}\,\sum_{R\in J_1\cap\PP:\,R\subset Q}m_{s\in Q^0_{J(Q)}}(\Delta^\psi_{R,s}h_x)
\end{split}
\end{equation*}
Set $\wit\Delta^\psi_{Q,s}h_x:=\sum_{R\in\PP:\,R\subset Q}\Delta^\psi_{R,s}h_x$. Then, using the definition of $J_1$ and $J$, we can split 
\begin{equation}\label{ernwvijf}
\begin{split}
U3_m(x)&=(m_D^\mu f)\int_{A_m(x)}K(x-y)\sum_{Q\in J_1\cap\Tre}m_{s\in Q^0_{J(Q)}}(\Delta^\psi_{Q,s}h_x(y))\,d\HH^n_{L_D^x}(y)\\
&\quad+(m_D^\mu f)\int_{A_m(x)}K(x-y)\sum_{Q\in J_1\cap\Stop}m_{s\in Q^0_{J(Q)}}(\wit\Delta^\psi_{Q,s}h_x(y))\,d\HH^n_{L_D^x}(y)\\
&=:U3_m^{a}(x)+U3_m^{b}(x).
\end{split}
\end{equation}

\begin{claim}\label{5teorema L2 no suau 19}
Under the notation above, we have
\begin{equation*}
\sum_{m\in\SSS_D(x)}|U3^{a}_m(x)|^2\lesssim|m_D^\mu f|^2\sum_{Q\in J_1\cap\Tre}\bigg(\frac{\ell(Q)}{\ell(D)}\bigg)^{1/2}\|m_{s\in Q^0_{J(Q)}}(\Delta^\psi_{Q,s}h_x)\|_2^2\,\ell(D)^{-n}.
\end{equation*}
For simplicity of notation, we have set $\|\cdot\|_p:=\|\cdot\|_{L^p(\HH^n_{L_D^x})}$.
\end{claim}
\begin{proof}[{\bf{\em Proof of }Claim \ref{5teorema L2 no suau 19}}]
Notice that $\HH^n_{L_D^x}(A_m(x))
\lesssim(\epsilon_m-\epsilon_{m+1})\ell(D)^{n-1}$. Moreover, the function $m_{s\in Q^0_{J(Q)}}(\Delta^\psi_{Q,s}h_x)$ is supported in $CQ$ and has vanishing integral, because the same holds for each $\Delta^\psi_{Q,s}h_x$ with $s\in Q^0_{J(Q)}$.
Hence, 
the sum $\sum_{m\in\SSS_D(x)}|U3^{a}_m(x)|^2$ can be estimated using arguments very similar to the ones in Subsection \ref{5 ss5321} (see (\ref{5 var eq7})), and the analogues of Lemma \ref{lema pendent petita3} and Claims \ref{5 claim1} and \ref{5 claim2} for $\HH^n_{L_D^x}$ follow easily. One obtains the expected estimate.
\end{proof}

\begin{claim}\label{5teorema L2 no suau 19b}
Under the notation above, we have
\begin{equation*}
\sum_{m\in\SSS_D(x)}|U3^{b}_m(x)|^2\lesssim|m_D^\mu f|^2\sum_{Q\in J_1\cap\Stop}\bigg(\frac{\ell(Q)}{\ell(D)}\bigg)^{1/2}\frac{\|m_{s\in Q^0_{J(Q)}}(\wit\Delta^\psi_{Q,s}h_x)\|_1^2}{\ell(D)^{n}\ell(Q)^{n}}.
\end{equation*}
\end{claim}
\begin{proof}[{\bf{\em Proof of }Claim \ref{5teorema L2 no suau 19b}}]
Since $m_{s\in Q^0_{J(Q)}}(\wit\Delta^\psi_{Q,s}h_x)$ has vanishing integral and it is supported in a neighbourhood of $Q$, the term $U3^{b}_m(x)$ can be estimated in the same manner (but now we do not use the estimate $\|m_{s\in Q^0_{J(Q)}}(\wit\Delta^\psi_{Q,s}h_x)\|_1^2\lesssim\ell(Q)^{n}\|m_{s\in Q^0_{J(Q)}}(\wit\Delta^\psi_{Q,s}h_x)\|_2^2$), and one obtains the expected estimate (compare with (\ref{5 var eq9})).
\end{proof}

Recall that we have fixed $x\in D\in S\in\Trees$, and we denote by $Q_S$ the maximal $\mu$-cube in $S$ from the corona decomposition, so $D\subset Q_S$. The following lemma, whose proof is given in Subsection \ref{oooo}, yields the suitable estimates for $m_{s\in Q^0_{J'(Q)}}(\Delta^\psi_{Q,s}h_x)$ and $m_{s\in Q^0_{J'(Q)}}(\wit\Delta^\psi_{Q,s}h_x)$.

\begin{lema}\label{5lema L2}
Assume that $\alpha_\mu(D)< C_*$, for  some constant $C_*>0$ small enough. Given $Q\in\DD_x^{n,0}$, there exists constants $C_1,C_2>1$ depending on $C_*$ and $b_*$ (see $(\ref{reduced lema1})$)such that,
\begin{enumerate}
\item[$(a)$]if $Q\in J_2$ and $\ell(Q)>\ell(Q_S)$, then
$m_{s\in Q^0_{J'(Q)}}(\|\Delta^\psi_{Q,s}h_x\|_2)\lesssim\ell(Q_S)^n\ell(Q)^{-n/2}$,
\item[$(b)$]if $Q\in J_2$ and $\ell(Q)\leq\ell(Q_S)$, then
$$m_{s\in Q^0_{J'(Q)}}\big(\|\Delta^\psi_{Q,s}h_x\|_2\big)\lesssim\bigg(\sum_{R\in\DD:\,D\subset R\subset  B(z_Q,C_1\ell(Q))}\alpha_{\mu}(C_1R)+\frac{\dist(x,L_D)}{\ell(D)}\bigg)\ell(Q)^{n/2},$$
\item[$(c)$] if $Q\in J_1\cap\Tre$, then there exists  $Q_0\equiv Q_0(x,Q)\in\DD$ depending on $x$ and $Q\in\DD_x^{n,0}$ such that $Q_0\subset C_2D$, $\ell(Q_0)\approx\ell(Q)$, $Q_0\cap(p^x)^{-1}(\supp\psi_Q)\neq\emptyset$ and 
$$\|m_{s\in Q^0_{J(Q)}}(\Delta^\psi_{Q,s}h_x)\|_2\lesssim\bigg(\sum_{R\in\DD:\,Q_0\subset R\subset C_2D}\alpha_{\mu}(C_2R)+\frac{\dist(x,L_D)}{\ell(D)}\bigg)\ell(Q)^{n/2},\quad\text{and}$$
\item[$(d)$] if $Q\in J_1\cap\Stop$,  then $\|m_{s\in Q^0_{J(Q)}}(\wit\Delta^\psi_{Q,s}h_x)\|_1\lesssim\ell(Q)^n.$
\end{enumerate}
\end{lema}

We are ready to put all the estimates together to bound the first term on the right hand side of (\ref{5 var quasiort eq1}). From (\ref{5teorema L2 no suau 6'}), (\ref{5teorema L2 no suau 6}), (\ref{5teorema L2 no suau 16}), and (\ref{ernwvijf}) we have
\begin{equation}\label{5estimate ss8.2}
\begin{split}
\sum_{m\in\SSS_D(x)}&\big|
(K\chi_{\epsilon_{m+1}}^{\epsilon_m}*((m_D^\mu f)\chi_{\wit D}\mu))(x)\big|^2
\lesssim|m_D^\mu f|^2\alpha_\mu(10D)^2\\
&\quad+\sum_{m\in\SSS_D(x)}(|U1_m(x)|^2+|U3^a_m(x)|^2+|U3^b_m(x)|^2+|U4_m(x)|^2).
\end{split}
\end{equation}

Let us deal with $U1_m(x)$ (the term $|m_D^\mu f|^2\alpha_\mu(10D)^2$ above is handled in the same manner). If $L_D^1$ and $L_D^2$ denote a minimizing $n$-plane for $\beta_{1,\mu}(D)$ and $\beta_{2,\mu}(D)$, respectively, one can show that
$\dist_\HH(L_D\cap B_D,L_D^1\cap B_D)\lesssim\alpha_\mu(D)\ell(D)$ and $\dist_\HH(L_D^1\cap B_D,L_D^2\cap B_D)\lesssim\beta_{2,\mu}(D)\ell(D)$, so we have $\dist(x,L_D)\lesssim\dist(x,L^2_D)+\beta_{2,\mu}(D)\ell(D)+\alpha_\mu(D)\ell(D)$ for $x\in D$.
Then, by Claim \ref{5teorema L2 no suau 18} and Carleson's embedding theorem,
\begin{equation}\label{est u1}
\begin{split}
\sum_{S\in\Trees}\sum_{D\in S}&\int_D\sum_{m\in\SSS_D(x)}|U1_m|^2\,d\mu\\
&\lesssim\sum_{D\in\DD}\int_D
|m_D^\mu f|^2\bigg(\beta_{1,\mu}(D)^2+\alpha_\mu(D)^2+\bigg(\frac{\dist(x,L_D)}{\ell(D)}\bigg)^2\bigg)\,d\mu(x)\\
&\lesssim\sum_{D\in\DD}
|m_D^\mu f|^2\ell(D)^n\big(\beta_{1,\mu}(D)^2+\alpha_\mu(D)^2+\beta_{2,\mu}(D)^2\big)\lesssim\|f\|_{L^2(\mu)}^2.
\end{split}
\end{equation}

For the case of $U3^a_m (x)$, by Claim \ref{5teorema L2 no suau 19} and Lemma \ref{5lema L2}$(c)$ applied to the $\mu$-cubes in $J_1\cap\Tre$, we have
\begin{equation*}
\begin{split}
\sum_{S\in\Trees}\sum_{D\in S}&\int_D\sum_{m\in\SSS_D(x)}|U3_m^a|^2\,d\mu\\
&\lesssim\sum_{D\in\DD}|m_D^\mu f|^2\int_D\sum_{Q\in J_1\cap\Tre}\bigg(\frac{\ell(Q)}{\ell(D)}\bigg)^{1/2}\frac{\|m_{s\in Q^0_{J(Q)}}(\Delta^\psi_{Q,s}h_x)\|^2_2}{\ell(D)^{n}}\,d\mu(x)\\
&\lesssim\sum_{D\in\DD}|m_D^\mu f|^2\int_D\sum_{Q\in J_1\cap\Tre}\bigg(\frac{\ell(Q)}{\ell(D)}\bigg)^{n+1/2}\bigg(\sum_{\begin{subarray}{c}R\in\DD:\\Q_0(x,Q)\subset R\subset C_2D\end{subarray}}\alpha_{\mu}(C_2R)\bigg)^2\,d\mu(x)\\
&\quad+\sum_{D\in\DD}|m_D^\mu f|^2\int_D\sum_{Q\in J_1\cap\Tre}\bigg(\frac{\ell(Q)}{\ell(D)}\bigg)^{n+1/2}\bigg(\frac{\dist(x,L_D)}{\ell(D)}\bigg)^2\,d\mu(x)=:S_1+S_2.
\end{split}
\end{equation*}
Recall that $J_1\subset\{Q\in\DD_x^{n,0}:\,Q\subset  B(z_D,C_a\ell(D))\}$. Then $\sum_{Q\in J_1}(\ell(Q)/\ell(D))^{n+1/2}\lesssim1$, and since $\dist(x,L_D)\lesssim\dist(x,L^2_D)+\beta_{2,\mu}(D)\ell(D)+\alpha_\mu(D)\ell(D)$ for $x\in D$, then $S_2\lesssim\sum_{D\in\DD}|m_D^\mu f|^2(\beta_{2,\mu}(D)^2+\alpha_\mu(D)^2)\ell(D)^n,$ and hence $S_2\leq C\|f\|_{L^2(\mu)}^2$, by Carleson's embedding theorem. For $S_1$, since $\ell(Q)\approx\ell(Q_0(x,Q))$ (recall the definition of $Q_0\equiv Q_0(x,Q)$ in Lemma \ref{5lema L2}$(c)$), $Q_0(x,Q)\subset C_2D$, and every $Q_0\in\DD$ intersects $(p^x)^{-1}(\supp\psi_Q)$ for finitely many cubes $Q\in\DD_x^{n,0}$ (with a bound for the number of such cubes $Q$ independent of $x$ and $Q_0$), we have
\begin{equation*}
\begin{split}
\sum_{Q\in J_1\cap\Tre}&\bigg(\frac{\ell(Q)}{\ell(D)}\bigg)^{n+1/2}
\bigg(\sum_{R\in\DD:\,Q_0(x,Q)\subset R\subset C_2D}\alpha_{\mu}(C_2R)\bigg)^2\\
&=\sum_{P\in\DD\,:\,P\subset C_2D}\,\sum_{\begin{subarray}{c}Q\in\DD_x^{n,0}:\,Q\subset B(z_D,C_a\ell(D)),\\Q_0(x,Q)=P\end{subarray}}\bigg(\frac{\ell(Q)}{\ell(D)}\bigg)^{n+1/2}
\bigg(\sum_{R\in\DD:\,P\subset R\subset C_2D}\alpha_{\mu}(C_2R)\bigg)^2\\
&\lesssim\sum_{P\in\DD\,:\,P\subset C_2D}\bigg(\frac{\ell(P)}{\ell(D)}\bigg)^{n+1/2}
\bigg(\sum_{R\in\DD:\,P\subset R\subset C_2D}\alpha_{\mu}(C_2R)\bigg)^2.
\end{split}
\end{equation*}
By Cauchy-Schwarz inequality,
\begin{equation}\label{fnjid}
\begin{split}
\sum_{P\in\DD\,:\,P\subset C_2D}&\bigg(\frac{\ell(P)}{\ell(D)}\bigg)^{n+1/2}
\bigg(\sum_{R\in\DD:\,P\subset R\subset C_2D}\alpha_{\mu}(C_2R)\bigg)^2\\
&\lesssim\sum_{P\in\DD\,:\,P\subset C_2D}\bigg(\frac{\ell(P)}{\ell(D)}\bigg)^{n+1/2}
\log_2\bigg(\frac{\ell(D)}{\ell(P)}\bigg)\sum_{R\in\DD:\,P\subset R\subset C_2D}\alpha_{\mu}(C_2R)^2\\
&\lesssim\sum_{R\in\DD:\,R\subset C_2D}\alpha_{\mu}(C_2R)^2\sum_{P\in\DD\,:\,P\subset R}\bigg(\frac{\ell(P)}{\ell(D)}\bigg)^{n+1/4}\\
&\lesssim\sum_{R\in\DD:\,R\subset C_2D}\alpha_{\mu}(C_2R)^2\bigg(\frac{\ell(R)}{\ell(D)}\bigg)^{n+1/4}=:\lambda_1(D)^2.
\end{split}
\end{equation}

By standard arguments one can easily show that 
these $\lambda_1$ coefficients satisfy a Carleson packing condition, so by (\ref{fnjid}) and Carleson's embedding theorem we obtain
$S_1\lesssim\sum_{D\in\DD}|m_D^\mu f|^2\ell(D)^n\lambda_1(D)^2\lesssim\|f\|_{L^2(\mu)}^2,$ which combined with $S_2\lesssim\|f\|_{L^2(\mu)}^2$ yields 
\begin{equation}\label{est u3a}
\sum_{S\in\Trees}\sum_{D\in S}\int_D\sum_{m\in\SSS_D(x)}|U3^a_m|^2\,d\mu\lesssim\|f\|_{L^2(\mu)}^2.
\end{equation}

Let us deal now with $U3^b_m$. By Claim \ref{5teorema L2 no suau 19b} and Lemma \ref{5lema L2}$(d)$ applied to the $\mu$-cubes in $J_1\cap\Stop$, we have
\begin{equation*}
\begin{split}
\sum_{S\in\Trees}\sum_{D\in S}&\int_D\sum_{m\in\SSS_D(x)}|U3_m^b|^2\,d\mu\\
&\lesssim\sum_{D\in\DD}|m_D^\mu f|^2\int_D\sum_{Q\in J_1\cap\Stop}\bigg(\frac{\ell(Q)}{\ell(D)}\bigg)^{1/2}\frac{\|m_{s\in Q^0_{J(Q)}}(\wit\Delta^\psi_{Q,s}h_x)\|^2_1}{\ell(D)^{n}\ell(Q)^{n}}\,d\mu(x)\\
&\lesssim\sum_{D\in\DD}|m_D^\mu f|^2\int_D\sum_{Q\in J_1\cap\Stop}\bigg(\frac{\ell(Q)}{\ell(D)}\bigg)^{n+1/2}\,d\mu.\\
\end{split}
\end{equation*}
Given $D\in\DD$, consider the family $\Lambda_D:=\{R\in\DD:\, R=R_Q \text{ for some }x\in D \text{ and some }Q\in J_1\cap\Stop\}$ (see the definition of $R_Q$ in (\ref{reduced lema1})). Observe that every $R\in\DD$ intersects $(p^x)^{-1}(Q\cap L_D^x)$ for finitely many $\mu$-cubes $Q\in\DD_x^{n,0}$ such that $\ell(Q)=\ell(R)$. Thus, simlilarly to what we did for $Q\in J_1\cap\Tre$ in the case of $U3^a_m$, we have 
\begin{equation*}
\begin{split}
\sum_{D\in\DD}|m_D^\mu f|^2\int_D\sum_{Q\in J_1\cap\Stop}\bigg(\frac{\ell(Q)}{\ell(D)}\bigg)^{n+1/2}\,d\mu
\lesssim\sum_{D\in\DD}|m_D^\mu f|^2\int_D\sum_{R\in\Lambda_D}\bigg(\frac{\ell(R)}{\ell(D)}\bigg)^{n+1/2}\,d\mu\\
\lesssim\sum_{D\in\DD}|m_D^\mu f|^2\sum_{R\in\Lambda_D}\bigg(\frac{\ell(R)}{\ell(D)}\bigg)^{n+1/2}\mu(D)=\sum_{D\in\DD}|m_D^\mu f|^2\lambda_2(D)^2\mu(D),
\end{split}
\end{equation*}
where we have set $\lambda_2(D)^2:=\sum_{R\in\Lambda_D}(\ell(R)/\ell(D))^{n+1/2}$.
Since the $\alpha_\mu$'s satisfy a Carleson packing condition, it is not hard to show that the same holds for the $\lambda_2$'s. Indeed, since for any $R\in\Lambda_D$ we have
$\sum_{R'\in\DD:\,R\subset R',\,\ell(R)\leq\ell(D)}\alpha_{\mu}(10R')\geq b_*$ by (\ref{reduced lema1}), then $$\lambda_2(D)^2\leq b_*^{-2}\sum_{R\in\Lambda_D}\bigg(\frac{\ell(R)}{\ell(D)}\bigg)^{n+1/2}\bigg(\sum_{R'\in\DD:\,R\subset R',\,\ell(R)\leq\ell(D)}\alpha_{\mu}(10R')\bigg)^2,$$
and we can proceed as in (\ref{fnjid}). Hence, putting these estimates together and using Carleson's embedding theorem for the $\lambda_2$'s, we obtain
\begin{equation}\label{est u3b}
\sum_{S\in\Trees}\sum_{D\in S}\int_D\sum_{m\in\SSS_D(x)}|U3_m^b|^2\,d\mu
\lesssim\|f\|_{L^2(\mu)}^2.
\end{equation}

We deal now with $U4_m(x)$. By Claim \ref{5teorema L2 no suau 20} and Lemma \ref{5lema L2}$(a)$ and $(b)$ applied to the cubes in $J_2$, 
\begin{equation}\label{qwqe}
\begin{split}
&\sum_{S\in\Trees}\sum_{D\in S}\int_D\sum_{m\in\SSS_D(x)}|U4_m|^2\,d\mu\\
&\quad\lesssim\sum_{S\in\Trees}\sum_{D\in S}|m_D^\mu f|^2\int_D\sum_{Q\in J_2}\bigg(\frac{\ell(D)}{\ell(Q)}\bigg)^{1/2}\,\frac{m_{s\in Q^0_{J'(Q)}}\big(\|\Delta^\psi_{Q,s}h_x\|_2\big)^2}{\ell(Q)^{n}}\,d\mu\\
&\quad\lesssim\sum_{S\in\Trees}\sum_{D\in S}|m_D^\mu f|^2\int_D\sum_{Q\in J_2:\, \ell(Q)\leq\ell(Q_S)}\bigg(\frac{\ell(D)}{\ell(Q)}\bigg)^{1/2}\\
&\quad\qquad\qquad\qquad\qquad\quad\quad\bigg[\bigg(\sum_{R\in\DD:\,D\subset R\subset  B(z_Q,C_1\ell(Q))}\alpha_{\mu}(C_1R)\bigg)^2+\bigg(\frac{\dist(x,L_D)}{\ell(D)}\bigg)^2\bigg]\,d\mu\\
&\quad\quad+\sum_{S\in\Trees}\sum_{D\in S}|m_D^\mu f|^2\int_D\sum_{Q\in J_2:\,\ell(Q)>\ell(Q_S)}\bigg(\frac{\ell(D)}{\ell(Q)}\bigg)^{1/2}\frac{\ell(Q_S)^{2n}}{\ell(Q)^{2n}}\,d\mu=:S_3+S_4.
\end{split}
\end{equation}
Regarding $S_3$, since $\dist(x,L_D)\lesssim\dist(x,L^2_D)+\beta_{2,\mu}(D)\ell(D)+\alpha_\mu(D)\ell(D)$ for $x\in D$ and $\sum_{Q\in J_2}(\ell(D)/\ell(Q))^{1/2}\lesssim1$, the second term in the definition of  $S_3$ is bounded by $\sum_{D\in\DD}|m_D^\mu f|^2(\beta_{2,\mu}(D)^2+\alpha_\mu(D)^2)\ell(D)^n,$ and hence by $C\|f\|_{L^2(\mu)}^2$, by Carleson's embedding theorem. For the first term in $S_3$, by Cauchy-Schwarz inequality,
\begin{equation*}
\begin{split}
&\sum_{S\in\Trees}\sum_{D\in S}|m_D^\mu f|^2\int_D\sum_{Q\in J_2:\, \ell(Q)\leq\ell(Q_S)}\bigg(\frac{\ell(D)}{\ell(Q)}\bigg)^{1/2}\bigg(\sum_{\begin{subarray}{c}R\in\DD:\\D\subset R\subset  B(z_Q,C_1\ell(Q))\end{subarray}}\alpha_{\mu}(C_1R)\bigg)^2\,d\mu\\
&\quad\lesssim\sum_{S\in\Trees}\sum_{D\in S}|m_D^\mu f|^2\int_D\sum_{\begin{subarray}{c}Q\in J_2:\\ \ell(Q)\leq\ell(Q_S)\end{subarray}}\bigg(\frac{\ell(D)}{\ell(Q)}\bigg)^{1/2}\log_2\bigg(\frac{\ell(Q)}{\ell(D)}\bigg)\!\!\sum_{\begin{subarray}{c}R\in \DD:\\D\subset R\subset  B(z_Q,C_1\ell(Q))\end{subarray}}\alpha_{\mu}(C_1R)^2\,d\mu\\
&\quad\lesssim\sum_{D\in\DD}|m_D^\mu f|^2\int_D\sum_{\begin{subarray}{c}R\in\DD:\\D\subset R\end{subarray}}\alpha_{\mu}(C_1R)^2\sum_{\begin{subarray}{c}Q\in\DD_x^{n,0}:\\R\subset  B(z_Q,C_1\ell(Q))\end{subarray}}\bigg(\frac{\ell(D)}{\ell(Q)}\bigg)^{1/4}\,d\mu.
\end{split}
\end{equation*}
Notice that $\sum_{Q\in\DD_x^{n,0}:\,R\subset  B(z_Q,C_1\ell(Q))}\big(\ell(D)/\ell(Q)\big)^{1/4}\lesssim\big(\ell(D)/\ell(R)\big)^{1/4}$, thus the right side of the preceeding inequality is bounded above by
\begin{equation}\label{bncdenacl}
\sum_{D\in\DD}|m_D^\mu f|^2\ell(D)^n\sum_{R\in \DD:\,D\subset R}\alpha_{\mu}(C_1R)^2\bigg(\frac{\ell(D)}{\ell(R)}\bigg)^{1/4}=:\sum_{D\in\DD}|m_D^\mu f|^2\ell(D)^n\lambda_3(D)^2.
\end{equation}
By standard arguments one can show that the $\lambda_3$'s satisfy a Carleson packing condition, so by Carleson's embedding theorem again, the last term in (\ref{bncdenacl}) is bounded by $C\|f\|_{L^2(\mu)}^2$. Thus we obtain $S_3\lesssim\|f\|_{L^2(\mu)}^2$.

The estimate of $S_4$ from (\ref{qwqe}) is easier:
\begin{equation*}
\begin{split}
S_4\lesssim\sum_{S\in\Trees}\sum_{D\in S}|m_D^\mu f|^2\int_D\sum_{\begin{subarray}{c}Q\in\DD_x^{n,0}:\,\ell(Q)>\ell(Q_S),\\D\subset  B(z_Q,C_1\ell(Q))\end{subarray}}\frac{\ell(D)^{1/2}\ell(Q_S)^{2n}}{\ell(Q)^{2n+1/2}}\,d\mu(x).
\end{split}
\end{equation*}
As before, $\sum_{Q\in\DD_x^{n,0}:\,\ell(Q)>\ell(Q_S),\,
D\subset  B(z_Q,C_1\ell(Q))}\ell(Q)^{-2n-1/2}\lesssim\ell(Q_S)^{-2n-1/2}$, thus 
\begin{equation*}
\begin{split}
S_4&\lesssim\sum_{S\in\Trees}\sum_{D\in S}|m_D^\mu f|^2\ell(D)^{n}\bigg(\frac{\ell(D)}{\ell(Q_S)}\bigg)^{1/2}\lesssim\sum_{D\in\DD}|m_D^\mu f|^2\ell(D)^{n}\sum_{S\in\Trees:\,S\ni D}\bigg(\frac{\ell(D)}{\ell(Q_S)}\bigg)^{1/2}\\
&=:\sum_{D\in\DD}|m_D^\mu f|^2\ell(D)^{n}\lambda_4(D)^2.
\end{split}
\end{equation*}
Similarly to the case of the $\lambda_3$ coefficients, one can show that the $\lambda_4$'s also satisfy a Carleson packing condition, thus 
$S_4\lesssim\|f\|_{L^2(\mu)}^2$ by Carleson's embedding theorem. Actually, if one defines $\widehat\alpha_\mu(Q)=1$ if $Q=Q_S$ for some $S\in\Trees$ and $\widehat\alpha_\mu(Q)=0$ otherwise, using the packing condition for the $\mu$-cubes $Q_S$ with $S\in\Trees$, one can easily verify that the $\widehat\alpha_\mu$'s  satisfy a Carleson packing condition. Then, 
$$\lambda_4(D)^2=\sum_{S\in\Trees:\,D\subset Q_S}\bigg(\frac{\ell(D)}{\ell(Q_S)}\bigg)^{1/2}\widehat\alpha_\mu(Q_S)^2=\sum_{Q\in\DD:\,D\subset Q}\bigg(\frac{\ell(D)}{\ell(Q)}\bigg)^{1/2}\widehat\alpha_\mu(Q)^2,$$
and we can argue as in the case of the $\lambda_3$'s in (\ref{bncdenacl}).

By the estimates of $S_3$ and $S_4$, we obtain 
\begin{equation}\label{est u4}
\sum_{S\in\Trees}\sum_{D\in S}\int_D\sum_{m\in\SSS_D(x)}|U4_m|^2\,d\mu\lesssim\|f\|_{L^2(\mu)}^2.
\end{equation}

Finally, plugging (\ref{est u1}), (\ref{est u3a}), (\ref{est u3b}), and (\ref{est u4}) in (\ref{5estimate ss8.2}), and combining the result with (\ref{5 var quasiort eq1}) and (\ref{5 var quasiort eq2}), we conclude that
\begin{equation*}
\begin{split}
\sum_{S\in\Trees}\sum_{D\in S}\int_D\sum_{m\in\SSS_D(x)}\bigg|\sum_{R\in V(D)}
(K\chi_{\epsilon_{m+1}}^{\epsilon_m}*((m_R^\mu f)\chi_R\mu))(x)\bigg|^2\,d\mu(x)\lesssim\|f\|_{L^2(\mu)}^2,
\end{split}
\end{equation*}
and Lemma \ref{5 var eq8a} is finally proved, except for Lemma \ref{5lema L2}.
\end{proof}

\subsubsection{{\bf Proof of Lemma \ref{5lema L2}}}\label{oooo}
\begin{proof}[\bf{\em Proof of} Lemma \ref{5lema L2}$(a)$]
By Definition \ref{definicio wavelet}$(e)$, for any $s\in Q^0_{J'(Q)}$ we have
\begin{equation*}
\begin{split}
\|\Delta^\psi_{Q,s}h_x\|_\infty&\lesssim|\langle h_x,\psi_{s+Q}\rangle|\ell(Q)^{-n/2}\lesssim\ell(Q)^{-n}\int h_x\,d\HH^n_{L_D^x}=\ell(Q)^{-n}\int\,d\nu_x\\
&=\ell(Q)^{-n}\int\,d(p^x_\sharp(\chi_{40Q_S}\mu))=\ell(Q)^{-n}\int_{40Q_S}\,d\mu
\lesssim\frac{\ell(Q_S)^n}{\ell(Q)^n}.
\end{split}
\end{equation*}
Hence,
$\|\Delta^\psi_{Q,s}h_x\|_2\leq\|\Delta^\psi_{Q,s}h_x\|_\infty\LL^n({\supp\psi_{s+Q}})^{1/2}\lesssim\ell(Q_S)^n\ell(Q)^{-n/2}$ for all $s\in Q^0_{J'(Q)}$, and  Lemma \ref{5lema L2}$(a)$ follows by taking the average over $s\in Q^0_{J'(Q)}$.
\end{proof}

\begin{proof}[\bf{\em Proof of} Lemma \ref{5lema L2}$(b)$]
Since $D\subset  B(z_Q,C_a\ell(Q))$, $D\in S$, and $\ell(D)\lesssim\ell(Q)\leq\ell(Q_S)$, by taking $C_{cor}$ big enough (see property $(f)$ in Subsection \ref{5ss corona decomposition}), we can assume that $\mu$ is well approximated by $\Gamma_S$ in a neighborhood of $Q$.  We are going to show that, for each $s\in Q^0_{J'(Q)}$,
\begin{equation}\label{kikiki}
\|\Delta^\psi_{Q,s}h_x\|_2\lesssim\bigg(\sum_{R\in\DD:\,D\subset R\subset  B(z_Q,C_1\ell(Q))}\alpha_{\mu}(C_1R)+\frac{\dist(x,L_D)}{\ell(D)}\bigg)\ell(Q)^{n/2},
\end{equation}
and Lemma \ref{5lema L2}$(b)$ will follow by taking the average over $s\in Q^0_{J'(Q)}$. 

Fix $Q\in J_2$, so $D\subset B(z_Q,C_a\ell(Q))$ with $\ell(Q)\leq\ell(Q_S)$, and $s\in Q^0_{J'(Q)}$. Take $Q'\in\DD$ such that $\ell(Q)=\ell(Q')$ and $Q\subset B(z_{Q'},3\ell(Q))$. Recall that $\supp\psi_{s+Q}\subset CQ$ and $|\nabla\psi_{s+Q}|\lesssim\ell(Q)^{-n/2-1}$. Let $\phi_{s+Q'}$ be an extension of $\psi_{s+Q}$, i.e., let $\phi_{s+Q'}:\R^d\to\R$ be such that $\supp\phi_{s+Q'}\subset B_{Q'}\subset\R^d$, $|\nabla\phi_{s+Q'}|\lesssim\ell(Q')^{-n/2-1}$ and $\phi_{s+Q'}=\psi_{s+Q}$ in $L_D^x$.

Let $L_{Q'}$ be a minimizing $n$-plane for $\alpha_{\mu}(C_1Q')$, where $C_1>1$ is some big constant to be fixed below, and let $L_{Q'}^x$ be the $n$-plane parallel to $L_{Q'}$ which contains $x$. Let $\sigma_{Q'}:=c_{Q'}\HH^n_{L_{Q'}}$ be a minimizing measure for $\alpha_{\mu}(C_1Q')$ and define $\sigma^x_{Q'}:=c_{Q'}\HH^n_{L_{Q'}^x}$. Finally, set $\sigma:=c_{Q'}\HH^n_{L_D^x}$. Since $\psi_{s+Q}$ has vanishing integral in $L_D^x$, we also have $\int \phi_{s+Q'}\,d\HH^n_{L_D^x}=0$. Hence,
\begin{equation}\label{pipipi}
\begin{split}
\|\Delta_{Q,s}^\psi h_x\|_2&=\|\langle h_x,\psi_{s+Q}\rangle\psi_{s+Q}\|_2=|\langle h_x,\psi_{s+Q}\rangle|
=\Big|\int_{L_D^x}\phi_{s+Q}(y)\,d\nu_x(y)\Big|\\&=\Big|\int\phi_{s+Q'}(y)\,d(\nu_x-\sigma)(y)\Big|
\lesssim\ell(Q)^{-n/2-1}\dist_{B_{Q'}}(\nu_x,\sigma).
\end{split}
\end{equation}

We can assume that 
\begin{equation}\label{ririri}
\sum_{R\in\DD:\,D\subset R\subset  B(z_Q,C_1\ell(Q))}\alpha_{\mu}(C_1R)\leq b_*,
\end{equation}
otherwise Lemma \ref{5lema L2}$(b)$ follows easily. By assuming (\ref{ririri}) one can show that the angle between $L_D^x$ and $L_{Q'}^x$ is small. By the triangle inequality, we have
\begin{equation}\label{eq4}
\begin{split}
\dist_{B_{Q'}}(\nu_x,\sigma)
\leq\dist_{B_{Q'}}(\nu_x,p^x_\sharp\sigma_{Q'}^x)
+\dist_{B_{Q'}}(p^x_\sharp\sigma_{Q'}^x,\sigma).
\end{split}
\end{equation}

To deal with the first term on the right hand side of (\ref{eq4}), let $h$ be a Lipschitz function such that $\supp h\subset B_{Q'}$ and $\Lip(h)\leq1$. Then, using that $\supp\mu$ is well approximated in $CQ'$ by a Lipschitz graph $\Gamma_S$ with small slope, the function $h\circ p^x$ restricted to $\supp\mu\cup L_Q^x$ can be extended to a Lipschitz function supported in $B_{C_1Q'}$ (if $C_1$ is big enough) with $\Lip(h\circ p^x)$ bounded by a constant which only depends on $n$, $d$, and $\Lip(\Gamma_S)$. Therefore,
\begin{equation}\label{eq7}
\begin{split}
\Big|&\int_{B_{Q'}} h\,d(\nu_x-p^x_\sharp\sigma_{Q'}^x)\Big|=\Big|\int_{B_{C_1{Q'}}}h\circ p^x\,d(\mu-\sigma_{Q'}^x)\Big|
\lesssim\dist_{B_{C_1Q'}}(\mu,\sigma_{Q'}^x)\\
&\leq\dist_{B_{C_1Q'}}(\mu,\sigma_{Q'})+\dist_{B_{C_1Q'}}(\sigma_{Q'},\sigma_{Q'}^x)
\lesssim\alpha_{\mu}(C_1Q')\ell(Q)^{n+1}+\dist(x,L_{Q'})\ell(Q)^n.
\end{split}
\end{equation}
Since $x\in D$ and $D\subset C_1Q'$ (if $C_1>C_b$), by \cite[Remark 5.3]{To} we have
\begin{equation}\label{eq9}
\begin{split}
\dist(x,L_{Q'})\lesssim\sum_{R\in\DD:\,D\subset R\subset C_1Q'}\alpha_{\mu}(R)\ell(R)+\dist(x,L_D).
\end{split}
\end{equation}

Taking the supremum over all possible Lipschitz functions $h$ in (\ref{eq7}) and using that $\ell(D)\leq\ell(R)\lesssim\ell(Q)$ in the sum above, we get
\begin{equation}\label{eq8}
\begin{split}
\dist_{B_{Q'}}(\nu_x,p^x_\sharp\sigma_{Q'}^x)\lesssim\sum_{R\in\DD:\,D\subset R\subset C_1Q'}\alpha_{\mu}(C_1R)\ell(Q)^{n+1}+\frac{\dist(x,L_D)}{\ell(D)}\,\ell(Q)^{n+1}.
\end{split}
\end{equation}

To estimate the second term on the right hand side of (\ref{eq4}), notice that $p^x_\sharp\sigma=\sigma$ because $p^x|_{L_D^x}=\operatorname{Id}$. Hence, as in (\ref{eq7}),
\begin{equation*}
\begin{split}
\dist_{B_{Q'}}(p^x_\sharp\sigma_{Q'}^x,\sigma)&=\dist_{B_{Q'}}(p^x_\sharp\sigma_{Q'}^x,p^x_\sharp\sigma)
\lesssim\dist_{B_{C_1{Q'}}}(\sigma_{Q'}^x,\sigma)\\
&\leq\dist_{B_{C_1Q'}}(\sigma_{Q'}^x,\sigma_{Q'})+\dist_{B_{C_1Q'}}(\sigma_{Q'},\sigma)\\
&\lesssim\dist_{B_{C_1Q'}}(\HH^n_{L_{Q'}^x},\HH^n_{L_{Q'}})+\dist_{B_{C_1Q'}}(\HH^n_{L_{Q'}},\HH^n_{L_D})
+\dist_{B_{C_1Q'}}(\HH^n_{L_D},\HH^n_{L_D^x})\\
&\lesssim\dist(x,L_{Q'})\ell(Q)^{n}+\dist_{B_{C_1Q'}}(\HH^n_{L_{Q'}},\HH^n_{L_D})+\dist(x,L_D)\ell(Q)^{n}.
\end{split}
\end{equation*}

The term $\dist_{B_{C_1Q'}}(\HH^n_{L_{Q'}},\HH^n_{L_D})$ can be estimated using the intermediate $\mu$-cubes between $D$ and $C_1Q'$ (similarly to (\ref{eq8})), and we obtain
$$\dist_{B_{C_1Q}}(\HH^n_{L_Q},\HH^n_{L_D})\lesssim\sum_{R\in\DD:\,D\subset R\subset C_1Q}\alpha_{\mu}(C_1R)\ell(Q)^{n+1}.$$ Thus, by (\ref{eq9}) and since $\ell(D)\lesssim\ell(Q)$,
\begin{equation*}
\begin{split}
\dist_{B_{Q'}}(p^x_\sharp\sigma_{Q'}^x,\sigma)\lesssim
\sum_{R\in\DD:\,D\subset R\subset C_1Q'}\alpha_{\mu}(C_1R)\ell(Q)^{n+1}+\frac{\dist(x,L_D)}{\ell(D)}\,\ell(Q)^{n+1}.
\end{split}
\end{equation*}
Then, (\ref{kikiki}) follows by plugging this last inequality and (\ref{eq8}) in (\ref{eq4}) combined with (\ref{pipipi}), and recalling that $\ell(Q)\approx\ell(Q')$. Thus we are done with Lemma \ref{5lema L2}$(b)$.

\end{proof}

\begin{proof}[\bf{\em Proof of} Lemma \ref{5lema L2}$(c)$]
Given $Q\in J_1\cap\Tre$, using (\ref{reduced lema1}) we have
\begin{equation*}
\sum_{R'\in\DD:\,R\subset R',\,\ell(R')\leq\ell(D)}\alpha_{\mu}(10R')<b_*
\end{equation*}
for all $R\in\DD$ with $\ell(R)=\ell(Q)$ and such that  $R\cap(p^x)^{-1}(\supp\psi_{s+Q})\neq\emptyset$ for all $s\in Q^0_{J(Q)}$. By assuming $b_*$ small enough, we are going to show that for some $Q_0(x,Q)\in\DD$ as in the statement $(c)$ and all $s\in Q^0_{J(Q)}$ we have
\begin{equation}\label{lilili}
\|\Delta^\psi_{Q,s}h_x\|_2\lesssim\bigg(\sum_{R\in\DD:\,Q_0\subset R\subset C_2D}\alpha_{\mu}(C_2R)+\frac{\dist(x,L_D)}{\ell(D)}\bigg)\ell(Q)^{n/2}
\end{equation}

As before, Lemma \ref{5lema L2}$(c)$ will follow by averaging over $s\in Q^0_{J(Q)}$, and noting that $\|m_{s\in Q^0_{J(Q)}}(\Delta^\psi_{Q,s}h_x)\|_2\leq m_{s\in Q^0_{J(Q)}}\|\Delta^\psi_{Q,s}h_x\|_2$ by Minkowski's integral inequality.

Take $Q\in J_1\cap\Tre$. Let $C_2$ be some big constant which will be fixed later on, and let $Q_0\in\DD$ be a minimal $\mu$-cube such that $C_2Q_0$ contains $\supp\mu\cap(p^x)^{-1}(\supp\psi_{s+Q}\cap L_D^x)$ for all $s\in J(Q)$. We can assume that $Q_0\subset C_2D$ if $C_2$ is big enough and, by (\ref{reduced lema1}), we may also suppose that $\sum_{R\in\DD:\,Q_0\subset R\subset C_2D}\alpha_{\mu}(C_2R)$ is small enough. Hence, if $L_{Q_0}$ is a minimizing $n$-plane for $\beta_{\infty,\mu}(C_2Q_0)$, the angle between $L_{Q_0}$ and $L_D^x$ is also small enough, since it is bounded by $\sum_{R\in\DD:\,Q_0\subset R\subset C_2D}\alpha_{\mu}(C_2R)$ (see \cite[Lemma 5.2]{To} for a related argument). It is not hard to show that then
\begin{equation}\label{L2 claim}
\diam(\Gamma\cap(p^x)^{-1}(Q\cap L_D^x))\lesssim\ell(Q).
\end{equation}

Let $L_{Q_0}$ and $\sigma_{Q_0}:=c_{Q_0}\HH^n_{L_{Q_0}}$ be a minimizing $n$-plane and measure for $\alpha_{\mu}(C_2Q_0)$, respectively. Fix $z_{Q_0}\in L_{Q_0}\cap B_{C_2Q_0}$ and let $L_r$ be an $n$-plane parallel to $L_{D}^x$ which contains $z_{Q_0}$. Finally, define the measures $\sigma_{r}:=c_{Q_0}\HH^n_{L_{r}}$ and $\sigma':=c_{Q_0}\HH^n_{L_D^x}$.

Since $\sigma'$ is a multiple of $\HH^n_{L_D^x}$, similarly to (\ref{pipipi}) and using the triangle inequality,
\begin{equation}\label{eq6}
\begin{split}
\|\Delta^\psi_{Q,s}h_x\|_2\ell(Q)^{n/2+1}&\lesssim\dist_{B_Q}(\nu_x,\sigma')\\
&\leq\dist_{B_Q}(\nu_x,p^x_\sharp\sigma_{Q_0})
+\dist_{B_Q}(p^x_\sharp\sigma_{Q_0},p^x_\sharp\sigma_r)
+\dist_{B_Q}(p^x_\sharp\sigma_r,\sigma'),
\end{split}
\end{equation}
where we have set $B_Q:=B(z_Q,3\ell(Q))\subset\R^d$ (for these computations, we may also assume that $\ell(Q)$ is small enough in comparison with $\ell(D)$).

Arguing as in (\ref{eq7}), if $C_2$ is big enough, we have
\begin{equation}\label{eq10}
\begin{split}
\dist_{B_Q}(\nu_x,p^x_\sharp\sigma_{Q_0})=\dist_{B_Q}(p^x_\sharp\mu,p^x_\sharp\sigma_{Q_0})\lesssim\alpha_\mu(C_2Q_0)\ell(Q)^{n+1},
\end{split}
\end{equation}
and
\begin{equation*}
\begin{split}
\dist_{B_Q}(p^x_\sharp\sigma_{Q_0},p^x_\sharp\sigma_r)\lesssim\dist_{B_{C_2Q_0}}(\sigma_{Q_0},\sigma_r)\lesssim\dist_\HH(L_{Q_0}\cap B_{C_2Q_0},L_r\cap B_{C_2Q_0})\ell(Q)^n.
\end{split}
\end{equation*}
Let $\gamma$ be the angle between $L_r$ and $L_{Q_0}$ (which is the same as the one between $L_D$ and $L_{Q_0}$).
Since $z_{Q_0}\in L_{Q_0}\cap L_r\cap B_{C_2Q_0}$, we have $\dist_\HH(L_{Q_0}\cap B_{C_2Q_0},L_r\cap B_{C_2Q_0})\lesssim\sin(\gamma)\ell(Q)$, and it is not difficult to show that
$\sin(\gamma)\lesssim\sum_{R\in\DD:\,Q_0\subset R\subset C_2D}\alpha_{\mu}(C_2R)$. Thus,
\begin{equation}\label{eq11}
\begin{split}
\dist_{B_Q}(p^x_\sharp\sigma_{Q_0},p^x_\sharp\sigma_r)\lesssim
\sum_{R\in\DD:\,Q_0\subset R\subset C_2D}\alpha_{\mu}(C_2R)\ell(Q)^{n+1}.
\end{split}
\end{equation}

Let us estimate the last term on the right hand side of (\ref{eq6}). Since $c_{Q_0}\lesssim1$, we have $\dist_{B_Q}(p^x_\sharp\sigma_r,\sigma')\lesssim\dist_{B_Q}(p^x_\sharp\HH^n_{L_r},\HH^n_{L_D^x})$. Let $h$ be a 1-Lipschitz function supported in $B_Q$ and such that
Set $d:=\dist(z_{Q_0},L_D^x)$. Since $Q\in J_1\subset J$ and $\ell(Q)\leq C\ell(D)$, if $C$ is small enough then $\dist(x,B_Q)\gtrsim\ell(D)$.
Without loss of generality, we may assume that $x=0$ and that $L_D^x=\R^n\times\{0\}^{d-n}$, so $L_r=z_{Q_0}+\R^n\times\{0\}^{d-n}$. 
Thus, if we set $z_{Q_0}':=(z_{Q_0}^{n+1},\ldots,z_{Q_0}^d)$, we have that $d=|z_{Q_0}'|$ and $p^x$ restricted to $L_r\cap B_Q$ can be written in the following manner:
$p^x:y=(y^1,\ldots,y^n,z_{Q_0}')\mapsto(F(y^1,\ldots,y^n),0)$, where $F:\R^n\setminus\{0\}^n\to\R^n$ is defined by
$$F(y)=y\frac{\sqrt{|y|^2+d^2}}{|y|}=y\sqrt{1+\frac{d^2}{|y|^2}}.$$
Therefore, $\int h\,d(p^x_\sharp\HH^n_{L_r})=\int h\circ p^x\,d\HH^n_{L_r}=\int_{\R^n}(h\circ p^x)(y,z_{Q_0}')\,dy=\int_{\R^n}h(F(y),0)\,dy$, and we also have
$\int h\,d\HH^n_{L_D^x}=\int_{\R^n}h((y,0))\,dy=\int_{\R^n}h(F(y),0)J(F)(y)\,dy$ by a change of variables, where $J(F)$ denotes the Jacobian of $F$. Hence
\begin{equation}\label{lmlml}
\left|\int h\,d(p^x_\sharp\HH^n_{L_r}-\HH^n_{L_D^x})\right|\lesssim\int_{\R^n}|h(F(y),0)||1-J(F)(y)|\,dy.
\end{equation}
Notice that, because of the assumptions on $\supp h(F(\cdot),0)$ and since $z_{Q_0}\in B_{C_2Q_0}$ and $Q_0\subset C_2D$, we have $d\lesssim|y|$ for all $y\in\supp h(F(\cdot),0)$.
If $F_i$ denotes the $i$'th coordinate of $F$, it is straightforward to check that
$\partial_{y^j}F_i(y)=-d^2y^iy^j|y|^{-3}(|y|^2+d^2)^{-1/2}$ if $i\neq j$ and
$\partial_{y^i}F_i(y)=(1+d^2/|y|^2)^{1/2}-d^2(y^i)^2|y|^{-3}(|y|^2+d^2)^{-1/2}$. Thus, we easily obtain
\begin{equation}\label{lolo}
|1-J(F)(y)|\lesssim d/|y|\lesssim d/\ell(D)
\end{equation}
for all $y\in\supp h(F(\cdot),0)$.
Since $\diam(\supp h(F(\cdot),0))\lesssim\ell(Q)$ and $h((F(\cdot),0))$ is Lipschitz, using (\ref{lolo}) and taking the supremum in (\ref{lmlml}) over all such functions $h$, we have $\dist_{B_Q}(p^x_\sharp\HH^n_{L_r},\HH^n_{L_D^x})\lesssim \ell(Q)^{n+1}d/\ell(D).$
Finally, by \cite[Remark 5.3]{To} and since $z_{Q_0}\in L_{Q_0}$,
$$d\lesssim\dist(z_{Q_0},L_D)+\dist(L_D,L_D^x)
\lesssim\sum_{R\in\DD:\,Q_0\subset R\subset C_2D}\alpha_\mu(C_2R)\ell(R)+\dist(x,L_D),$$
and thus
\begin{equation}\label{eq12}
\begin{split}
\dist_{B_Q}(p^x_\sharp\HH^n_{L_r},\HH^n_{L_D^x})\lesssim
\sum_{R\in\DD:\,Q_0\subset R\subset C_2D}\alpha_\mu(C_2R)\ell(Q)^{n+1}+\frac{\dist(x,L_D)}{\ell(D)}\,\ell(Q)^{n+1}.
\end{split}
\end{equation}

Finally, (\ref{lilili}) follows by applying (\ref{eq10}), (\ref{eq11}), and (\ref{eq12}) to (\ref{eq6}), which yields Lemma \ref{5lema L2}$(c)$.

\end{proof}

\begin{proof}[\bf{\em Proof of} Lemma \ref{5lema L2}$(d)$]
This is the key point where taking averages of dyadic lattices with respect to the parameter $s$ is necessary. Given $Q\in J_1\cap\Stop$,  we have to show that $\|m_{s\in Q^0_{J(Q)}}(\wit\Delta^\psi_{Q,s}h_x)\|_1\lesssim\ell(Q)^n.$ Unlike in $(a),\ldots,(c)$, the estimate in $(d)$ does not hold for a particular choice of $s$ in general but, as we will see, it holds in average. Recall that, for a fixed $s\in Q^0_{J(Q)}$, 
\begin{equation*}
\begin{split}
\wit\Delta^\psi_{Q,s}h_x&=\sum_{R\in\PP:\,R\subset Q}\Delta^\psi_{R,s}h_x\\
&=\sum_{\begin{subarray}{c}R\in\PP:\,\supp\psi_{R}\cap Q\neq\emptyset\\ \ell(R)\leq\ell(Q)\end{subarray}}\chi_{s+Q}\,\Delta^\psi_{R,s}h_x
-\sum_{\begin{subarray}{c}R\in\PP:\,\supp\psi_{R}\cap Q\neq\emptyset\\ \ell(R)\leq\ell(Q),\,R\not\subset Q\end{subarray}}\chi_{s+Q}\,\Delta^\psi_{R,s}h_x\\
&\quad+\sum_{\begin{subarray}{c}R\in\PP:\\ R\subset Q\end{subarray}}\chi_{(s+Q)^c}\,\Delta^\psi_{R,s}h_x
=:I_s+II_s+III_s.
\end{split}
\end{equation*}
We are going to estimate $I_s$, $II_s$, and $III_s$ separately. For the case of $I_s$, we have
\begin{equation*}
\begin{split}
\chi_{s+Q}\, h_x=\chi_{s+Q}\sum_{R\in\DD_x^{n,0}:\, \ell(R)>\ell(Q)}\Delta^\psi_{R,s}h_x+
\chi_{s+Q}\sum_{R\in\DD_x^{n,0}:\, \ell(R)\leq\ell(Q)}\Delta^\psi_{R,s}h_x=\chi_{s+Q}\,I'_s+I_s,\\
\end{split}
\end{equation*}
where we have set $I'_s:=\sum_{R\in\DD_x^{n,0}:\, \ell(R)>\ell(Q)}\Delta^\psi_{R,s}h_x$. On one hand, since $Q\in J_1\cap\Stop$, (\ref{reduced lema1}) holds. Thus, using that $\sum_{R\in\DD:\,P(R_Q)\subset R,\,\ell(R)\leq\ell(D)}\alpha_{\mu}(10R)<b_*$, one can show that 
\begin{equation}\label{njniop}
\|\chi_{s+Q}\,h_x\|_1\lesssim\ell(Q)^n
\end{equation}
(see above (\ref{L2 claim}) for a related argument). On the other hand, since $\|\chi_{s+Q}\,h_x\|_1\lesssim\ell(Q)^n$, it is known that then $\|\chi_{s+Q} I'_s\|_1\lesssim\ell(Q)^n$  (see \cite[Part I]{David-LNM}, in particular pay attention to the last sum in equation (46) of Part I). Combining these estimates, we conclude that $\|I_s\|_1\lesssim\ell(Q)^n$.

Let us now deal with $II_s$. First of all, split $II_s$ into different scales, that is 
$$\sum_{\begin{subarray}{c}R\in\PP:\,\supp\psi_{R}\cap Q\neq\emptyset\\ \ell(R)\leq\ell(Q),\,R\not\subset Q\end{subarray}}\chi_{s+Q}\,\Delta^\psi_{R,s}h_x=
\sum_{k\geq J(Q)}\,\sum_{\begin{subarray}{c}R\in\PP:\,\supp\psi_{R}\cap Q\neq\emptyset\\ \ell(R)=2^{-k},\,R\not\subset Q\end{subarray}}\chi_{s+Q}\,\Delta^\psi_{R,s}h_x.$$
Observe that if $k\geq J(Q)$, $\supp\psi_{R}\cap Q\neq\emptyset$, $\ell(R)=2^{-k}$, and $R\not\subset Q$, then $s+R\subset U_{C2^{-k}}(s+\partial Q)$, where $C>1$ is some fixed constant  and  $U_{C2^{-k}}(s+\partial Q):=\{z\in L_D^x:\, \dist(z,s+\partial Q)<C2^{-k}\}$. Hence, using Definition \ref{definicio wavelet}$(e)$ and the definition of $h_x$, we get
\begin{equation*}
\begin{split}
\|II_s\|_1\leq\sum_{k\geq J(Q)}\,\sum_{\begin{subarray}{c}R\in\PP:\,\supp\psi_{R}\cap Q\neq\emptyset\\ \ell(R)=2^{-k},\,R\not\subset Q\end{subarray}}\|\Delta^\psi_{R,s}h_x\|_1
\lesssim\sum_{k\geq J(Q)}\nu_x(U_{C2^{-k}}(s+\partial Q)).
\end{split}
\end{equation*}

The case of $III_s$ can be dealt with very similar techniques, and then one obtains the same estimate. Therefore, 
\begin{equation}\label{reduced aqpl}
\begin{split}
\|m_{s\in Q^0_{J(Q)}}(\wit\Delta^\psi_{Q,s}h_x)\|_1&=\|m_{s\in Q^0_{J(Q)}}(I_s+II_s+III_s)\|_1
\leq m_{s\in Q^0_{J(Q)}}\|I_s+II_s+III_s\|_1\\
&\lesssim\ell(Q)^n+m_{s\in Q^0_{J(Q)}}\bigg(\sum_{k\geq J(Q)}\nu_x(U_{C2^{-k}}(s+\partial Q))\bigg).
\end{split}
\end{equation}
Using Fubini's theorem, it is not difficult to show that $$m_{s\in Q^0_{J(Q)}}\nu_x(U_{C2^{-k}}(s+\partial Q))\big)\lesssim2^{-k}\ell(Q)^{-1}\nu_x(CQ)$$ for all for $k\geq J(Q)$ (see \cite[Lemma 7.5]{Tolsa3} for example, for a related argument). Since $Q\in\Stop$, then (\ref{reduced lema1}) holds and then, as in (\ref{njniop}), we have $\nu_x(CQ)\lesssim\ell(Q)^n$, thus $$m_{s\in Q^0_{J(Q)}}\bigg(\sum_{k\geq J(Q)}\nu_x(U_{C2^{-k}}(s+\partial Q))\bigg)\lesssim\ell(Q)^n.$$
If we combine this last estimate with (\ref{reduced aqpl}), we are done.
\end{proof}

\subsubsection{{\bf Final estimates}}
From Lemmas \ref{5 var eq8}, \ref{5 var eq11}, and \ref{5 var eq8a}, we obtain the following:
\begin{equation*}
\begin{split}
\sum_{S\in\Trees}\,\sum_{D\in S}&\int_D\sum_{m\in\SSS_D(x)}\bigg|\sum_{R\in V(D)}\sum_{Q\in \Tre(R)}(K\chi_{\epsilon_{m+1}}^{\epsilon_m}*(\Delta_Q f)\mu)(x)\bigg|^2\,d\mu(x)\\
+\sum_{S\in\Trees}&\,\sum_{D\in S}\int_D\sum_{m\in\SSS_D(x)}\bigg|\sum_{R\in V(D)}\sum_{Q\in \Stop(R)}(K\chi_{\epsilon_{m+1}}^{\epsilon_m}*(\wit\Delta_Q f)\mu)(x)\bigg|^2\,d\mu(x)\\
&+\sum_{S\in\Trees}\sum_{D\in S}\int_D\sum_{m\in\SSS_D(x)}\bigg|\sum_{R\in V(D)}
(K\chi_{\epsilon_{m+1}}^{\epsilon_m}*(m_R^\mu f)\chi_R\mu)(x)\bigg|^2\,d\mu(x)\lesssim\|f\|_{L^2(\mu)}^2.
\end{split}
\end{equation*}
Combining this estimate with (\ref{eqmain}), we deduce
\begin{equation*}
\begin{split}
\sum_{S\in\Trees}&\,\sum_{D\in S}\int_D\sum_{m\in\SSS_D(x)}\big|(K\chi_{\epsilon_{m+1}}^{\epsilon_m}*(f\mu))(x)\big|^2\,d\mu(x)\lesssim\|f\|_{L^2(\mu)}^2.
\end{split}
\end{equation*}
Finally, using (\ref{5 var eq1}) and (\ref{5 var eq2}), we conclude that 
\begin{equation*}
\begin{split}
\|(\VV^\SSS_\rho\circ\TT^\mu)f\|_{L^2(\mu)}^2\lesssim\sum_{D\in\DD}\int_D\sum_{m\in\SSS_D(x)}\big|(K\chi_{\epsilon_{m+1}}^{\epsilon_m}*(f\mu))(x)\big|^2\,d\mu(x)\lesssim\|f\|_{L^2(\mu)}^2.
\end{split}
\end{equation*}
This finishes the proof of
Theorem \ref{5teo var no suau acotada L2}.

\section{If $\VV_\rho\circ\RR^\mu:\,L^2(\mu)\to L^2(\mu)$ is a bounded operator,\\then $\mu$ is  a uniformly $n$-rectifiable measure}\label{4sec acotacio implica rectif}
Let $C_\mu>0$ be the AD regularity constant of an AD regular measure $\mu$, that is $C_\mu^{-1}r^n\leq\mu(B(x,r))\leq C_\mu r^{n}$ for all $x\in\supp\mu$ and $0\leq r<\diam(\supp\mu)$. For simplicity of notation, we may assume that $\diam(\supp\mu)=\infty$ (the general case follows with minor modifications in our arguments). As before, we denote by $\DD$ the dyadic lattice of $\mu$-cubes introduced in Subsection \ref{dyadic lattice}.

In this section, we set $K(x)={x}{|x|^{-n-1}}$ for $x\neq0$. Recall that, given $\epsilon>0$, a Borel measure $\mu$, and $f\in L^1(\mu)$, we have set $\RR^\mu f:=\{R_\epsilon^\mu f\}_{\epsilon>0}$, where
\begin{equation*}
R_\epsilon^\mu f(x) = \int_{|x-y|>\epsilon} K(x-y)f(y)\,d\mu(y).
\end{equation*}

In order to prove the main theorem of this section, namely Theorem \ref{4rectif teorema}, we need first to introduce some notation and state some preliminary results.

\begin{defi}[Special truncation of the Riesz transform]\label{c djlcnA}
For $\epsilon>0$, let $\varphi_\epsilon$ be as in Definition \ref{4defi varphi}. 
Given $m\in\Z$ and a Borel measure $\mu$ in $\R^d$, we set
\begin{equation*}
S_m\mu(x) := \int \big(\varphi_{2^{-m-1}}(x-y)-\varphi_{2^{-m}}(x-y)\big)K(x-y)\,d\mu(y).
\end{equation*}
\end{defi}
\begin{lema}[Lemma 5.8 of \cite{DS1}] \label{4lemli}
Given $Q\in\DD$, there exist $n+1$ points $x_0,\ldots,x_n$ in $Q$ (and thus in $\supp\mu$) such that $\dist(x_j,L_{j-1})\geq C\ell(Q)$, where $L_k$ denotes the $k$-plane passing through $x_0,\ldots,x_k$, and where $C$ depends only on $n$ and $C_\mu$.
\end{lema}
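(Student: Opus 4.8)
The plan is to build the points $x_0,\dots,x_n$ greedily, using only that $\mu$ is $n$-dimensional AD regular and that, by property $(c)$ of the dyadic lattice in Subsection \ref{dyadic lattice}, one has $\diam(Q)\le\ell(Q)$ and $\mu(Q)\ge C_\mu^{-1}\ell(Q)^n$. First I would take $x_0$ to be any point of $Q$ (say the center $z_Q$), so that $Q\subset B(x_0,\ell(Q))$. Suppose that for some $1\le j\le n$ the points $x_0,\dots,x_{j-1}\in Q$ have already been chosen, that they are affinely independent — hence span a genuine $(j-1)$-plane $L_{j-1}$ — and that $\dist(x_i,L_{i-1})\ge c\,\ell(Q)$ for $1\le i\le j-1$, where $c=c(n,C_\mu)\in(0,1)$ is a constant to be fixed at the end. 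I claim there is a point $x_j\in Q$ with $\dist(x_j,L_{j-1})\ge c\,\ell(Q)$. Such a point necessarily lies off $L_{j-1}$, so $x_0,\dots,x_j$ are affinely independent, $L_j$ is a $j$-plane, and the construction proceeds to the next step; after $n$ steps we are done, with $C=c$ in the statement.

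To prove the claim I argue by contradiction. If no such $x_j$ existed, then $Q$ would be contained in the tube $U:=\{x\in\R^d:\dist(x,L_{j-1})<c\,\ell(Q)\}\cap B(x_0,\ell(Q))$ around the piece $L_{j-1}\cap B(x_0,2\ell(Q))$ of the $(j-1)$-plane $L_{j-1}$; this piece has diameter $\lesssim\ell(Q)$, so it admits a $c\,\ell(Q)$-net consisting of at most $C_n\,c^{-(j-1)}$ points. A routine covering argument — replacing each net point by a nearby point of $\supp\mu$ and invoking the upper growth bound $\mu(B(x,r))\le C_\mu r^n$ — then gives, with $C_n$ depending only on $n$,
\[
\mu(Q)\le\mu(U)\le C_n\,C_\mu\,c^{\,n-j+1}\,\ell(Q)^n\le C_n\,C_\mu\,c\,\ell(Q)^n,
\]
where the last inequality uses $1\le j\le n$ and $c\le1$. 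It now suffices to fix $c=c(n,C_\mu)\in(0,1)$ so small that $C_n\,C_\mu\,c<C_\mu^{-1}$; this forces $\mu(Q)<C_\mu^{-1}\ell(Q)^n$, contradicting the lower AD regularity bound for $Q$. Hence $x_j$ exists, and the greedy construction carries through, producing $x_0,\dots,x_n\in Q\subset\supp\mu$ with the required separation.

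I do not expect a genuine obstacle here; the argument is essentially bookkeeping. The only points deserving care are the elementary estimate for the $\mu$-measure of a thin tubular neighborhood of a lower-dimensional plane — this is where the hypothesis $j-1\le n-1$ (codimension at least one) is used, keeping the exponent $n-j+1$ strictly positive — and the check that one constant $c$ works simultaneously at all $n$ steps. Since the bound $\mu(U)\lesssim C_\mu\,c^{\,n-j+1}\ell(Q)^n$ (implied constant depending only on $n$) is least favorable when $j=n$, it is enough to choose $c$ according to that worst case, and then the same $c$ applies at every step; in particular $c$, and hence the constant $C$ in the lemma, depends only on $n$ and $C_\mu$.
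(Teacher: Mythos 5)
Your argument is correct. Note first that the paper does not prove this lemma at all: it is quoted verbatim as Lemma 5.8 of \cite{DS1}, so there is nothing internal to compare against; your greedy construction with the tube-covering contradiction is in fact the standard proof of this statement in the AD regular setting, and every step goes through. If no point of $Q$ lay at distance $\geq c\,\ell(Q)$ from $L_{j-1}$, then since $\diam(Q)\leq\ell(Q)$ the cube sits inside the tube of width $c\,\ell(Q)$ around a piece of the $(j-1)$-plane of diameter $\lesssim\ell(Q)$; covering that tube by $\lesssim_n c^{-(j-1)}$ balls of radius $\approx c\,\ell(Q)$ (recentered on $\supp\mu$ to use the upper growth bound) gives $\mu(Q)\lesssim_n C_\mu\,c^{\,n-j+1}\ell(Q)^n\leq C_n C_\mu\,c\,\ell(Q)^n$, which contradicts the lower bound on $\mu(Q)$ once $c$ is small, and the worst case $j=n$ indeed fixes a single admissible $c$ for all steps.

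Two cosmetic points. The lower bound you invoke is not literally $\mu(Q)\geq C_\mu^{-1}\ell(Q)^n$: property $(c)$ of the lattice only gives $\mu(Q)\approx\ell(Q)^n$ with an implicit constant coming from the construction (equivalently, use the center $z_Q$ with $\dist(z_Q,\supp\mu\setminus Q)\gtrsim\ell(Q)$ and lower AD regularity), but this only changes the threshold for $c$ and the final constant still depends only on $n$, $C_\mu$ (and harmlessly on $d$), as required. Also, when you bound the measure of the covering balls you should say explicitly that a ball $B(p,r)$ meeting $\supp\mu$ is contained in $B(y,2r)$ for some $y\in\supp\mu$, which is exactly the ``replace each net point by a nearby point of $\supp\mu$'' step you allude to; with that spelled out the proof is complete and self-contained.
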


\begin{lema}[Lemma 7.4 and Remark 7.5 of \cite{To}]\label{4lemcas1}
Let $Q\in\DD$ and $x_0,\ldots,x_n\in Q$ be like in Lemma \ref{4lemli}.
Denote $r=\diam(Q)$, and let $m,p\in\Z$ be such that $t\geq s>4r$ for $t=2^{-p}$ and $s=2^{-m}$.
Suppose that $A(x_0,2^{-m-1/2},2^{-m+1/2})\cap\supp\mu\neq\emptyset$.
Then any point $x_{n+1}\in 3Q$ satisfies
\begin{equation}\label{4eqrem}
\dist(x_{n+1},L_0)\lesssim s\sum_{j=1}^{n+1}\sum_{k=p}^m|S_k\mu(x_j)-S_k\mu(x_0)|
+\frac{r^2}{s}+\frac{rs}{t},
\end{equation}
where $L_0$ is the $n$-plane passing through $x_0,\ldots,x_n$.
\end{lema}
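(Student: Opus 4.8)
The plan is to collapse the cumulative sum over scales into a single smoothly truncated Riesz transform, linearize it (which is legitimate because all the points are clustered at a scale $r$ much smaller than the truncation scales $s$ and $t$), and then read off $\dist(x_{n+1},L_0)$ from finite–dimensional linear algebra, using the general position of $x_0,\dots,x_n$. By Definition \ref{c djlcnA} we have $S_k\mu=K(\varphi_{2^{-k-1}}-\varphi_{2^{-k}})*\mu$, so the telescoping sum gives
\[
\sum_{k=p}^{m}S_k\mu\;=\;K\bigl(\varphi_{2^{-m-1}}-\varphi_{2^{-p}}\bigr)*\mu\;=:\;\Phi,
\]
which is the $n$-dimensional Riesz transform with a smooth truncation at inner scale $\approx s$ and outer scale $\approx t$. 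Since $\bigl|\sum_{k=p}^{m}(S_k\mu(x_j)-S_k\mu(x_0))\bigr|\le\sum_{k=p}^{m}|S_k\mu(x_j)-S_k\mu(x_0)|$, it suffices to prove
\[
\dist(x_{n+1},L_0)\;\lesssim\;s\sum_{j=1}^{n+1}|\Phi(x_j)-\Phi(x_0)|+\frac{r^{2}}{s}+\frac{rs}{t}.
\]

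\textbf{Linearization and general position.} Because $r<s/4\le t/4$, the points $x_0,\dots,x_{n+1}$ lie in a ball of radius $\lesssim r$, tiny compared with $s$ and $t$. Using the $n$-growth of $\mu$ and the Calder\'on--Zygmund bounds $|\nabla^{\alpha}K(z)|\lesssim|z|^{-n-|\alpha|}$ one checks $\|\nabla^{2}\Phi\|\lesssim s^{-2}$ on $B(x_0,Cr)$ and, writing $\Phi=(K\varphi_{2^{-m-1}})*\mu-(K\varphi_{2^{-p}})*\mu$, that the gradient of the second (far) term at $x_0$ has norm $\lesssim 1/t$. Setting $M:=\nabla\bigl((K\varphi_{2^{-m-1}})*\mu\bigr)(x_0)$, Taylor's formula gives, for every $j$,
\[
\Phi(x_j)-\Phi(x_0)=M[x_j-x_0]+E_j,\qquad |E_j|\lesssim\frac{r^{2}}{s^{2}}+\frac{r}{t}.
\]
Write $x_{n+1}-x_0=\sum_{j=1}^{n}\lambda_j(x_j-x_0)+h$ with $h\perp(L_0-x_0)$, so that $|h|=\dist(x_{n+1},L_0)$. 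By Lemma \ref{4lemli} the vectors $x_1-x_0,\dots,x_n-x_0$ span $L_0-x_0$ with an $n$-volume $\gtrsim r^{n}$ while each has length $\lesssim r$; hence they are well conditioned at scale $r$ and $|\lambda_j|\lesssim 1$. Subtracting the corresponding combination of the identities above,
\[
M[h]=\bigl(\Phi(x_{n+1})-\Phi(x_0)\bigr)-\sum_{j=1}^{n}\lambda_j\bigl(\Phi(x_j)-\Phi(x_0)\bigr)+E',\qquad |E'|\lesssim\frac{r^{2}}{s^{2}}+\frac{r}{t},
\]
so $|M[h]|\lesssim\sum_{j=1}^{n+1}|\Phi(x_j)-\Phi(x_0)|+r^{2}/s^{2}+r/t$.

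\textbf{The non-degeneracy estimate.} It now suffices to prove the lower bound $|M[h]|\gtrsim|h|/s$; then $|h|\lesssim s|M[h]|$ and the desired inequality follows by multiplying the previous bound by $s$. This is where the hypothesis $A(x_0,2^{-m-1/2},2^{-m+1/2})\cap\supp\mu\neq\emptyset$ enters: it forces $\mu$ to carry genuine $n$-dimensional mass at scale $s$ around $x_0$. I would argue by dichotomy. If $s\sum_{j=1}^{n+1}|\Phi(x_j)-\Phi(x_0)|+rs/t\gtrsim r$ there is nothing to prove, since $\dist(x_{n+1},L_0)\le|x_{n+1}-x_0|\lesssim r$ and $r^{2}/s\ll r$. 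In the complementary case, the linearization applied to $j=1,\dots,n$ shows that each $x_j-x_0$ lies within distance $\ll r$ of the $n$-plane $E$ through $x_0$ that best approximates $\mu$ at scale $s$; since the $x_j-x_0$ are well conditioned at scale $r$, this forces $E$ to make a small angle with $L_0$, so the normal directions of $L_0$ and $E$ are comparable. For such a near-flat configuration $M$ acts essentially as $\tfrac{c}{s}$ times the orthogonal projection onto the normal space of $E$, with $c\gtrsim 1$ (here the mass at scale $s$ supplied by the annulus hypothesis prevents cancellation), whence $|M[h]|\gtrsim|h|/s$. Combining the two cases yields \eqref{4eqrem}, and reverting to $\sum_k S_k\mu$ via the first step finishes the proof.

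The main obstacle is this last step: bounding from below, in a prescribed direction, the gradient at $x_0$ of a smoothly truncated Riesz transform of a merely AD-regular measure. This requires intertwining the annulus hypothesis, the AD regularity, and the self-improving interplay ``small Riesz data $\Rightarrow$ $L_0$ aligned with the scale-$s$ geometry of $\mu$''; it is precisely the content of Lemma~7.4 and Remark~7.5 of \cite{To}, whose quasi-orthogonality arguments (in the spirit of \cite{DS1}) one would follow in a self-contained treatment.
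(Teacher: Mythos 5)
The reductions in your first two steps are sound: the telescoping identity $\sum_{k=p}^m S_k\mu=K(\varphi_{2^{-m-1}}-\varphi_{2^{-p}})*\mu$, the gradient/Hessian bounds coming from AD regularity, the Taylor linearization with error $O(r^2/s^2+r/t)$, and the use of the well-conditioned points of Lemma \ref{4lemli} to pass from $M[x_j-x_0]$ to $M[h]$ with $h\perp L_0$. But everything then hinges on the lower bound $|M[h]|\gtrsim |h|/s$, and this is precisely where your argument has a genuine gap. The hypothesis $A(x_0,2^{-m-1/2},2^{-m+1/2})\cap\supp\mu\neq\emptyset$ only yields $\mu\big(A(x_0,s/4,2s)\big)\gtrsim s^n$, and mass in an annulus does not prevent cancellation in $M=\int\nabla(\psi K)(x_0-y)\,d\mu(y)$: since $\nabla K(z)=|z|^{-n-1}\big(I-(n+1)u\otimes u\big)$ with $u=z/|z|$, equal amounts of mass at comparable distances in $d$ orthogonal directions contribute $\sum_i\big(I-(n+1)u_i\otimes u_i\big)=(d-n-1)I$, which vanishes in codimension one (think of two orthogonal branches of an AD-regular $1$-dimensional set in the plane, seen from $x_0$); and in general the contributions from the whole range $s\lesssim|x_0-y|\lesssim t$ can make $M$ degenerate in the direction $h$. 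So no quantitative non-degeneracy of $M$ follows from the stated hypotheses alone.

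Your dichotomy does not repair this. In the ``small data'' case you invoke an $n$-plane $E$ ``best approximating $\mu$ at scale $s$'' and assert that $M$ acts like $(c/s)$ times the projection onto $E^{\perp}$ with $c\gtrsim1$; but no flatness of $\mu$ is assumed in the lemma — it is applied exactly when $\beta_{1,\mu}(Q)>\epsilon_0$, i.e.\ to detect non-flatness — and even the intermediate claim that smallness of $M[x_j-x_0]$ forces $x_j-x_0$ to lie near $E$ already presupposes the transverse non-degeneracy of $M$ you are trying to establish, so the reasoning is circular. You in fact concede the point by deferring this step to Lemma 7.4 and Remark 7.5 of \cite{To}, which is the very statement to be proved; note that the present paper does not prove it either but imports it from \cite{To}, so a blind proof had to supply exactly this core estimate. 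As written, your proposal delivers only the routine outer layer, and the inequality \eqref{4eqrem} remains unproved.
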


The following proposition is a direct consequence of the techniques used in the last section of \cite{To}. We give the proof for completeness.
\begin{propo}\label{4rectif propo}
Given $\epsilon_0>0$, there exist $\delta_0>0$ and $m_0,k_0\in\N$ depending on
$\epsilon_0$, $n$, and $C_\mu$ such that, for all $i\in\Z$ and all $Q\in\DD_i$ with
$\beta_{1,\mu}(Q)>\epsilon_0$, there exist $k\in\Z$ with $|k|\leq k_0$ and $P\in\DD_{i+k+m_0}$ such that $P\subset 4Q$ and
$|S_{i+k}\mu(x)|\geq\delta_0\text{ for all }x\in P$.
\end{propo}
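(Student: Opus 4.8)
The plan is to derive this from the geometric lemmas \ref{4lemli} and \ref{4lemcas1} together with a continuity estimate for the special truncations $S_k\mu$, following the scheme in the last section of \cite{To}. Fix $Q\in\DD_i$ with $\beta_{1,\mu}(Q)>\epsilon_0$ and set $r=\diam(Q)\approx\ell(Q)=2^{-i}$. First I would take the points $x_0,\dots,x_n\in Q$ provided by Lemma \ref{4lemli}; they span an $n$-plane $L_0$. Since $L_0$ is one competitor in the infimum defining $\beta_{1,\mu}(Q)$, the hypothesis gives $\int_{2Q}\dist(y,L_0)\,d\mu(y)>\epsilon_0\ell(Q)^{n+1}$. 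As $\dist(\cdot,L_0)\lesssim\ell(Q)$ on $2Q$ (because $x_0\in Q\subset L_0$) while $\mu(2Q)\approx\ell(Q)^n$, a Chebyshev-type argument produces a point $x_{n+1}\in 2Q\cap\supp\mu\subset 3Q$ with $\dist(x_{n+1},L_0)\geq\tau\ell(Q)$ for some $\tau=\tau(\epsilon_0,n,C_\mu)>0$.

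Next I would apply Lemma \ref{4lemcas1} with $s=2^{-m}$ and $t=2^{-p}$, where $m=i-a$ and $p=m-b$ for two large integers $a,b$ to be fixed (depending on $\epsilon_0,n,C_\mu$ and the constant of Lemma \ref{4lemcas1}). To satisfy the annulus hypothesis $A(x_0,2^{-m-1/2},2^{-m+1/2})\cap\supp\mu\neq\emptyset$ I would choose $m$ inside a window $[i-a-N_0,\,i-a]$ of controlled length $N_0=N_0(n,C_\mu)$: by AD regularity the mass $\mu(B(x_0,\cdot))$ cannot stay constant across $N_0$ consecutive dyadic scales, so at least one such annulus meets $\supp\mu$. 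With these choices $s>4r$ (since $a$ is large), $t\geq s$ (since $p<m$), and the geometric errors in \eqref{4eqrem} satisfy $r^2/s+rs/t\lesssim r\,(2^{-a}+2^{-b})\leq\tfrac12\tau\ell(Q)$ for $a,b$ large. Absorbing them into the left side of \eqref{4eqrem} and using $s\lesssim\ell(Q)$ yields
\[
\sum_{j=1}^{n+1}\sum_{k=p}^{m}\bigl|S_k\mu(x_j)-S_k\mu(x_0)\bigr|\gtrsim 1,
\]
with implicit constant depending only on $\epsilon_0,n,C_\mu$. This double sum has a bounded number $N_3=(n+1)(b+1)$ of terms, so some term is $\gtrsim 1$; hence there are $j\in\{0,\dots,n+1\}$ and $k\in[p,m]$ with $|S_k\mu(x_j)|\geq\delta_0'$ for some $\delta_0'=\delta_0'(\epsilon_0,n,C_\mu)>0$. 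Writing $k=i+k'$, the range of $k$ forces $|k'|\leq k_0:=a+b+N_0$, and this $k'$ is the ``$k$'' of the statement.

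Finally I would upgrade this pointwise lower bound at $x_j\in 3Q\cap\supp\mu$ to a lower bound on a whole $\mu$-cube, via the standard estimate $|S_k\mu(x)-S_k\mu(x')|\lesssim 2^k|x-x'|$, which holds because the kernel of $S_k$ is supported in an annulus of radius $\approx 2^{-k}$, is bounded by $\lesssim 2^{kn}$, has gradient $\lesssim 2^{k(n+1)}$, and $\mu$ has $n$-growth. Taking $P$ to be the $\mu$-cube of generation $k+m_0=i+k'+m_0$ containing $x_j$, with $m_0=m_0(\epsilon_0,n,C_\mu)$ chosen so that $\diam(P)\leq 2^{-(k+m_0)}$ is a small enough multiple of $\delta_0' 2^{-k}$ (and $m_0>k_0$, which forces $P\subset 4Q$), we get $|S_{i+k'}\mu(x)|\geq\delta_0'/2=:\delta_0$ for all $x\in P$, which is exactly the claim.

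The main obstacle is the bookkeeping around Lemma \ref{4lemcas1}: one must pick $s$ and $t$ simultaneously so that $(i)$ the structural hypotheses $t\geq s>4r$ hold, $(ii)$ the annulus at scale $s$ meets $\supp\mu$, and $(iii)$ both error terms $r^2/s$ and $rs/t$ are dominated by the gain $\tau\ell(Q)$ coming from $\beta_{1,\mu}(Q)>\epsilon_0$ — all while keeping $m-p$ bounded, so the pigeonhole step loses only a constant and $k$ stays within $O(1)$ of $i$. The remaining ingredients (the Chebyshev argument, the continuity estimate, and tracking the dependence of constants) are routine.
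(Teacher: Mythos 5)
Your proof is correct and follows essentially the same route as the paper's: Lemma \ref{4lemli} plus Lemma \ref{4lemcas1} applied with scales $s,t$ a bounded number of dyadic generations above $\ell(Q)$, a pigeonhole over the boundedly many resulting terms to produce a scale $i+k$ with $|k-i|$ controlled and a point of $3Q$ where $|S_{i+k}\mu|\gtrsim 1$, and finally the Lipschitz estimate $|S_{i+k}\mu(x)-S_{i+k}\mu(z)|\lesssim 2^{i+k}|x-z|$ to upgrade this to a whole $\mu$-cube $P\in\DD_{i+k+m_0}$ contained in $4Q$. The only (harmless) deviations are that you pick a single far point $x_{n+1}$ by a Chebyshev argument instead of integrating \eqref{4eqrem} in $x_{n+1}$ over $3Q$ against the definition of $\beta_{1,\mu}$, and that you make explicit the AD-regularity pigeonhole ensuring the annulus hypothesis $A(x_0,2^{-m-1/2},2^{-m+1/2})\cap\supp\mu\neq\emptyset$, which the paper leaves implicit.
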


\begin{proof}[{\bf {\em Proof.}}]
Fix $\epsilon_0>0$. Let $Q\in\DD_i$ such that $\beta_{1,\mu}(Q)>\epsilon_0$.
Take points $x_0,\ldots,x_n$ in $Q$ as in Lemma \ref{4lemli}, denote $r=\diam Q$, and let $m\in\Z$ to be fixed below such that $4r<2^{-m}=:s$ and $A(x_0,2^{-m-1/2},2^{-m+1/2})\cap\supp\mu\neq\emptyset$ (we assume $\diam(\supp\mu)=\infty$). 
By Lemma \ref{4lemcas1}, for $t:=2^{-p}\geq s$ to be fixed below and all $x_{n+1}\in3Q$, 
\begin{equation*}
\begin{split}
\dist(x_{n+1},L_0)&\lesssim s\sum_{j=1}^{n+1}\sum_{k=p}^m|S_k\mu(x_j)-S_k\mu(x_0)|
+\frac{r^2}{s}+\frac{rs}{t}\\
&\lesssim s\sum_{k=p}^m\sum_{j=0}^{n+1}|S_k\mu(x_j)|
+\frac{r^2}{s}+\frac{rs}{t}.
\end{split}
\end{equation*}
Then, by integrating on $x_{n+1}\in3Q$, for some constant $C_1>0$ depending only on $n$ and $C_\mu$
\begin{equation*}
\begin{split}
\epsilon_0&<\beta_{1,\mu}(Q)\leq\frac{1}{\ell(Q)^n}\int_{3Q}\frac{\dist(x_{n+1},L_0)}{\ell(Q)}\,d\mu(x_{n+1})\\
&\leq C_1\bigg(\frac{s}{r}\sum_{k=p}^m\bigg(\frac{1}{\ell(Q)^n}\int_{3Q}|S_k\mu(x_{n+1})|\,d\mu(x_{n+1})+\sum_{j=0}^{n}|S_k\mu(x_j)|\bigg)+\frac{r}{s}+\frac{s}{t}\bigg).
\end{split}
\end{equation*}
Thus,
\begin{equation*}
\begin{split}
\frac{r}{s}\bigg(\frac{\epsilon_0}{C_1}-\frac{r}{s}-\frac{s}{t}\bigg)
\leq\sum_{k=p}^m\bigg(\int_{3Q}\frac{|S_k\mu(x_{n+1})|}{\ell(Q)^n}\,d\mu(x_{n+1})+\sum_{j=0}^{n}|S_k\mu(x_j)|\bigg).
\end{split}
\end{equation*}
We can easily choose $s$ and $t$ big enough (depending on $r$, $\epsilon_0$, and $C_1$) such that, for some constant $\epsilon_1>0$ depending only on $\epsilon_0$, $n$ and $C_\mu$,
\begin{equation}\label{4rectif eq8}
\begin{split}
0<\epsilon_1\leq\sum_{k=p}^m\bigg(\int_{3Q}\frac{|S_k\mu(x_{n+1})|}{\ell(Q)^n}\,d\mu(x_{n+1})+\sum_{j=0}^{n}|S_k\mu(x_j)|\bigg).
\end{split}
\end{equation}
Notice that, since $t=2^{-p}$ and $s=2^{-m}$ where chosen depending on $r\approx2^{-i}$, the sum on the right hand side of (\ref{4rectif eq8}) has a finite number of terms which only depends on $\epsilon_0$, $n$ and $C_\mu$. Therefore, there exists $k_0\in\N$ and $C_2>0$ depending only on $\epsilon_0$, $n$ and $C_\mu$ such that, for some negative integer $k$ with $|k|\leq k_0$ and some $j=0,\ldots,n$, 
\begin{equation*}
\begin{split}
\epsilon_1\leq C_2\bigg(\frac{1}{\ell(Q)^n}\int_{3Q}|S_{i+k}\mu|\,d\mu+|S_{i+k}\mu(x_{j})|\bigg),
\end{split}
\end{equation*}
which implies that there exists $C_3$ (depending on $C_2$) and $z\in 3Q$ such that $\epsilon_1\leq C_3|S_{i+k}\mu(z)|$.

Given $x\in\supp\mu$, if $|x-z|\leq2^{-i-k}$, then 
\begin{equation*}
\begin{split}
|S_{i+k}\mu(x)-S_{i+k}\mu(z)|&\leq
\int_{|y-z|\lesssim2^{-i-k}}\|\nabla(\varphi_{i+k}K)\|_\infty|x-z|\,d\mu(y)\\
&\lesssim2^{(i+k)(n+1)}|x-z|\int_{|y-z|
\lesssim2^{-i-k}}\,d\mu(y)\lesssim2^{i+k}|x-z|.
\end{split}
\end{equation*}
Hence if $|x-z|\leq C_42^{-i-k}$ with $C_4>0$ small enough, we have $C_3|S_{i+k}\mu(x)-S_{i+k}\mu(z)|\leq\epsilon_1/2$, so $\epsilon_1/2\leq C_3|S_{i+k}\mu(x)|$. 
Therefore, there exist $m_0\in\N$ depending on $C_4$ (and thus on $\epsilon_0$, $n$, and $C_\mu$) and $P\in\DD_{i+k+m_0}$ such that $\epsilon_1/2\leq C_3|S_{i+k}\mu(x)|$ for all $x\in P$. We can also assume that $P\subset4Q$ by taking $C_4$ small enough, and since $|k|\leq k_0$ we have $\ell(P)\approx\ell(Q)$. The proposition follows by setting $\delta_0:=\epsilon_1/(2C_3)>0$.
\end{proof}

\begin{defi}
Given $\epsilon_0>0$, let $\delta_0,m_0>0$ be as in Proposition \ref{4rectif propo}. Set
\begin{equation*}
\begin{split}
\BB&:=\{Q\in\DD\,:\,\beta_{1,\mu}(Q)>\epsilon_0\},\quad
\wit\BB:=\bigcup_{k\in\Z}\{Q\in\DD_{k+m_0}\,:\,|S_k\mu(x)|\geq\delta_0\text{ for all }x\in Q\}.
\end{split}
\end{equation*}
Given $P,R\in\DD$ with $P\subset R$, we set $F_P^R=\sum_{Q\in\wit\BB:\,P\subset Q\subset R}\chi_Q$ and
$F^R=\sum_{Q\in\wit\BB:\,Q\subset R}\chi_Q$.
\end{defi}

\begin{lema}\label{4rectif lema}
Let $\rho>0$. Assume that there exists $C_0>0$ such that, for all $R\in\DD$,
\begin{equation}\label{4rectif eq1}
\int_R \big(F^R\big)^{2/\rho}\,d\mu\leq C_0\mu(R).
\end{equation}
Then, there exists $C>0$ such that
$\sum_{Q\in\wit\BB:\,Q\subset R}\mu(Q)\leq C\mu(R)$ for all $R\in\DD$.
\end{lema}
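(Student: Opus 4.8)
The plan is to derive the Carleson packing estimate for the family $\wit\BB$ from the $L^{2/\rho}$-bound \eqref{4rectif eq1} on the counting functions $F^R$ by a layer-cake / stopping-time argument, controlling the overlap of the $\mu$-cubes in $\wit\BB$ via the pointwise size of $F^R$. Fix $R\in\DD$. The key observation is that for $x\in R$,
\[
F^R(x)=\#\{Q\in\wit\BB:\,x\in Q,\ Q\subset R\},
\]
so $F^R$ literally counts how many cubes of $\wit\BB$ below $R$ contain a given point. Thus $\sum_{Q\in\wit\BB:\,Q\subset R}\mu(Q)=\int_R F^R\,d\mu$, and the lemma reduces to showing $\int_R F^R\,d\mu\lesssim\mu(R)$, given that $\int_R (F^R)^{2/\rho}\,d\mu\lesssim\mu(R)$. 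When $2/\rho\geq 1$ (i.e.\ $\rho\leq 2$) this is immediate by Hölder's inequality, since $\int_R F^R\,d\mu\le\big(\int_R (F^R)^{2/\rho}\,d\mu\big)^{\rho/2}\mu(R)^{1-\rho/2}\lesssim\mu(R)$. The substance of the lemma is therefore the range $\rho>2$, where $2/\rho<1$ and the naive Hölder inequality runs the wrong way; here one needs to exploit the \emph{nested} dyadic structure of $\wit\BB$, not merely the integrability of $F^R$.

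For $\rho>2$ I would argue as follows. Define a stopping-time family: let $\mathcal{M}_1$ be the collection of maximal cubes $Q\in\wit\BB$ with $Q\subset R$ (maximal with respect to inclusion). By \eqref{4rectif eq1} applied with each such $Q$ in place of $R$, together with the fact that on $Q$ one has $F^Q\ge 1$ (since $Q\in\wit\BB$), one gets $\mu(Q)\le\int_Q (F^Q)^{2/\rho}\,d\mu\le C_0\mu(Q)$—which is only a tautology, so instead the right move is to iterate: inside each $Q\in\mathcal{M}_1$ select the maximal cubes of $\wit\BB$ strictly contained in $Q$, call this generation $\mathcal{M}_2$, and so on. For a point $x\in R$, $F^R(x)$ equals the number of generations $\mathcal{M}_j$ to which a cube containing $x$ belongs, i.e.\ the depth of the stopping tree at $x$. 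Now \eqref{4rectif eq1} gives a uniform geometric-type decay: applying it on a fixed $Q\in\mathcal{M}_j$ and splitting $Q=\bigsqcup_{Q'\in\mathcal{M}_{j+1},Q'\subset Q}Q'\ \sqcup\ (\text{rest})$, one estimates $\sum_{Q'\in\mathcal{M}_{j+1}:Q'\subset Q}\mu(Q')$ in terms of $\int_Q (F^Q)^{2/\rho}$ and the fact that on those sub-cubes $F^Q\ge 2$; more precisely, since $F^Q\ge j'+1$ on any cube of the $(j')$-th sub-generation inside $Q$, Chebyshev gives $\mu\big(\bigcup_{\text{gen }j'\text{ below }Q}\big)\le (j'+1)^{-2/\rho}\int_Q(F^Q)^{2/\rho}\,d\mu\le C_0(j'+1)^{-2/\rho}\mu(Q)$. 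Summing $\int_R F^R\,d\mu=\sum_{j\ge 1}\mu\big(\bigcup_{Q\in\mathcal{M}_j,Q\subset R}Q\big)$ and bounding the $j$-th term by $C_0 j^{-2/\rho}\mu(R)$ would give convergence precisely when $2/\rho>1$—the wrong range again. This signals that the crude Chebyshev split loses too much, and the correct exploitation must instead be: \eqref{4rectif eq1} applied directly on $R$ already bounds a superlinear moment, and one upgrades to the linear moment by a Kolmogorov–type / distributional argument using that $F^R$ is an \emph{integer-valued} sum of indicators of a nested family. For such a function, $\int_R F^R\,d\mu=\sum_{t\ge 1}\mu(\{F^R\ge t\})$ and $\{F^R\ge t\}$ is a union of cubes $Q\in\wit\BB$ at "depth $t$"; applying \eqref{4rectif eq1} on each maximal depth-$t$ cube and summing yields $\mu(\{F^R\ge t\})\le C_0^{\,?}\,\mu(\{F^R\ge t-1\})$-type recursions only if one has a genuine multiplicative gain, which the hypothesis as stated does not obviously provide.

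Given this tension, I expect the actual proof to run the \emph{reverse} implication in spirit: assume for contradiction the packing fails, pick $R$ with $\sum_{Q\in\wit\BB,Q\subset R}\mu(Q)$ huge relative to $\mu(R)$, hence $\int_R F^R\,d\mu$ huge; then since $F^R$ is a sum of indicators of a nested family, a convexity/Jensen argument on the dyadic martingale $\big(\mathbb{E}[F^R\mid\DD_k]\big)_k$ forces $\int_R (F^R)^{2/\rho}\,d\mu$ to be large as well (for a nested/monotone-increasing-along-chains integer function, the $L^{2/\rho}$ norm is comparable to the $L^1$ norm up to the nesting depth, and large $L^1$ with bounded generations per point forces large $L^{2/\rho}$), contradicting \eqref{4rectif eq1}. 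Concretely, if every point sees at most $N$ cubes of $\wit\BB$ above it within $R$ then $F^R\le N$ and $(F^R)^{2/\rho}\ge N^{2/\rho-1}F^R$ when $\rho>2$, whence $\int_R F^R\le N^{1-2/\rho}\int_R(F^R)^{2/\rho}\le C_0 N^{1-2/\rho}\mu(R)$—still depending on $N$. The resolution must be that \eqref{4rectif eq1}, holding for \emph{all} $R\in\DD$ simultaneously, self-improves: a standard iteration (à la the proof that $L^p$-Carleson for one $p$ implies it for all $p$, cf.\ the John–Nirenberg-type lemmas in \cite{DS2} or \cite{Tolsa-llibre}) bootstraps the $2/\rho$-moment bound to the $1$-moment bound with a constant independent of $R$. \textbf{The main obstacle is precisely this bootstrapping step}: making the self-improvement of \eqref{4rectif eq1} from exponent $2/\rho<1$ up to exponent $1$ rigorous, uniformly in $R$, using only the nested structure of $\wit\BB$ and the hypothesis for all $R$. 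Once that is in place, $\sum_{Q\in\wit\BB:\,Q\subset R}\mu(Q)=\int_R F^R\,d\mu\le C\mu(R)$ follows at once, which is the assertion of the lemma.
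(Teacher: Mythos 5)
Your reduction $\sum_{Q\in\wit\BB:\,Q\subset R}\mu(Q)=\int_R F^R\,d\mu$ and the observation that H\"older settles the case $2/\rho\geq1$ are fine, but the lemma is only interesting (and only needed in the paper) when $\rho>2$, and for that case your proposal never closes: you try a generation-by-generation stopping time with depth increasing by $1$ at each step, correctly compute that Chebyshev then only yields a factor $(j+1)^{-2/\rho}$, note that this does not sum when $2/\rho<1$, and then fall back on an unproven ``self-improvement/bootstrapping'' claim which you yourself flag as the main obstacle. As stated, that is a genuine gap: nothing in the proposal turns the sub-linear moment bound \eqref{4rectif eq1} into the linear bound $\int_R F^R\,d\mu\lesssim\mu(R)$.

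The missing idea is to stop not each time the ancestor count increases by one, but the first time it exceeds a \emph{large} threshold $M$. Concretely, with $F_P^R=\sum_{Q\in\wit\BB:\,P\subset Q\subset R}\chi_Q$, define $\Tree(R)$ as those $Q\in\wit\BB$, $Q\subset R$, with $\chi_QF_Q^R\leq M\chi_Q$, and let $\Top_0(R)$ be the maximal cubes $P\in\wit\BB$, $P\subset R$, where the count first exceeds $M$; iterate to get generations $\Top_m(R)$. On any $Q\in\Top_0(P)$ one has $1<M^{-2/\rho}\big(F^P\big)^{2/\rho}$ pointwise, so the hypothesis applied to $P$ gives
\begin{equation*}
\sum_{Q\in\Top_0(P)}\mu(Q)\leq M^{-2/\rho}\int_P\big(F^P\big)^{2/\rho}\,d\mu\leq C_0M^{-2/\rho}\mu(P)\leq\tfrac12\,\mu(P)
\end{equation*}
once $M>(2C_0)^{\rho/2}$; note the exponent $2/\rho<1$ is harmless because $M$ is allowed to depend on $\rho$. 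This yields geometric decay $\sum_{P\in\Top_m(R)}\mu(P)\leq2^{-m}\mu(R)$, and since each point of a tree sees at most $M$ cubes of that tree, $\sum_{Q\in\wit\BB:\,Q\subset R}\mu(Q)\leq M\big(\mu(R)+\sum_{m\geq0}2^{-m}\mu(R)\big)\leq3M\mu(R)$. This threshold-$M$ stopping time is exactly what replaces your attempted bootstrap; without it (or an equivalent device) the argument does not go through in the range $\rho>2$.
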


\begin{proof}[{\bf {\em Proof.}}]
Let $M>1$ big enough (it will be fixed below). For $R\in\DD$, set
\begin{equation*}
\begin{split}
\Tree(R)&:=\big\{Q\in\wit\BB\,:\,Q\subset R,\,\chi_Q F_Q^R\leq M\chi_Q\big\},\\
\Top_0(R)&:=\big\{P\in\wit\BB\,:\,P\subset R,\,\chi_P F_P^R>M\chi_P,\text{ and }\chi_Q F_{Q}^R\leq M\chi_{Q}\\
&\qquad\qquad\qquad\qquad\qquad\qquad\qquad\qquad
\text{ for all }\,Q\in\wit\BB\text{ such that }P\subsetneq Q\subset R\big\}.
\end{split}
\end{equation*}

For $m\geq1$, set $\Top_m(R):=\bigcup_{P\in\Top_{m-1}(R)}\Top_0(P)$, and $\Top(R):=\bigcup_{m\geq0}\Top_m(P)$.
Notice that if $R\in\wit\BB$ then $R\in\Tree(R)$, because $M>1$. Notice also that
\begin{equation}\label{4rectif eq2}
\{Q\in\wit\BB\,:\,Q\subset R\}=\Tree(R)\cup\Big(\textstyle{\bigcup_{P\in\Top(R)}}\Tree(P)\Big),
\end{equation}
and the union is disjoint.

Fix $R\in\DD$. Then, by (\ref{4rectif eq2}),
\begin{equation}\label{4rectif eq3}
\begin{split}
\sum_{Q\in\wit\BB:\,Q\subset R}\mu(Q)&=\sum_{Q\in\Tree(R)}\mu(Q)+\sum_{P\in\Top(R)}\,\sum_{Q\in\Tree(P)}\mu(Q)\\
&=\int_R\sum_{Q\in\Tree(R)}\chi_Q\,d\mu+\int_R\sum_{P\in\Top(R)}\,\sum_{Q\in\Tree(P)}\chi_Q\,d\mu.
\end{split}
\end{equation}
Given $x\in R$ and $P\in\DD$ such that $P\subset R$, by the definition of $\Tree(P)$, we have
$$\sum_{Q\in\Tree(P)}\chi_Q(x)\leq M\chi_P(x).$$
Therefore, by (\ref{4rectif eq3}),
\begin{equation}\label{4rectif eq4}
\begin{split}
\sum_{Q\in\wit\BB:\,Q\subset R}\mu(Q)&\leq M\mu(R)+\int_R\sum_{P\in\Top(R)}M\chi_P\,d\mu
=M\bigg(\mu(R)+\sum_{m\geq0}\sum_{P\in\Top_m(R)}\mu(P)\bigg).
\end{split}
\end{equation}

We are going to prove that, if $M$ is big enough,
\begin{equation}\label{4rectif eq5}
\sum_{P\in\Top_m(R)}\mu(P)\leq2^{-m}\mu(R)
\end{equation}
for all $m\geq0$, and then, by (\ref{4rectif eq4}), we will finally obtain
$$\sum_{Q\in\wit\BB:\,Q\subset R}\mu(Q)\leq M\mu(R)+M\sum_{m\geq0}2^{-m}\mu(R)\leq3M\mu(R),$$
and the lemma will be proven. 

Notice that, if $P,P'\in\Top_0(R)$ are different, then $P\cap P'=\emptyset$ because of the last condition in the definition of $\Top_0(R)$. So, to verify (\ref{4rectif eq5}), it is enough to show that, for all $m\geq0$,
\begin{equation}\label{4rectif eq5bis}
\begin{split}
\sum_{P\in\Top_{m+1}(R)}\mu(P)<\frac{1}{2}\sum_{P\in\Top_{m}(R)}\mu(P).
\end{split}
\end{equation}

We have
\begin{equation}\label{plki}
\sum_{P\in\Top_{m+1}(R)}\mu(P)=\sum_{P\in\Top_{m}(R)}\,\sum_{Q\in\Top_{0}(P)}\mu(Q)
\end{equation}
and $\sum_{Q\in\Top_0(P)}\chi_Q=\chi_{U}$, where $U:=\bigcup_{Q\in\Top_0(P)}Q\subset P$. If $x\in U$, there exists $Q\in\Top_0(P)$ such that $x\in Q$, so $1=\chi_Q(x)<{M^{-2/\rho}}\big(F^P_Q(x)\big)^{2/\rho}\leq{M^{-2/\rho}}\big(F^P(x)\big)^{2/\rho}$,
and then using (\ref{4rectif eq1}) we have
\begin{equation*}
\sum_{Q\in\Top_0(P)}\mu(Q)=\int_P\sum_{Q\in\Top_0(P)}\chi_Q\,d\mu
=\int_U1\,d\mu<M^{-2/\rho}\int_P\big(F^P\big)^{2/\rho}\,d\mu\leq
\frac{C_0}{M^{2/\rho}}\,\mu(P),
\end{equation*}
which, in combination with (\ref{plki}), yields (\ref{4rectif eq5bis}) by taking $M>(2C_0)^{\rho/2}$. 
\end{proof}
 
\begin{lema}\label{4rectif lema 2}
Assume that, for some $C_1>0$, $\sum_{Q\in\wit\BB:\,Q\subset R}\mu(Q)\leq C_1\mu(R)$ for all $R\in\DD$. Then there exists $C_2>0$ such that $\sum_{Q\in\BB:\,Q\subset R}\mu(Q)\leq C_2\mu(R)$ for all $R\in\DD$.
\end{lema}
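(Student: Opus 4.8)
The plan is to produce a bounded‑multiplicity assignment $Q\mapsto P(Q)$ from $\{Q\in\BB:Q\subset R\}$ into $\wit\BB$ and then invoke the hypothesized packing condition for $\wit\BB$. The key tool is Proposition \ref{4rectif propo}: to every $Q\in\DD_i$ with $\beta_{1,\mu}(Q)>\epsilon_0$ it attaches a $\mu$-cube $P=P(Q)\in\DD_{i+k+m_0}$ with $|k|\le k_0$, $P\subset 4Q$, and $|S_{i+k}\mu(x)|\ge\delta_0$ for all $x\in P$; reading the definition of $\wit\BB$ with the index $i+k$, this says precisely $P(Q)\in\wit\BB$. Since $|k|\le k_0$, the ratio $\ell(P(Q))/\ell(Q)=2^{-(k+m_0)}$ stays in a fixed bounded range, so by property (c) of the dyadic lattice $\mu(P(Q))\approx\mu(Q)$, with implicit constants depending only on $n$, $C_\mu$, $k_0$, $m_0$.

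First I would bound the multiplicity of this map: fix $P\in\wit\BB$ and count the $Q\in\BB$ with $P(Q)=P$. The generation of such a $Q$ is determined by that of $P$ up to the $2k_0+1$ choices of $k$, and $P\subset 4Q$ forces $\dist(y,Q)\lesssim\ell(Q)$ for any fixed $y\in P$; by AD regularity and disjointness of equal‑generation $\mu$-cubes, the number of generation‑$i$ cubes within a bounded multiple of $2^{-i}$ of a fixed point is $\le N_0(n,C_\mu)$, so the multiplicity is $\le N:=(2k_0+1)N_0$. Next I would localize the range of the map: if $Q\subset R$ then $\ell(Q)\le\ell(R)$, hence $4Q\subset 4R$ and $P(Q)\subset 4R$, while $\ell(P(Q))\lesssim\diam(4Q)\lesssim\ell(R)$. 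Cover the set $4R\subset\supp\mu$ by the finitely many dyadic $\mu$-cubes $R_1,\dots,R_M$ of generation $\operatorname{gen}(R)-L$ meeting it, where $2^{L}$ exceeds the (fixed) maximal ratio $\ell(P(Q))/\ell(R)$; then $M\le M(n,C_\mu,k_0,m_0)$, each $\mu(R_j)\approx\mu(R)$ by AD regularity, and every $P(Q)$ — having generation at least that of the $R_j$ and meeting one of them — is contained in some $R_j$ by properties (a)–(b) of the lattice. Combining the two steps with the hypothesis gives
\[
\sum_{Q\in\BB:\,Q\subset R}\mu(Q)\lesssim\sum_{Q\in\BB:\,Q\subset R}\mu(P(Q))\le N\sum_{j=1}^{M}\ \sum_{P\in\wit\BB:\,P\subset R_j}\mu(P)\le NC_1\sum_{j=1}^{M}\mu(R_j)\lesssim NMC_1\,\mu(R),
\]
which is the desired bound with $C_2$ depending only on $\epsilon_0$ (through $k_0,m_0,\delta_0$), $n$, $C_\mu$, and $C_1$.

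The only mildly delicate point — the part I would write out carefully rather than the routine counting — is verifying that $P(Q)$ really lands inside one of the covering cubes $R_j$: this uses that $\mu$-cubes of a common generation partition $\supp\mu$, the nesting property (b), and the uniform bound on $\ell(P(Q))/\ell(R)$; one also has to make sure the various bounded‑overlap constants ($N_0$, $M$, and the factor $2k_0+1$) are genuinely absorbed into $C_2$ without circular dependence on $R$. Everything else is straightforward bookkeeping with the AD regularity of $\mu$.
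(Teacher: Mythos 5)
Your proof is correct and follows essentially the same route as the paper's: assign to each $Q\in\BB$ the cube $P_Q\in\wit\BB$ produced by Proposition \ref{4rectif propo}, note that $\mu(P_Q)\approx\mu(Q)$ and that the assignment has bounded multiplicity, and then apply the packing hypothesis for $\wit\BB$ on boundedly many dyadic cubes of comparable size covering $4R$. Your write-up is in fact slightly more careful than the paper's at the covering step (choosing the covering generation so that every $P(Q)$ is genuinely contained in one of the $R_j$), which is a welcome but minor refinement.
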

\begin{proof}[{\bf {\em Proof.}}]
Given $Q\in\BB$, by Proposition \ref{4rectif propo}, there exists $P_Q\in\DD_{k+m_0}$ for some $k\in\Z$ such that
$P_Q\subset 4Q$, $\mu(P_Q)\geq C_0\mu(Q)$, and
$|S_k\mu(x)|\geq\delta_0\text{ for all }x\in P_Q,$ where $C_0>0$ is some small constant. Thus, in particular, $P_Q\in\wit\BB$ for all $Q\in\BB$. Since $P_Q\subset 4Q$ and $\mu(P_Q)\geq C_0\mu(Q)$ for all $Q\in\BB$, given $P\in\wit\BB$ there are finitely many $\mu$-cubes $Q\in\BB$ such that $P_Q=P$, and the number of such $\mu$-cubes is bounded above by a constant depending only on $n$, $C_0$, and $C_\mu$. Hence, since $4R$ is contained in the union of a bounded number of $\mu$-cubes with side length $\ell(R)$,
\begin{equation*}
\sum_{Q\in\BB:\,Q\subset R}\mu(Q)\leq C_0^{-1}\sum_{Q\in\BB:\,Q\subset R}\mu(P_Q)
\lesssim\sum_{P\in\wit\BB:\,P\subset 4R}\mu(P)\leq C_1\mu(R)
\end{equation*}
for all $R\in\DD$, as wished. 
\end{proof}

\begin{teo}\label{4rectif teorema}
Let $\rho>0$. Given an $n$-dimensional AD regular measure $\mu$, if $\VV_\rho\circ\RR^\mu$ is a bounded operator in $L^2(\mu)$, then $\mu$ is uniformly $n$-rectifiable.
\end{teo}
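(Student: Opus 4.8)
The plan is to show that the $L^2(\mu)$ boundedness of $\VV_\rho\circ\RR^\mu$ forces the $\beta_{1,\mu}$ coefficients to satisfy the Carleson packing condition $(b)$ of Theorem~\ref{alphas unifrectif}, which then yields uniform rectifiability. The first step is to observe that, since $T_*^\mu f(x)\le(\VV_\rho\circ\RR^\mu)f(x)$ for compactly supported $f$, the maximal Riesz transform $R_*^\mu$ is bounded in $L^2(\mu)$; together with the $n$-dimensional AD regularity this controls $R_\epsilon^\mu 1$ and hence, via standard smoothing estimates, the special truncations $S_m\mu$ of Definition~\ref{c djlcnA}. More importantly, writing $S_m\mu(x)=R_{\varphi_{2^{-m-1}}}^\mu 1(x)-R_{\varphi_{2^{-m}}}^\mu 1(x)$ as a difference of two terms in the family $\TT_\varphi^\mu 1$, I would compare the family $\{R_{\varphi_{2^{-m}}}^\mu 1\}_m$ with $\{R_{2^{-m}}^\mu 1\}_m$ (the error being controlled by the short variation, exactly as in Lemma~\ref{lqlq}), and deduce that
\begin{equation*}
\Big\|\Big(\sum_{m\in\Z}|S_m\mu(x)|^2\Big)^{1/2}\Big\|_{L^2_{\mathrm{loc}}(\mu)}\lesssim(\VV_\rho\circ\RR^\mu)1\ \text{-type bounds},
\end{equation*}
so that for every $R\in\DD$ one has $\sum_{m}\int_R |S_m\mu|^2\,d\mu\lesssim\mu(R)$. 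Since $\rho>0$ is arbitrary here and $\VV_2\le\VV_\rho$ is false in general, I would actually run the argument with the $\rho$-variation directly: the point is that $(\VV_\rho\circ\RR^\mu)1(x)^\rho$ dominates $\sum_m|S_{m+1}\mu(x)-S_m\mu(x)|^\rho$ up to the comparison errors, and a telescoping/Abel summation plus the boundedness on $L^2(\mu)$ of $\VV_\rho\circ\RR^\mu$ (applied to $f=\chi_{B}$ for suitable balls, using AD regularity to pass to $1$) gives $\int_R F^R(x)^{2/\rho}\,d\mu(x)\lesssim\mu(R)$, where $F^R=\sum_{Q\in\wit\BB:\,Q\subset R}\chi_Q$; this is precisely hypothesis~\eqref{4rectif eq1} of Lemma~\ref{4rectif lema}.

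Granting \eqref{4rectif eq1}, Lemma~\ref{4rectif lema} gives the Carleson packing condition $\sum_{Q\in\wit\BB:\,Q\subset R}\mu(Q)\lesssim\mu(R)$ for the cubes in $\wit\BB$, i.e.\ the cubes where $|S_k\mu|$ is large on all of the cube. Then Proposition~\ref{4rectif propo} is the geometric bridge: it says that whenever $\beta_{1,\mu}(Q)>\epsilon_0$ there is a comparable descendant $P\subset 4Q$ lying in $\wit\BB$ with $\mu(P)\gtrsim\mu(Q)$. Feeding this into Lemma~\ref{4rectif lema 2} transfers the packing condition from $\wit\BB$ to $\BB=\{Q\in\DD:\beta_{1,\mu}(Q)>\epsilon_0\}$, so that $\sum_{Q\in\BB:\,Q\subset R}\mu(Q)\lesssim\mu(R)$ for every $R$. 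Because $\ell(Q)\approx\mu(Q)^{1/n}$ and $\mu$ is AD regular, this reads $\sum_{Q\in\DD:\,Q\subset R,\ \beta_{1,\mu}(Q)>\epsilon_0}\beta_{1,\mu}(Q)^2\ell(Q)^n\lesssim\mu(R)\approx\ell(R)^n$, while the cubes with $\beta_{1,\mu}(Q)\le\epsilon_0$ contribute at most $\epsilon_0^2\sum_{Q\subset R}\ell(Q)^n$, which is \emph{not} summable directly --- so I must instead apply the above with a sequence $\epsilon_0=2^{-\ell}$ and sum a geometric series in $\ell$, or more cleanly invoke the standard fact (see \cite{DS1}) that a Carleson packing bound on the over-threshold cubes for \emph{every} threshold $\epsilon_0$ is equivalent to $\sum_{Q\subset R}\beta_{1,\mu}(Q)^2\ell(Q)^n\lesssim\ell(R)^n$. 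Either way we obtain condition $(b)$ of Theorem~\ref{alphas unifrectif} with $p=1$, hence $\mu$ is uniformly $n$-rectifiable.

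The main obstacle I anticipate is the first step: extracting a clean Carleson-type estimate for $\sum_m |S_m\mu|^2$ (or the $\rho$-variant yielding \eqref{4rectif eq1}) from the $L^2(\mu)$ boundedness of $\VV_\rho\circ\RR^\mu$. The subtleties are that $\VV_\rho\circ\RR^\mu$ is only assumed bounded on $L^2(\mu)$ (not from $M(\R^d)$), so applying it to the constant function $1$ requires localizing to balls $B(x_R,C\ell(R))$ and controlling the tail $\RR^\mu(\chi_{\R^d\setminus 2B}\mu)$ --- but here one cannot use the smooth-truncation trick of \eqref{4eqgli01} since we are dealing with the sharp truncations $\RR^\mu$; instead the tail differences are genuinely controlled because $S_m\mu$ only sees the annulus $|x-y|\approx 2^{-m}$, so for $x$ in a cube $R$ and $2^{-m}\lesssim\ell(R)$ the truncation $S_m\mu(x)$ depends only on $\mu|_{CR}$. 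A second delicate point is that $\VV_\rho$ with $\rho>2$ does \emph{not} dominate the full square function $\sum_m|S_m\mu|^2$ pointwise (only the $\ell^\rho$ sum), so I expect to need the short-variation machinery of Section~\ref{5s var no suau} --- specifically an estimate of the form $\big(\sum_m|S_m\mu|^2\big)^{1/2}\lesssim(\VV_\rho\circ\RR^\mu)1+(\VV^\SSS_\rho\circ\RR^\mu)1$ together with the $L^2$ boundedness of $\VV^\SSS_\rho\circ\RR^\mu$ on uniformly rectifiable pieces --- but since uniform rectifiability is what we are trying to prove, this square-function bound must instead be obtained directly from the hypothesis by a duality/$T1$-type argument, which is the technically heaviest part of the proof.
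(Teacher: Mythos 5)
The decisive problem is your last step. You aim to deduce condition $(b)$ of Theorem \ref{alphas unifrectif} from the fact that, for \emph{every} threshold $\epsilon_0$, the family $\{Q\in\DD:\beta_{1,\mu}(Q)>\epsilon_0\}$ satisfies a Carleson packing condition. That statement is exactly the Weak Geometric Lemma, and it is strictly weaker than the $\beta^2$ packing condition: the Carleson constant depends on $\epsilon_0$, and the ``standard fact'' you invoke (WGL for every threshold $\Leftrightarrow$ $\sum_{Q\subset R}\beta_{1,\mu}(Q)^2\ell(Q)^n\lesssim\ell(R)^n$) is false. Your fallback of summing over $\epsilon_0=2^{-\ell}$ would require the packing constant $C(\epsilon_0)$ to be $o(\epsilon_0^{-2})$, which this construction does not deliver: in Proposition \ref{4rectif propo} one is forced to take $s/r\gtrsim\epsilon_0^{-1}$ and $t/s\gtrsim\epsilon_0^{-1}$, so $\delta_0\sim\epsilon_0^{2}$, and the constant coming out of the $F^R$ estimate and Lemma \ref{4rectif lema} is at least of order $\delta_0^{-2}\sim\epsilon_0^{-4}$, so the geometric series diverges. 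The paper never attempts this upgrade: it proves only the WGL and then invokes Theorem 1.2 of \cite[Part III, Chapter 1]{DS2}, which says that for an AD regular measure with $R^\mu_*$ bounded on $L^2(\mu)$ the WGL already implies uniform rectifiability. You observe at the start that $R^\mu_*$ is bounded, but you never use it for this purpose; without that external input (or a substitute) your argument does not close.

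By contrast, the step you flag as ``technically heaviest'' is actually the easy part, and the route you sketch is partly misdirected: the short-variation machinery of Section \ref{5s var no suau} presupposes uniform rectifiability, a duality/$T1$ argument is unnecessary, and $\sum_m|S_{m+1}\mu-S_m\mu|^\rho$ is the wrong quantity. The paper proceeds as follows: after reducing to $\rho\geq1$, since $\varphi_\R$ is a convex combination of sharp cut-offs, Minkowski's integral inequality gives the pointwise domination of $(\VV_\rho\circ\TT^\mu_\varphi)f$ by $(\VV_\rho\circ\RR^\mu)f$, hence $\VV_\rho\circ\TT^\mu_\varphi$ is bounded on $L^2(\mu)$; each $S_k(f\mu)$ is a single difference of consecutive smooth truncations, so $\sum_k|S_k(f\mu)(x)|^\rho\leq\big((\VV_\rho\circ\TT^\mu_\varphi)f(x)\big)^\rho$ with no $\ell^2$ square function needed; and the localization you worry about is trivial, because for $x\in Q\subset R$ with $Q\in\DD_{k+m_0}$ one has $S_k\mu(x)=S_k(\chi_{MR}\mu)(x)$ for a fixed $M$, so the operator is only ever applied to $\chi_{MR}\in L^2(\mu)$, never to the constant $1$ or to a measure. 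This gives $F^R\leq\delta_0^{-\rho}\big((\VV_\rho\circ\TT^\mu_\varphi)\chi_{MR}\big)^\rho$ on $R$, and \eqref{4rectif eq1} follows by integrating the $2/\rho$ power. The middle portion of your plan (Proposition \ref{4rectif propo}, Lemmas \ref{4rectif lema} and \ref{4rectif lema 2}) does coincide with the paper's argument.
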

\begin{proof}[{\bf {\em Proof.}}]

It is easy to see that, if $\VV_\rho\circ\RR^\mu$ is a bounded operator in $L^2(\mu)$, then $R_*^\mu$ is also bounded in $L^2(\mu)$.
By Theorem 1.2 in \cite[Part III, Chapter 1]{DS2}, in order to show that $\mu$ is uniformly $n$-rectifiable, it is enough to show that $\mu$ satisfies the Weak Geometric Lemma, i.e., that for any $\epsilon_0>0$, the set $\BB$ is a Carleson set. In other words, it suffices to show that there exists a constant $C>0$ depending on $\epsilon_0$ such that $\sum_{Q\in\BB:\,Q\subset R}\mu(Q)\leq C\mu(R)$ for all $R\in\DD$. By Lemma \ref{4rectif lema 2} and Lemma \ref{4rectif lema}, this holds if, for some $\rho>0$, there exists $C>0$ depending on $\epsilon_0$ such that, for all $R\in\DD$,
\begin{equation}\label{4rectif eq6}
\int_R \big(F^R\big)^{2/\rho}\,d\mu\leq C\mu(R).
\end{equation}

Notice that, for $m\in\Z$ and $f\in L^1(\mu)$, $S_m(f\mu)=T_{\varphi_{2^{-m-1}}}^\mu f-T_{\varphi_{2^{-m}}}^\mu f$, where $S_m$ is introduced in Definition \ref{c djlcnA} and $T_{\varphi_{\epsilon}}^\mu$ is as in Definition \ref{4defi varphi} (remember that now $K$ denotes the Riesz kernel), thus
\begin{equation}\label{4rectif eqvar}
\begin{split}
\sum_{k\in\Z}|S_k(f\mu)(x)|^{\rho}
\leq\big((\VV_{\rho}\circ\TT_\varphi^\mu)f(x)\big)^{\rho}.
\end{split}
\end{equation}
We may assume that $\rho\geq1$, since $(\VV_{\wit\rho}\circ\RR^\mu)f(x)\leq(\VV_{\rho}\circ\RR^\mu)f(x)$ for $\wit\rho\geq\rho$, and then the $L^2(\mu)$ boundedness of $\VV_{\rho}\circ\RR^\mu$ for some $\rho>0$ implies the $L^2(\mu)$ boundedness of $\VV_{\wit\rho}\circ\RR^\mu$ for all $\wit\rho\geq\rho$. Since $\varphi_\R\big(2^{2m}t^2\bigr)$ is a convex combination of the functions $\chi_{\{s\in\R\,:\, s>\epsilon\}}(t)$ for $\epsilon>0$, using that $\rho\geq1$ and Minkowski's integral inequality, it is not hard to show that the $L^2(\mu)$ boundedness of $\VV_{\rho}\circ\RR^\mu$ implies the $L^2(\mu)$ boundedness of $\VV_{\rho}\circ\TT_\varphi^\mu$ (see Subsection \ref{5sslong}, or \cite[Lemma 2.4]{CJRW-Hilbert}, for a similar argument). Therefore, for any $M>0$, we have
\begin{equation}\label{4rectif eqvar1}
\|(\VV_{\rho}\circ\TT_\varphi^\mu)\chi_{MR}\|^2_{L^2(\mu)}\leq C\mu(MR)\leq C\mu(R)\text{ for all $R\in\DD$.}
\end{equation}

Fix $\epsilon_0>0$, let $\delta_0,m_0>0$ be as in Proposition \ref{4rectif propo}, and let $R\in\DD$. Given $x\in R$ and $k\in\Z$, for any $Q\in\DD_{k+m_0}\cap\wit\BB$ such that $x\in Q\subset R$ we have $|S_k\mu(x)|\geq\delta_0$. Notice that, since $Q\in\DD_{k+m_0}$ and $Q\subset R$, there exists $M>1$ depending only on $n$ and $m_0$ such that $\delta_0\leq|S_k\mu(x)|=|S_k(\chi_{MR}\mu)(x)|$.
Therefore, using (\ref{4rectif eqvar}) and that for each $k\in\Z$ there is at most one $\mu$-cube $Q\in\DD_{k+m_0}$ such that $x\in Q\subset R$,
\begin{equation}\label{eq osc}
\begin{split}
F^R(x)&=\sum_{k\in\Z}\,\sum_{Q\in\DD_{k+m_0}\cap\wit\BB\,:\,x\in Q\subset R\,}\chi_Q(x)
\leq\sum_{k\in\Z}\,\sum_{Q\in\DD_{k+m_0}\cap\wit\BB\,:\,x\in Q\subset R\,}\delta_0^{-\rho}|S_k(\chi_{MR}\mu)(x)|^{\rho}\\
&\leq\delta_0^{-\rho}\sum_{k\in\Z}|S_k(\chi_{MR}\mu)(x)|^{\rho}
\leq\delta_0^{-\rho}\big((\VV_{\rho}\circ\TT_\varphi^\mu)\chi_{MR}(x)\big)^{\rho}
\end{split}
\end{equation}
and then, by (\ref{4rectif eqvar1}),
\begin{equation*}
\begin{split}
\int_R\big(F^R)^{2/\rho}\,d\mu&
\leq\delta_0^{-2}\int_R\big((\VV_{\rho}\circ\TT_\varphi^\mu)\chi_{MR}\big)^2\,d\mu
\leq\delta_0^{-2}\|(\VV_{\rho}\circ\TT_\varphi^\mu)\chi_{MR}\|^2_{L^2(\mu)}
\leq C\mu(R)
\end{split}
\end{equation*}
for all $R\in\DD$. This yields (\ref{4rectif eq6}), and the theorem follows.
\end{proof}

\begin{remarko}\label{4remark oscil}
{\em Let $\{r_m\}_{m\in\Z}\subset(0,\infty)$ be a fixed decreasing sequence defining $\OO$. If there exists $C>0$ such that $C^{-1}r_m\leq r_m-r_{m+1}\leq Cr_m$ for all $m\in\Z$, then the last inequality in (\ref{eq osc}) still holds if we replace $\VV_{\rho}$ by $\OO$ (by taking from the beginning $\rho=2$). Hence, Theorem \ref{4rectif teorema} still holds replacing $\VV_\rho$ by $\OO$ for this particular sequence $\{r_m\}_{m\in\Z}$. However, we do not know if it holds for any  $\{r_m\}_{m\in\Z}\subset(0,\infty)$. }
\end{remarko}

\end{document}